\newcommand*{\tightdisplaymath}{\abovedisplayskip\z@\belowdisplayskip\z@}
\newcommand{\vers}{Tensor product of $\Ain$Categories}
\title[\vers]{Tensor product of $\Ain$Categories}
\author{Mattia Ornaghi}
\address{\parbox{0.9\textwidth}{Universit\`a degli Studi di Milano\\
Dipartimento di Matematica\\
Via Cesare Saldini 50, 20133 Milano, Italy}}
\email{mattia12.ornaghi@gmail.com}
\theoremstyle{definition}
\newtheorem{defn}{Definition}[section]
\newtheorem{thm}{Theorem}[section]
\newtheorem{lem}[thm]{Lemma}
\newtheorem{cor}[thm]{Corollary}
\newtheorem*{namedthm}{Theorem}
\theoremstyle{remark}
\newtheorem{rem}{Remark}[section]
\newtheorem{exmp}{Example}[section]
\newtheorem{notat}{Notation}[section]
\newcommand{\Ain}{\mbox{A$_{\infty}$}}
\newcommand{\id}{\mbox{id}}
\newcommand{\aCat}{\mbox{A$_{\infty}$Cat}}
\newcommand{\DgCat}{\mbox{DGCat}}
\newcommand{\Rep}{\underline{\mbox{Rep}}}
\newcommand{\REP}{\mbox{Rep}}
\newcommand{\Id}{\mbox{Id}}
\newcommand{\Un}{{U}^{\tiny\mbox{n}}}
\newcommand{\Hom}{\mbox{Hom}}
\newcommand{\HomA}{\mbox{Hom}_{\mathscr{A}}}
\newcommand{\A}{\mathscr{A}}
\newcommand{\B}{\mathscr{B}}
\newcommand{\Ho}{\mbox{Ho}}
\newcommand{\Fun}{\mbox{Fun}}
\newcommand{\G}{\mathscr{G}}
\newcommand{\Q}{\mathscr{Q}}
\newcommand{\F}{\mathscr{F}}
\newcommand{\C}{\mathscr{C}}
\newcommand{\Z}{\mathbb{Z}}
\subjclass[2020]{14F08, 18E35, 18G70}
\thanks{The author was supported by the research project FARE 2018 HighCaSt (grant number R18YA3ESPJ) and by ERC Advanced
Grant - CUP: G43C23001750006 - HE-ERC23ANEEM-01}
\begin{document}

\date{\today}

\maketitle
\begin{abstract}
In this paper we define the tensor product of two $\Ain$-categories and two $\Ain$-functors.\ 
This tensor product makes the category of $\Ain$-categories symmetric monoidal (up to homotopy), and 
the category $\aCat^u/\approx$ a closed symmetric monoidal category.\
Moreover, we define the derived tensor product making $\Ho(\aCat)$, the homotopy category of the $\Ain$-categories, 
a closed symmetric monoidal category.\ We provide also an explicit description of the internal homs in terms of $\Ain$-functors.
\end{abstract}

\setcounter{tocdepth}{1}

\tableofcontents
\section{Introduction}

We fix a commutative ring $R$.\
Let $A,B$ be two DG $R$-quivers, the tensor product $A\otimes B$ is a DG quiver defined as follows:\
\begin{itemize} 
\item[1.] The objects are the pair $\mbox{Ob}(A)\times\mbox{Ob}(B)$,
\item[2.] the morphisms are 
\begin{align}\label{Tensorello}
({A\otimes B})^n\big((x_1,y_1),(x_2,y_2)\big):=\displaystyle\bigoplus_{i+j=n} A^i(x_1,x_2)\otimes B^j(y_1,y_2),
\end{align}
where $x_1,x_2\in A$ and $y_1,y_2\in B$.\
\end{itemize}
Fixed a morphism $a\otimes b\in A\otimes B$, the differential is given by
\begin{align}\label{diiffo}
d_{A\otimes B}(a\otimes b):=d_{A}(a)\otimes b + (-1)^{\tiny\mbox{deg}(a)}a\otimes d_{B}(b).
\end{align}
If $A$ and $B$ are two DG-categories (not necessarily unital) then the formula: 
\begin{align}\label{compoz}
(a_2\otimes b_2)\cdot (a_1\otimes b_1):=a_2\cdot a_1\otimes b_2\cdot b_1.
\end{align}
makes $A\otimes B$ a DG-category.\ Moreover, taking two DG-functors $F:A\to B$ and $G:A'\to B'$, it is easy to define the tensor product $F\otimes G:A\otimes A'\to B\otimes B'$ as follows:
\begin{align}\label{functoro}
(F\otimes G) (a\otimes a' ):=F(a)\otimes G(a').
\end{align}
It is easy to prove that the category of DG-categories, with the tensor product defined above, is a closed symmetric monoidal category.\

Things gets more complicated if $A$ is a DG-category and $B$ is an $\Ain$-category.\ 
If $n\ge2$, we can extend formulas ($\ref{diiffo}$) and $(\ref{compoz})$ as follows:
\begin{align}\label{zorku}
m^{n}_{A\otimes B}(a_n\otimes b_n,...,a_1\otimes b_1):=\pm\ a_n\cdot... \cdot a_1\otimes m^n(b_n,...,b_1).
\end{align}
It is not hard to prove that $(\ref{zorku})$ makes $A\otimes B$ an $\Ain$-category, see \cite[(3.)]{GZ}, \cite[Appendix C]{BLM} or \cite[Remark 1.11]{Sei}.\ 
On the other hand, if $A$ and $B$ are both $\Ain$-categories, formula $(\ref{zorku})$ does not make sense, since $A$ is not associative.\ 
An explicit formula for $m^n_{A\otimes B}$ is highly non trivial (see \cite[\S5.2]{KS}).\\

Let us give a brief historical background.\
The question about the tensor product of two $\Ain$-categories came up in the context of Open String Theory.\ 
In 1997, indeed, Gaberdiel and Zwiebach were the first who faced this problem.\ They managed to 
give an explicit description of the tensor product of an $\Ain$-algebra with a DG one \cite[3.14]{GZ} and of the first homotopy of the tensor product
algebra (i.e. $m^3_{A\otimes B}$, \cite[3.13]{GZ}).\ Namely they provided formulas (\ref{zorku}) and (\ref{mtre}).\\
The first who provided a complete, formula for the $\Ain$structure on the tensor product were Saneblidze and Umble \cite{SU}, in 2004.\
A few years later, in 2006, Markl and Shnider \cite{MS} gave a more conceptual proof.\ Here is a theoretical explanation of their strategy:\
First, we denote by $\mathcal{A}_{\infty}$ the operad describing the $\Ain$algebras (called the Stasheff polytope or associahedron).\ Namely, every $\Ain$algebra is an algebra over the operad $\mathcal{A}_{\infty}$.\ 
Roughly speaking, every diagonal 
$$\Delta:\mathcal{A}_{\infty}\to \mathcal{A}_{\infty}\otimes \mathcal{A}_{\infty},$$ 
is equivalent to an $\Ain$-structure on the tensor product of the underlying DG quivers \cite[pp.45]{MS}.\\
Their $\Ain$-structure on the tensor was based on the Boardman-Vogt cubical decomposition of the Stasheff polytope.\ 
Another $\Ain$-structure was provided by Loday, in 2007 \cite{Lod}.\ 
His strategy is different, since he used the simplicial (not the cubical) decomposition of the associahedron.\ 
Saneblidze-Umble-Markl-Shnider and Loday constructions are quasi-isomorphic but he conjectured that they are actually the same.\ It was verified by Loday up to $n\le 6$.\ 
More recently contributions on the decomposition of the Stasheff polytope were done by Laplante-Anfossi, Mazuir \cite{LAM} and Masuda, Thomas, Tonks and Vallette \cite{MTTV}.\ Very recently, in 2023, Saneblidze and Umble (cf. \cite{SaU}) proved that the structure they found in \cite{SU}, the one of Markl-Shnider, and the aforementioned ones, agree.\

On the other hand, it is not strictly necessary to invoke the abstract language of the operads to find a good notion of tensor product for two $\Ain$-categories.\ 
Indeed, in \cite[\S5.2]{KS}, Kontsevich and Soibelman defined the tensor product of two $\Ain$-categories simply as the (DG )tensor product of the DG-categories of their modules:
\begin{align}\label{KontSob}
\A\otimes\B:=\mbox{End}(\A)\otimes\mbox{End}(\B).
\end{align}
The drawback of Definition $(\ref{KontSob})$ is that it is not an $\Ain$structure on the underlying DG-quivers of $A$ and $B$, but it has a different underlying DG-quiver.\
It was Amorim, in 2016 \cite{Amo}, who makes explicit their idea, transferring the DG-structure of $\mbox{End}(\A)\otimes\mbox{End}(\B)$ on the DG-quiver $A\otimes B$, using the Homology Perturbation Theory.\ In Section \ref{TENSHPL} we provide an extensive discussion about this strategy.\\

It important to say that what is really difficult in the context of $\Ain$-categories is the definition of tensor between two $\Ain$-functors.\
Namely, given two $\Ain$-functors $\F:\A\to\B$ and $\G:\A'\to\B'$ we would like to define the $\Ain$-functor
\begin{align}\label{compolo}
\F\otimes\G:\A\otimes\B\to\A'\otimes\B'
\end{align}
extending the tensor product of DG-functors in equation $(\ref{functoro})$ (cf. \cite[pp. 4]{MS}).\\
On the other hand, there are a few of definitions of tensor product of $\Ain$-functors (see \cite[4.4.3]{LAM} and \cite[Definition 3.9]{Amo}) unfortunately 
none of them has the property:
\begin{align}\label{formulaz}
(\F\otimes\G)\cdot(\F'\otimes\G')=(\F\cdot\F')\otimes(\G\cdot\G')
\end{align}
but 
it seems that 
$(\ref{formulaz})$ holds up to an appropriate homotopy relation, see \cite[Definition 3.9]{Amo} and \cite[4.4.4]{LAM}.\
The aim of this work start from this: we want to define a tensor product for $\Ain$-categories and for $\Ain$-functors, giving rise to an homotopical symmetric monoidal structure on $\aCat$.

\subsection{Statement of the results}
Let us make precise the results of this paper.\
First, we should say that there are different notions of unitary $\Ain$-categories: the strictly unital $\Ain$-categories (su), unital $\Ain$-categories (u), cohomology unital $\Ain$-categories (cu) and non unital $\Ain$-categories (nu).\ 
We will treat all of them.\ To state the next results, we will use the symbol $\star$ where 
$$\star\in\mathcal{f}\mbox{non unital, cohomological unital, unital, strictly unital}\mathcal{g}.$$

Given two $\star$ $\Ain$-categories $\A$ and $\B$, we define the tensor product $\A\otimes\B$ (Definition \ref{tenso}).\ 
Its existence follows from Theorem \ref{enzone}).\ The tensor product $\A\otimes\B$ is a $\star$ $\Ain$-category extending the DG-one.\
In particular it has the following properties:
\begin{itemize}
\item[1.] The objects of $\A\otimes\B$ are the pair $\mbox{Ob}(\A)\times \mbox{Ob}(\B)$. 
\item[2.] The morphism of $\A\otimes\B$ are the ones of equation (\ref{Tensorello}). 
\item[3.] $m^1_{\A\otimes\B}$ is the differential in equation (\ref{diiffo}).
\item[4.] $m^2_{\A\otimes\B}$ is given by the composition in equation (\ref{compoz}).
\item[5.] If $\A$ is a DG-category then $\A\otimes\B$ is given by equation (\ref{zorku}). 
\item[6.] If $\A$ and $\B$ are DG-categories then $\A\otimes\B$ is the usual tensor product.
\end{itemize}
Moreover, given two $\star$ $\Ain$-functors $\F:\A\to\A'$ and $\G:\B\to\B'$ we define the $\star$ $\Ain$-functor 
\begin{align*}
\F\otimes\G:\A\otimes\B\to\A'\otimes\B'
\end{align*}
which extends the usual tensor product of DG-functors.\ 
To state the (main) properties of such a tensor product of $\Ain$-functors we need a few of preliminaries.\\

Given an $\Ain$-functor $\F$ we say that $\F$ is an \emph{equivalence}  if there exists $\G$ such that $\F\cdot \G=\Id_{\B}$ and $\G\cdot \F=\Id_{\A}$.\ It is important to say that in the framework of $\Ain$-categories there are weaker notions of {equivalence}.\

Recalling that, given an $\Ain$-category $\A$, its cohomology $H(\A)$ is a $R$-linear category, 
we say that an $\Ain$-functor is a \emph{quasi-equivalence} if it induces an equivalence of their cohomology categories.\ Clearly 
\begin{center}
$\F$ is \emph{equivalence} $\Rightarrow$ $\F$ is \emph{quasi-equivalence}.
\end{center}
We call homotopy category of the $\Ain$-categories the Gabriel-Zisman \cite{GaZi} localization of the category of $\Ain$-categories with respect to the quasi-equivalences, we denote it by $\Ho(\aCat)$.\\ 
Fixed two $\Ain$-categories $\A$, $\B$ the hom-space 
\begin{align*}
{\aCat}(\A,\B)
\end{align*}
has a natural enrichments in $\aCat$.\ Namely, the set of $\Ain$-functors has an $\Ain$-structure (denoted by $\mathfrak{M}^n$) \cite{Lyu}, whose morphisms are the prenatural transformations.\
We denote this $\Ain$-category by $\Fun_{\infty}(\A,\B)$.\\ 
We say that two $\Ain$-functors $\F$ and $\G$ are \emph{homotopic} (in formula $\F\sim\G$) if there exists a prenatural transformation $T$ such that $\F-\G=\mathfrak{M}^1(T)$.\ We denote by $\aCat/_\sim$ the category of $\Ain$-categories whose morphisms are the $\Ain$-functors quotient by the equivalence relation $\sim$.\\
To conclude, we say that $\F$ and $\G$ are \emph{weakly equivalent} (in formula $\F\approx\G$) if they are isomorphic as objects in $H^0(\Fun_{\infty}(\A,\B))$.\ We have:
\begin{center}
\emph{homotopic} ($\F\sim\G$) $\Rightarrow$ \emph{weakly-equivalent} ($\F\approx\G$) $\Rightarrow$ \emph{quasi-equivalent} ($[\F]=[\G]$).
\end{center}

Coming back to our tensor product of $\Ain$-functors, we have the following property:

\begin{namedthm}[\ref{trunchezfol}]
Given four $\star$ $\Ain$-functors $\F:\A\to\B$, $\G:\A'\to\B'$, $\F':\C\to\A$ and $\G':\C'\to\A'$ we have:
\begin{align}\label{homuz}
(\F\otimes\G)\cdot (\F'\otimes\G')\sim (\F\cdot\F'\otimes\G\cdot\G').
\end{align}
\end{namedthm}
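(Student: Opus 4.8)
The plan is to exploit the construction of the tensor product through Homological Perturbation Theory (as discussed in Section~\ref{TENSHPL}), together with the fact that the asserted identity \eqref{formulaz} holds \emph{strictly} in the DG world by equation~\eqref{functoro}. The idea is that both sides of \eqref{homuz} arise as transferred structures along the same (or homotopic) contractions, so the discrepancy between them is visible only through the chosen homotopy data, and such a discrepancy is always witnessed by a prenatural transformation $T$ with $(\F\otimes\G)\cdot(\F'\otimes\G')-(\F\cdot\F'\otimes\G\cdot\G')=\mathfrak{M}^1(T)$.

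First I would recall that each $\Ain$-category $\A$ sits inside its DG-category of modules $\mathrm{End}(\A)$ via the Yoneda embedding, which is an $\Ain$-quasi-equivalence onto its image, and that the tensor product $\A\otimes\B$ of Definition~\ref{tenso} is the structure obtained by transferring the honest DG tensor product $\mathrm{End}(\A)\otimes\mathrm{End}(\B)$ along a fixed contraction onto the underlying DG-quiver $A\otimes B$ (this is Amorim's realization of the Kontsevich--Soibelman definition~\eqref{KontSob}, giving properties 1.--6. in the statement of the results). Next I would observe that an $\Ain$-functor $\F\colon\A\to\B$ induces a DG-functor (up to coherent homotopy) $\mathrm{End}(\A)\to\mathrm{End}(\B)$ on modules compatible with Yoneda, and that under this passage composition of $\Ain$-functors corresponds to composition of the induced DG-functors up to a homotopy that is itself transported from the contraction data. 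Then on the DG side the equality $(\mathrm{End}(\F)\otimes\mathrm{End}(\G))\circ(\mathrm{End}(\F')\otimes\mathrm{End}(\G'))=(\mathrm{End}(\F)\circ\mathrm{End}(\F'))\otimes(\mathrm{End}(\G)\circ\mathrm{End}(\G'))$ is immediate from~\eqref{functoro}, and I would push this equality back through the transfer: because $\F\otimes\G$ is \emph{defined} as the transfer of $\mathrm{End}(\F)\otimes\mathrm{End}(\G)$, the two $\Ain$-functors $(\F\otimes\G)\cdot(\F'\otimes\G')$ and $(\F\cdot\F'\otimes\G\cdot\G')$ are both transfers, along the same homotopy equivalence, of DG-functors that either coincide or are DG-homotopic; HPT then produces the required prenatural transformation $T$ realizing the homotopy $\sim$.

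The main obstacle, which is where the bookkeeping is genuinely delicate, is controlling the homotopies introduced by transfer when composing: the transferred $\Ain$-functor associated to a composite of DG-functors is not literally the composite of the transferred $\Ain$-functors but differs from it by a higher homotopy coming from the homotopy transfer theorem for $\Ain$-morphisms. I would therefore need to assemble these homotopies — one for the composition $\F\cdot\F'$, one for $\G\cdot\G'$, one for the tensor of composites versus the composite of tensors — into a single prenatural transformation and check that all the sign conventions and the $\mathfrak{M}^n$-structure on $\Fun_\infty(\C\otimes\C',\B\otimes\B')$ conspire so that the total boundary $\mathfrak{M}^1(T)$ is exactly the difference of the two sides. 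Concretely I would build $T$ componentwise from the contracting homotopies $h_\A,h_\B,h_{\A\otimes\B}$ attached to each tensor factor, verify the $\Ain$-functor relations for each side hold (so the difference is $\mathfrak{M}^1$-closed-up-to-the-stated-identity), and then invoke the explicit HPT formulas to identify $T$; I expect the only real work is the sign chase, and I would organize it by induction on the arity $n$ of the $\Ain$-functor components, the base case $n=1$ being the strict DG identity~\eqref{functoro}.
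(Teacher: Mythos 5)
There is a genuine gap, and it sits exactly where you locate your ``main obstacle.'' You propose to realize $\A\otimes\B$ as a transfer of $\mathrm{End}(\A)\otimes\mathrm{End}(\B)$ along Yoneda-type contractions (the Kontsevich--Soibelman/Amorim picture) and then to push the strict DG interchange law back through the transfer. The problem is that under Yoneda the pushforward of an $\Ain$-functor $\F$ is again only an \emph{$\Ain$-functor} $\REP(\F):\Rep(\A)\to\Rep(\B)$, not a DG-functor, and it respects composition only up to homotopies that you would then have to assemble coherently across four functors and three contractions. You acknowledge this but do not resolve it; this assembly is not routine sign-chasing, it is the entire content of the theorem. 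Indeed the paper explicitly flags this route as the one that does \emph{not} obviously work: in the remark closing Section 4.3 it is observed that with the Amorim-style definition ``the relation between $(\F\otimes\G)\cdot(\F'\otimes\G')$ and $\F\cdot\F'\otimes\G\cdot\G'$ is not clear.''

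The paper's proof avoids all of this by a different choice of DG replacement: $\F\otimes\G$ is defined (Definition \ref{compofuncto}) as $\Xi\cdot\big(\Un(\F)\otimes^{\tiny\mbox{DG}}\Un(\G)\big)\cdot\eta$, where $\Un=\Omega\cdot\mbox{B}_\infty$ is the cobar--bar construction. The decisive point is that $\Un$ is a genuine (left adjoint) functor to DG-categories, so $\Un(\F\cdot\F')=\Un(\F)\cdot\Un(\F')$ holds \emph{on the nose}, and $\otimes^{\tiny\mbox{DG}}$ of DG-functors is strictly functorial. Consequently the only homotopy that ever enters is the single relation $\eta\cdot\Xi\sim\Id_{\Un(\A)\otimes\Un(\A')}$ inserted in the middle of the composite, combined with the standard fact that $\sim$ is compatible with left and right composition. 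Your argument would become correct if you replaced the module-category realization by a strictly functorial DG replacement of this kind; as written, the single homotopy the paper needs is replaced in your proposal by an uncontrolled family of higher homotopies, and no prenatural transformation $T$ with $\mathfrak{M}^1(T)$ equal to the difference of the two sides is actually produced.
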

In particular, we have:
\begin{align}\label{torquez}
[(\F\otimes\G) \cdot (\F'\otimes\G')]= [\F\cdot\F'\otimes\G\cdot\G']
\end{align}
in $\Ho(\aCat)$.\\
\\
Our proof of Theorem \ref{trunchezfol} is explicit.\ Namely we provide a prenatural trasformation making the $\Ain$-functors (the composition of the tensors and the tensor of the compositions) in equation (\ref{homuz}) homotopic.\
Moreover we prove that, if $\F\sim\G$ and $\F'\sim\G'$, then $(\F\otimes\F')\sim(\G\otimes\G')$.\ 
This provides a bifunctor 
\begin{align*}
\otimes:\aCat^{\star}/_\sim\mbox{ $\times$ }\aCat^{\star}/_\sim \to \aCat^{\star}/_\sim
\end{align*}
see Theorem \ref{gronzulcore}.\ 
In particular the category $(\aCat^{\star},\otimes,R)$ is symmetric monoidal up to homotopy.\ 
We prove the following results:

\begin{namedthm}[\ref{symcat}]
The category $(\aCat^{\star}/_\sim,\otimes,R)$ is symmetric monoidal.\
\end{namedthm}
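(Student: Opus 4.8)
The plan is to supply, on top of the bifunctor $\otimes$ on $\aCat^{\star}/_\sim$ already obtained in Theorem \ref{gronzulcore} and with unit object the one-object DG-category $R$ (with $\Hom(\ast,\ast)=R$ concentrated in degree $0$), an associativity isomorphism $\alpha_{\A,\B,\C}\colon(\A\otimes\B)\otimes\C\to\A\otimes(\B\otimes\C)$, unit isomorphisms $\lambda_\A\colon R\otimes\A\to\A$ and $\rho_\A\colon\A\otimes R\to\A$, and a symmetry $\beta_{\A,\B}\colon\A\otimes\B\to\B\otimes\A$, all as isomorphisms in $\aCat^{\star}/_\sim$, and then to verify the pentagon, the triangle, the two hexagon identities and $\beta_{\B,\A}\circ\beta_{\A,\B}=\Id$ \emph{in the quotient}, i.e.\ up to the relation $\sim$. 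The unit isomorphisms are the soft part. Since $R$ is a DG-category, property 5 of the tensor product (that $\A\otimes\B$ is computed by formula (\ref{zorku}) when the left factor is a DG-category) gives $m^n_{R\otimes\A}\big((r_n\otimes a_n),\dots,(r_1\otimes a_1)\big)=\pm\,(r_n\cdots r_1)\otimes m^n_{\A}(a_n,\dots,a_1)$, and because $R$ is concentrated in degree $0$ the Koszul signs in (\ref{zorku}) all vanish; hence $\lambda_\A\colon(\ast,x)\mapsto x$, $r\otimes a\mapsto ra$, is a strict $\Ain$-functor and a genuine isomorphism of $\Ain$-categories. One gets $\rho_\A$ similarly (or as $\lambda_\A\circ\beta_{\A,R}$), as an isomorphism in $\aCat^{\star}/_\sim$; naturality of $\lambda,\rho$ is immediate from the construction of $\Id_R\otimes\F$.

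For $\alpha$ and $\beta$ I would avoid the higher products $m^{\ge 3}_{\A\otimes\B}$, which have no closed form, and work through the homotopy-transfer picture of Section \ref{TENSHPL}. On underlying graded quivers, $(\A\otimes\B)\otimes\C$ and $\A\otimes(\B\otimes\C)$ are identified by the canonical associativity isomorphism $\phi$ of iterated tensor products of $R$-modules, and by (\ref{diiffo}) and (\ref{compoz}) this $\phi$ intertwines $m^1$ and $m^2$ on the two sides; likewise the Koszul flip $a\otimes b\mapsto(-1)^{\deg(a)\deg(b)}\,b\otimes a$ intertwines $m^1,m^2$ of $\A\otimes\B$ and $\B\otimes\A$. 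Both $\Ain$-structures in play are homotopy transfers from the genuinely (and strictly) symmetric monoidal world of DG-categories --- from the Kontsevich--Soibelman/Amorim model $\A\otimes\B\simeq\mathrm{End}(\A)\otimes\mathrm{End}(\B)$ of (\ref{KontSob}), equivalently from a fixed diagonal on the operad $\mathcal{A}_\infty$ that is coassociative and cocommutative up to coherent homotopy. Transferring the DG associator, resp.\ braiding, along the relevant contractions --- equivalently, extending $\phi$, resp.\ the Koszul flip, to an $\Ain$-functor by the usual obstruction argument, which succeeds precisely because the two $\Ain$-structures already agree through $m^2$ --- produces $\Ain$-functors $\alpha_{\A,\B,\C}$ and $\beta_{\A,\B}$ with linear components $\phi$, resp.\ the Koszul flip. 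Since an $\Ain$-functor with invertible linear component is an isomorphism in $\aCat^{\star}$ (an inverse is built recursively, one arity at a time, compatibly with the unitality flavour $\star$), $\alpha$ and $\beta$ are isomorphisms in $\aCat^{\star}/_\sim$; naturality in each variable is checked in the same way, both composites around a naturality square having equal linear component and hence agreeing up to $\sim$ once recognised as transfers of a strictly natural DG diagram.

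Coherence is then nearly formal. In each of the pentagon, the triangle, the two hexagons and in $\beta_{\B,\A}\circ\beta_{\A,\B}$, the two $\Ain$-functors being compared carry the same linear component, namely the corresponding coherence isomorphism for iterated tensor products of $R$-modules, which satisfies Mac Lane's axioms on the nose. Hence the two sides differ only in components of arity $\ge 2$, and that difference is a boundary for $\mathfrak{M}^1$ in the appropriate $\Fun_\infty$: either by the same obstruction computation (the hom-complex of prenatural transformations inherits the transferred contraction and is acyclic in the obstruction range) or, most transparently, because the whole diagram is the homotopy transfer of the corresponding strictly commuting diagram in $\DgCat$. Thus every coherence diagram commutes in $\aCat^{\star}/_\sim$, giving the claim.

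The main obstacle is the second paragraph --- exhibiting $\alpha$ as an honest $\Ain$-functor and checking the pentagon. A direct computation is out of reach since the higher products of the tensor product are not explicit, so the argument must lean on the transfer/operadic-diagonal description of $\otimes$ and on the coherence of that diagonal; tracking which contraction is used on each side of an instance of associativity or of the pentagon, and carrying the Koszul signs through, is the delicate part on which everything rests.
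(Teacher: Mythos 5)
Your treatment of the unit is essentially the paper's: the unitors $\lambda_\A$, $\rho_\A$ are the strict $\Ain$-functors $_lI_{\A_1}$, $_rI_{\A_1}$ of the paper's proof, and since $R$ is a DG-category concentrated in degree zero this part is unproblematic. The divergence, and the gap, is in how you produce the associator and the symmetry and how you verify coherence. You propose to extend the canonical graded-quiver identification $\phi$ (resp.\ the Koszul flip) to an $\Ain$-functor ``by the usual obstruction argument, which succeeds precisely because the two $\Ain$-structures already agree through $m^2$''. Agreement in arities $1$ and $2$ does not make the obstructions to extending an $\Ain$-functor vanish: those obstructions live in Hochschild-type cohomology groups of the target, which are not acyclic for a general $\Ain$-category over a general commutative ring, and you never exhibit a contraction of the relevant hom-complexes. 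The same unestablished acyclicity underlies your coherence step, where you assert that two $\Ain$-functors occurring in a Mac Lane diagram and sharing the same linear component must differ by an $\mathfrak{M}^1$-boundary; as a general principle this is false. Your fallback --- that every diagram is ``the homotopy transfer of a strictly commuting diagram in $\DgCat$'' --- is the right idea but is not executed: one must say along which contractions the transfer is taken on each side of the pentagon and hexagons and why the two transferred composites are joined by an actual prenatural homotopy. Note also that you invoke the Kontsevich--Soibelman/Amorim model and a coherently coassociative operadic diagonal, neither of which is the model used here: the paper's $\Ain$-structure on $\A\otimes\B$ is transferred from the DG-tensor $\Un(\A)\otimes\Un(\B)$ via the cobar--bar adjunction, and coherence of an operadic diagonal up to coherent homotopy is itself a nontrivial input not available in this framework.

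The paper sidesteps all of this by \emph{defining} the swap map and the associator as conjugates of the strict DG ones, e.g.\ $\mu_{\A_1\A_2}:=(\tilde{\tilde{\chi}}_{\A_2}\otimes\tilde{\tilde{\chi}}_{\A_1})\cdot\mu^{\tiny\mbox{DG}}_{\Un(\A_1)\Un(\A_2)}\cdot(\alpha_{\A_1}\otimes\alpha_{\A_2})$, and similarly for $\sigma$ through $\Un(\A_1)\otimes\Un(\A_2)\otimes\Un(\A_3)$. Since $\tilde{\tilde{\chi}}_{\A}\cdot\alpha_{\A}=\Id_{\A}$ and $\alpha_{\A}\cdot\tilde{\tilde{\chi}}_{\A}\sim\Id_{\Un(\A)}$, these functors have invertible linear components and are therefore equivalences by Theorem \ref{equivalone}; and every coherence diagram collapses, after cancelling the inner $\alpha\cdot\tilde{\tilde{\chi}}$ pairs up to $\sim$ by means of Theorem \ref{trunchezfol} and Lemmas \ref{stupidone} and \ref{drododo}, onto the corresponding strictly commuting diagram in $\DgCat$. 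If you want to keep your outline, the fix is to replace the obstruction-theoretic construction of $\alpha_{\A,\B,\C}$ and $\beta_{\A,\B}$ by these explicit conjugation formulas, after which your ``coherence is nearly formal'' paragraph becomes an honest reduction to Mac Lane coherence for DG-categories rather than an appeal to acyclicity.
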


\begin{namedthm}[\ref{closedsm}]
The category $(\aCat^{u}/_\approx,\otimes,R)$ is closed symmetric monoidal.\ 
Given two unital $\Ain$-categories $\A$ and $\B$, the internal hom $[\B,\C]$ can be described as $\Fun^{u}(\B,\C)$.\ 
\end{namedthm}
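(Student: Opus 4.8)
The plan is to bootstrap off the symmetric monoidal structure already obtained in Theorem \ref{symcat} and the homotopy‑control statement in Theorem \ref{trunchezfol}, passing from $\aCat^{u}/_\sim$ to the finer localization $\aCat^{u}/_\approx$ and then exhibiting $\Fun^{u}(\B,\C)$ as the internal hom. First I would verify that $\otimes$ descends to $\aCat^{u}/_\approx$: since $\approx$ is generated by the relation ``isomorphic in $H^0(\Fun_\infty(\A,\B))$'' and we already know from the remarks following Theorem \ref{trunchezfol} that $\F\sim\G$, $\F'\sim\G'$ imply $(\F\otimes\F')\sim(\G\otimes\G')$, it suffices to check that tensoring preserves the property of being an isomorphism in $H^0(\Fun_\infty)$. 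This reduces, via Theorem \ref{trunchezfol}, to the statement that if $\F\cdot\G\approx\Id$ and $\G\cdot\F\approx\Id$ then $(\F\otimes\id)\cdot(\G\otimes\id)\approx\Id$ and symmetrically; the homotopy $(\F\otimes\id)\cdot(\G\otimes\id)\sim(\F\cdot\G)\otimes\id\sim\Id\otimes\id=\Id$ does the job once one knows $\Id\otimes\id$ is (homotopic to) the identity, which is part of the monoidal structure. Thus $(\aCat^{u}/_\approx,\otimes,R)$ is symmetric monoidal.

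Next I would construct the adjunction. The candidate internal hom is $[\B,\C]:=\Fun^{u}(\B,\C)$, the $\Ain$‑category of unital $\Ain$‑functors with prenatural transformations as morphisms and the $\mathfrak{M}^n$ structure of \cite{Lyu}. The unit and counit should be built from the evaluation $\Ain$‑functor
\begin{align*}
\mathrm{ev}\colon \Fun^{u}(\B,\C)\otimes\B\to\C
\end{align*}
and the coevaluation
\begin{align*}
\mathrm{coev}\colon \A\to\Fun^{u}(\B,\A\otimes\B),
\end{align*}
the latter sending an object $a$ to the functor $(-)\mapsto a\otimes(-)$ and, on higher components, using the tensor‑of‑functors construction applied to $\id_\A$. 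The core of the argument is then the natural bijection
\begin{align*}
\aCat^{u}/_\approx\big(\A\otimes\B,\C\big)\;\cong\;\aCat^{u}/_\approx\big(\A,\Fun^{u}(\B,\C)\big),
\end{align*}
obtained by currying: an $\Ain$‑functor $\F\colon\A\otimes\B\to\C$ is sent to $\widehat{\F}\colon\A\to\Fun^{u}(\B,\C)$ with $\widehat{\F}(a)=\F(a\otimes-)$ and higher Taylor components assembled from the components of $\F$ by fixing the first slot in $\A$ and letting the remaining slots range over $\B$. I would check that this is well defined at the level of $\Ain$‑functors (the $\Ain$‑relations for $\widehat{\F}$ follow from those for $\F$ after reorganizing the sum over the associahedron according to the cubical/simplicial decomposition fixed in Theorem \ref{enzone}), that it respects the relation $\approx$ (here one invokes that $\mathfrak{M}^1$‑homotopies and $H^0$‑isomorphisms transport across currying, which is essentially Theorem \ref{trunchezfol} applied with one argument held fixed), and that the triangle identities hold up to the relation $\approx$ — again reducing to homotopies provided by the explicit prenatural transformation of Theorem \ref{trunchezfol}.

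The main obstacle I anticipate is precisely the compatibility of currying with the $\Ain$‑structures: unlike the DG case, where \eqref{functoro} is strict, the higher components $m^n_{\A\otimes\B}$ are a non‑trivial combination coming from the chosen diagonal on the associahedron, so writing down $\widehat{\F}$'s higher components and checking the $\Ain$‑functor equations requires a careful bookkeeping of signs and of how the diagonal interacts with ``freezing'' one tensor factor. I expect this to work because the diagonal used in Theorem \ref{enzone} is coassociative up to the controlled homotopies of Theorem \ref{trunchezfol}, so that the failure of strict currying is always a $\mathfrak{M}^1$‑coboundary — hence invisible in $\aCat^{u}/_\approx$. A secondary point is to confirm that $\Fun^{u}(\B,\C)$ is again \emph{unital}: its strict unit is the identity prenatural transformation on each functor, so unitality is inherited componentwise, and the bijection above restricts correctly to the unital subcategories because currying sends unital functors to unital functors and conversely. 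Assembling these pieces yields that $(\aCat^{u}/_\approx,\otimes,R)$ is closed symmetric monoidal with $[\B,\C]=\Fun^{u}(\B,\C)$.
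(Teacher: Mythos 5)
The central step of your proposal --- the natural bijection $\aCat^{u}/_\approx(\A\otimes\B,\C)\cong\aCat^{u}/_\approx(\A,\Fun^{u}(\B,\C))$ obtained by explicitly currying an $\Ain$-functor $\F:\A\otimes\B\to\C$ into $\widehat{\F}:\A\to\Fun^{u}(\B,\C)$ --- is not actually carried out, and this is exactly where the difficulty lives. You describe the higher components of $\widehat{\F}$ as ``assembled from the components of $\F$ by fixing the first slot'' and defer the verification of the $\Ain$-functor equations to ``careful bookkeeping,'' asserting that any failure of strict currying ``is always a $\mathfrak{M}^1$-coboundary.'' That assertion is the theorem, not an observation; nothing in Theorem \ref{trunchezfol} (which controls compositions of tensors of functors, not currying) supplies it. Moreover, your plan to check the equations ``according to the cubical/simplicial decomposition fixed in Theorem \ref{enzone}'' misreads the construction: in this paper $m^n_{\A\otimes\B}$ is not given by a diagonal on the associahedron but is transferred by homological perturbation from the DG-category $\Un(\A)\otimes\Un(\B)$, so a direct comparison of $\widehat{\F}$'s components with the perturbed structure maps is a genuinely hard combinatorial problem that your sketch does not touch.

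The paper's proof deliberately avoids this. It uses Theorem \ref{teoremono} to replace $\A\otimes\B$ by the weakly equivalent unital DG-category $\Un(\A)\otimes\Un(\B)$, and then invokes the bifunctor formalism: Lemma \ref{tensoronzi} gives $\aCat^u(\A\otimes\B,\C)/{\approx}\simeq\aCat^u(\A,\B,\C)/{\approx}$ for unital DG-categories (via the cocategory $\overline{\A_+\otimes\B_+}$, where one only has to produce the unit (\ref{unitor})), and Lemma \ref{isonzo} gives $\aCat^u(\A,\B,\C)\simeq\aCat^u(\A,\Fun^u(\B,\C))$. The ``currying'' thus happens entirely at the level of bifunctors out of a tensor product of DG cocategories, where it is an established isomorphism from \cite{COS2}, and never against the perturbation-theoretic $\Ain$-structure on $\A\otimes\B$. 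Your first paragraph (descent of $\otimes$ to $\aCat^u/_\approx$) is fine and agrees with Lemma \ref{storgheloz} and Theorem \ref{gronzulcore}, but as written the adjunction itself rests on an unproved claim, so the proposal has a genuine gap.
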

Here $\Fun^{u}(\B,\C)$ denotes the unital $\Ain$-category of the unital $\Ain$-functors with source $\A$ and target $\B$, whose objects are the prenatural transformations.\\

We point out that Theorem \ref{symcat} and \ref{closedsm} follow from the fact that $\sim$ and $\approx$ are "stable" with respect to the DG-tensor product (see Lemmas \ref{stupidone} and \ref{stupidonzio}).\ Namely, give three DG-functors $\mathsf{F}$, $\mathsf{G}$ and $\mathsf{H}$ then
$$\mathsf{F}\sim \mathsf{G}\Rightarrow \mathsf{F}\otimes\mathsf{H}\sim \mathsf{G}\otimes\mathsf{H},$$
$$\mathsf{F}\approx \mathsf{G}\Rightarrow \mathsf{F}\otimes\mathsf{H}\approx \mathsf{G}\otimes\mathsf{H}.$$
Note that Lemmas \ref{stupidone} and \ref{stupidonzio} are new in the literature.\\

To conclude, motivated by (\ref{torquez}), we define the \emph{derived tensor product} of two $\star$ $\Ain$-categories as follows:
\begin{align*}
\A\otimes^{\mathbb{L}}\B :=\A\otimes\B^{\tiny\mbox{hp}}
\end{align*}
where $\B^{\tiny\mbox{hp}}$ denotes a homotopically projective resolution of $\B$.\ 
We recall that the existence of a (functorial up to $\approx$) homotopically projective resolution of $\B$ is guaranteed by \cite{Orn2}.\\

The derived tensor product $\otimes^{\mathbb{L}}$ provides a symmetric monoidal structure on $\Ho(\aCat^{\star})$.\ We prove that it is closed and we describe the internal homs in terms of $\Ain$-functors \`a la Kontsevich-Keller \cite[Section 4.3]{Kel}.\
In particular we prove:
\begin{namedthm}[\ref{INTERNALHOM}]
Given $\A_1,\A_2,\A_3\in\star$, there exists a natural bijection of sets
\begin{align*}
{\mbox{Ho}(\star)}(\A_1\otimes^{\mathbb{L}}\A_2,\A_3)\simeq{\mbox{Ho}(\star)}(\A_1, \Fun^u_{\infty}(\A^{\tiny\mbox{hp}}_2,\A_3)),
\end{align*}
where $\star\in\mathcal{f}\DgCat^u,\aCat^u,\aCat\mathcal{g}$.\\
Moreover, if $\A_1,\A_2,\A_3\in\aCat$ and $\A_1\otimes^{\mathbb{L}}\A_2:=\A_1\otimes\tilde{\A}_2$, such that $\tilde{\A}_2$ is h-projective with split unit, then 
\begin{align*}
{\mbox{Ho}(\aCat)}(\A_1\otimes^{\mathbb{L}}\A_2,\A_3)\simeq{\mbox{Ho}(\aCat)}(\A_1, \Fun_{\infty}(\tilde{\A}_2,\A_3)).
\end{align*}
\end{namedthm}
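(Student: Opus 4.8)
The statement to prove is the adjunction

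$$\mathrm{Ho}(\star)(\A_1\otimes^{\mathbb{L}}\A_2,\A_3)\simeq \mathrm{Ho}(\star)(\A_1,\Fun^u_\infty(\A_2^{\mathrm{hp}},\A_3)),$$

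and its non-unital variant. The strategy I would follow is the classical Keller-style argument (cf.\ \cite[Section 4.3]{Kel}): reduce the statement about the localized homotopy category to a statement at the level of $\aCat/_\sim$ (or $\aCat/_\approx$), where one already has an honest $\Hom$-$\otimes$ adjunction, and then transport it across the localization functor using the explicit homotopically projective resolutions guaranteed by \cite{Orn2}.

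\smallskip

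\emph{Step 1 (adjunction before localizing).} First I would establish, for a fixed h-projective $\tilde\A_2$ (with split unit, in the unital case), a natural bijection
$$\aCat^{\star}/_\sim(\,\A_1\otimes\tilde\A_2,\A_3\,)\;\simeq\;\aCat^{\star}/_\sim(\,\A_1,\Fun^{u}_\infty(\tilde\A_2,\A_3)\,),$$
together with its $\approx$-version for $\star=\aCat^u$. This is essentially a restatement of Theorem~\ref{closedsm}: the internal hom there is $\Fun^u(\B,\C)$, and an $\Ain$-functor $\A_1\otimes\A_2\to\A_3$ is the same datum as an $\Ain$-functor $\A_1\to\Fun^u_\infty(\A_2,\A_3)$ by the curry/uncurry construction, provided the unit behaves well — which is exactly why the h-projective resolution is taken with \emph{split unit}. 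I would check that this bijection is compatible with the homotopy relations $\sim$ and $\approx$ using Lemmas~\ref{stupidone} and \ref{stupidonzio} (stability of $\sim$, $\approx$ under the DG-tensor), so that it descends to the quotient categories.

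\smallskip

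\emph{Step 2 (passing to $\mathrm{Ho}$).} Here I would use that $\mathrm{Ho}(\aCat^{\star})$ is the Gabriel–Zisman localization at the quasi-equivalences, and that by \cite{Orn2} every $\Ain$-category $\A_2$ admits a functorial-up-to-$\approx$ h-projective resolution $\A_2^{\mathrm{hp}}\xrightarrow{\;\sim\;}\A_2$, which realizes $\A_2^{\mathrm{hp}}$ as a cofibrant-type replacement. The key input is that for h-projective source the quasi-equivalences out of $\A_1\otimes\A_2^{\mathrm{hp}}$ are "detected" at the level of $/_\sim$, i.e.\ the localization map
$$\aCat^{\star}/_\sim(\,\A_1\otimes\A_2^{\mathrm{hp}},\A_3\,)\;\longrightarrow\;\mathrm{Ho}(\star)(\,\A_1\otimes^{\mathbb{L}}\A_2,\A_3\,)$$
is a bijection once one also replaces $\A_1$ and $\A_3$ by suitable resolutions/co-resolutions; concretely, I would invoke the calculus-of-fractions description of morphisms in $\mathrm{Ho}$ and use that $\Fun^u_\infty(\A_2^{\mathrm{hp}},\A_3)$ has the right homotopy type (it is invariant up to $\approx$ under quasi-equivalences in either variable, since $\A_2^{\mathrm{hp}}$ is h-projective — this is the usual "homotopy function complex" computation). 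Composing the bijection of Step 1 with these two localization identifications (one on each side of the adjunction) yields the claimed natural bijection. The second displayed isomorphism of the theorem, for $\star=\aCat$ and with $\Fun_\infty$ in place of $\Fun^u_\infty$, I would get the same way, tracking that when $\tilde\A_2$ is h-projective with split unit the unital and non-unital internal-hom constructions agree up to $\approx$.

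\smallskip

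\emph{Main obstacle.} The delicate point is Step 2: making precise that $\Fun^u_\infty(\A_2^{\mathrm{hp}},\A_3)$ genuinely computes the derived internal hom, i.e.\ that it is homotopy-invariant in $\A_3$ and independent (up to $\approx$, hence to an isomorphism in $\mathrm{Ho}$) of the choice of h-projective resolution of $\A_2$. This requires knowing that h-projectivity of $\A_2^{\mathrm{hp}}$ (with split unit) forces $\Fun^u_\infty(\A_2^{\mathrm{hp}},-)$ to send quasi-equivalences to quasi-equivalences — the "left-derived tensor / right-derived hom" lifting property — and that two h-projective resolutions of $\A_2$ are connected by a zig-zag of quasi-equivalences which induce $\approx$-equivalences on the functor categories. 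Everything else (Step 1, the curry/uncurry bijection, compatibility with $\sim$) is bookkeeping on top of Theorems~\ref{closedsm} and the stability Lemmas~\ref{stupidone}, \ref{stupidonzio}; the content is concentrated in this homotopy-invariance statement, which is where I expect the argument to require the most care with the split-unit hypothesis.
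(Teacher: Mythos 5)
Your outline is correct and lands on the same skeleton as the paper --- curry/uncurry at the level of homotopy classes of functors, then h-projective resolutions plus the identification $\aCat^u(\A^{\tiny\mbox{hp}},-)/_{\approx}\simeq\Ho(\aCat^u)(\A^{\tiny\mbox{hp}},-)$ (Theorem \ref{patturzo}) to descend to $\Ho$ --- but the actual proof is organized rather differently, and the difference matters. The paper never curries directly against the $\Ain$-tensor product: its proof is a chain of bijections that shuttles between $\Ho(\aCat^u)$ and $\Ho(\DgCat^u)$ via $\Un$, using $\Un(\A_1\otimes\A_2)\approx\Un(\A_1)\otimes\Un(\A_2)$ (Theorem \ref{teoremono}), the equivalence of homotopy categories (Theorem \ref{equivalonze}), and the DG internal-hom statement (Theorem \ref{corzel}), which is itself proved through the bifunctor categories $\aCat^u(\A,\B,\C)$ and Lemmas \ref{isonzo} and \ref{tensoronzi}. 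This detour is not cosmetic: your Step 1 asserts that an $\Ain$-functor $\A_1\otimes\A_2\to\A_3$ ``is the same datum'' as $\A_1\to\Fun^u_{\infty}(\A_2,\A_3)$, but for the transferred $\Ain$-structure on $\A_1\otimes\A_2$ no such strict identification exists; Lemma \ref{tensoronzi} is stated for unital \emph{DG}-categories and only yields a bijection modulo $\approx$, so the currying must be performed on $\Un(\A_1)\otimes\Un(\A_2)$ and then transported back. If you take Theorem \ref{closedsm} as a black box this is harmless (its proof is exactly the DG detour), but as written your Step 1 cannot be carried out directly. Two smaller points: the paper also replaces $\A_1$ by $\A_1^{\tiny\mbox{hp}}$ at the outset (you mention this only in passing, but it is what makes Theorem \ref{patturzo} applicable to the source of the adjunction), and the split-unit hypothesis plays no role in the first displayed bijection --- it enters only in the final step, to replace $\Fun^u_{\infty}(\A_2^{\tiny\mbox{hp}},\A_3)$ by $\Fun_{\infty}(\A_2^{\tiny\mbox{hp}},\A_3)$, rather than being ``exactly why'' the resolution is taken with split unit. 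Your identification of the homotopy-invariance of $\Fun^u_{\infty}(\A_2^{\tiny\mbox{hp}},-)$ as the delicate point is accurate; in the paper this is absorbed into the step $\Fun^u_{\infty}(\Un(\A_2^{\tiny\mbox{hp}}),\Un(\A_3))\simeq\Fun^u_{\infty}(\A_2^{\tiny\mbox{hp}},\A_3)$ via the induced functors $\mathscr{R}$ and $\mathscr{L}$ and $\A\approx\Un(\A)$.
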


\subsection*{Recent related works}

Some of the aforementioned works on tensor product of $\Ain$-categories works only over a fields (or field of characteristic zero) and for $\Ain$-algebras (namely categories with one object).\ We don't have any restrictions in this sense.\

Theorem \ref{INTERNALHOM} extends to the $\Ain$-categories and unital DG-categories, the description of the internal homs in $\Ho(\DgCat)$ proved by the author together with Alberto Canonaco and Paolo Stellari, see \cite{COS2}.\  
This is a non trivial generalization, for several technical reasons.

It is worth to say that in \cite{Tan}, making use of the following result: 
\begin{namedthm}[\cite{COS2}] 
We have an equivalence of $\infty$-categories:
\begin{align*}
\Ho_{\infty}(\aCat)\simeq \Ho_{\infty}(\DgCat),
\end{align*}
provided by the following DK-adjunction of categories:
\begin{align*}
U:(\aCat,\mathcal{W}^{\tiny\Ain}) \rightleftharpoons(\DgCat,\mathcal{W}^{\tiny\mbox{DG}}):i
\end{align*}
Where $\mathcal{W}^{\tiny\Ain}$ and $\mathcal{W}^{\tiny\mbox{DG}}$ denote the sets of quasi-equivalences.
\end{namedthm}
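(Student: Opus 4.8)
The plan is to check that $(U,i)$ is a Dwyer--Kan adjunction whose unit and counit are objectwise quasi-equivalences, and then to invoke the general principle that such an adjunction descends to an adjoint equivalence of the $\infty$-categorical localizations. Recall that $i\colon\DgCat\to\aCat$ is the tautological inclusion (a DG-category is an $\Ain$-category with $m^{\geq 3}=0$) and that $U\colon\aCat\to\DgCat$ is the strictification functor: for an $\Ain$-category $\A$ one takes $U(\A)$ to be the full DG-subcategory of the DG-category $\mathrm{Mod}_{\infty}(\A)$ of right $\Ain$-modules spanned by the representable modules, an $\Ain$-functor acting by (co)restriction of scalars together with the canonical comparison of representables, so that $U$ is strictly functorial; the unit of $U\dashv i$ is the $\Ain$-Yoneda embedding $\eta_{\A}\colon\A\to iU(\A)$. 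First I would record that both functors are relative: $i$ preserves quasi-equivalences because quasi-equivalence is detected on cohomology, which $i$ leaves unchanged, and $U$ preserves them because a quasi-equivalence $\A\to\A'$ induces a quasi-equivalence on module categories carrying representables to representables, as in \cite{Lyu}, \cite{Orn2}.

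The substantive step is that $\eta$ and the counit $\varepsilon$ are quasi-equivalences. That $\eta_{\A}\colon\A\to iU(\A)$ is a quasi-equivalence is exactly the $\Ain$-Yoneda lemma together with the definition of $U(\A)$ as the representables: $\eta_{\A}$ is quasi-fully faithful by Yoneda and essentially surjective by construction. For the counit $\varepsilon_{\B}\colon Ui(\B)\to\B$ with $\B$ a DG-category, one uses that over a strict $\B$ the $\Ain$- and DG-Yoneda embeddings agree (via $\mathrm{Mod}_{\mathrm{DG}}(\B)\hookrightarrow\mathrm{Mod}_{\infty}(\B)$), so that evaluation defines a DG-functor $Ui(\B)\to\B$ which is an inverse of $\eta_{\B}$ up to homotopy, hence a quasi-equivalence. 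One then verifies the triangle identities; in the module-theoretic model they hold strictly, or at worst up to an explicit prenatural transformation that is absorbed upon localization.

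To conclude, one passes to the $\infty$-localizations. Since the Gabriel--Zisman / Dwyer--Kan localization is functorial on relative categories and turns a natural transformation with weak-equivalence components into a natural equivalence, $U$ and $i$ descend to $\overline{U}\colon\Ho_{\infty}(\aCat)\to\Ho_{\infty}(\DgCat)$ and $\overline{i}$ in the other direction, while $\eta$ and $\varepsilon$ descend to natural equivalences $\Id\simeq\overline{i}\,\overline{U}$ and $\overline{U}\,\overline{i}\simeq\Id$. Hence $\overline{U}$ is an equivalence of $\infty$-categories with quasi-inverse $\overline{i}$, which is the claim (and in particular recovers $\Ho(\aCat)\simeq\Ho(\DgCat)$ on homotopy categories). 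An alternative, equivalent route is to replace the module model of $U$ by the bar--cobar strictification $\A\rightsquigarrow\Omega\mathrm{B}\,\A$, whose structural quasi-isomorphism $\Omega\mathrm{B}\,\A\to\A$ supplies the comparison map directly.

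The main obstacle is the middle step in the \emph{non-unital} setting: the $\Ain$-Yoneda lemma, in the form needed for $\eta_{\A}$ to be quasi-fully faithful, relies on (homological) units, so one must either restrict first to the (co)homologically unital case and deduce the general one by a separate argument, or work with the bar--cobar model, which is insensitive to units. A second, purely bookkeeping, difficulty is arranging that $U$ is a \emph{strict} functor and $\eta$ a \emph{strict} natural transformation with the triangle identities holding on the nose --- this forces a careful, uniform choice of the model for $\mathrm{Mod}_{\infty}$ and of the representable modules; once the model is fixed, everything else is formal.
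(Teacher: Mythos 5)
You are proving a statement that the paper only \emph{cites} from \cite{COS2}; the paper contains no proof of it, so I am comparing your sketch with the constructions it recalls and with the argument of \cite{COS2}. Your overall strategy --- exhibit $(U,i)$ as an adjunction of relative categories whose unit and counit are objectwise quasi-equivalences, and then pass to the $\infty$-localizations --- is the right one and is essentially what \cite{COS2} does.

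The gap is in your primary model for $U$. You take $U(\A)$ to be the representable $\Ain$-modules and assert that ``(co)restriction of scalars together with the canonical comparison of representables'' makes this strictly functorial. It does not: for an $\Ain$-functor $\F:\A\to\B$ the induced map $\Rep(\F):\Rep(\A)\to\Rep(\B)$ is again only an $\Ain$-functor, not a DG-functor --- the paper makes exactly this point in the remark closing Subsection \ref{functoroni} --- so $\Rep$ does not even define a $1$-functor $\aCat\to\DgCat$, let alone the left adjoint in the displayed adjunction. This is not ``purely bookkeeping''; it is the reason the adjunction in the statement is the bar--cobar one, $U=\Un=\Omega\cdot\Br$, which you relegate to a closing ``alternative route''. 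With that model everything you need is already in the paper's background: $\Un\dashv i$ is a genuine $1$-categorical adjunction (\ref{gorgogo}) with unit $\alpha$ and counit $\xi$; the unit is a quasi-equivalence by the explicit contraction $(\chi^1_{\A},H^1_{\A})$ of Theorem \ref{adjun}, with no unitality hypothesis (so your worry about Yoneda requiring units disappears); and the counit $\xi_{\B}$ for a DG-category $\B$ is the standard cobar--bar resolution quasi-equivalence. One further point you underestimate: for a (strictly) unital $\A$ the DG-category $\Un(\A)$ is only cohomologically unital, so the adjunction does not restrict naively to the unital flavours appearing in the statement; reconciling the various unitality conditions (cf. Theorem \ref{equivalonze}) is where most of the actual work in \cite{COS2} lies.
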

Tanaka 
induces a closed symmetric monoidal structure on the $\infty$-category of $\Ain$-categories.\
Moreover, using smartly some of results in \cite{COS2} he gives a description of the internal homs in terms of nerve of $\Ain$-functors
\cite[Theorem 1.5]{Tan}.\
The tensor product he induced in the $\infty$-category of the $\Ain$-categories, makes sense only at the $\infty$-level.\
On the other hand, given two $\Ain$-categories $\A$ and $\B$, the underlying DG-quiver of the tensor product (induced in this way) is not the $\A\otimes\B$ of formula (\ref{Tensorello}).\
Note also that, his description of internal homs is not natural, see \cite[Remark 3.38]{Tan}.\ 



\subsection*{Acknowledgements}

I thank Francesco Genovese, Alberto Canonaco and Paolo Stellari for all the
valuable conversations I benefitted during the writing of this paper.\ 
I am very grateful to Volodymyr Lyubashenko and Paul Seidel for the useful clarifications about their results.

\newpage

\section{Background}

In this section we briefly recall a few of basic definitions in the context of DG and $\Ain$-categories.\ 
We fix a commutative ring $R$.\ All our categories will be linear over $R$ and small.

\begin{defn}[Graded Quiver]
A \emph{graded quiver} $\textsf{Q}$ is given by: a set of objects and, fixed two objects $x$ and $y$ in $\textsf{Q}$, a graded $R$-module $\mathsf{Q}(x,y)$.
\end{defn}

\begin{defn}[DG-Quiver]
A \emph{DG-quiver} $\textsf{Q}$ is given by: a set of objects and, fixed two objects $x$ and $y$ in $\textsf{Q}$, a differential graded $R$-module $\mathsf{Q}(x,y)$.
\end{defn}

\begin{defn}[$\Ain$-structure]
Let $\mathsf{Q}$ be a graded quiver, an \emph{$\Ain$-structure} on $\mathsf{Q}$ is a sequence of multilinear maps:
\begin{align*}
m^{n}:\mathsf{Q}(x_{n-1},x_n)\otimes...\otimes\mathsf{Q}(x_0,x_1)&\to \mathsf{Q}(x_0,x_n)[2-n],
\end{align*}
satisfying:
\begin{align*}
\displaystyle\sum^{d}_{m=1}\sum^{d-m}_{n=0}(-1)^{\tiny\mbox{deg}(q_1)+...+\tiny\mbox{deg}(q_n)-n} m^{d-m+1}(q_d,...,q_{n+m+1},m^m(q_{n+m},...,q_{n+1}),q_n,...,q_1)=0.
\end{align*}
For every sequence of objects $x_0,...x_n\in\mathsf{Q}$ and $n\ge1$
\end{defn}

A non unital \emph{$\Ain$-category} is: a graded quiver $\mathsf{Q}$ and an $\Ain$-structure on $\mathsf{Q}$.\ 
A \emph{non unital DG-category} is an $\Ain$-category whose $\Ain$structure is such that $m^{n\ge3}=0$.
Given a non unital $\Ain$-category $\A$ we denote by $|\A|$ the underlying DG-quiver $(\A,m^1_{\A})$.

\begin{defn}[Non unital $\Ain$-functor]
Let $\A$ and $\B$ be two non unital $\Ain$-categories, a \emph{non unital $\Ain$-functor} $\F:\A\to\B$ is:
\begin{itemize}
\item[1.] A morphism of sets $\F^0:\mbox{Obj}(\A)\to\mbox{Obj}(\B)$.
\item[2.] Given $n\ge 1$, multilinear maps:
\begin{align*}
\F^{n}:\A(x_{n-1},x_n)\otimes...\otimes\A(x_0,x_1)&\to \B(x_0,x_n)[1-n],
\end{align*}
satisfying:
\begin{align*}
\displaystyle\sum^{d}_{m=1}&\sum^{d-m}_{n=0}m^{r}(\F^{s_r}(a_d,...,a_{n+m+1}),...,\F^{s_1}(a_n,...,a_1))=\\
&=\displaystyle\sum_{m,n}(-1)^{\tiny\mbox{deg}(a_1)+...+\tiny\mbox{deg}(a_n)-n}\F^{d-m+1}(a_d,...,a_{n+m+1},m^m(a_{m+n},...,a_{n+1}),a_{n},...,a_1).
\end{align*}
\end{itemize}
\end{defn}

A \emph{strict $\Ain$-functor} $\F$ is an $\Ain$-functor such that $\F^{n\ge2}=0$.

\begin{exmp}
Let $\A$ be a non unital $\Ain$-category, the identity functor $\Id_{\A}$ defined as $\Id^0_{\A}(x)=x$, $\Id^1(f)=f$ and $\Id^{n\ge2}=0$ is a (strict) $\Ain$-functor.  
\end{exmp}

\begin{defn}[Composition of $\Ain$-functors]
Let $\A$, $\B$ and $\mathscr{C}$ be three non unital $\Ain$-categories, and $\mathscr{F}:\A\to\B$ and $\mathscr{G}:\B\to\mathscr{C}$ two $\Ain$-functors we have an $\Ain$-functor $\G\cdot\F:\A\to\mathscr{C}$ defined as follows:
\begin{align}\label{compo}
(\G\cdot\F)^d(q_d,...,q_1):=\displaystyle\sum_{r}\sum_{s_1,...,s_r}\G^{r}(\F^{s_r}(q_d,...,q_{d-s_r+1}),...,\F^{s_1}(q_{s_1},...,q_1))
\end{align}
\end{defn}

The set of $\Ain$-categories with $\Ain$-functors form a (1-)category that we denote by $\aCat^{\tiny\mbox{nu}}$.\ 
In the same vein, the set of non unital DG-categories with $\Ain$-functors is denoted by $\aCat^{\tiny\mbox{nu}}_{\tiny\DgCat^{\tiny\mbox{nu}}}$ 
and the set of non unital DG-categories with strict $\Ain$-functors is denoted by $\DgCat^{\tiny\mbox{nu}}$.\\
\\
Given an $\Ain$-category $\A$ we can take the \emph{cohomology category} $\Ho(\A)$.\ 
This is a non unital graded category whose objects are the same of $\A$ and the morphisms are defined as follows:
\begin{align*}
{\Ho(\A)}(x,y):=\displaystyle\bigoplus_{i\in\mathbb{\Z}} H^i({\A}(x,y)),
\end{align*}
for every $x,y\in\A$.

\subsection{Different notions of unital $\Ain$-category}\label{nice}
In general, given a non unital $\Ain$-category its cohomology category is not unital category.\ 
In the framework of $\Ain$-category, we have different notions of \emph{unit}.
\begin{defn}[Strictly unital]\label{strictlyunital}
An $\Ain$-category $\mathscr{A}$ is \emph{strictly unital} if for all $x\in\mathscr{A}$ there exists a closed morphism {$e_x\in\A^0(x,x)$} such that:
\begin{itemize}
\item[i)] {$m^2_{\A}(f,e_x)=(-1)^{\tiny\mbox{deg($f$)}}\cdot m^2(e_x,f)=f$},
\item[ii)] $m^n(f_{n-1},...,f_{k},e_x,f_{k-1},...,f_1)=0$
\end{itemize}
for all morphisms $f, f_1,...,f_{i-1}$ and for all $n>2$ and $1\le k\le n$.
\end{defn}

Moreover, if $\A$ a strictly unital $\Ain$-category we say that $\A$ has a \emph{split} unit if, for every $x\in\A$, the unit $R$ is such that $R\cdot e_x \cong R$ and the following short exact sequence 
\begin{align*}
0\to R\cdot e_x \to {\A}(x,x) \to {\A}(x,x)/ R\cdot e_x\to 0
\end{align*}
splits in the category of graded $R$-modules.

\begin{rem}
Any morphism $e_x$ satisfying condition i) must be closed and of degree zero since $m^2(e_x,e_x)=e_x$.\ We have: 
\begin{align*}
m^1(e_x)=m^1(m^2(e_x,e_x))=m^2(m^1(e_x),e_x)-m^2(e_x,m^1(e_x))=0.
\end{align*}
\end{rem}

\begin{defn}[Cohomological unital]
An $\Ain$-category $\mathscr{A}$ is \emph{cohomological unital} if its cohomology category $H^0(\mathscr{A})$ is a category.
\end{defn}

\begin{rem}
By condition ii) we have that a strict unit $e_x$ (if it exists) is unique.\ 
On the other hand a cohomological unit $e_x$ it is unique up to $m^1$.
\end{rem}

\begin{defn}[Unital]
An {$\Ain$}-category without strictly unit $\A$ is \emph{unital} if, for every $x,y\in\A$
and any representative $e_x$ of the unit of $x$ in $H(\A)$, there exists: \emph{$\mathcal{H}:\A(x,y)\to\A(x,y)[-1]$} such that 
\begin{align}\label{unitrap}
\Id_{\A(x,y)}-m_2(-,e_x)=m_1\cdot\mathcal{H}+\mathcal{H}\cdot m_1,
\end{align}
or there exists \emph{$\mathcal{H'}:\A(y,x)\to\A(y,x)[-1]$} such that 
\begin{align*}
\Id_{\A(y,x)}-m_2(e_x,-)=m_1\cdot\mathcal{H}'+\mathcal{H}'\cdot m_1.
\end{align*}
\end{defn}
The definition above is due to Lyubashenko (see \cite[Definition 7.3]{Lyu}).\\ 
We have the following inclusions:
\begin{align}\label{inclusone}
\mbox{strictly unital $\Ain$-cats}\subset\mbox{unital $\Ain$-cats}\subset \mbox{cohomological unital $\Ain$-cats}.
\end{align}
Of course fixed two strictly unital (resp. unital, cohomological unital) $\Ain$-categories we have the corresponding notion of strictly unital (resp. unital, cohomological unital) $\Ain$-functors. See \cite[pp 23]{Sei} and \cite[Definition 8.1 and Proposition 8.2]{Lyu}.\\
\\
From now we will use the following notation:\ 
\begin{itemize}
\item $\aCat^{\star}$ denotes the set of $\dagger$ $\Ain$-categories with $\star$ $\Ain$-functors.
\item $\aCat^{\star}_{\tiny\DgCat^{\star}}$ denotes $\dagger$ DG-categories with $\Ain$-functors.
\item $\DgCat^{\star}$ denotes the category of $\dagger$ DG-categories with strict $\dagger$ $\Ain$-functors.
\end{itemize}
Here $(\star,\dagger)\in\mathcal{f} (\emptyset, \mbox{strictly unital}), (u, \mbox{unital}), (c,\mbox{cohomological unital})\mathcal{g}$.\\ 
We have the following inclusions:
\begin{align}\label{inclustwo}
\DgCat^{\star}\subset \aCat^{\star}_{\tiny\DgCat^{\star}} \subset \aCat^{\star}.
\end{align}

\subsection{Equivalences of $\Ain$-categories}


In this subsection we give three notions of equivalence in the framework of $\Ain$-categories.\ 

First we need to recall the definition of prenatural transformation:\ 
we fix two $\Ain$-categories $\A$ and $\B$, and two $\Ain$-functors $\F,\G:\A\to\B$, a prenatural transformation 
$T=(T^0,T^1,...):\F\Rightarrow\G$ of degree $g$ is given by the datum:
\begin{itemize}
\item[1.] For every object $x\in\A$, a morphism $T^0(x):\F^0(x)\to\G^0(x)$ in $\B$.
\item[2.] for every $n>0$ and any sequence of objects $x_0,...,x_n\in\A$,
\begin{align*}
T^n:\A(x_{n-1},x_n)\otimes...\otimes\A(x_0,x_1)\to&\B\big(\F^0( x_0),\G^0( x_n)\big)[g-n].
\end{align*}
\end{itemize}
 
If $\A$ is a strictly unital $\Ain$-category then a prenatural transformation $T$ is \emph{strictly unital} if $T^n(f_n,...f_1)=0$ whenever $n\ge 1$ and there exists $j$ such that $f_j=e_x$ for any $x\in\A$.\\
\\
Given $\A$, $\B$ two unital (resp. non unital) $\Ain$-categories by \cite[\S5]{Lyu} we can equip the set of (resp. non unital) $\Ain$-functors from $\A$ to $\B$ with an $\Ain$-structure whose morphisms are the prenatural transformations.\ We denote by
\begin{align*}
\mbox{Fun}^{u}_{\infty}(\A,\B)\mbox{ $\big($resp. $\mbox{Fun}^{nu}_{\infty}(\A,\B)$$\big)$}
\end{align*}
such a unital (resp. non unital) $\Ain$-category, the corresponding $\Ain$-structure is denoted by $\mathfrak{M}^{n}$.\
The precise formulas of $\mathfrak{M}^1$ and $\mathfrak{M}^2$ can be found in \cite[Definition 1.3.2-1.3.3]{Orn1} or in \cite[(1d)]{Sei}.\\
Moreover, if $\A$ and $\B$ are strictly unital then 
\begin{align*}
\mbox{Fun}^{su}_{\infty}(\A,\B)
\end{align*}
denotes the strictly unital $\Ain$-category whose objects are the strictly unital $\Ain$-functors and whose morphisms are the strictly unital prenatural transformations.\\
\\
Given a $\star$ $\Ain$-functor $\F:\C\to\A$ we have two induced $\star$ $\Ain$-functors
\begin{align*}
\mathscr{R}_{\F}:\mbox{Fun}^{\dagger}_{\infty}(\A,\B)\to\mbox{Fun}^{\dagger}_{\infty}(\C,\B).
\end{align*}
and 
\begin{align*}
\mathscr{L}_{\F}:\mbox{Fun}^{\dagger}_{\infty}(\B,\C)\to\mbox{Fun}^{\dagger}_{\infty}(\B,\A).
\end{align*}
where $(\star,\dagger)\in\mathcal{f}\mbox{(strictly unital, $su$),(unital, $u$),(non unital, $nu$)}\mathcal{g}$, see \cite[(1e)]{Sei}.\\

Moreover we have an inclusion of $\Ain$-categories
\begin{align*}
\mbox{Fun}^{su}_{\infty}(\A,\B)\hookrightarrow \mbox{Fun}^u_{\infty}(\A,\B)=\mbox{Fun}^c_{\infty}(\A,\B).
\end{align*}
If $\A$ has split unit (see subsection \ref{nice}) the inclusion above is a quasi-equivalence according to the following definition.\\

We say that two $\Ain$-functors $\F,\G:\A\to\B$ (such that $\F^0=\G^0$) are \emph{homotopic}, in formula $\F\sim\G$, if there exists a prenatural transformation $T$ such that, $T^0=0$ and:
\begin{align*}
\F-\G=\mathfrak{M}^1(T).
\end{align*}
$\sim$ is an equivalence relation that is compatible with the left and right composition \cite[(1h)]{Sei}.\\

An $\Ain$-functor $\F:\A\to\B$ is a \emph{quasi-equivalence} if:
\begin{itemize}
\item[i)] $\F^1:\A(x,y)\to\B(\F^0(x),\F^0(y))$ is a quasi-isomorphism for any $x,y\in\A$.
\item[ii)] $H^0(\F):H^0(\A)\to H^0(\B)$ is essentially surjective.
\end{itemize}
Or equivalently the functor $\mbox{Ho}(\F)$ is an equivalence of graded categories.\\
If $\A$ and $\B$ are $\Ain$-algebras (i.e. categories with one object) then an $\Ain$-functor is a \emph{quasi-isomorphism} if condition i) holds.\\

Given another $\Ain$-functor $\G:\A\to\B$ we say that they are \emph{weakly equivalent} (denoted by $\F\approx\G$) if they are isomorphic (as objects) in {$H(\Fun(\A,\B))$}.\ 
Namely there exist two natural transformations $T:\F\Rightarrow\G$, $S:\G\Rightarrow\F$ and two prenatural trasformations $H:\F\Rightarrow\F$, $H':\G\Rightarrow\G$ such that: 
\begin{align}\label{appross}
\mathfrak{M}^2(S,T)=\Id_{\F}+\mathfrak{M}^1(H)\mbox{ and } \mathfrak{M}^2(T,S)=\Id_{\G}+\mathfrak{M}^1(H')
\end{align}

\begin{thm}[\cite{Orn2}]
Let $\F,\G:\A\to\B$ be two strictly unital $\Ain$-functors then
\begin{align*}
\F\sim\G\Rightarrow \F\approx\G \Rightarrow [\F]=[\G] \mbox{ in $\Ho(\aCat)$}.
\end{align*}
\end{thm}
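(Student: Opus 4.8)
The plan is to establish the two implications separately, reducing each to a purely formal statement about the $\Ain$-category $\Fun^{su}_{\infty}(\A,\B)$ and its cohomology, and then invoking the standard dictionary between homotopy in an $\Ain$-category and isomorphism in its cohomology category. First I would unwind the definitions: a prenatural transformation $T\colon\F\Rightarrow\G$ with $T^0=0$ and $\F-\G=\mathfrak{M}^1(T)$ says precisely that $\F$ and $\G$ are \emph{cohomologous} as degree-$0$ elements of the chain complex $\big(\Fun^{su}_{\infty}(\A,\B)(\F,\G),\mathfrak{M}^1\big)$ — but one must be slightly careful, since $\F$ and $\G$ are objects, not morphisms, so the correct statement is that $\Id_{\F}-c_{\F\G}$ is $\mathfrak{M}^1$-exact where $c_{\F\G}$ is the comparison map; this is the content recorded in \cite[(1h)]{Sei}. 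The cleanest route is: $\F\sim\G$ means the identity of $\F$ and the identity of $\G$ become equal in $H^0(\Fun^{su}_{\infty}(\A,\B))$ after transport along $T$, which forces $\F\cong\G$ in $H^0(\Fun^{su}_{\infty}(\A,\B))$. Taking $S:=T$, $H:=H':=0$ (or whatever second-order correction terms the strict-unitality bookkeeping demands) gives exactly the data in equation (\ref{appross}), hence $\F\approx\G$.

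For the second implication, $\F\approx\G$ means by definition that $\F$ and $\G$ are isomorphic objects of $H^0(\Fun^{su}_{\infty}(\A,\B))$ — equivalently of $H(\Fun(\A,\B))$. The key step is to apply the cohomology functor $H^0(\,\cdot\,)$ together with the fact, recalled in the excerpt, that $H^0(\Fun^{su}_{\infty}(\A,\B))$ is (a full subcategory of) the category whose objects are $\Ain$-functors and whose morphisms are homotopy classes of prenatural transformations, and that evaluation/restriction induces a functor from this category to the relevant functor categories on cohomology. Concretely, passing to $H^0$ sends the closed degree-$0$ prenatural transformations $T\colon\F\Rightarrow\G$ and $S\colon\G\Rightarrow\F$ satisfying (\ref{appross}) to natural transformations $H^0(T)\colon H^0(\F)\Rightarrow H^0(\G)$ and $H^0(S)$ that are mutually inverse in $\Fun(H^0(\A),H^0(\B))$; likewise $H(T),H(S)$ are mutually inverse in the graded functor category. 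Therefore $\Ho(\F)$ and $\Ho(\G)$ are isomorphic functors $\Ho(\A)\to\Ho(\B)$, so in particular $\Ho(\F)$ is an equivalence if and only if $\Ho(\G)$ is, and they represent the same morphism in the Gabriel–Zisman localization $\Ho(\aCat)$, i.e. $[\F]=[\G]$.

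The main obstacle I anticipate is not the homotopical algebra but the unitality bookkeeping: one must check that all the auxiliary prenatural transformations ($T$, $S$, $H$, $H'$) can be taken strictly unital, so that the argument stays inside $\Fun^{su}_{\infty}(\A,\B)$ rather than leaking into $\Fun^{u}_{\infty}$ or $\Fun^{nu}_{\infty}$. For the first implication this is immediate because the $T$ witnessing $\F\sim\G$ is already a strictly unital prenatural transformation by hypothesis and $\mathfrak{M}^1$ preserves strict unitality. For the second implication one invokes the inclusion $\Fun^{su}_{\infty}(\A,\B)\hookrightarrow\Fun^{u}_{\infty}(\A,\B)$ and the observation — already used in the excerpt — that when the relevant $\Ain$-categories have split units this inclusion is a quasi-equivalence, so it induces an equivalence on $H^0$ and the isomorphism in $H^0(\Fun^{u}_\infty)$ can be pulled back to one in $H^0(\Fun^{su}_\infty)$. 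The remaining verifications — that $H^0$ and $H$ are functorial on prenatural transformations, that they send $\mathfrak{M}^2$ to composition and $\mathfrak{M}^1$-exact terms to zero, and that essential surjectivity is preserved under isomorphism of functors — are routine once the sign conventions from \cite[Definition 1.3.2--1.3.3]{Orn1} are fixed, and I would cite \cite{Orn2} for the precise statement rather than reproduce the computation.
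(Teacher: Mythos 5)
This statement is only quoted from \cite{Orn2} in the present paper, so the comparison below is with the standard argument that the cited reference (and \cite{COS2} in the DG case) actually uses; against that, your proposal has a genuine gap in each implication. For $\F\sim\G\Rightarrow\F\approx\G$: the $T$ witnessing $\F\sim\G$ is a homotopy, i.e.\ a non-closed prenatural transformation with $T^0=0$, whereas the data required in (\ref{appross}) are \emph{closed} transformations $S,T$ whose degree-zero components realize an isomorphism in $H^0$. Your concrete suggestion $S:=T$, $H=H'=0$ cannot work: looking at the degree-zero component of (\ref{appross}) it would force $e_{\F^0(x)}=m^2_{\B}\big(T^0(x),T^0(x)\big)=0$ in $H^0\big(\B(\F^0(x),\F^0(x))\big)$, which fails unless the relevant hom-complexes are acyclic. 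The parenthetical ``or whatever second-order correction terms the bookkeeping demands'' is exactly where the proof lives: one must start from the identity transformation $\Id_{\F}$ (available because $\F^0=\G^0$ and $\B$ is strictly unital), deform it using $T$ into a closed transformation $c\colon\F\Rightarrow\G$ with $c^0(x)=e_{\F^0(x)}$, and verify that $c$ is invertible in $H^0(\Fun^{su}_{\infty}(\A,\B))$; this is the content of \cite[(1h)]{Sei} and is a construction, not sign-fixing.

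For $\F\approx\G\Rightarrow[\F]=[\G]$: the step ``$\Ho(\F)$ and $\Ho(\G)$ are isomorphic functors, hence they represent the same morphism in the Gabriel--Zisman localization'' is a non sequitur. $\Ho(\aCat)$ is the localization of the $1$-category $\aCat$ at quasi-equivalences, and nothing in its universal property identifies two parallel $\Ain$-functors merely because they induce isomorphic functors on cohomology; indeed this criterion is strictly weaker than $\F\approx\G$ (it forgets all the coherence data), and already for DG-categories it is false that $H^0(\F)\cong H^0(\G)$ implies $[\F]=[\G]$ in $\Ho(\DgCat)$. The actual proof factors the pair $(\F,\G)$ through a path object: one builds an $\Ain$-category $P(\B)$ with two projections $p_0,p_1\colon P(\B)\to\B$ that are quasi-equivalences admitting a common section $\B\to P(\B)$ (so that $[p_0]=[p_1]$ in $\Ho(\aCat)$), together with an $\Ain$-functor $\Phi\colon\A\to P(\B)$ encoding the data (\ref{appross}) and satisfying $p_0\cdot\Phi=\F$, $p_1\cdot\Phi=\G$; then $[\F]=[p_0]\cdot[\Phi]=[p_1]\cdot[\Phi]=[\G]$. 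Without some such cylinder or path-object argument the second implication is not established.
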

\begin{namedthm}[\ref{patturzo}]
Let $\F,\G:\A\to\B$ be two unital $\Ain$-functors between unital $\Ain$-categories then
\begin{align*}
\F\approx\G \Rightarrow [\F]=[\G] \mbox{ in $\Ho(\aCat^u)$}.
\end{align*}
\end{namedthm}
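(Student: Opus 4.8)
The claim is that weak equivalence $\F\approx\G$ between unital $\Ain$-functors implies $[\F]=[\G]$ in $\Ho(\aCat^u)$, i.e.\ that $\F$ and $\G$ become equal after inverting quasi-equivalences. The strategy is to realize $\F\approx\G$ as a zig-zag of quasi-equivalences connecting $\F$ and $\G$, using the $\Ain$-category $\Fun^u_\infty(\A,\B)$ together with a suitable "cylinder" or "path" object. Concretely, I would first observe that $\F\approx\G$ means $\F$ and $\G$ are isomorphic objects of the $R$-linear category $H^0(\Fun^u_\infty(\A,\B))$, witnessed by a closed, invertible-up-to-$\mathfrak M^1$ natural transformation $T:\F\Rightarrow\G$. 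The point is to upgrade this algebraic isomorphism in the cohomology category to an actual zig-zag of quasi-equivalences in $\aCat^u$ between the objects of $\aCat^u$ named $\F$ and $\G$ (viewed, say, as $\Ain$-functors out of the one-object category $R$, or more precisely as strict $\Ain$-functors $R\to\Fun^u_\infty(\A,\B)$ picking out the object).

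\smallskip

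The key steps, in order, are as follows. (1) Identify an $\Ain$-functor (morphism in $\aCat^u$) $\mathsf{ev}_1:\Fun^u_\infty(\A,\B)\to\B$ or, more usefully, build the interval $\Ain$-category: let $\mathcal I$ be the $R$-linear category with two objects $0,1$ and a single isomorphism between them (the "free-living isomorphism"), viewed as a (strictly unital) $\Ain$-category. (2) Show that a weak equivalence $T:\F\approx\G$ is the same datum as a strictly unital (or unital) $\Ain$-functor $\mathcal I\to\Fun^u_\infty(\A,\B)$ sending $0\mapsto\F$, $1\mapsto\G$ — this is where the closedness of $T$ and the homotopies $H,H'$ in (\ref{appross}) are used to define the higher components on the generating isomorphism and its inverse. (3) Show that the two inclusions $R\hookrightarrow\mathcal I$ (as $0$ and as $1$) are quasi-equivalences, and hence so are the two composites $R\to\mathcal I\to\Fun^u_\infty(\A,\B)$ picking out $\F$ and $\G$; this gives the zig-zag $\F\leftarrow(\mathcal I\to\Fun^u_\infty(\A,\B))\to\G$ in $\aCat^u$ all of whose backward legs are quasi-equivalences, forcing $[\F]=[\G]$ in $\Ho(\aCat^u)$. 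Alternatively, and perhaps more in the spirit of the earlier-quoted theorem from \cite{Orn2}, (3$'$) reduce to the strictly unital case: use that $\Fun^u_\infty(\A,\B)=\Fun^c_\infty(\A,\B)$ and that (when $\A$ has split unit) $\Fun^{su}_\infty\hookrightarrow\Fun^u_\infty$ is a quasi-equivalence, so the weak equivalence can be transported to strictly unital data, where the already-stated Theorem from \cite{Orn2} applies and gives $[\F]=[\G]$; then descend back.

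\smallskip

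The main obstacle I anticipate is step (2)/(3$'$): constructing the honest $\Ain$-functor $\mathcal I\to\Fun^u_\infty(\A,\B)$ from the weak-equivalence data, i.e.\ producing coherent higher components realizing that $T$ and its quasi-inverse $S$ together with $H,H'$ assemble into an $\Ain$-functor out of a $2$-object category — or, equivalently, the transport of a weak equivalence between unital functors to one between strictly unital functors. This requires genuine $\Ain$-bookkeeping: one must invoke the homotopy-invariance / homotopy-transfer machinery for $\Ain$-functors to replace $T,S$ by strictly unital representatives without losing invertibility in $H^0$, and check that unitality (not merely cohomological unitality) is preserved throughout. The rest — that the interval inclusions are quasi-equivalences, and that a morphism in $\aCat^u$ equal to $\F$ on one end and $\G$ on the other, with quasi-equivalences as the other legs, forces $[\F]=[\G]$ — is formal from the Gabriel–Zisman calculus. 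I would therefore structure the write-up so that the technical heart is isolated as: \emph{a weak equivalence between unital $\Ain$-functors can be promoted to a unital $\Ain$-functor from the free-living isomorphism, whose two "evaluations" are $\F$ and $\G$}, and then conclude in one line.
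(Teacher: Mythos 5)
The paper does not actually prove this statement: Theorem \ref{patturzo} is imported verbatim from \cite[Theorem 6.19]{Orn2} (it is exactly the well-definedness of the map $\delta:\aCat^u(\A,\B)/{\approx}\to\Ho(\aCat^u)(\A,\B)$), so there is no in-paper argument to compare yours against. Judged on its own terms, your proposal has a genuine gap at step (3). Your zig-zag $R\to\mathcal{I}\to\Fun^u_{\infty}(\A,\B)\leftarrow R$ shows that the two morphisms $R\to\Fun^u_{\infty}(\A,\B)$ \emph{classifying} $\F$ and $\G$ as objects become equal in $\Ho(\aCat^u)$ (indeed $[i_0]=[i_1]$, both being inverse to the projection $\mathcal{I}\to R$). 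But the statement to be proved is that $\F$ and $\G$ become equal \emph{as morphisms} $\A\to\B$ in $\Ho(\aCat^u)$, and nothing in your argument converts equality of classifying maps into equality of the functors themselves. Making that conversion would require an evaluation $\Ain$-functor $\Fun^u_{\infty}(\A,\B)\otimes\A\to\B$ interacting correctly with composition in the localization --- i.e.\ precisely the closed monoidal structure that this paper builds \emph{on top of} Theorem \ref{patturzo}, so invoking it here would be circular. Restating $\F\approx\G$ as ``the classifying maps agree in $\Ho$'' is a reformulation of the hypothesis, not progress toward the conclusion.

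The standard repair is to transpose your construction. From the data $(T,S,H,H')$ of (\ref{appross}) build a unital $\Ain$-functor $\mathscr{H}:\A\to\Fun^u_{\infty}(\mathcal{I},\B)$ (a path object for $\B$) with $\mathrm{ev}_0\cdot\mathscr{H}=\F$ and $\mathrm{ev}_1\cdot\mathscr{H}=\G$, where $\mathrm{ev}_0,\mathrm{ev}_1:\Fun^u_{\infty}(\mathcal{I},\B)\to\B$ are the two evaluations. Both evaluations are quasi-equivalences split by the constant-diagram functor $c:\B\to\Fun^u_{\infty}(\mathcal{I},\B)$, so $[\mathrm{ev}_0]=[c]^{-1}=[\mathrm{ev}_1]$ and hence $[\F]=[\mathrm{ev}_0][\mathscr{H}]=[\mathrm{ev}_1][\mathscr{H}]=[\G]$; now every leg is an honest morphism from $\A$ toward $\B$ and the Gabriel--Zisman step really is formal. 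The $\Ain$-bookkeeping you correctly isolate as the technical heart is the same, just carried out in the other variable. Your alternative route (3$'$) has a parallel circularity: the replacement $\F\mapsto\F^{\tiny\mbox{cm}}$ of Theorem \ref{risoluzioni} only commutes with $\F$ up to $\approx$, so identifying $[\F]$ with $[\Psi_{\B}][\F^{\tiny\mbox{cm}}][\Psi_{\A}]^{-1}$ already presupposes the implication ``$\approx$ implies equal in $\Ho$'' that you are trying to prove.
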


\begin{defn}\label{equivalonze}
Let $\A$ and $\B$ be two $\Ain$-categories.\ They are said:
\begin{itemize}
\item[1.] \emph{Equivalent}, if there exist two $\Ain$-functors $\F:\A\to\B$ and $\G:\B\to\A$ such that the following diagrams
\begin{align*}
\xymatrix{
\A\ar[d]_{\F}\ar[dr]^{\tiny\mbox{Id}_{\A}}&\\
\B\ar[r]_{\G}&\A
}
\mbox{ , } 
\xymatrix{
\B\ar[dr]_{\tiny\mbox{Id}_{\B}}\ar[r]^{\G}&\A\ar[d]^{\F}\\
&\B
}
\end{align*}
are commutative.\ We denote two $\Ain$-categories equivalent by $\A\cong\B$.
\item[2.] \emph{Weakly-equivalent}, if there exist two $\Ain$-functors $\F:\A\to\B$ and $\G:\B\to\A$ such that $\G\cdot\F\approx\mbox{Id}$ and $\F\cdot\G\approx\mbox{Id}$.\ It means that the following diagrams
\begin{align*}
\xymatrix{
\A\ar[d]_{\F}\ar@/^1.2pc/[dr]^{\tiny\mbox{Id}_{\A}}&\ar@{}[dl]<1.2ex>^(.75){}="a"^(.35){}="b" \ar@{=>} "a";"b"\\
\B\ar@{}[ur]<1.2ex>^(.65){}="a"^(.25){}="b" \ar@{=>} "a";"b"\ar[r]_{\G}&\A
}
\mbox{ , } 
\xymatrix{
\B\ar@/_1.2pc/[dr]_{\tiny\mbox{Id}_{\B}}\ar[r]^{\G}&\A\ar@{}[ld]<1.2ex>^(.65){}="a"^(.25){}="b" \ar@{=>} "a";"b" \ar[d]^{\F}\\
\ar@{}[ur]<1.2ex>^(.75){}="a"^(.35){}="b" \ar@{=>} "a";"b" &\B 
}
\end{align*}
are commutative up to $\approx$.\ We denote two $\Ain$-categories equivalent by $\A\approx\B$.
\item[3.] \emph{Quasi-equivalent}, if there exist:
\begin{itemize}
\item[3i.] an integer $n\ge 0$.
\item[3ii.] A sequence of $\Ain$-categories $\A_i$, for $i=0,...,{n+1}$, such that $\A_0=\A$ and $\A_{n+1}=\B$.
\item[3iii.] A sequence of quasi-equivalences $\F_i$ such that $\F_i:\A_i\to \A_{i+1}$ or $\F_i:\A_{i+1}\to \A_{i}$, for $i=1,...,n$.
\end{itemize}
We denote two $\Ain$-categories quasi-equivalent by $\A\simeq\B$.
\end{itemize}
\end{defn}


\subsection{Equivalent homotopy categories}

Given $\DgCat^{\star}$, $\aCat^{\star}_{\tiny\DgCat^{\star}}$ and $\aCat^{\star}$ we define their homotopy category (denoted by $\Ho$) the corresponding localization with respect to the class of quasi-equivalences.\ We have the following result:

\begin{thm}[\cite{COS2}]\label{equivalonze}
The inclusions (\ref{inclusone}) and (\ref{inclustwo}) give rise to the following equivalences of categories:
\begin{align}\label{categoriez}
\xymatrix{
\Ho(\DgCat) \ar[d]^{\simeq}\ar[r]^-{\simeq} &\Ho(\aCat_{\tiny\DgCat}) \ar[d]^{\simeq}\ar[r]^-{\simeq} &\Ho(\aCat) \ar[d]^{\simeq}\\
\Ho(\DgCat^{u}) \ar[d]^{\simeq}\ar[r]^-{\simeq} &\Ho(\aCat^{u}_{\tiny\DgCat^{u}}) \ar[d]^{\simeq}\ar[r]^-{\simeq} &\Ho(\aCat^{u}) \ar[d]^{\simeq}\\
\Ho(\DgCat^c) \ar[r]^-{\simeq} &\Ho(\aCat^{c}_{\tiny\DgCat^{c}}) \ar[r]^-{\simeq} &\Ho(\aCat^{c}) 
}
\end{align}
\end{thm}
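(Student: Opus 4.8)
The plan is to prove the $3\times 3$ grid of equivalences by reducing everything to the two "anchor" equivalences in the rightmost column and bottom row, together with the known two-out-of-three style comparisons between DG and $\Ain$ settings. First I would observe that all nine entries carry localized categories with respect to quasi-equivalences, and that the six horizontal functors are induced by the forgetful-type inclusions $\DgCat^{\star}\hookrightarrow\aCat^{\star}_{\DgCat^{\star}}\hookrightarrow\aCat^{\star}$ of (\ref{inclustwo}), while the three vertical functors are induced by the inclusions of (\ref{inclusone}) on the level of unitality conditions. The key structural input, which I would cite from \cite{COS2} (it is essentially the content of the "Theorem [\cite{COS2}]" recalled in the Recent-related-works subsection, in its refined form), is that the inclusion of $\Ain$-categories into the localization is a Dwyer–Kan localization and that any $\Ain$-category is quasi-equivalent to a DG-category; this gives essential surjectivity of the horizontal functors. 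For full faithfulness one uses that $\Ain$-functors between DG-categories, up to homotopy, compute the same morphism sets in the localization as strict DG-functors do after replacing the source by a cofibrant (h-projective, split-unital) resolution — this is where Theorem \ref{patturzo} and the homotopy relations $\sim$, $\approx$ enter.

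The steps, in order, are: (1) Show each horizontal arrow is essentially surjective, by exhibiting for every $\Ain$-category a zig-zag of quasi-equivalences to an object in the image; for the first column this is the statement that every $\Ain$-category admits an h-projective (hence DG) model, and for the middle-to-right column one uses that a DG-category with $\Ain$-functors is already a DG-category. (2) Show each horizontal arrow is full: given an $\Ain$-functor $\F\colon\A\to\B$ between objects coming from the left-hand category, produce a roof of strict functors representing the same morphism in the localization; here one replaces $\A$ by its h-projective resolution $\A^{\mathrm{hp}}$ and uses that on h-projective (split-unital) source any $\Ain$-functor is homotopic, hence by Theorem \ref{patturzo} equal in $\Ho$, to a strictly unital one, and then rectifies to a DG-functor. (3) Show each horizontal arrow is faithful: if two strict functors become equal in $\Ho(\aCat^{\star})$, they are connected by a zig-zag of $\Ain$-quasi-equivalences, which one again rectifies to a zig-zag in $\DgCat^{\star}$ using the resolution functor; this is the step where one must check that the resolution of \cite{Orn2} is functorial up to $\approx$ and that $\approx$ descends correctly to $\Ho$. (4) For the vertical arrows, repeat the same three checks but now comparing strictly unital, unital and cohomological-unital conditions: essential surjectivity is clear since every cohomological-unital $\Ain$-category is quasi-equivalent to a strictly unital one (minimal model / strictification of the unit), and fullness/faithfulness follow from the quasi-equivalence $\Fun^{su}_\infty\hookrightarrow\Fun^u_\infty=\Fun^c_\infty$ recorded in Subsection \ref{nice}. (5) Finally, check that the square commutes, i.e. that the horizontal and vertical rectifications are compatible; this is formal once the resolutions are chosen naturally.

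I would organize the write-up so that steps (1)–(3) are done once, abstractly, for a localization functor induced by an inclusion that (a) sends quasi-equivalences to quasi-equivalences, (b) admits a "resolution" right-or-left adjoint up to homotopy landing in the subcategory, and (c) has the property that the resolution of an object in the subcategory is still quasi-equivalent to it within the subcategory; then apply this lemma six times for the horizontal arrows and three times for the vertical ones. This keeps the proof modular and avoids re-deriving the same roof calculus.

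The main obstacle I expect is step (3), faithfulness of the horizontal arrows, and more precisely the passage from a zig-zag of $\Ain$-quasi-equivalences to an honest zig-zag of DG-quasi-equivalences. The subtlety is that an intermediate $\Ain$-category in the zig-zag need not be a DG-category, and replacing it by a DG-model requires choosing resolutions coherently along the whole chain; one must ensure the squares
\begin{align*}
\xymatrix{
\A_i^{\mathrm{hp}}\ar[r]\ar[d] & \A_i\ar[d]^{\F_i}\\
\A_{i+1}^{\mathrm{hp}}\ar[r] & \A_{i+1}
}
\end{align*}
commute at least up to $\approx$, and then invoke Theorem \ref{patturzo} to conclude equality in $\Ho$. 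Handling the non-strictly-unital cases adds the extra wrinkle that one cannot assume strict units are preserved on the nose, so the reductions to split-unital h-projective models via the quasi-equivalence $\Fun^{su}_\infty\hookrightarrow\Fun^u_\infty$ must be threaded through carefully. Everything else — commutativity of the $3\times 3$ grid, the vertical comparisons, essential surjectivity — should be routine given the results already cited.
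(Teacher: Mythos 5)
The paper does not prove this statement: Theorem \ref{equivalonze} is imported verbatim from \cite{COS2} and is stated in the Background section without any proof, so there is no in-paper argument to compare yours against. Judged on its own terms, your outline reproduces the broad strategy of \cite{COS2} (compute hom-sets in the localization via h-projective resolutions modulo $\approx$, using Theorem \ref{patturzo} and the functorial resolutions of \cite{Orn2}), but two of its load-bearing steps are not correct as stated.

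First, in step (1) you claim essential surjectivity of the leftward horizontal arrows because ``every $\Ain$-category admits an h-projective (hence DG) model.'' The parenthetical is false: an h-projective $\Ain$-category is one whose hom-complexes are h-projective DG modules; it is not a DG-category, and h-projective resolution does nothing to kill the higher $m^n$. The actual mechanism for replacing an $\Ain$-category by a quasi-equivalent DG-category is the cobar--bar envelope $\Un(\A)=\Omega\cdot\Br(\A)$ together with the quasi-equivalence $\alpha_{\A}\colon\A\to\Un(\A)$ (or, alternatively, the Yoneda embedding into $\Rep(\A)$), and your proposal never invokes either. Without this, neither essential surjectivity nor the rectification of $\Ain$-functors to strict DG-functors in your step (2) gets off the ground.

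Second, in step (4) you derive essential surjectivity of the vertical arrows from ``every cohomological-unital $\Ain$-category is quasi-equivalent to a strictly unital one (minimal model / strictification of the unit).'' Over a field this is Seidel's Lemma 2.1, but the present paper works over an arbitrary commutative ring $R$ and explicitly remarks that this strictification ``is true only if $R$ is a field.'' Minimal models are likewise unavailable over a general ring. The comparison between the three unitality conditions is precisely the hard, ring-sensitive content of \cite{COS2}; it is carried out through h-projective resolutions with \emph{split} unit (cf.\ Theorem \ref{fofofofofff}, where the split-unit hypothesis is shown to be essential), not through strictification. You gesture at split-unit models in your closing paragraph, but the essential-surjectivity claim as written rests on a tool that fails in the stated generality. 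The remaining steps (roof calculus, coherence of resolutions along zig-zags, commutativity of the grid) are plausibly routine once these two points are repaired, but as it stands the proposal has genuine gaps at both of them.
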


We recall that an h-projective $\Ain$-category (resp. DG-category) $\A$ is an $\Ain$-category (resp. DG-category) such that the hom-space $\A(x,y)$ is a h-projective DG module for every $x,y\in\A$.\
An $\emph{h-projective}$ resolution of $\A$ is the datum $(\A^{\tiny\mbox{hp}},\Psi_{\A})$, where $\A^{\tiny\mbox{hp}}$ is an h-projective $\Ain$category (resp. DG-category) and $\Psi_{\A}:\A^{\tiny\mbox{hp}}\to\A$ is a quasi-equivalence.\

If $\B$ is a unital/strictly unital DG-category/$\Ain$-category it is always possible to find an h-projective resolution.\
Actually we have more (see \cite[Theorem B and B$'$]{Orn2}):

\begin{thm}\label{risoluzioni}
Let $\A$ be a unital $\Ain$-category (resp. DG-category), there exists a (functorial up to $\approx$) h-projective unital $\Ain$-category (resp. DG-category) $\A^{\tiny\mbox{cm}}$ and a unital strict $\Ain$-functor $\Psi_{\A}:\A^{\tiny\mbox{cm}}\to\A$ surjective on the morphisms.\ Moreover, given a unital $\Ain$-functor $\F:\A\to\B$ we have a unital $\Ain$-functor $\F^{\tiny\mbox{cm}}$ fitting the diagram
\begin{align*}
\xymatrix{
\A^{\tiny\mbox{cm}}\ar[r]^{\F^{\tiny\mbox{cm}}}\ar[d]^{\Psi_{\A}}&\B^{\tiny\mbox{cm}}\ar[d]^{\Psi_{\B}}\\
\A\ar[r]^{\F}&\B
}
\end{align*}
which is commutative up to $\approx$, i.e $\Psi_{\B}\cdot\F^{\tiny\mbox{cm}} \approx \F\cdot\Psi_{\A}$.
\end{thm}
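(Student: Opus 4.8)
The plan is to build $\A^{\tiny\mbox{cm}}$ as a functorial ``cofibrant model'' of $\A$ with the same objects, by first resolving the hom-complexes and then transporting the $\Ain$-structure along the resolution. \emph{Step 1.} Over an arbitrary commutative ring $R$ the category $\Ch$ of complexes carries the projective model structure, hence admits functorial cofibrant resolutions; cofibrant complexes are h-projective, and may be realized as cell complexes. So I would choose, functorially in the complex, a surjective quasi-isomorphism $p_{x,y}:P(x,y)\twoheadrightarrow\A(x,y)/R\cdot e_x$ with $P(x,y)$ a cofibrant (cell) complex, and set $\A^{\tiny\mbox{cm}}(x,x):=R\cdot e_x\oplus P(x,x)$ and $\A^{\tiny\mbox{cm}}(x,y):=P(x,y)$ for $x\ne y$, with the obvious surjection $\psi_{x,y}:\A^{\tiny\mbox{cm}}(x,y)\twoheadrightarrow\A(x,y)$ (fixing $e_x$) lifting $p_{x,y}$. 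Then $\A^{\tiny\mbox{cm}}$ is a graded quiver with h-projective hom-complexes and a distinguished split cycle $e_x\in\A^{\tiny\mbox{cm}}(x,x)^0$.

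\emph{Step 2.} I would next endow $\A^{\tiny\mbox{cm}}$ with operations $m^n_{\A^{\tiny\mbox{cm}}}$ making it a strictly unital $\Ain$-category with unit $e_x$ for which $\psi:=(\psi_{x,y})$ is a \emph{strict} $\Ain$-functor, that is $\psi\circ m^n_{\A^{\tiny\mbox{cm}}}=m^n_{\A}\circ\psi^{\otimes n}$. This is an obstruction-theoretic lifting along the surjective quasi-isomorphism $\psi$: the $m^n_{\A^{\tiny\mbox{cm}}}$ are built by induction on $n$ and on the cells of the hom-complexes, the obstruction to each extension lying in a hom-complex that is acyclic because $\psi$ is a quasi-isomorphism; absorption of the unit in all $m^{n\ge 3}$ is ensured by running the induction on the $P(x,y)$. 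The outcome is a strict, surjective, unital $\Ain$-functor $\Psi_\A:\A^{\tiny\mbox{cm}}\to\A$, a quasi-equivalence since it is the identity on objects and a hom-wise quasi-isomorphism, with $\A^{\tiny\mbox{cm}}$ h-projective and with split unit. The DG case is the same argument with $m^{n\ge 3}=0$ and strict functors (equivalently, a cofibrant replacement in Tabuada's model structure on $\DgCat$). Functoriality of $\A\mapsto\A^{\tiny\mbox{cm}}$ comes from that of the resolutions in Step 1.

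\emph{Step 3.} Given a unital $\Ain$-functor $\F:\A\to\B$, I would build $\F^{\tiny\mbox{cm}}:\A^{\tiny\mbox{cm}}\to\B^{\tiny\mbox{cm}}$ as a lift of $\F\cdot\Psi_\A:\A^{\tiny\mbox{cm}}\to\B$ along the surjective quasi-equivalence $\Psi_\B$, i.e.\ with $\Psi_\B\cdot\F^{\tiny\mbox{cm}}=\F\cdot\Psi_\A$, the components $(\F^{\tiny\mbox{cm}})^n$ again being produced by induction on the cells of $\A^{\tiny\mbox{cm}}$ as lifts of $(\F\cdot\Psi_\A)^n$ along $\Psi_\B$, with vanishing obstructions. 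Any two such lifts are homotopic, hence $\approx$; since $(\G\cdot\F)^{\tiny\mbox{cm}}$ and $\G^{\tiny\mbox{cm}}\cdot\F^{\tiny\mbox{cm}}$ are two lifts of the same square (and likewise $(\Id_\A)^{\tiny\mbox{cm}}$ and $\Id_{\A^{\tiny\mbox{cm}}}$), this gives the functoriality up to $\approx$; in particular the displayed diagram commutes, on the nose for the chosen lift and in any case up to $\approx$.

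The hard part is Step 2 (and its analogue in Step 3): showing that the obstruction complexes controlling the lift of the $\Ain$-structure and of the $\Ain$-functors are acyclic \emph{uniformly} in all arities and all tuples of objects, while keeping $e_x$ absorbing in every higher operation and staying inside $\Ch$ over a general $R$, where the only splittings at one's disposal are those furnished by the functorial cofibrant resolutions of Step 1. Once this acyclicity is established, the sign bookkeeping in the $\Ain$-relations and the $\Ain$-functor relations is routine.
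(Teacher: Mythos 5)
First, a point of comparison: the paper does not actually prove Theorem \ref{risoluzioni} --- it is imported wholesale from \cite{Orn2} (Theorems B and B$'$), so there is no in-paper argument to measure yours against. Judged on its own, your sketch follows the expected pattern (functorial cofibrant resolution of the hom-complexes in $\Ch$, then obstruction-theoretic transport of the $\Ain$-structure and of the functors along the resulting surjective quasi-isomorphisms), which is the right family of ideas, but it has a genuine gap at the start. Step 1 presupposes a strict unit $e_x$ together with a splitting $\A^{\tiny\mbox{cm}}(x,x)=R\cdot e_x\oplus P(x,x)$ with $R\cdot e_x\cong R$. Theorem \ref{risoluzioni} concerns \emph{unital} (Lyubashenko-unital) $\Ain$-categories, which carry only a cohomological unit with a chosen cycle representative: over a general commutative ring $R$ the submodule $R\cdot e_x$ need not be free of rank one (the annihilator of $e_x$ can be nonzero) and need not split off, so the quotient $\A(x,x)/R\cdot e_x$ you resolve can have the wrong cohomology and the five-lemma step making $\Psi_{\A}$ a quasi-isomorphism on endomorphism complexes breaks down. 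What you describe is really a construction adapted to the \emph{following} theorem in the paper (the strictly unital resolution $\A^{\tiny\mbox{sf}}$ with split unit); for merely unital $\A$ the unit must be handled differently, e.g.\ adjoined formally or treated at the level of a bar/cobar-type resolution.

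Second, you explicitly defer what you yourself call the hard part: the uniform acyclicity of the obstruction complexes in all arities while keeping $e_x$ absorbing in every $m^{n\ge 3}$ over a general base ring. This is not a routine sign check --- the paper itself warns that Seidel's strictification of cohomological units (\cite[Lemma 2.1]{Sei}) is valid only over a field, and precisely this unit control is what forces the split-unit hypothesis elsewhere in the paper. A smaller issue: in Step 3 you assert that the chosen lift satisfies $\Psi_{\B}\cdot\F^{\tiny\mbox{cm}}=\F\cdot\Psi_{\A}$ on the nose; the statement only claims (and the uniqueness-of-lifts argument only delivers) commutativity up to $\approx$, and strict commutativity would require a lifting property of $\A^{\tiny\mbox{cm}}$ against surjective quasi-equivalences of $\Ain$-categories that you have not established. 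So the architecture is plausible, but the two load-bearing steps --- the treatment of non-strict units over a general $R$ and the acyclicity of the obstruction complexes --- are exactly the content of \cite{Orn2} and remain unproved in your proposal.
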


\begin{thm}
Let $\A$ be a strictly unital $\Ain$-category, there exists a (functorial up to $\approx$) h-projective strictly unital $\Ain$-category $\A^{\tiny\mbox{sf}}$ with split unit and a strict strictly unital $\Ain$-functor $\Psi_{\A}:\A^{\tiny\mbox{sf}}\to\A$ surjective on the morphisms.\ Moreover, given a strictly unital $\Ain$-functor $\F:\A\to\B$ we have a strictly unital $\Ain$-functor $\F^{\tiny\mbox{sf}}$ fitting the diagram
\begin{align*}
\xymatrix{
\A^{\tiny\mbox{sf}}\ar[r]^{\F^{\tiny\mbox{sf}}}\ar[d]^{\Psi_{\A}}&\B^{\tiny\mbox{sf}}\ar[d]^{\Psi_{\B}}\\
\A\ar[r]^{\F}&\B
}
\end{align*}
which is commutative up to $\approx$, i.e $\Psi_{\B}\cdot\F^{\tiny\mbox{sf}} \approx \F\cdot\Psi_{\A}$.
\end{thm}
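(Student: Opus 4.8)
The plan is to take for $\A^{\mathrm{sf}}$ a cofibrant resolution of $\A$ computed \emph{inside} the category $\aCat^{su}$ of strictly unital $\Ain$-categories, built by the cellular procedure (of small-object-argument / cobar--bar type) behind Theorem \ref{risoluzioni} but carried out at the strictly unital level with the unit adjoined freely. Concretely, I would start from a strict strictly unital $\Ain$-functor $\Psi^{(0)}\colon\A^{(0)}\to\A$ which is bijective on objects and surjective on morphisms, with $\A^{(0)}(x,y)$ a free graded $R$-module for $x\neq y$ and $\A^{(0)}(x,x)=R\cdot e_x\oplus P_x$ for a free $P_x$, the strict unit of $\A^{(0)}$ being the generator of the first summand; then I would iteratively attach free cells (in the strictly unital sense, i.e. not involving the unit summands, so that strict unitality is preserved) so that in the colimit all relative homotopy groups of $\Psi^{(0)}$ vanish. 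Writing $\A^{\mathrm{sf}}$ for the colimit and $\Psi_{\A}\colon\A^{\mathrm{sf}}\to\A$ for the induced functor, one checks that: $\Psi_{\A}$ is strict, strictly unital, bijective on objects and surjective on morphisms already by stage $0$; it is a quasi-equivalence because the cells are attached precisely so as to make $\Psi_{\A}^1$ a quasi-isomorphism; each $\A^{\mathrm{sf}}(x,y)$ is an iterated extension of free $R$-modules, hence h-projective; and $R\cdot e_x$ is, by construction, a free direct summand of $\A^{\mathrm{sf}}(x,x)$, so the unit is split. (Equivalently: cofibrant strictly unital $\Ain$-categories have split units.)

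For the functoriality statement, given a strictly unital $\F\colon\A\to\B$ I would lift the composite $\F\cdot\Psi_{\A}\colon\A^{\mathrm{sf}}\to\B$ along $\Psi_{\B}$: since $\A^{\mathrm{sf}}$ is cellular and $\Psi_{\B}\colon\B^{\mathrm{sf}}\to\B$ is a strict quasi-equivalence surjective on morphisms (a trivial fibration in this setting), such a lift exists, producing a strictly unital $\F^{\mathrm{sf}}\colon\A^{\mathrm{sf}}\to\B^{\mathrm{sf}}$ with $\Psi_{\B}\cdot\F^{\mathrm{sf}}$ equal to $\F\cdot\Psi_{\A}$ on the nose, or at worst weakly equivalent to it; note that $\F^{\mathrm{sf}}$ is strictly unital but need not be strict, which is why the statement asks for strictness only of $\Psi_{\A}$ and $\Psi_{\B}$. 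Any two such lifts are weakly equivalent, since they become equal after composing with the quasi-equivalence $\Psi_{\B}$; hence $\A\mapsto\A^{\mathrm{sf}}$, $\F\mapsto\F^{\mathrm{sf}}$ is well defined as a functor up to $\approx$, with $\Psi$ natural up to $\approx$ — here I would invoke that $\sim$ implies $\approx$ and that $\approx$ is preserved by composition, together with Theorem \ref{patturzo}, as recalled in the Background.

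The step I expect to be the main obstacle is reconciling, over an arbitrary commutative ring $R$, the three constraints that pull in opposite directions: the split unit forces $R\to\A^{\mathrm{sf}}(x,x)$ to be a split monomorphism of graded $R$-modules and hence dictates the free-summand bookkeeping above; the requirement that $\Psi_{\A}$ be strict and surjective on morphisms is a "co-cellular" demand that must be arranged simultaneously with attaching enough cells to turn $\Psi_{\A}$ into a quasi-equivalence; and h-projectivity forces every cell to be free and the unit summand to be split off without spoiling h-projectivity of its complement. Over a field all of this is routine; over a general $R$ it requires the careful inductive construction that is exactly \cite[Theorem B$'$]{Orn2}, which I would invoke (or reproduce) here. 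The uniqueness-up-to-$\approx$ assertions needed for functoriality then follow from the usual obstruction-theoretic argument together with the implications $\sim\Rightarrow\approx\Rightarrow[\cdot]$ recalled above.
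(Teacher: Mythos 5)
The paper does not prove this statement at all: it is recalled verbatim from \cite[Theorem B$'$]{Orn2} (the sentence ``Actually we have more (see \cite[Theorem B and B$'$]{Orn2})'' immediately precedes it), so your proposal --- which sketches the semi-free cellular construction and then correctly identifies that the hard part over a general ring $R$ is exactly \cite[Theorem B$'$]{Orn2}, to be invoked --- rests on the same source and is consistent with the paper. Your outline of how that cited proof goes is a reasonable summary, but within this paper there is nothing further to compare it against.
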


\begin{notat}
We denote by: 
\begin{itemize}
\item[]
\begin{itemize}
\item[]
\begin{itemize}
\item[${\aCat_{\tiny\mbox{hps}}}$]: the category of the h-projective with split unit $\Ain$-categories, whose morphisms are the strictly unital $\Ain$-functors. 
\item[${\aCat_{\tiny\mbox{hps}}}/\approx$]: the category ${\aCat_{\tiny\mbox{hps}}}$, whose morphisms are given by the quotient ${\aCat_{\tiny\mbox{hps}}}/\approx(\A,\B):={\aCat_{\tiny\mbox{hps}}}(\A,\B)/\approx$. 
\item[${\aCat^u_{\tiny\mbox{hps}}}$]: the category of the h-projective with split unit $\Ain$-categories, whose morphisms are the unital $\Ain$-functors. 
\item[${\aCat^u_{\tiny\mbox{hps}}}/\approx$]: the category ${\aCat^u_{\tiny\mbox{hps}}}$, whose morphisms are given by the quotient ${\aCat^u_{\tiny\mbox{hps}}}/\approx(\A,\B):={\aCat^u_{\tiny\mbox{hps}}}(\A,\B)/\approx$. 
\item[${\aCat_{\tiny\mbox{hp}}}$]: the category of the h-projective $\Ain$-categories, whose morphisms are the strictly unital $\Ain$-functors.
\item[${\aCat_{\tiny\mbox{hp}}}/\approx$]: the category ${\aCat_{\tiny\mbox{hp}}}$, whose morphisms are given by the quotient ${\aCat_{\tiny\mbox{hp}}}/\approx(\A,\B):={\aCat_{\tiny\mbox{hp}}}(\A,\B)/\approx$. 
\item[${\aCat^u_{\tiny\mbox{hp}}}$]: the category of the h-projective $\Ain$-categories, whose morphisms are the unital $\Ain$-functors. 
\item[${\aCat^u_{\tiny\mbox{hp}}}/\approx$]: the category ${\aCat^u_{\tiny\mbox{hp}}}$, whose morphisms are given by the quotient ${\aCat^u_{\tiny\mbox{hp}}}/\approx(\A,\B):={\aCat^u_{\tiny\mbox{hp}}}(\A,\B)/\approx$. 
\end{itemize}
\end{itemize}
\end{itemize}
\end{notat}

\begin{thm}[\cite{COS2}]\label{fofofofofff}
The categories in (\ref{categoriez}) are equivalent to the categories $\aCat_{\tiny\mbox{hps}}/\approx$ and $\aCat^u_{\tiny\mbox{hps}}/\approx$  but they are not equivalent to $\aCat_{\tiny\mbox{hp}}/\approx$.
\end{thm}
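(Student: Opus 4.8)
\emph{Overall plan.} I would prove that the localization functor $Q\colon\aCat\to\Ho(\aCat)$ (resp.\ $Q^u\colon\aCat^u\to\Ho(\aCat^u)$) restricts, on the full subcategory $\aCat_{\tiny\mbox{hps}}$ of h-projective $\Ain$-categories with split unit (resp.\ on $\aCat^u_{\tiny\mbox{hps}}$), to a functor that factors through the quotient by $\approx$ and becomes an equivalence. Since by the equivalences of (\ref{categoriez}) all nine homotopy categories there are mutually equivalent, this yields the first assertion for every one of them. For the negative assertion I would show that the \emph{same} statement fails for $\aCat_{\tiny\mbox{hp}}/\!\approx$, by exhibiting h-projective $\Ain$-categories that are quasi-equivalent but not weakly equivalent.

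\emph{Descent, essential surjectivity, and the Whitehead-type lemma.} On strictly unital $\Ain$-functors one has $\F\approx\G\Rightarrow[\F]=[\G]$ in $\Ho(\aCat)$ (the theorem of \cite{Orn2} recalled above), and on unital functors the analogue in $\Ho(\aCat^u)$ by Theorem \ref{patturzo}; hence $Q|_{\aCat_{\tiny\mbox{hps}}}$ factors as $\bar Q\colon\aCat_{\tiny\mbox{hps}}/\!\approx\ \to\ \Ho(\aCat)$, and likewise in the unital case. Essential surjectivity of $\bar Q$ is immediate from the split-unit h-projective resolution: for any $\A$ the quasi-equivalence $\Psi_\A\colon\A^{\tiny\mbox{sf}}\to\A$ (the $(-)^{\tiny\mbox{sf}}$ construction following Theorem \ref{risoluzioni}) gives $[\A]\cong\bar Q(\A^{\tiny\mbox{sf}})$, after first replacing $\A$ by a strictly unital model in the unital case. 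The crux is the \emph{Whitehead-type lemma}: if $\A,\B\in\aCat_{\tiny\mbox{hps}}$ and $\F\colon\A\to\B$ is a quasi-equivalence, then $\F$ is invertible in $\aCat_{\tiny\mbox{hps}}/\!\approx$, i.e.\ there is a strictly unital $\G\colon\B\to\A$ with $\G\cdot\F\approx\Id_\A$ and $\F\cdot\G\approx\Id_\B$. Since each $\F^1$ is a quasi-isomorphism of h-projective hom-complexes it is a homotopy equivalence, and the splitting of the units lets one pick a homotopy inverse compatible with the $e_x$; the higher components of $\G$ are assembled by the standard obstruction-theoretic (Yoneda-type) induction, strictly unitally, and the homotopies witnessing $\G\cdot\F\simeq\Id$, $\F\cdot\G\simeq\Id$ are promoted to the relation $\approx$ using the $\Ain$-structure $\mathfrak{M}^\bullet$ on $\Fun^{su}_{\infty}$ — the split-unit hypothesis being exactly what keeps every step inside $\Fun^{su}_{\infty}$ rather than only $\Fun^{u}_{\infty}$.

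\emph{Full faithfulness and the unital variant.} Given $\A,\B\in\aCat_{\tiny\mbox{hps}}$, a morphism $\A\to\B$ in $\Ho(\aCat)$ is a finite zig-zag of $\Ain$-functors whose wrong-way arrows are quasi-equivalences. Applying $(-)^{\tiny\mbox{sf}}$ (functorial up to $\approx$) moves the whole zig-zag into $\aCat_{\tiny\mbox{hps}}$, where by the previous step each wrong-way arrow becomes invertible modulo $\approx$; composing yields a single morphism in $\aCat_{\tiny\mbox{hps}}/\!\approx(\A,\B)$. That this assignment is well defined (independent of the representing zig-zag) and two-sided inverse to the map induced by $\bar Q$ is the usual bookkeeping for passing from a zig-zag localization to the homotopy category of bifibrant objects: it uses the Whitehead-type lemma at each inverted arrow together with the compatibility of $\approx$ with composition. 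Thus $\bar Q$ is an equivalence; running the identical argument with ``unital'' in place of ``strictly unital'' throughout gives $\Ho(\aCat^u)\simeq\aCat^u_{\tiny\mbox{hps}}/\!\approx$, and the equivalences of (\ref{categoriez}) transport both statements to all of the nine categories.

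\emph{The negative statement, and the main obstacle.} Finally $\aCat_{\tiny\mbox{hp}}/\!\approx$ is \emph{not} equivalent to $\Ho(\aCat)$: it is enough to produce h-projective $\Ain$-categories $\A\simeq\B$ with $\A\not\approx\B$, for then the induced functor $\aCat_{\tiny\mbox{hp}}/\!\approx\ \to\ \Ho(\aCat)$ would carry non-isomorphic objects to isomorphic ones, which an equivalence cannot do. I would take for $\B$ an h-projective strictly unital $\Ain$-category whose unit does not split (such exist over a suitable base ring $R$) and set $\A=\B^{\tiny\mbox{sf}}$; the quasi-equivalence $\Psi_\B\colon\B^{\tiny\mbox{sf}}\to\B$ shows $\A\simeq\B$, while a direct check shows there is no strictly unital $\Ain$-functor $\B\to\B^{\tiny\mbox{sf}}$ inverting $\Psi_\B$ up to $\approx$ — precisely because the Whitehead-type lemma demands a split-unital source and genuinely fails otherwise. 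The main obstacle of the whole proof is that lemma: producing a \emph{strictly unital} quasi-inverse and upgrading the witnessing homotopies to $\approx$ inside $\Fun^{su}_{\infty}$ is where h-projectivity (to homotopy-invert $\F^1$) and the splitting of the unit are both indispensable, and it is simultaneously what makes the negative statement true rather than false.
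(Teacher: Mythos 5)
This theorem is imported from \cite{COS2}; the paper gives no proof of it, so there is no internal argument to compare yours against. Judged on its own terms, the positive half of your sketch is sound and is essentially a repackaging of results already quoted in the background section: essential surjectivity of $\bar Q$ follows from the functorial resolutions $\A^{\tiny\mbox{sf}}$ (split unit, h-projective) and $\A^{\tiny\mbox{cm}}$, and full faithfulness is literally the statement of the theorem from \cite{Orn2} recorded just before Theorem \ref{fofofofofff}, namely $\Ho(\aCat)(\A,\B)\simeq\aCat(\A,\B)/_{\approx}$ when $\A$ is strictly unital with split unit (resp.\ $\simeq\aCat^u(\A,\B)/_{\approx}$ when $\A$ is h-projective unital). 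Your Whitehead-type lemma and zig-zag bookkeeping re-derive this; you could instead just cite it, though you should be careful that a quasi-equivalence need not be bijective on objects, so inverting $\F^1$ hom-complex-wise does not immediately produce $\G$ (Theorem \ref{DC} requires $\F^0$ bijective).

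The genuine gap is in the negative statement, which is the actual content separating $\aCat_{\tiny\mbox{hp}}/\!\approx$ from $\aCat_{\tiny\mbox{hps}}/\!\approx$. You assert that for a suitable h-projective strictly unital $\B$ with non-split unit, ``a direct check shows'' there is no strictly unital $\Ain$-functor $\B\to\B^{\tiny\mbox{sf}}$ inverting $\Psi_{\B}$ up to $\approx$. No such check is given, and non-splitness of the unit is not by itself an obstruction: one must exhibit a concrete cohomological or module-theoretic invariant that distinguishes $\aCat(\B,-)/_{\approx}$ from $\Ho(\aCat)(\B,-)$, i.e.\ show that the inclusion $\Fun^{su}_{\infty}(\B,\C)\hookrightarrow\Fun^{u}_{\infty}(\B,\C)$ genuinely fails to be a quasi-equivalence for some $\C$, over some base ring $R$. (Even producing a strictly unital h-projective category whose unit does not split already requires a nontrivial construction; over a field or more generally whenever $R\cdot e_x$ is free of rank one and a graded direct summand, it cannot happen.) There is also a logical slippage: exhibiting quasi-equivalent objects that are not isomorphic in $\aCat_{\tiny\mbox{hp}}/\!\approx$ only shows that the \emph{natural} functor to $\Ho(\aCat)$ is not an equivalence, not that the two categories are abstractly inequivalent; the statement should be read (and proved) in the former sense. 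As it stands, the half of the theorem that motivates introducing the split-unit condition at all is asserted rather than proved.
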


On the other hand, we have the following result
\begin{thm}[\cite{Orn2}]
If $\A$ is h-projective unital $\Ain$-category we have an isomorphism of sets:
\begin{align*}
\Ho(\aCat)(\A,\B)\simeq\aCat^u(\A,\B)/_{\approx}.
\end{align*}
Moreover, if $\A$ is strictly unital with split unit $\Ain$-category (e.g. a semi-free $\Ain$-category) then:
\begin{align*}
\Ho(\aCat)(\A,\B)\simeq\aCat(\A,\B)/_{\approx}.
\end{align*}
\end{thm}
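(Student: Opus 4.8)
The plan is to exhibit the asserted bijection as the map induced by the localization functor, and to reduce both its injectivity and its surjectivity to a single lifting principle for h-projective $\Ain$-categories. Throughout I fix an h-projective unital $\Ain$-category $\A$; since $\A$ need not be strictly unital I read $\Ho(\aCat)$ via the equivalence $\Ho(\aCat)\simeq\Ho(\aCat^{u})$ of \cite{COS2}, and I write $\gamma\colon\aCat^{u}\to\Ho(\aCat^{u})$ for the localization. First I would set
\[
\Theta\colon \aCat^{u}(\A,\B)/_{\approx}\longrightarrow\Ho(\aCat)(\A,\B),\qquad \Theta([\F]):=\gamma(\F),
\]
which is well defined because $\F\approx\G$ forces $[\F]=[\G]$ in $\Ho(\aCat^{u})$ (Theorem \ref{patturzo}). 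It then remains to prove that $\Theta$ is bijective, in parallel with the DG-category case.

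The technical heart is the following \emph{Key Lemma}: if $\A$ is h-projective unital and $p\colon\C\to\mathscr{D}$ is a quasi-equivalence which is surjective on objects and on every morphism complex, then post-composition with $p$ induces a bijection $\aCat^{u}(\A,\C)/_{\approx}\xrightarrow{\ \sim\ }\aCat^{u}(\A,\mathscr{D})/_{\approx}$. I would prove this by obstruction theory in the arity. Write $\mathscr{K}=\ker p^{1}$; since $p^{1}$ is a surjective quasi-isomorphism every morphism complex of $\mathscr{K}$ is acyclic, and since the morphism complexes of $\A$ are h-projective, every complex of the form $\Hom\!\big(\A(x_{n-1},x_{n})\otimes\cdots\otimes\A(x_{0},x_{1}),\,\mathscr{K}(\cdot,\cdot)\big)$ is acyclic. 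Given a unital $\Ain$-functor $\F\colon\A\to\mathscr{D}$, one builds a lift $\widetilde\F$ by choosing the $\widetilde\F^{n}$ inductively: having fixed $\widetilde\F^{<n}$, pick any $p^{1}$-preimage of $\F^{n}$; by the standard obstruction identity for $\Ain$-functors its defect is a $\mathfrak{M}^{1}$-cocycle valued in $\mathscr{K}$, hence a coboundary by the acyclicity just noted, and correcting by the corresponding homotopy (which lies in $\mathscr{K}$, so leaves $p^{1}\widetilde\F^{n}=\F^{n}$ untouched) yields $\widetilde\F^{n}$. The same induction run relatively, between two lifts of a fixed $\F$, shows the lift is unique up to a $\mathfrak{M}^{1}$-homotopy, i.e. up to $\approx$; equivalently one may package the whole statement as: for $\A$ h-projective the $\Ain$-functor $\Fun^{u}_{\infty}(\A,\C)\to\Fun^{u}_{\infty}(\A,\mathscr{D})$ induced by $p$ is a quasi-equivalence. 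I expect this lemma to be the main obstacle — it is the only place where one actually manipulates the $\Ain$-relations and the Koszul signs. A closely related consequence I would also record is that any quasi-equivalence between h-projective $\Ain$-categories is a homotopy equivalence, admitting an $\Ain$-functor quasi-inverse well defined up to $\approx$.

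For surjectivity of $\Theta$, represent a morphism of $\Ho(\aCat^{u})(\A,\B)$ by a finite zig-zag $\A=\mathscr{X}_{0}\to\mathscr{X}_{1}\leftarrow\mathscr{X}_{2}\to\cdots\to\mathscr{X}_{n}=\B$ whose backward arrows are quasi-equivalences. Apply the functorial-up-to-$\approx$ h-projective resolution $(-)^{\mathrm{cm}}$ of Theorem \ref{risoluzioni} to every $\mathscr{X}_{i}$ with $i\geq 1$, leaving the already h-projective $\A$ untouched, and lift the first arrow $\A\to\mathscr{X}_{1}$ through the surjective quasi-equivalence $\Psi_{1}\colon\mathscr{X}_{1}^{\mathrm{cm}}\to\mathscr{X}_{1}$ by the Key Lemma; the functoriality of the resolution supplies the remaining arrows together with a ladder of squares commuting up to $\approx$, so the resolved zig-zag represents the same morphism. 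All its objects are now h-projective, hence each backward arrow is a homotopy equivalence; replacing it by (the composite with) an $\Ain$-functor quasi-inverse collapses the zig-zag to a single unital $\Ain$-functor $\A\to\B$ whose $\gamma$-class is the prescribed morphism.

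For injectivity, suppose $\gamma(\F)=\gamma(\G)$. Post-composing with the surjective quasi-equivalence $\Psi_{\B}\colon\B^{\mathrm{cm}}\to\B$ and using the Key Lemma to lift $\F,\G$ to $\F',\G'\colon\A\to\B^{\mathrm{cm}}$ with $\Psi_{\B}\F'\approx\F$, $\Psi_{\B}\G'\approx\G$, together with the invertibility of $\gamma(\Psi_{\B})$, gives $\gamma(\F')=\gamma(\G')$ with both $\A$ and $\B^{\mathrm{cm}}$ h-projective; in that situation equality in the localization is exactly $\F'\approx\G'$, since for an h-projective (hence cofibrant) source a class in $\Ho(\aCat^{u})$ determines the $\Ain$-functor up to $\approx$ — this is the content of the homotopy-theoretic package of \cite{COS2}, \cite{Orn2} (compare Theorem \ref{fofofofofff}) — whence $\F\approx\G$. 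This proves the first isomorphism. For the second, assume moreover $\A$ is strictly unital with split unit (as for a semi-free $\A$) and $\B$ is strictly unital. By the first part $\Ho(\aCat)(\A,\B)\cong\aCat^{u}(\A,\B)/_{\approx}$, so it is enough to see that the inclusion of strictly unital $\Ain$-functors among unital ones is a bijection after $/_{\approx}$. But $\aCat(\A,\B)/_{\approx}$ and $\aCat^{u}(\A,\B)/_{\approx}$ are the sets of isomorphism classes of $H^{0}(\Fun^{su}_{\infty}(\A,\B))$ and $H^{0}(\Fun^{u}_{\infty}(\A,\B))$ respectively, and the inclusion $\Fun^{su}_{\infty}(\A,\B)\hookrightarrow\Fun^{u}_{\infty}(\A,\B)$ is a quasi-equivalence because $\A$ has split unit \cite{Lyu}; passing to $H^{0}$ gives an equivalence of categories, hence the required bijection on isomorphism classes.
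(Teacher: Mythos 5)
First, note that the paper does not actually prove this statement: it is quoted verbatim from \cite{Orn2}, so there is no internal proof to compare against. Your outline does reconstruct the standard architecture used in \cite{COS}, \cite{COS2} and \cite{Orn2} --- a lifting/obstruction lemma for h-projective sources along surjective quasi-equivalences, collapse of zig-zags after resolving, and reduction of the strictly unital case to the unital one via the quasi-equivalence $\Fun^{su}_{\infty}(\A,\B)\hookrightarrow\Fun^{u}_{\infty}(\A,\B)$ for split-unit $\A$. The Key Lemma, the surjectivity argument, and the second half (where you correctly isolate the split-unit hypothesis as the point where Theorem \ref{fofofofofff} would otherwise fail) are all sound in outline.

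The one genuine gap is the injectivity step, which as written is circular. After reducing to $\F',\G'\colon\A\to\B^{\mathrm{cm}}$ with both ends h-projective, you assert that ``equality in the localization is exactly $\F'\approx\G'$, since for an h-projective source a class in $\Ho(\aCat^{u})$ determines the $\Ain$-functor up to $\approx$.'' That assertion \emph{is} the injectivity of $\Theta$, i.e.\ the very statement under proof; citing \cite{Orn2} or Theorem \ref{fofofofofff} here just defers to the theorem itself. (The parenthetical ``hence cofibrant'' does not rescue it: the paper only records that cofibrant implies h-projective, not the converse, so no model-categorical generality about cofibrant objects is available.) The standard non-circular repair is to build a retraction out of the localization: use the functorial-up-to-$\approx$ resolution of Theorem \ref{risoluzioni} to define a functor $R\colon\aCat^{u}\to\aCat^{u}_{\tiny\mbox{hp}}/\approx$, $\A\mapsto\A^{\mathrm{cm}}$, $\F\mapsto[\F^{\mathrm{cm}}]$; your Key Lemma (Whitehead form) shows $R$ sends quasi-equivalences to isomorphisms, so $R$ factors as $\bar R\colon\Ho(\aCat^{u})\to\aCat^{u}_{\tiny\mbox{hp}}/\approx$. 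One then checks that for h-projective $\A$ the composite $\bar R\circ\Theta$ is (up to the isomorphism $[\Psi_{\A}]$, which is invertible in $\aCat^{u}_{\tiny\mbox{hp}}/\approx$ by the Key Lemma) the identity on $\aCat^{u}(\A,\B)/\approx$, which gives injectivity of $\Theta$ honestly. With that replacement your sketch becomes a complete proof strategy.
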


\newpage

\section{Homological Perturbation Theory and Main Construction}\label{TENSHPL}

In this section we give some important properties of the $\Ain$-categories.

\begin{defn}[Formal diffeomorphism]
Let $\mathsf{Q}$ be a graded quiver, a \emph{formal diffeomorphism} $\phi=\mathcal{f} \phi_1, \phi_2, ... \mathcal{g}$ is a family of multilinear maps 
\begin{align*}
\phi_{n}:\mathsf{Q}(x_{n-1},x_n)\otimes...\otimes\mathsf{Q}(x_0,x_1)&\to \mathsf{Q}(x_0,x_n)[1-n],
\end{align*}
for every sequence of objects $x_0,...x_n\in\mathsf{Q}$ and $n\ge1$, such that $\phi_1$ is an automorphism.
\end{defn}

We fix a graded quiver $\mathsf{Q}$, we have a map of sets:
\begin{align*}
\xymatrix{
\Biggl\{ \substack{\mbox{Formal Diffeomorphism} \\ \mbox{on $\mathsf{Q}$}}\Biggl\}\times \Biggl\{ \substack{\mbox{$\Ain$-structure} \\ \mbox{on $\mathsf{Q}$}}\Biggl\}\ar[r]^{}&\Biggl\{ \substack{\mbox{$\Ain$-structure} \\ \mbox{on $\mathsf{Q}$}}\Biggl\}
}
\end{align*}
Let $\A$ be an $\Ain$-structure on $\mathsf{Q}$ and $\phi$ a formal diffeomorphism on $\mathsf{Q}$ then we can find an $\Ain$-structure $\phi_{*}\A$ on $\mathsf{Q}$ making $\phi:\A\to\phi_{*}\A$ an $\Ain$-functor.\\
\\
The set of formal diffeomorphisms is a group taking as a composition the one in (\ref{compo}).

\begin{thm}\label{equivalone}
Let $\F:\A\to\B$ be an $\Ain$-functor then $\F$ is an equivalence of $\Ain$-categories if and only if $\F^1:|\A|\to|\B|$ is an equivalence of DG quivers.
\end{thm}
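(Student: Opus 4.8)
The plan is to prove both implications, with the forward direction being essentially formal and the reverse direction being the substantive one. For the forward direction, suppose $\F:\A\to\B$ is an equivalence, so there is an $\Ain$-functor $\G:\B\to\A$ with $\G\cdot\F=\Id_\A$ and $\F\cdot\G=\Id_\B$. Reading off the degree-$1$ component of the composition formula $(\ref{compo})$, one sees that $(\G\cdot\F)^1=\G^1\cdot\F^1$ and $(\Id_\A)^1=\id$, so $\G^1\cdot\F^1=\id_{|\A|}$ and similarly $\F^1\cdot\G^1=\id_{|\B|}$. Since $\F^1$ is by definition a chain map $|\A|\to|\B|$ (the $\Ain$-functor relation in degree $1$ says exactly that $\F^1$ commutes with $m^1$), we get that $\F^1$ is an isomorphism of DG-quivers, in particular an equivalence.

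For the reverse direction, assume $\F^1:|\A|\to|\B|$ is an equivalence of DG-quivers. First I would reduce to the case where $\F^1$ is an \emph{isomorphism} of DG-quivers: since "equivalence of DG-quivers" should here mean bijective on objects and an isomorphism on each hom-complex (the notion for which Theorem \ref{equivalone} can hold, as a genuine equivalence up to quasi-isomorphism would only give quasi-equivalence, not strict equivalence), $\F^0$ is a bijection and we may identify the underlying graded quivers of $\A$ and $\B$ along $\F$. Then $\F$ becomes a formal diffeomorphism in the sense of the Definition just stated: its first component $\F^1$ is an automorphism of the graded quiver $\mathsf{Q}:=|\A|=|\B|$ (graded; the differentials need not match, but that is irrelevant to being a formal diffeomorphism). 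By the paragraph following the definition of formal diffeomorphism, the set of formal diffeomorphisms on $\mathsf{Q}$ forms a group under the composition $(\ref{compo})$, and $\F_*$ acts on $\Ain$-structures. Concretely, $\F:\A\to\F_*\A$ is an $\Ain$-functor for the transported structure $\F_*\A$; but $\F$ is \emph{also} given to be an $\Ain$-functor $\A\to\B$, and since an $\Ain$-functor determines the target $\Ain$-structure on a fixed graded quiver once the functor components and source are fixed (this is exactly what the $\Ain$-functor relations say, solved for $m^n_{\B}$ inductively in $n$), we conclude $\B=\F_*\A$.

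It then remains to invert $\F$ as an $\Ain$-functor. Let $\phi$ denote $\F$ viewed as a formal diffeomorphism; since formal diffeomorphisms form a group, there is $\psi$ with $\phi\cdot\psi=\psi\cdot\phi=\Id$ as formal diffeomorphisms (equivalently, in the monoid of $\Ain$-functors with strict identities). Set $\G:=\psi$, regarded as a map $\B=\F_*\A\to\A$. The key check is that $\G$ is an $\Ain$-functor $\B\to\A$, i.e. that it intertwines $m_\B$ and $m_\A$; but $m_\B=\F_*(m_\A)=\phi_*(m_\A)$, so $\psi_*(m_\B)=\psi_*\phi_*(m_\A)=(\psi\cdot\phi)_*(m_\A)=m_\A$, which is exactly the statement that $\psi=\G$ is an $\Ain$-functor $\B\to\A$. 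Then $\G\cdot\F$ and $\F\cdot\G$ are $\Id$ on the nose, since this already holds at the level of the group of formal diffeomorphisms, so $\F$ is an equivalence of $\Ain$-categories. The main obstacle is the bookkeeping in the reduction step: making precise that "equivalence of DG-quivers" is the strict (isomorphism-type) notion and that the target $\Ain$-structure is genuinely forced, so that the group-theoretic inversion of formal diffeomorphisms can be imported verbatim; once that is set up, the inversion itself is purely formal.
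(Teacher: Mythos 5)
Your proposal is correct and follows essentially the same route as the paper: the paper's one-line proof (``unwind formula (\ref{compo}) to find the inverse'') is precisely the inversion in the group of formal diffeomorphisms that the paper sets up immediately before the statement, and your argument just makes that explicit, including the observation that $\B=\F_*\A$ is forced and that the group inverse is automatically an $\Ain$-functor. You also correctly pin down the only delicate point, namely that ``equivalence of DG quivers'' must be read as an isomorphism (bijective on objects, isomorphism on hom complexes) for the strict statement to hold.
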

\begin{proof}
We need to prove that, if $\F^1$ is an equivalence of DG-quivers then $\F$ is an equivalence of $\Ain$-categories.\ 
To find the inverse it suffices to unwind formula (\ref{compo}). 
\end{proof}


\begin{thm}[]\label{HPT}
Let $\B$ be an $\Ain$-category, and $\Q$ a DG-quiver with the same objects of $\B$.\ Suppose that for every objects $x,y\in\B$ we have:
\begin{align*}
\xymatrix{
{\Q}(x,y)\ar@<0.5ex>[r]^-{f^1}&\ar@<0.5ex>[l]^-{g^1} {\B}(x,y) \ar@(r,d)[]^{T^1}
}
\end{align*}
where $f^1$, $g^1$ are chain maps and $T^1$ is a morphism of $R$-modules of degree $-1$ such that:
\begin{align*}
m_{\B}^1T^1 + T^1m_{\Q}^1=f^1\cdot g^1-\Id_{{\B}(x,y)}.
\end{align*}
There exists an $\Ain$-structure on $\Q$, two $\Ain$-functors $\F:=(f^0,f^1,\F^2,...)$, $\G:=(g^0,g^1,\G^2,...)$ and a prenatural transformation $T=(T^0,T^1,...)$ such that 
$$\F\cdot\G-\Id_{\B}=\mathfrak{M}^1(T).$$
\end{thm}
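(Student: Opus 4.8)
The plan is to prove Theorem \ref{HPT} as a categorical version of the Homological Perturbation Lemma, realized through the bar construction. The essential observation is that an $\Ain$-structure on a graded quiver $\mathsf{Q}$ is the same datum as a coderivation $b$ of square zero on the tensor coalgebra $T(\mathsf{Q}[1])$ (the reduced bar construction), an $\Ain$-functor is a counit-preserving coalgebra morphism commuting with the codifferentials, and a prenatural transformation is a coderivation of the appropriate bimodule-type, with the homotopy relation $\F\cdot\G-\Id=\mathfrak{M}^1(T)$ becoming an honest chain homotopy at the level of coalgebra morphisms. So the first step is to set up this bar-side dictionary carefully, in the multi-object setting, observing that everything is "local" in the pair of objects $(x,y)$ exactly because the side data $f^1,g^1,T^1$ are given object-wise with $f^0=g^0$ the identity on objects (so $\F^0=\G^0$ are forced to be the identity on $\mathrm{Ob}(\B)$; this is what makes $\mathfrak{M}^1(T)$ even make sense).

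Next I would invoke the classical Homological Perturbation Lemma / Homotopy Transfer Theorem in the following form. We start from the situation where $(\mathsf{Q},m^1_{\mathsf{Q}})$ and $(\B,m^1_{\B})$ are just DG-quivers with the given contraction data $f^1,g^1,T^1$ satisfying $m^1_{\B}T^1+T^1 m^1_{\mathsf{Q}}=f^1 g^1-\Id_{\B}$ (after possibly applying the standard trick to also arrange $T^1 g^1=0$, $f^1 T^1=0$, $(T^1)^2=0$, the "side conditions," which costs nothing). Now the higher products $m^{\ge2}_{\B}$ constitute a perturbation $\delta$ of the codifferential $b^{(1)}_{\B}$ on $T(\B[1])$ that is "small" in the sense that it raises bar-length, hence the geometric series appearing in the perturbed data converges on each finite filtration piece $T^{\le N}$. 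The Perturbation Lemma then outputs: a perturbed codifferential $b_{\mathsf{Q}}$ on $T(\mathsf{Q}[1])$ (this is the transferred $\Ain$-structure on $\mathsf{Q}$), perturbed coalgebra maps $F\colon T(\mathsf{Q}[1])\to T(\B[1])$ and $G\colon T(\B[1])\to T(\mathsf{Q}[1])$ whose linear parts are $f^1,g^1$ (these are the $\Ain$-functors $\F,\G$), and a perturbed homotopy $T$ with linear part $T^1$ satisfying $F G-\Id_{T(\B[1])}=b_{\B}T+T b_{\B}$ — which, read in components, is exactly $\F\cdot\G-\Id_{\B}=\mathfrak{M}^1(T)$, since $\mathfrak{M}^1$ is by definition the differential on prenatural transformations induced by the bar differentials. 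I would write down the explicit recursive formulas $b_{\mathsf{Q}}=\sum_{k\ge0}(-1)^k\,(\text{stuff with }T^1,\delta)$, etc., at least schematically, and note that the sums are finite in each arity so no completeness hypothesis on $R$ is needed.

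Finally I would check that the transferred data lands in the non-unital $\Ain$ world with no unitality assumed (consistent with the statement, which just says "$\Ain$-category"), and that $T^0=0$ can be arranged — indeed it is automatic here since $f^0,g^0$ are identities and there is no object-level component to perturb, so $\mathfrak{M}^1(T)$ in the statement is the version with $T^0=0$ used elsewhere in the paper. The main obstacle, and the part deserving genuine care rather than a citation, is the bookkeeping of signs and of the multi-object (quiver, not algebra) indexing when translating the abstract Perturbation Lemma on tensor coalgebras into the componentwise formulas for $\F^n,\G^n,T^n$ and verifying term-by-term that they satisfy the $\Ain$-functor equations and the homotopy identity; the convergence/finiteness issue is real but routine once one filters by bar-length. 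An alternative, more self-contained route avoiding the coalgebra language is to define $b_{\mathsf{Q}},\F,\G,T$ by summing over planar trees with leaves decorated by $m^{\ge1}_{\B}$, internal edges by $T^1$, the root by $f^1$ and the inputs by $g^1$ (the standard tree formulas), and then prove the relations by the usual "cancel adjacent terms / $T^1$-insertion telescoping" argument; I would likely present this tree version for concreteness and remark that it coincides with the Perturbation Lemma output.
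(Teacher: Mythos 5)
Your proposal is correct and essentially coincides with the paper's treatment: the paper establishes Theorem \ref{HPT} by citing Seidel \cite[(1.18)]{Sei} and then writing down exactly the recursive/tree formulas for $\F^n$ and $m^n_{\Q}$ (sums of $T^1$, resp.\ $g^1$, applied to $m^r_{\B}$ of lower $\F^{s_i}$'s) that you describe as your ``alternative, more self-contained route.'' The one point to watch in your primary, coalgebra-level Perturbation Lemma phrasing is that the hypothesis here is one-sided --- nothing is assumed about $g^1\cdot f^1$, so $(f^1,g^1,T^1)$ is not a deformation retract and the standard side-condition trick is not literally available --- but the recursive/telescoping construction you offer as a fallback does not need it, so this is a matter of presentation rather than a gap.
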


Theorem \ref{HPT} corresponds to \cite[(1.18)]{Sei}.\
Seidel provides an explicit description of the $\Ain$-functor $\F$ and the $\Ain$-structure $m_{\Q}$ given, for any $n\ge 2$, by the recursive formulas:
\begin{align*}
\F^n(f_n,...,f_1):=\displaystyle\sum_{r=2}^n\sum_{s_1,...,s_r=n} T^1\big( m^r_{\B}(\F^{s_r}(f_{n},...,f_{n-s_r+1}),...,\F^{s_1}(f_{s_1},...,f_1)),
\end{align*}
and
\begin{align*}
m_{\Q}^n(f_n,...,f_1):=\displaystyle\sum_{r=2}^n\sum_{s_1,...,s_r=n} \G^1\big( m^r_{\B}(\F^{s_r}(f_{n},...,f_{n-s_r+1}),...,\F^{s_1}(f_{s_1},...,f_1)).
\end{align*}
Where $f_j\in\Q(x_{j-1},x_j)$ for any $j=1,...,n$.\\


We also recall \cite[8.8 Theorem]{Lyu}: 

\begin{thm}\label{DC}
Let $\mathscr{F}$ be a unital $\Ain$-functor $\F:\A\to\B$ where $\A$ and $\B$ two unital $\Ain$-categories.\
We assume the following:
\begin{itemize}
\item[1.] {$\mathscr{F}^0:\mbox{Ob}(\A)\to\mbox{Ob}(\B)$} is a bijection of sets.
\item[2.] For every two objects $a$ and $a'$ in $\A$, the map of chain complexes:
{$$\mathscr{F}^1:\A(a,a')\to\B(\mathscr{F}^0(a),\mathscr{F}^0(a'))$$}
has a homotopy inverse $\mathscr{G}^1$ with homotopy $T^1$: 
{$${g}^1:\B(\mathscr{F}^0(a),\mathscr{F}^0(a'))\to\A(a,a').$$}
\end{itemize}
Setting ${g}^0=(\mathscr{F}^0)^{-1}$, we can extend ${g}^1$ to a unital $\Ain$-functor $\mathscr{G}:\B\to\A$ such that $\mathscr{F}$ and $\mathscr{G}$ are quasi-inverse to each other.\ Namely {$\mathscr{G}\cdot\mathscr{F}\approx\Id_{\mathscr{A}}$} and {$\mathscr{F}\cdot\mathscr{G}\approx\Id_{\mathscr{B}}$}.
\end{thm}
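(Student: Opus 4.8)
Since the statement is Lyubashenko's \cite[8.8 Theorem]{Lyu} one may simply invoke it; but here is how I would reprove it with the material above. Were $\mathscr{F}^{1}$ an \emph{isomorphism} of DG-quivers, Theorem \ref{equivalone} would produce a strict inverse immediately, so the whole content lies in treating a genuine chain homotopy equivalence, and for that it is essential that $\mathscr{F}^{1}$ is a \emph{homotopy} equivalence and not merely a quasi-isomorphism. The plan is: (1) run the homological-perturbation theorem (Theorem \ref{HPT}) on the homotopy-equivalence data to get auxiliary $\Ain$-functors that are inverse to one another up to $\sim$; (2) match these with $\mathscr{F}$ by formal diffeomorphisms; (3) assemble $\mathscr{G}$ and extract the two relations.

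For (1): as $g^{1}$ is a homotopy inverse of $\mathscr{F}^{1}$, after the standard normalisation of the homotopies we have $\mathscr{F}^{1}g^{1}-\Id_{\mathscr{B}}=m^{1}_{\mathscr{B}}T^{1}+T^{1}m^{1}_{\mathscr{B}}$ and $g^{1}\mathscr{F}^{1}-\Id_{\mathscr{A}}=m^{1}_{\mathscr{A}}S^{1}+S^{1}m^{1}_{\mathscr{A}}$. Applying Theorem \ref{HPT} twice --- once to $\mathscr{B}$ and the DG-quiver $|\mathscr{A}|$, once to $\mathscr{A}$ and the DG-quiver $|\mathscr{B}|$ --- yields $\Ain$-categories $\mathscr{A}',\mathscr{B}'$ with underlying DG-quivers $|\mathscr{A}|,|\mathscr{B}|$, together with $\Ain$-functors $\mathbf{F}\colon\mathscr{A}'\to\mathscr{B}$, $\mathbf{G}\colon\mathscr{B}\to\mathscr{A}'$, $\mathbf{F}'\colon\mathscr{A}\to\mathscr{B}'$, $\mathbf{G}'\colon\mathscr{B}'\to\mathscr{A}$ with $\mathbf{F}^{1}=(\mathbf{F}')^{1}=\mathscr{F}^{1}$, $\mathbf{G}^{1}=(\mathbf{G}')^{1}=g^{1}$, object parts induced by $\mathscr{F}^{0}$, and prenatural transformations exhibiting $\mathbf{F}\cdot\mathbf{G}\sim\Id_{\mathscr{B}}$ and $\mathbf{G}'\cdot\mathbf{F}'\sim\Id_{\mathscr{A}}$. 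Since $\mathscr{A},\mathscr{B},\mathscr{F}$ are unital and (cohomology) units are unique up to $m^{1}$, the data $g^{1},T^{1},S^{1},\dots$ may be taken to preserve units, so $\mathbf{F},\mathbf{G},\mathbf{F}',\mathbf{G}'$ and all the prenatural transformations are unital and everything below stays inside $\Fun^{u}_{\infty}$.

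For (2) and (3): $\mathscr{A}$ and $\mathscr{A}'$ share the DG-quiver $|\mathscr{A}|$, and $\mathscr{F},\mathbf{F}$ agree on objects and linear parts, and similarly for $\mathscr{B},\mathscr{B}'$ and $\mathscr{F},\mathbf{F}'$. By induction on the arity I would construct formal diffeomorphisms $\Psi$ on $|\mathscr{A}|$, $\Phi$ on $|\mathscr{B}|$ with $\Psi^{1}=\Phi^{1}=\Id$, giving $\Ain$-isomorphisms $\Psi\colon\mathscr{A}\to\mathscr{A}'$, $\Phi\colon\mathscr{B}\to\mathscr{B}'$ with $\mathbf{F}\cdot\Psi\sim\mathscr{F}$ and $\Phi\cdot\mathscr{F}\sim\mathbf{F}'$: at each step the component to be produced solves a cocycle equation in a complex of the shape $\Hom_{R}\big(\bigotimes_{i}\mathscr{A}(x_{i-1},x_{i}),\mathscr{B}(\mathscr{F}^{0}x_{0},\mathscr{F}^{0}x_{n})\big)$ (or its $|\mathscr{B}|$-counterpart), and its obstruction class is killed by the isomorphism on cohomology induced by $\mathscr{F}^{1}$ --- this is where the homotopy-equivalence hypothesis is indispensable, since then $\Hom_{R}(E,\mathscr{F}^{1})$ and $\Hom_{R}(\mathscr{F}^{1},E)$ are homotopy equivalences for every complex $E$. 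Put $\mathscr{G}:=\mathbf{G}'\cdot\Phi\colon\mathscr{B}\to\mathscr{A}$ and $\mathscr{G}_{\circ}:=\Psi^{-1}\cdot\mathbf{G}\colon\mathscr{B}\to\mathscr{A}$ (both unital, with $\mathscr{G}^{0}=(\mathscr{F}^{0})^{-1}$, $\mathscr{G}^{1}=g^{1}$). Using that $\sim$ and $\approx$ are compatible with composition,
\[
\mathscr{G}\cdot\mathscr{F}=\mathbf{G}'\cdot(\Phi\cdot\mathscr{F})\sim\mathbf{G}'\cdot\mathbf{F}'\sim\Id_{\mathscr{A}},\qquad \mathscr{F}\cdot\mathscr{G}_{\circ}=(\mathscr{F}\cdot\Psi^{-1})\cdot\mathbf{G}\sim\mathbf{F}\cdot\mathbf{G}\sim\Id_{\mathscr{B}},
\]
whence $\mathscr{G}\approx\mathscr{G}\cdot(\mathscr{F}\cdot\mathscr{G}_{\circ})=(\mathscr{G}\cdot\mathscr{F})\cdot\mathscr{G}_{\circ}\approx\mathscr{G}_{\circ}$, so that $\mathscr{G}\cdot\mathscr{F}\sim\Id_{\mathscr{A}}$ and $\mathscr{F}\cdot\mathscr{G}\approx\mathscr{F}\cdot\mathscr{G}_{\circ}\sim\Id_{\mathscr{B}}$; since $\sim$ implies $\approx$, $\mathscr{G}$ is a quasi-inverse of $\mathscr{F}$.

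I expect step (2) to be the real obstacle: converting ``equal object and linear parts, a priori unequal higher structure'' into honest formal diffeomorphisms compatible with $\mathscr{F}$ up to homotopy forces one to run the obstruction calculus carefully in the coderivation/bar complex, and it genuinely uses that $\mathscr{F}^{1}$ is a homotopy equivalence (a mere quasi-isomorphism would not suffice). The two invocations of Theorem \ref{HPT} and the concluding bookkeeping are then routine; the only further point of vigilance is carrying unitality through every construction. This is, in substance, the argument of \cite[\S8]{Lyu}.
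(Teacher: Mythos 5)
The paper offers no proof of this statement at all: it is quoted as \cite[8.8 Theorem]{Lyu}, so the ``official'' argument is simply the citation, which you correctly identify as an option. Your independent sketch must therefore stand on its own, and it does not: steps (1) and (3) are fine (Theorem \ref{HPT} does produce $\mathscr{A}'$, $\mathscr{B}'$, $\mathbf{F}$, $\mathbf{G}$, $\mathbf{F}'$, $\mathbf{G}'$ with the stated one-sided homotopies, and the concluding bookkeeping deducing $\F\cdot\G\approx\Id_{\B}$ from $\G\cdot\F\sim\Id_{\A}$ and $\F\cdot\G_{\circ}\sim\Id_{\B}$ is correct), but step (2), which you yourself flag as the real obstacle, is precisely where the theorem lives, and your sketch of it does not close.

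Concretely: the only $\Ain$-functor $\A\to\mathscr{A}'$ that the construction actually hands you is $\mathbf{G}\cdot\F$, whose linear part is $g^{1}\F^{1}=\Id+m^{1}_{\A}S^{1}+S^{1}m^{1}_{\A}$; a map of this form is homotopic to the identity but need not be invertible (it can vanish on a contractible direct summand), so Theorem \ref{equivalone} does not apply to it and it is not a formal diffeomorphism. In the inductive construction of a $\Psi$ with $\Psi^{1}=\Id$ \emph{prescribed}, the obstruction at arity $n$ is a cocycle in a complex built from $\A$ alone; the hypothesis that $\F^{1}$ is a homotopy equivalence only lets you trivialize its image in the corresponding complex valued in $\B$, and pulling the primitive back perturbs the already-constructed lower components, so without a further argument you only get $\Psi^{1}$ \emph{homotopic} to $\Id$ --- at which point $\Psi$ is no longer invertible, $\Psi^{-1}$ (which you use to define $\G_{\circ}$) does not exist, and you are invoking the theorem you are trying to prove. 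Finally, ``the data may be taken to preserve units, so everything stays in $\Fun^{u}_{\infty}$'' is asserted, not proved: unitality in the sense of (\ref{unitrap}) is not automatically inherited through homotopy transfer (compare the paper's own observation that $\Un(\A)$ of a strictly unital $\A$ is only cohomologically unital), and establishing it is where much of the actual work in \cite[\S 8]{Lyu} lies. So either cite \cite[8.8 Theorem]{Lyu} and stop, or supply genuine proofs of the comparison step (2) and of unitality.
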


Assuming that $\B$ is a DG-category we can give an explicit description of the $\Ain$-functor $\G$ and the prenatural transformation $T$ of Theorem \ref{DC} as follows:
\begin{align*}
\G^n(f_1,...,f_n):=\G^1\big(\nu^n(f_n,...,f_1)\big),
\end{align*}
and 
\begin{align*}
T^n(f_1,...,f_n):=T^1\big(\nu^n(f_n,...,f_1)\big).
\end{align*}
Where 
\begin{align*}
\nu^n(f_n,...,f_1)&:=m_{\B}^2(T^{n-1}(f_n,...,f_2),f_1)\\
&+\displaystyle\sum^{n-1}_{j=1}m_{\B}^2((\F\cdot\G)^{n-j}(f_n,...,f_{n-j}),T^j(f_j,...,f_1))\\
&+\displaystyle\sum^{n-1}_{j=1}T^{n-1}(f_n,...,m_{\B}^2(f_{j+1},f_j),...,f_1).
\end{align*}
Where $f_j\in\Q(x_{j-1},x_j)$ for any $j=1,...,n$.




\subsection{Main Construction}

In this section we describe a general method to provide an $\Ain$-structure on the DG-quiver $(\A\otimes\B, m^1_{\A\otimes\B})$ 
where the differential $m^1_{\A\otimes\B}$ is the usual differential of DG complex.\
Now we can consider $\A$ and $\B$ as DG-quivers just forgetting the multilinear maps $m^{n>1}$.\\ 
This corresponds to $|\A|$ and $|\B|$ of \cite[(7)]{Orn1}.\ So we can form the DG-quiver $|\A|\otimes|\B|$:
\begin{itemize}
\item[1.] the objects are given by $\mbox{Obj}(\A)\times\mbox{Obj}(\B)$.
\item[2.] The morphisms are  
\begin{align*}
{\A\otimes\B}\big((a_1,b_1),(a_2,b_2)\big):={\A}(a_1,a_2)\oplus {\B}(b_1,b_2).
\end{align*}
\item[3.] The differential is given as follows:
\begin{align*}
m^1_{\A\otimes\B}(f,g):=m^1_{\A}(f)\otimes g +(-1)^{\tiny\mbox{deg}(f)} f\otimes m^1_{\B}(g).
\end{align*}
\end{itemize}

First we need a few of preliminaries.\ Lemma \ref{stupido} is a well known result, Lemmas \ref{stupidone} and \ref{stupidonzio} are new in the literature.\ 
They will be crucial to have a good definition of the tensor product of two $\Ain$-functors up to the homotopy relations 
$\sim$ and $\approx$.\

\begin{lem}\label{stupido}
We fix $X^{\bullet}$, $Y^{\bullet}$ and $Z^{\bullet}$ three chain complexes of $R$-modules.\\
Let $f,f':X^{\bullet}\to Y^{\bullet}$ and $g,g':Y^{\bullet}\to Z^{\bullet}$ be two morphisms of chain complexes.
\begin{itemize}
\item[1.] If $f\sim f'$ and $g\sim g'$ then $g\cdot f\sim g'\cdot f'$,
\item[2.] If $f\sim f'$ and $h,h':W^{\bullet}\to J^{\bullet}$ two morphisms of chain complexes such that $h\sim h'$
then $h\otimes f\sim h'\otimes f'$.
\end{itemize}
\end{lem}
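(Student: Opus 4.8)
The plan is to unwind what ``$\sim$'' means for morphisms of chain complexes: $f\sim f'$ means there is a map $H\colon X^{\bullet}\to Y^{\bullet}$ of degree $-1$ with $f-f'=d_YH+Hd_X$, and similarly $g-g'=d_ZK+Kd_Y$ in part (1) and $h-h'=d_JK'+K'd_W$ in part (2). Both parts then reduce to writing down an explicit combined homotopy and checking it works; this is routine, the only care being the Koszul signs in the differential of a tensor product of complexes, $d_{W\otimes X}(w\otimes x)=d_Ww\otimes x+(-1)^{\deg w}w\otimes d_Xx$.

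For part (1) I would begin from the telescoping identity $g\cdot f-g'\cdot f'=g\cdot(f-f')+(g-g')\cdot f'$. Substituting the two homotopies and pushing $g$ and $f'$ past the differentials — using $gd_Y=d_Zg$ and $d_Yf'=f'd_X$, valid because $g$ and $f'$ are chain maps — turns the first term into $d_Z(gH)+(gH)d_X$ and the second into $d_Z(Kf')+(Kf')d_X$. Adding, $g\cdot f-g'\cdot f'=d_Z(gH+Kf')+(gH+Kf')d_X$, so $gH+Kf'$ is the desired homotopy.

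For part (2) I would split $h\otimes f-h'\otimes f'=(h-h')\otimes f+h'\otimes(f-f')$ and handle the two summands separately. For the first, the claim is that the degree $-1$ map $K'\otimes f$ is a homotopy: evaluating $d(K'\otimes f)+(K'\otimes f)d$ on a generator $w\otimes x$, and using that $f$ is a chain map, the terms involving $d_Xx$ cancel — precisely because $\deg(K'w)=\deg w-1$ flips the relevant Koszul sign — leaving $(d_JK'+K'd_W)(w)\otimes f(x)=(h-h')(w)\otimes f(x)$. For the second summand the right candidate is $L(w\otimes x):=(-1)^{\deg w}h'(w)\otimes H(x)$; the analogous computation, now invoking that $h'$ is a chain map, gives $h'\otimes(f-f')=dL+Ld$. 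Hence $K'\otimes f+L$ exhibits $h\otimes f\sim h'\otimes f'$.

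The only genuine obstacle is the sign bookkeeping in part (2): one must track that $K'$ and $H$ have odd degree, so that inserting them shifts the Koszul signs, and it is exactly this shift that makes the cross terms cancel and that forces the factor $(-1)^{\deg w}$ in the definition of $L$. Everything else is purely formal, which is why the result is classical.
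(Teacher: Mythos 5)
Your proof is correct and follows essentially the same strategy as the paper: part (1) via a telescoping decomposition whose two step-homotopies sum to $gH+Kf'$, and part (2) by reducing to homotopies of the two tensor factors separately. The only cosmetic difference is that in part (2) the paper factors $h\otimes f$ through maps of the form $h\otimes\id$ and $\id\otimes f$ and then invokes part (1), whereas you write out the combined homotopy $K'\otimes f+L$ and check the Koszul signs directly; both yield the same homotopy up to the choice of telescoping.
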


\begin{proof}
Let $S$ be the homotopy of $f\sim f'$ and $T$ the homotopy of $g\sim g'$.
\begin{itemize}
\item[1.] As an intermediate step we prove $g\cdot f \sim g'\cdot f$.\\
To prove that we can choose as homotopy for example: 
\begin{align*}
H:=T\cdot f.  
\end{align*}
Then we can prove $g'\cdot f \sim g'\cdot f'$ taking the homotopy
\begin{align*}
H':=g'\cdot S.
\end{align*}
\item[2.] Let $R$ be the homotopy of $h\sim h'$.\ By the previous item it suffices to prove that $h\otimes \mbox{id}_{W^{\bullet}}\sim h'\otimes \mbox{id}_{W^{\bullet}}$ and $\mbox{id}_{J^{\bullet}}\otimes f\sim \mbox{id}_{J^{\bullet}}\otimes f'$.\ We can take the homotopy $R\otimes \mbox{id}_{W^{\bullet}}$ and $\mbox{id}_{J^{\bullet}}\otimes S$.
\end{itemize}
\end{proof}

\begin{lem}\label{stupidone}
Given three (non unital) DG-categories $\A$, $\B$ and $\C$ and two (non unital) DG-functors, 
$$\mathsf{F},\mathsf{G}:\A\to\B$$
if $\mathsf{F}\sim\mathsf{G}$ then $\mathsf{F}\otimes\Id_{\C}\sim\mathsf{G}\otimes\Id_{\C}$ and $\Id_{\C}\otimes\mathsf{F}\sim\Id_{\C}\otimes\mathsf{G}$.
\end{lem}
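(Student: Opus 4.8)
The plan is to reduce the statement to Lemma \ref{stupido} by understanding what a homotopy between DG-functors means at the level of hom-complexes, and then checking that tensoring with $\Id_{\C}$ interacts well with that data. Recall that $\mathsf{F}\sim\mathsf{G}$ means there is a prenatural transformation $T$ with $T^0=0$ and $\mathsf{F}-\mathsf{G}=\mathfrak{M}^1(T)$; since $\mathsf{F},\mathsf{G}$ are DG-functors (so $\mathsf{F}^{n\ge2}=\mathsf{G}^{n\ge2}=0$) and $T^0=0$, unwinding the formula for $\mathfrak{M}^1$ shows that the only surviving components force $T=(0,T^1,0,0,\dots)$ and that $T^1$ is, objectwise, a chain homotopy: for each pair $a,a'\in\A$ the map $T^1\colon\A(a,a')\to\B(\mathsf{F}^0 a,\mathsf{F}^0 a')[-1]$ satisfies $\mathsf{F}^1-\mathsf{G}^1=m^1_{\B}T^1+T^1 m^1_{\A}$, together with a compatibility with composition $m^2$ coming from the vanishing of the higher components of $\mathfrak{M}^1(T)$.

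First I would make this translation precise: I would write out $\mathfrak{M}^1(T)$ in the two cases above and observe that $\mathsf{F}\sim\mathsf{G}$ for DG-functors is equivalent to the existence of a collection of degree $-1$ maps $T^1_{a,a'}$ which is a chain homotopy between $\mathsf{F}^1$ and $\mathsf{G}^1$ and which additionally satisfies the "DG-natural homotopy" identity
\begin{align*}
T^1(g\cdot f)=\mathsf{F}^1(g)\cdot T^1(f)+(-1)^{\deg g}T^1(g)\cdot\mathsf{G}^1(f)
\end{align*}
(up to the sign conventions used in the paper), and $T^1(e_a)=0$ in the unital case. Then for $\mathsf{F}\otimes\Id_{\C}$ and $\mathsf{G}\otimes\Id_{\C}$, which are again DG-functors $\A\otimes\C\to\B\otimes\C$ agreeing on objects, I would simply propose the candidate homotopy
\begin{align*}
\widetilde T^1(f\otimes h):=(-1)^{?}\,T^1(f)\otimes h,
\end{align*}
extended $R$-linearly on the direct sum $(\A\otimes\C)((a_1,c_1),(a_2,c_2))=\bigoplus \A(a_1,a_2)\otimes\C(c_1,c_2)$, with the sign chosen to match (\ref{diiffo}); this is exactly the "$S\otimes\mathrm{id}$" homotopy used in item 2 of Lemma \ref{stupido}. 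I would then verify the chain-homotopy identity $m^1_{\B\otimes\C}\widetilde T^1+\widetilde T^1 m^1_{\A\otimes\C}=(\mathsf{F}\otimes\Id_{\C})^1-(\mathsf{G}\otimes\Id_{\C})^1$ by a direct sign computation using (\ref{diiffo}), and the multiplicativity identity for $\widetilde T^1$ using (\ref{compoz}) and the corresponding identity for $T^1$; in the unital case I would check $\widetilde T^1$ vanishes on $e_{(a,c)}=e_a\otimes e_c$, which is immediate since $T^1(e_a)=0$. The symmetric case $\Id_{\C}\otimes\mathsf{F}\sim\Id_{\C}\otimes\mathsf{G}$ is handled identically with the homotopy $h\otimes f\mapsto (-1)^{\deg h} h\otimes T^1(f)$, mirroring the "$\mathrm{id}\otimes S$" choice in Lemma \ref{stupido}.

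The main obstacle I anticipate is purely bookkeeping of signs: the Koszul signs in (\ref{diiffo}) and in the $\Ain$-sign conventions must be reconciled with the sign appearing in $\mathfrak{M}^1(T)$, so that the candidate $\widetilde T^1$ really satisfies all three identities simultaneously (chain homotopy, multiplicativity, unitality) with one consistent sign choice. A secondary point worth spelling out is that one must genuinely check that $\mathsf{F}\sim\mathsf{G}$ for DG-functors reduces to "$T$ concentrated in degree $1$ with $T^0=0$", i.e. that no higher $T^n$ can contribute — this follows because $\mathsf{F},\mathsf{G}$ have no higher components and $m^{n\ge3}_{\B}=0$, so every term of $\mathfrak{M}^1(T)$ in arity $\ge 2$ that could involve $T^{\ge 2}$ is forced to vanish, but it should be stated. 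Once these are in place, the lemma follows by assembling the pieces exactly as in the proof of Lemma \ref{stupido}, item 2.
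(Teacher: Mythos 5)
Your overall strategy (transport the homotopy across the tensor with $\Id_{\C}$) is the right one, but there is a genuine gap in your reduction step. You claim that $\mathsf{F}\sim\mathsf{G}$ between DG-functors forces the witnessing prenatural transformation to be of the form $T=(0,T^1,0,0,\dots)$ with $T^1$ a strictly multiplicative chain homotopy. This does not follow from the definition of $\sim$ used here: the homotopy is an \emph{arbitrary} prenatural transformation $T$ with $T^0=0$, and the equation $\mathsf{F}-\mathsf{G}=\mathfrak{M}^1(T)$ in arity $n\ge 2$ only imposes a relation of the shape
\begin{align*}
0=m^1_{\B}\,T^n+\textstyle\sum_j T^n(\dots,m^1_{\A}a_j,\dots)\pm m^2_{\B}\big(\mathsf{F}^1(a_n),T^{n-1}(\dots)\big)\pm m^2_{\B}\big(T^{n-1}(\dots),\mathsf{G}^1(a_1)\big)\pm\textstyle\sum_j T^{n-1}(\dots,m^2_{\A}(a_{j+1},a_j),\dots),
\end{align*}
which constrains the higher components but does not kill them: for $n=2$ it says precisely that $T^1$ is multiplicative \emph{up to the correction $T^2$}, not on the nose. (Indeed, replacing $T$ by $T+\mathfrak{M}^1(U)$ for any degree $-2$ prenatural transformation $U$ yields another valid homotopy, generally with nonzero higher components, so you cannot normalize $T$ to the special form without proving a separate rectification statement, which over a general ring $R$ is not available.) Consequently your candidate $\widetilde{T}=(0,\widetilde{T}^1,0,\dots)$ with $\widetilde{T}^1(f\otimes h)=\pm T^1(f)\otimes h$ fails the arity-$2$ component of the equation $\mathfrak{M}^1(\widetilde{T})=(\mathsf{F}-\mathsf{G})\otimes\Id_{\C}$ unless $T^1$ happens to be strictly multiplicative.

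The repair is to keep \emph{all} components of the homotopy, which is what the paper does: set
\begin{align*}
\widetilde{T}^n(a_n\otimes c_n,\dots,a_1\otimes c_1):=(-1)^n\,T^n(a_n,\dots,a_1)\otimes(c_n\cdot\ldots\cdot c_1)
\end{align*}
for every $n\ge 1$. Since $\C$ is a DG-category, the $\C$-components compose associatively through every term of $\mathfrak{M}^1$, and a direct computation gives $(\mathfrak{M}^1(\widetilde{T}))^n=(\mathfrak{M}^1(T))^n\otimes(c_n\cdot\ldots\cdot c_1)=(\mathsf{F}-\mathsf{G})^n(a_n,\dots,a_1)\otimes(c_n\cdot\ldots\cdot c_1)$, which is the required identity for $n=1$ and vanishes for $n\ge 2$ because $\mathsf{F},\mathsf{G}$ are DG-functors. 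Your sign bookkeeping, the unital remark, and the symmetric case $\Id_{\C}\otimes\mathsf{F}\sim\Id_{\C}\otimes\mathsf{G}$ all go through unchanged once the higher components are included.
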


\begin{proof}
If $\mathsf{F}\sim\mathsf{G}$ there exists $S$ such that 
\begin{align*}
\mathsf{F}-\mathsf{G}=\mathfrak{M}^1(S).
\end{align*}
We can define a prenatural transformation 
$$\tilde{S}:\mathsf{F}\otimes\Id_{\C}\Rightarrow\mathsf{G}\otimes\Id_{\C}$$ 
as follows:
\begin{align*}
\tilde{S}^n(a_1\otimes c_1,...,a_n\otimes c_n):=(-1)^n\big( S^n(a_1,...,a_n)\otimes (c_1\cdot...\cdot c_n)\big).
\end{align*}
We calculate:
\begin{align*}
\big(\mathfrak{M}^1(\tilde{S})\big)^1(a_1\otimes c_1)&=d_{\B\otimes\C}\big(\tilde{S}^1(a_1\otimes c_1)\big)+ \tilde{S}^1\big(d_{\A\otimes\C}(a_1\otimes c_1)\big) \\
&=d_{\B\otimes\C}\big({S}^1(a_1)\otimes c_1\big)+ \tilde{S}^1\big(d_{\A}(a_1)\otimes c_1+(-1)^{\tiny\mbox{deg}(c_1)} a_1\otimes d_{\C}(c_1)\big) \\
&=d_{\B}({S}^1(a_1))\otimes c_1+(-1)^{\tiny\mbox{deg}({c}_1)} {S}^1(a_1)\otimes d_{\C}(c_1) +\\
&+ \tilde{S}^1\big(d_{\A}(a_1)\otimes c_1+(-1)^{\tiny\mbox{deg}(c_1)} a_1\otimes d_{\C}(c_1)\big) \\
&=d_{\B}({S}^1(a_1))\otimes c_1 + {S}^1 (d_{\A}(a_1))\otimes c_1 \\
&=\big(\mathfrak{M}^1(S)\big)^1(a_1)\otimes c_1\\
&=(\mathsf{F}-\mathsf{G})^1(a_1)\otimes c_1\\
&=(\mathsf{F}\otimes\Id_{\C}-\mathsf{G}\otimes\Id_{\C})^1(a_1\otimes c_1).
\end{align*}
If $n>1$,
\begin{align*}
\big(\mathfrak{M}^1(\tilde{S})\big)^n(a_n\otimes c_n,...,a_1\otimes c_1)&= d_{\B\otimes\C}\big(\tilde{S}^n(a_n\otimes c_n,...,a_1\otimes c_1)\big)\\
&\pm\displaystyle\sum_{j=1}^n \tilde{S}^n\big(a_n\otimes c_n,...,d_{\A\otimes\C}(a_j\otimes c_j),...,a_1\otimes c_1\big) \\
&\pm \big(\mathsf{F}\otimes\Id\big) (a_n\otimes c_n)\cdot \tilde{S}^{n-1}(a_{n-1}\otimes c_{n-1},...,a_1\otimes c_1) \big)\\
&\pm \tilde{S}^{n-1}(a_n\otimes c_n,...,a_2\otimes c_2)\cdot (\mathsf{G}\otimes\Id_{\C})(a_1\otimes c_1)\\
&\pm\displaystyle\sum_{j=1}^{n-1} \tilde{S}^{n-1}\big(a_n\otimes c_n,...,\big((a_{j+1}\otimes c_{j+1})\cdot(a_j\otimes c_j)\big),...,a_1\otimes c_1\big) \\
&
\end{align*}
This is equal to 
\begin{align*}
\big(\mathfrak{M}^1(\tilde{S})\big)^n&(a_n\otimes c_n,...,a_1\otimes c_1)= d_{\B\otimes\C}\big({S}^n(a_n,...,a_1)\otimes (c_n\cdot ... \cdot c_1)\big)\\
&\pm\displaystyle\sum_{j=1}^n \tilde{S}^n \big((a_n\otimes c_n,...,d_{\A}(a_j)\otimes c_j,...,a_1\otimes c_1) + (a_n\otimes c_n,...,a_j\otimes d_{\C}(c_j),...,a_1\otimes c_1) \big) \\
&\pm \big(\mathsf{F}\otimes\Id\big) (a_n\otimes c_n)\cdot \big( {S}^{n-1}(a_{n-1},...,a_1)\otimes ( c_{n-1}\cdot...\cdot c_1) \big)\\
&\pm \big({S}^{n-1}(a_n,...,a_1)\otimes (c_n\cdot ...\cdot c_2) \big)\cdot (\mathsf{G}\otimes\Id_{\C})(a_1\otimes c_1)\\
&\pm\displaystyle\sum_{j=1}^{n-1} \tilde{S}^{n-1}\big(a_n\otimes c_n,...,\big((a_{j+1}\otimes c_{j+1})\cdot(a_j\otimes c_j)\big),...,a_1\otimes c_1\big) \\
&
\end{align*}
and
\begin{align*}
\big(\mathfrak{M}^1(\tilde{S})\big)^n(a_n\otimes c_n,...,a_1\otimes c_1)&= d_{\B}\big({S}^n(a_n,...,a_1)\big)\otimes (c_n\cdot ... \cdot c_1)\\
&\pm \big(\displaystyle\sum_{j=1}^n {S}^n (a_n,...,d_{\A}(a_j),...,a_1) \big)\otimes (c_n\cdot ...\cdot c_1)  \\
&\pm \big(\mathsf{F}(a_n)\cdot {S}^{n-1}(a_{n-1},...,a_1) \big) \otimes ( c_{n}\cdot...\cdot c_1) \\
&\pm \big({S}^{n-1}(a_n,...,a_2)\cdot \mathsf{G}(a_1)\big)  \otimes (c_n\cdot ...\cdot c_1)\\
&\pm\displaystyle\sum_{j=1}^{n-1} {S}^{n-1}(a_n,...,(a_{j+1}\cdot a_j),...,a_1)\otimes (c_n\cdot...\cdot(c_{j+1}\cdot c_j)\cdot...\cdot c_1) \\
&=\big(\mathfrak{M}^1({S})\big)^n(a_n,...,a_1)\otimes (c_n\cdot...\cdot c_1)\\
&=(\mathsf{F}-\mathsf{G})^n(a_n,...,a_1)\otimes (c_n\cdot...\cdot c_1)\\
&=0.
\end{align*}
So $\mathsf{F}\otimes\Id_{\C}\sim \mathsf{G}\otimes\Id_{\C}$ (via $\tilde{S}$) and $\Id_{\C}\otimes\mathsf{F}\sim \Id_{\C}\otimes\mathsf{G}$.\\
\end{proof}

\begin{lem}\label{stupidonzio}
Given a non unital DG-category $\A$, a unital DG-categories $\B$, a DG-category $\C$ and two (non unital) DG-functors, 
$$\mathsf{F},\mathsf{G}:\A\to\B$$
if $\mathsf{F}\approx\mathsf{G}$ then $\mathsf{F}\otimes\Id_{\C}\approx\mathsf{G}\otimes\Id_{\C}$ and $\Id_{\C}\otimes\mathsf{F}\approx\Id_{\C}\otimes\mathsf{G}$.
\end{lem}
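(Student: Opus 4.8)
The plan is to mimic the proof of Lemma \ref{stupidone}, but now unwinding the definition of $\approx$ instead of $\sim$. Recall that $\mathsf{F}\approx\mathsf{G}$ means that $\mathsf{F}$ and $\mathsf{G}$ are isomorphic as objects of $H^0(\Fun(\A,\B))$, i.e. there exist natural transformations $T:\mathsf{F}\Rightarrow\mathsf{G}$ and $S:\mathsf{G}\Rightarrow\mathsf{F}$ together with prenatural homotopies $H:\mathsf{F}\Rightarrow\mathsf{F}$, $H':\mathsf{G}\Rightarrow\mathsf{G}$ satisfying $\mathfrak{M}^2(S,T)=\Id_{\mathsf{F}}+\mathfrak{M}^1(H)$ and $\mathfrak{M}^2(T,S)=\Id_{\mathsf{G}}+\mathfrak{M}^1(H')$, as in equation (\ref{appross}). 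First I would produce, from $T,S,H,H'$, the corresponding prenatural transformations and homotopies for the tensored functors: set
$$\tilde{T}^n(a_1\otimes c_1,\dots,a_n\otimes c_n):=(-1)^{n} T^n(a_1,\dots,a_n)\otimes(c_1\cdot\dots\cdot c_n),$$
and likewise $\tilde{S}^n$, $\tilde{H}^n$, $\tilde{H}'^n$, using the same sign convention as for $\tilde{S}$ in the proof of Lemma \ref{stupidone}. (Since $\B$ is unital, $\mathsf{F}\otimes\Id_{\C}$ and $\mathsf{G}\otimes\Id_{\C}$ are functors between unital DG-categories, so $\Fun(\mathsf{F}\otimes\Id_{\C},\mathsf{G}\otimes\Id_{\C})$ makes sense and the relation $\approx$ is defined for them.)

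The key steps, in order, are: (1) verify that $\tilde{T}$ and $\tilde{S}$ really are natural transformations (closed of degree $0$ in $\Fun$), which follows because $T,S$ are and because the bar-style differentials on the tensor product decompose as in the displayed computation of $\mathfrak{M}^1(\tilde{S})$ in Lemma \ref{stupidone}; (2) compute $\mathfrak{M}^2(\tilde{S},\tilde{T})$ and check that it equals $\widetilde{\mathfrak{M}^2(S,T)}$, i.e. that tensoring with $\Id_{\C}$ is compatible with the composition $\mathfrak{M}^2$ of prenatural transformations up to the fixed sign rule; (3) since $\mathfrak{M}^2(S,T)=\Id_{\mathsf{F}}+\mathfrak{M}^1(H)$, and since $\widetilde{\Id_{\mathsf{F}}}=\Id_{\mathsf{F}\otimes\Id_{\C}}$ and $\widetilde{\mathfrak{M}^1(H)}=\mathfrak{M}^1(\tilde{H})$ (the latter is exactly the content of the long computation in Lemma \ref{stupidone}, with $S$ replaced by $H$), conclude $\mathfrak{M}^2(\tilde{S},\tilde{T})=\Id_{\mathsf{F}\otimes\Id_{\C}}+\mathfrak{M}^1(\tilde{H})$; (4) repeat symmetrically with $\mathfrak{M}^2(\tilde{T},\tilde{S})$, $H'$, to get the second identity in (\ref{appross}); (5) deduce $\mathsf{F}\otimes\Id_{\C}\approx\mathsf{G}\otimes\Id_{\C}$, and observe that the $\Id_{\C}\otimes\mathsf{F}\approx\Id_{\C}\otimes\mathsf{G}$ case is identical up to reordering the tensor factors (or follows from symmetry of $\otimes$).

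The main obstacle is step (2): establishing that the assignment $U\mapsto\tilde{U}$ on prenatural transformations intertwines $\mathfrak{M}^2$ with the DG-composition $c_1\cdot\dots\cdot c_n$ on the $\C$-side, with signs that are consistent with the sign $(-1)^n$ built into $\tilde{U}^n$. Concretely one must show $\mathfrak{M}^2(\tilde{S},\tilde{T})^n=(-1)^n\,\mathfrak{M}^2(S,T)^n\otimes(c_1\cdot\dots\cdot c_n)$; expanding $\mathfrak{M}^2$ in the bar picture produces a sum of terms of the shape $m^2_{\B\otimes\C}(\tilde{T}^i(\dots),\tilde{S}^j(\dots))$, terms where one of $\tilde{T},\tilde{S}$ is applied to a slot containing an $m^2_{\A\otimes\C}$, and terms where $\mathsf{F}\otimes\Id$ or $\mathsf{G}\otimes\Id$ is inserted; using $m^2_{\B\otimes\C}(b\otimes c,b'\otimes c')=m^2_{\B}(b,b')\otimes(c\cdot c')$ and associativity of $\cdot$ in $\C$, every $\C$-component collapses to $c_1\cdot\dots\cdot c_n$, and the Koszul signs must be checked to assemble into the single global $(-1)^n$. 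This is the same bookkeeping already carried out for $\mathfrak{M}^1$ in Lemma \ref{stupidone}, just one level up, so I expect it to go through; I would present the argument by explicitly defining $\tilde{T},\tilde{S},\tilde{H},\tilde{H}'$ and then citing the computation of Lemma \ref{stupidone} for the $\mathfrak{M}^1$-parts, reducing the new work to the single identity relating $\mathfrak{M}^2(\tilde{S},\tilde{T})$ to $\widetilde{\mathfrak{M}^2(S,T)}$.
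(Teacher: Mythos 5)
Your proposal follows essentially the same route as the paper: one transports $S,T,H,H'$ to $\tilde S,\tilde T,\tilde H,\tilde H'$ by the same $(-1)^n(\,\cdot\,)\otimes(c_n\cdots c_1)$ rule used for $\sim$, checks that $\mathfrak{M}^2(\tilde S,\tilde T)^n=\mathfrak{M}^2(S,T)^n\otimes(c_n\cdots c_1)$, and reuses the $\mathfrak{M}^1$ computation from Lemma \ref{stupidone}. The only detail worth adding is the degree-zero component, which the paper sets to $\tilde S^0(x\otimes y):=S^0(x)\otimes 1_y$ using the unit of $\C$ (absent in the $\sim$ case, where $S^0=0$).
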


\begin{proof}
If $\mathsf{F}\approx\mathsf{G}$, we have two natural transformations $S:\mathsf{F}\Rightarrow\mathsf{G}$, 
$T:\mathsf{G}\Rightarrow\mathsf{F}$
and two prenatural transformations $H$ and $H'$ such that:
$$\mathfrak{M}^1(H)=\mathfrak{M}^2(T,S)-\Id$$
and
$$\mathfrak{M}^1(H')=\mathfrak{M}^2(S,T)-\Id$$
We define the natural transformation $\tilde{S}:\mathsf{F}\otimes\Id_{\C}\Rightarrow\mathsf{G}\otimes\Id_{\C}$ as
$$\tilde{S}^0(x\otimes y):=S^0(x)\otimes 1_y$$
where $x\in\mbox{Ob}(\A)$ and $y\in\mbox{Ob}(\C)$.\\
For $n>0$, we have 
$$\tilde{S}^n(a_n\otimes c_n,...,a_1\otimes c_1):=(-1)^n \big(S^n(a_n,...,a_1)\otimes (c_n\cdot...\cdot c_1)\big)$$
we can define, in the same way the natural transformation $\tilde{T}:\mathsf{F}\otimes\Id_{\C}\Rightarrow\mathsf{G}\otimes\Id_{\C}$ 
and the prenatural transformations $\tilde{H}$, and $\tilde{H}'$.\\
Note that 
\begin{align*}
\big(\mathfrak{M}^2(\tilde{S},\tilde{T})\big)^n(a_n\otimes c_n,...,a_1\otimes c_1)&=\displaystyle\sum_{j=1}^{n-1}\tilde{S}^{n-j}(a_n\otimes c_n,...,a_{j+1}\otimes c_{j+1})\cdot\tilde{T}^j(a_j\otimes c_j,...,a_1\otimes c_1) \\
&+\tilde{S}^0(x_n\otimes y_n)\cdot\tilde{T}^n(a_n\otimes c_n,...,a_1\otimes c_1)\\
&+\tilde{S}^n(a_n\otimes c_n,...,a_1\otimes c_1)\cdot\tilde{T}^n(x_0\otimes y_0)\\
&=\big(\mathfrak{M}^2({S},{T})\big)^n(a_n,...,a_1)\otimes (c_n\cdot...\cdot c_1).
\end{align*}
By the previous calculation we have 
$$\mathfrak{M}^1(\tilde{H})=\mathfrak{M}^2(\tilde{T},\tilde{S})-\Id$$
and
$$\mathfrak{M}^1(\tilde{H'})=\mathfrak{M}^2(\tilde{S},\tilde{T})-\Id$$
and we are done.
\end{proof}

Combining Lemma \ref{stupido} and Theorem \ref{DC} we have:

\begin{thm}\label{enzone}
Let $\A_1$ and $\A_2$ two non unital $\Ain$-categories such that there exist: 
\begin{itemize}
\item[T1.] two non unital DG-categories $\C_{\A_1}$ and $\C_{\A_2}$ with the same objects respectively of $\A_1$ and $\A_2$,
\item[T2.] two chain maps 
\begin{align*}
f^1_{\A_i,x_i,y_i}:\A_i(x_i,y_i)\to\C_{\A_i}(x_i,y_i)
\end{align*}
and 
\begin{align*}
g^1_{\C_{\A_i},x_i,y_i}:\C_{\A_i}(x_i,y_i)\to {\A_i}(x_i,y_i)
\end{align*}
for every pair of objects $x_i,y_i\in\A_1$,
\item[T3.] A morphism of $R$-modules $T^1_i$ such that 
\begin{align}
m_{\C_{\A_i}}^1T^1 + T^1m_{\A_i}^1=f^1\cdot g^1-\Id_{{\C_{\A_i}}(x,y)}.
\end{align}
and 
\begin{align*}
g^1\cdot f^1=\Id_{\A_i(x,y)}.
\end{align*}
\end{itemize}
Then there exists an $\Ain$-structure on $\A_1\otimes \A_2$, which does not depends on the homotopy $T^1$, such that $\A_1\otimes\A_2\approx\C_{\A_1}\otimes\C_{\A_2}$ and two $\Ain$-functors 
\begin{align*}
\eta_{\A_1\otimes\A_2}:\A_1\otimes\A_2\to\C_{\A_1}\otimes\C_{\A_2}
\end{align*}
and 
\begin{align*}
\Xi_{\A_1\otimes\A_2}:\C_{\A_1}\otimes\C_{\A_2}\to\A_1\otimes\A_2
\end{align*}
such that $\eta_{\A_1\otimes\A_2}\cdot\Xi_{{\A_1\otimes\A_2}}\sim\Id_{\C_{\A_1}\otimes\C_{\A_2}}$.
\end{thm}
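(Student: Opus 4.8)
The plan is to build everything by tensoring together the Seidel-type Homological Perturbation data for $\A_1$ and $\A_2$ and then running the HPT machine of Theorem \ref{HPT} once on the total complex $\A_1\otimes\A_2$. Concretely, from the DG-quiver data $(f^1_{\A_i},g^1_{\A_i},T^1_i)$ on each factor I would first form the tensor maps on $|\A_1|\otimes|\A_2|$ and $\C_{\A_1}\otimes\C_{\A_2}$: set $f^1:=f^1_{\A_1}\otimes f^1_{\A_2}$, $g^1:=g^1_{\A_1}\otimes g^1_{\A_2}$, and for the homotopy the standard formula
\begin{align*}
T^1:=T^1_1\otimes (f^1_{\A_2}g^1_{\A_2}) + (\Id)\otimes T^1_2,
\end{align*}
i.e.\ the usual homotopy on a tensor product of deformation retracts; one checks $m^1 T^1+T^1 m^1 = f^1 g^1-\Id$ on $\C_{\A_1}\otimes\C_{\A_2}$ by a direct computation using T3 on each factor, and $g^1 f^1=\Id$ on $\A_1\otimes\A_2$ follows from $g^1_i f^1_i=\Id$ and the interchange law for $\otimes$. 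Since $\C_{\A_1}\otimes\C_{\A_2}$ is a genuine DG-category (formula (\ref{compoz}) makes it so), these data are exactly the hypotheses of Theorem \ref{HPT} with $\B:=\C_{\A_1}\otimes\C_{\A_2}$ and $\Q:=|\A_1|\otimes|\A_2|$. Theorem \ref{HPT} then produces an $\Ain$-structure $m_{\A_1\otimes\A_2}$ on $\A_1\otimes\A_2$, two $\Ain$-functors $\Xi_{\A_1\otimes\A_2}:=(f^0,f^1,\ldots):\A_1\otimes\A_2\to\C_{\A_1}\otimes\C_{\A_2}$ and $\eta_{\A_1\otimes\A_2}:=(g^0,g^1,\ldots)$ going the other way, and a prenatural transformation $T$ with $\Xi\cdot\eta-\Id=\mathfrak{M}^1(T)$. (The labelling of which functor is $\eta$ and which is $\Xi$ should be matched to the statement; since $g^1 f^1=\Id$ on the nose, the composite $\eta\cdot\Xi$ is the one that is only homotopic to the identity, giving $\eta_{\A_1\otimes\A_2}\cdot\Xi_{\A_1\otimes\A_2}\sim\Id$ via the transferred prenatural transformation, while $\Xi\cdot\eta=\Id_{\A_1\otimes\A_2}$.)

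Next I would address independence of the transferred $\Ain$-structure from the choice of homotopy $T^1$. Two choices $T^1$ and $\tilde T^1$ of homotopy for the same deformation retract data differ by a cycle, and the standard argument (or: invoke that any two transferred structures along the same retract are related by a formal diffeomorphism which is the identity on $m^1$, hence the structures agree after applying the gauge, and in fact by the normalisation in Seidel's formulas for $\F^n$, $m^n_{\Q}$ the ambiguity is killed because those formulas only use $T^1$ through $m^r_{\B}(\ldots)$ which is fixed) shows $m_{\A_1\otimes\A_2}$ is well defined. In our tensor situation this is cleanest because $\C_{\A_1}\otimes\C_{\A_2}$ is a DG-category, so the recursive formulas for $\F^n$ and $m^n_\Q$ close up quickly; I would just remark that replacing $T^1$ by $\tilde T^1$ changes $\Xi$ and $\eta$ by homotopies but leaves $m_{\A_1\otimes\A_2}$ literally unchanged, as can be read off the recursion.

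Finally, the assertion $\A_1\otimes\A_2\approx\C_{\A_1}\otimes\C_{\A_2}$: I would deduce this from the pair $(\Xi_{\A_1\otimes\A_2},\eta_{\A_1\otimes\A_2})$ together with $\Xi\cdot\eta\sim\Id$ and $\eta\cdot\Xi=\Id$ (or both only $\sim\Id$), which gives a weak equivalence in the sense of Definition \ref{equivalonze}(2), since $\sim\Rightarrow\approx$. Alternatively, one can get it straight from Theorem \ref{DC}: $f^1$ is a homotopy equivalence of chain complexes (it has one-sided inverse $g^1$ with homotopy $T^1$, hence is a quasi-isomorphism between complexes, and being split it is a homotopy equivalence), and $f^0$ is a bijection on objects, so Theorem \ref{DC} applied to $\Xi_{\A_1\otimes\A_2}$ yields a quasi-inverse and $\approx$ on the nose.

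The main obstacle I anticipate is purely bookkeeping rather than conceptual: verifying the homotopy identity $m^1 T^1 + T^1 m^1 = f^1 g^1 - \Id$ for the tensor homotopy $T^1$ with the correct Koszul signs (the $(-1)^{\deg}$ in (\ref{diiffo}) and in the definition of $T^1$ must be threaded carefully through $f^1_{\A_2}g^1_{\A_2}$), and then checking that the hypotheses T1–T3 of Theorem \ref{enzone}, once tensored, are genuinely the hypotheses of Theorem \ref{HPT} — in particular that $g^1 f^1=\Id$ on the tensor product, which uses the interchange law $(g^1_1\otimes g^1_2)(f^1_1\otimes f^1_2)=(g^1_1 f^1_1)\otimes(g^1_2 f^1_2)$. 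None of this is deep, but it is where an error would most plausibly hide, so I would present the sign computation for $T^1$ explicitly and then simply cite Theorem \ref{HPT} and Theorem \ref{DC} for the rest.
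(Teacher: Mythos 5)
Your overall route is the same as the paper's: tensor the two retraction data into a single deformation-retract diagram between $|\A_1|\otimes|\A_2|$ and the genuine DG-category $\C_{\A_1}\otimes\C_{\A_2}$ (the paper packages this as Lemma \ref{stupido}), feed it to Theorem \ref{HPT} to get the transferred $\Ain$-structure, the functors $\eta$, $\Xi$ and the relation $\eta\cdot\Xi\sim\Id_{\C_{\A_1}\otimes\C_{\A_2}}$, and then invoke Theorem \ref{DC} for the two-sided statement $\A_1\otimes\A_2\approx\C_{\A_1}\otimes\C_{\A_2}$. That part is fine, including the interchange-law check $g^1f^1=\Id$ and the standard tensor homotopy.

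The genuine gap is in your treatment of independence from the homotopy. Your claim that replacing $T^1$ by $\tilde T^1$ ``leaves $m_{\A_1\otimes\A_2}$ literally unchanged, as can be read off the recursion'' is false, and the justification (``those formulas only use $T^1$ through $m^r_{\B}(\ldots)$ which is fixed'') misreads Seidel's recursion: there $T^1$ is applied \emph{to} $m^r_{\B}(\ldots)$, so $\F^2=T^1(m^2_{\B}(f^1,f^1))$ already changes with $T^1$, and hence so does $m^3_{\Q}=\sum\G^1(m^2_{\B}(\F^{s_2},\F^{s_1}))$. Concretely, the two homotopies $T^1$ and $T^1_r$ in the remark after (\ref{torcofo}) yield visibly different $m^3$ (compare Example \ref{mtre}). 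The correct statement — and what the paper proves — is independence up to \emph{equivalence} of $\Ain$-categories: the two transferred structures share the same underlying DG-quiver, and because $g^1\cdot f^1=\Id$ one gets an $\Ain$-functor between them whose first component is the identity, which is an equivalence by Theorem \ref{equivalone}. You do gesture at this (the formal-diffeomorphism/gauge remark), so the fix is to drop the literal-equality claim and run that argument instead. A second, non-load-bearing overreach: $\Xi\cdot\eta=\Id_{\A_1\otimes\A_2}$ on the nose does not follow from $g^1f^1=\Id$ (the higher components $(\Xi\cdot\eta)^{n\ge2}$ need not vanish), and Theorem \ref{HPT} only controls the composite on the DG side; since you anyway obtain $\approx$ from Theorem \ref{DC}, simply delete that parenthetical.
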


\begin{proof}
By Lemma \ref{stupido} we have an homotopy $H^1$.\ We have:
\begin{align*}
\xymatrix{
\big({|\A_1|\otimes|\A_2|}\big)(x,y)\ar@<0.5ex>[rr]^-{f^1_{\A_1}\otimes f^1_{\A_2}}&&\ar@<0.5ex>[ll]^-{g_{\C_{\A_1}}^1\otimes g_{\C_{\A_2}}^1} \big({\C_{\A_1}\otimes\C_{\A_2}}\big)(x,y) \ar@(r,u)[]_{H^1}
}
\end{align*}
By Theorem \ref{HPT} we find an $\Ain$-structure on $\A_1\otimes\A_2$ and two $\Ain$-functors
$$\eta_{\A_1\otimes\A_2}:\A_1\otimes\A_2\to\C_{\A_1}\otimes\C_{\A_2}$$ 
which extends $f^1_{\A_1}\otimes f^1_{\A_2}$ and  
$$\Xi_{\A_1\otimes\A_2}:\C_{\A_1}\otimes\C_{\A_2}\to\A_1\otimes\A_2$$ 
which extends $g^1_{\A_1}\otimes g^1_{\A_2}$ such that 
$$\eta_{\A_1\otimes\A_2}\cdot \Xi_{\A_1\otimes\A_2}\sim\Id_{\C_{\A_1}\otimes\C_{\A_2}}.$$
On the other hand by Theorem \ref{DC} we find 
$$g_{\C_{\A_1}}\otimes g_{\C_{\A_2}}: \C_{\A_1}\otimes\C_{\A_2}\to\A_1\otimes\A_2$$
which extends the functor of DG quivers $g_{\C_{\A_1}}^1\otimes g_{\C_{\A_2}}^1$ such that $$(g_{\C_{\A_1}}\otimes g_{\C_{\A_2}})\cdot(\eta_{\A_1\otimes\A_2})\approx\Id_{\A_1\otimes\A_2}$$
and 
$$\eta_{\A_1\otimes\A_2}\cdot(g_{\C_{\A_1}}\otimes g_{\C_{\A_2}})\approx\Id_{\C_{\A_1}\otimes\C_{\A_2}}.$$
To conclude we point out that the $\Ain$-structure on $|\A_1|\otimes|\A_2|$ does not depends on the homotopy $H^1$.\ 
If $\tilde{H}^1$ is another homotopy then the corresponding $\Ain$structure $\A_1\tilde{\otimes}\A_2$ is equivalent (as $\Ain$-category) to $\A_1{\otimes}\A_2$.\\
We denote by $f_{\A_1}\tilde{\otimes} f_{\A_2}:\A_1\tilde{\otimes}\A_2\to\C_{\A_1}\otimes\C_{\A_2}$ and by $g_{\C_{\A_1}}\tilde{\otimes}  g_{\C_{\A_2}}: \C_{\A_1}\otimes\C_{\A_2}\to\A_1\tilde{\otimes}\A_2$ the $\Ain$functors extending $f^1_{\A_1}\otimes f^1_{\A_2}$ and $g_{\C_{\A_1}}\otimes g_{\C_{\A_2}}$ via the homotopy $\tilde{H}^1$.\\
We have the following diagram:
\begin{align*}
\xymatrix{
\A_1\otimes\A_2 \ar@<0.5ex>[rr]^-{f_{\A_1}\otimes f_{\A_2}}&&\ar@<0.5ex>[ll]^-{g_{\C_{\A_1}}\otimes g_{\C_{\A_2}}} \C_{\A_1}\otimes\C_{\A_2}\ar@<0.5ex>[rr]^-{g_{\C_{\A_1}}\tilde{\otimes} g_{\C_{\A_2}}}&&\ar@<0.5ex>[ll]^-{f_{\A_1}\tilde{\otimes} f_{\A_2}}\A_1\tilde{\otimes}\A_2.
}
\end{align*}
Since $g^1\cdot f^1=\Id$ then by Theorem \ref{equivalone} $\A_1\otimes\A_2$ and $\A_1\tilde{\otimes}\A_2$ are equivalent as $\Ain$-categories.
\end{proof}





 

Moreover for simple reason of degrees we have:

\begin{cor}
If $\A$ and $\B$ are DG-categories then (every) $\Ain$-structure on $|\A|\otimes |\B|$ induced by Theorem \ref{enzone} is the usual tensor product defined in (\ref{Tensorello}).
\end{cor}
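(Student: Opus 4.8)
The plan is to read the induced $\Ain$-structure straight off Seidel's recursive formulas (the two displays right after Theorem~\ref{HPT}), applied to the trivial input data that is available once $\A$ and $\B$ are themselves DG-categories. First I would fix that data. In Theorem~\ref{enzone} (hence in Definition~\ref{tenso}) one is free to pick the auxiliary DG-categories $\C_{\A},\C_{\B}$ together with chain maps $f^1,g^1$ and a homotopy $T^1$; when $\A,\B$ are DG-categories the obvious choice is $\C_{\A}:=\A$, $\C_{\B}:=\B$, $f^1=g^1=\Id$ and $T^1=0$, which trivially satisfies \emph{T1}--\emph{T3} (here $g^1f^1=\Id$ and $f^1g^1-\Id=0$). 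With this choice Lemma~\ref{stupido} returns the \emph{zero} homotopy $H^1$ on the DG-quiver $|\A|\otimes|\B|$, while the ambient category in the construction is $\C_{\A}\otimes\C_{\B}=\A\otimes\B$, the ordinary DG-tensor product of DG-categories — in particular $m^r_{\A\otimes\B}=0$ for every $r\ge 3$.

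Next I would substitute $T^1=H^1=0$ into the recursion. Every summand of $\F^n(f_n,\dots,f_1)$ for $n\ge 2$ carries the outer factor $T^1$, so $\F^n=0$ for all $n\ge 2$ and $\eta_{\A\otimes\B}$ is the strict $\Ain$-functor with every component equal to $\Id$. Feeding this into
\[
m^n_{\A\otimes\B}(f_n,\dots,f_1)=\sum_{r=2}^{n}\ \sum_{s_1+\dots+s_r=n}\G^1\big(m^r_{\A\otimes\B}(\F^{s_r}(\dots),\dots,\F^{s_1}(\dots))\big),
\]
the terms with $r\ge 3$ vanish because $\A\otimes\B$ is a DG-category, and the terms in which some $s_i\ge 2$ vanish because $\F^{\ge 2}=0$; thus only $n=r=2$, $s_1=s_2=1$ survives. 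This yields $m^n_{\A\otimes\B}=0$ for $n\ge 3$ together with $m^2_{\A\otimes\B}(f_2,f_1)=\G^1\big(m^2_{\A\otimes\B}(\F^1f_2,\F^1f_1)\big)=m^2_{\A\otimes\B}(f_2,f_1)$, i.e.\ exactly the composition~(\ref{compoz}); since $m^1_{\A\otimes\B}$ is the differential~(\ref{diiffo}) by construction of $|\A|\otimes|\B|$, the resulting $\Ain$-structure is literally the one of~(\ref{Tensorello}).

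Finally, for the parenthetical ``every'' I would argue that the choice of auxiliary data is immaterial. On the one hand, for any other admissible choice the independence statement proved at the end of Theorem~\ref{enzone}, combined with Theorem~\ref{equivalone}, identifies the two induced $\Ain$-categories via a formal diffeomorphism of $|\A|\otimes|\B|$, so the structure is in any case~(\ref{Tensorello}) up to such an identification. On the other hand — and this is the ``simple reason of degrees'' alluded to — in the fixed construction of Definition~\ref{tenso} I expect the homotopy $T^1$ attached to a DG-category to contribute nothing, by a degree count on Seidel's recursion, after which the collapse above applies verbatim and one gets equality on the nose. The hard part is therefore not the recursion, which degenerates purely formally, but this last point: confirming that the homotopy produced by the construction genuinely vanishes (or is inert) on DG-categories, and keeping the signs straight in the a priori nonzero general case.
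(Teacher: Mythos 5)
Your first paragraph is correct, and it is in fact more than the paper itself supplies: the corollary is justified in the text only by the phrase ``for simple reason of degrees'', so there is no written proof to compare against. For the trivial datum $\C_{\A}=\A$, $\C_{\B}=\B$, $f^1=g^1=\Id$, $T^1=0$ the homotopy $H^1$ produced by Lemma \ref{stupido} vanishes, Seidel's recursion collapses exactly as you describe, and the transferred structure is literally the one of (\ref{Tensorello}). That part stands.

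The gap is in the ``every'' step, and you have correctly located it yourself. There are two separate problems. First, the independence clause of Theorem \ref{enzone} combined with Theorem \ref{equivalone} only identifies two admissible outputs up to an equivalence of $\Ain$-categories (a formal diffeomorphism of $|\A|\otimes|\B|$), and an $\Ain$-structure merely \emph{equivalent} to the DG tensor product need not equal it; so this route cannot deliver the literal conclusion. Second, and more importantly, the instance that actually matters for the rest of the paper --- Definition \ref{tenso}, with $\C_{\A}=\Un(\A)$, $f^1=\alpha^1_{\A}$, $g^1=\chi^1_{\A}$ --- has a genuinely nonzero homotopy even when $\A$ is a DG-category: by the computations in Section \ref{explicit} one has $H_{\A}(a_2\otimes a_1)=\underline{a_2\otimes a_1}\neq 0$. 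Hence the ``$T^1=0$, so $\F^{\ge 2}=0$'' collapse does not apply there, and the vanishing of $m^{n\ge 3}_{\A\otimes\B}$ must come from a different mechanism: as Example \ref{mtre} makes visible, each surviving term of $m^3_{\A\otimes\B}$ carries a factor $m^3_{\A}$ or $m^3_{\B}$, which is zero precisely because $\A$ and $\B$ are DG --- presumably the ``degree'' reason the paper alludes to --- but neither you nor the paper verifies that this pattern persists for all $n$. Finally, note that for a completely arbitrary datum satisfying T1--T3 the literal statement can fail already at $m^2$: the transferred product is $(g^1\otimes g^1)\bigl(m^2_{\C_{\A}\otimes\C_{\B}}(f^1\otimes f^1(-),\,f^1\otimes f^1(-))\bigr)$, and T2--T3 only require $f^1,g^1$ to be chain maps, not to respect composition; so ``every'' must implicitly be restricted to data for which $g^1\bigl(f^1(a_2)\cdot f^1(a_1)\bigr)=a_2\cdot a_1$ (as does hold for $\chi^1_{\A}$ and $\alpha^1_{\A}$), a hypothesis neither stated in the corollary nor discharged in your argument.
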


In the next subsection we give a natural way to associate a (non-unital) DG-category to an $\Ain$-one satisfying the conditions T1, T2 and T3.

\newpage

\section{Tensor product of two $\Ain$-categories and $\Ain$-functors}

In this section we define the tensor product of two $\Ain$-categories (Definition \ref{tenso}) making use of Theorem \ref{enzone}.\ 
We give two definitions of tensor product of $\Ain$-functors (Definition \ref{compofuncto}) and equation (\ref{sanzo}).\ 
This tensor product provides a symmetric monoidal structure (up to homotopy) on $\aCat$ (see subsection \ref{sms}) 
and a closed symmetric monoidal category on $\aCat^u/\approx$.

\subsection{Tensor product of two $\Ain$-categories}
We recall a few constructions in \cite{Orn1}, \cite{COS} and \cite{COS2}.\\
Let $\A$ be a non unital $\Ain$-category.\ 
The non unital DG-category $\Un(\A)$ is given by
$$\Un(\A):=\Omega\cdot \mbox{B}_{\infty}(\A)$$
where $\Omega$ is the Cobar construction (see \cite[\S 2.1]{Orn1}) and 
$\mbox{B}_{\infty}$ is the Bar construction (see \cite[\S 2.6]{Orn1}).\\
Explicitly the differential of $\Un(\A)$ is given by:
\begin{align*}
d_{\Un(\A)}\big(g_m[-1]\otimes...\otimes g_1[-1]\big)&=d_{\Un(|\A|)}\big(g_m[-1]\otimes...\otimes g_1[-1]\big)+\\
&+\displaystyle\sum_{i=1}^m (-1)^{\tiny\mbox{deg}(g_1)+...+\mbox{deg}(g_m)}g_m[-1]\otimes...\otimes g_{i+1}[-1]\otimes \\
&\otimes\displaystyle\sum_{k=1}^{n_i}\sum_{j=2}^{n_i-k+1}(-1)^{\tiny\mbox{deg}(f^i_1)+...+\mbox{deg}(f^i_{k-1})} \big( f^i_{n_i}[1]\otimes...\otimes f^i_{k+j}[1]\otimes \\
&\otimes m_j(f^i_{k+j-1},...,f^i_k)[1]\otimes f^i_{k-1}[1]\otimes ...\otimes f_1^i[1])[-1]\otimes g_{i-1}[-1]\otimes...\otimes g_1[-1]
\end{align*}
where
$$g_m[-1]\otimes...\otimes g_1[-1]:=(f^m_{n_m}[1]\otimes ... \otimes f^m_1[1])[-1]\otimes...\otimes (f^1_{n_1}[1]\otimes ... \otimes f^1_1[1])[-1].$$
We have an $\Ain$-functor:
\begin{align*}
{\alpha_{\A}}:\A&\to \Un(\A)
\end{align*}
Defined as:
\begin{align}\label{alfone}
\alpha^{n}_{\A}:(f_1,...,f_n)&\mapsto(f_1[1]\otimes...\otimes f_n[1])[-1]
\end{align}
On the other hand, if $\A$ is a non unital DG-category we have a non unital DG-functor 
\begin{align}\label{cocuz}
\xi_{\A}:\Un\big( i (\A) \big) \to\A.
\end{align}
Defined as follows:
$$\xi_{\A}((f^m_{n_m} [1]\otimes...\otimes  f^m_1[1])[-1]\otimes...\otimes (f^1_{n_1} [1]\otimes...\otimes  f^1_1[1])[-1])=
\begin{cases}
f^m_1\cdot...\cdot f^1_1, & \text{if $n_m=...=n_1=1$}\\
0, & \text{otherwise}.
\end{cases}
$$
We have an adjunction of categories:
\begin{align}\label{gorgogo}
\Un:\aCat^{\tiny\mbox{nu}} \rightleftharpoons \DgCat^{\tiny\mbox{nu}}:i
\end{align}
$\alpha$ and $\xi$ are respectively the unit and the counit of the adjunction $\Un\dashv i$.\\
\\
If $\A$ is an $\Ain$-(not DG-)category, (\ref{cocuz}) does not make sense, since the product (i.e. $m^2_{\A}$) is not associative.\ 
In this section we prove the following result:

\begin{thm}\label{adjun}
Fixed a non unital $\Ain$-category $\A$, we have:
\begin{itemize}
\item[T1.] the non-unital DG-category $\Un(\A)$ has the same objects of $\A$
\item[T2.-T3.] $\alpha^1_{\A}$ defined as:
\begin{align*}
\alpha_{\A}^1:\A(x_0,x_1)&\to i\cdot\Un(\A)(x_0,x_n),\\
f&\mapsto (f[1])[-1]
\end{align*}
has an homotopy inverse $\chi^1_\A$ (defined as \ref{CHI}).\ In particular:
\begin{align*}
\alpha^1_{\A}\cdot\chi^1_{\A}-\mbox{Id}_{\Un(\A)}=d_{\Un(\A)}\cdot H^1_{\A}+H^1_{\A}\cdot d_{\Un(\A)}.
\end{align*}
where $H^1_{\A}$ is defined in (\ref{H}) and
\begin{align*}
\chi^1_{\A}\cdot \alpha^1_{\A} =\mbox{Id}_{\A}.
\end{align*}
\end{itemize}
\end{thm}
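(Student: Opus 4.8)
The plan is the following. Assertion T1 is immediate: the Bar construction $\Br$ and the Cobar construction $\Omega$ operate hom-space by hom-space and leave the set of objects untouched, so $\Un(\A)=\Omega\cdot\Br(\A)$ has the objects of $\A$. The content is T2--T3: one must exhibit an explicit chain map $\chi^1_{\A}\colon\Un(\A)(x,y)\to\A(x,y)$ and an explicit degree $-1$ map $H^1_{\A}$ on $\Un(\A)(x,y)$ with $\chi^1_{\A}\cdot\alpha^1_{\A}=\Id_{\A}$ and $\alpha^1_{\A}\cdot\chi^1_{\A}-\Id_{\Un(\A)}=d_{\Un(\A)}\cdot H^1_{\A}+H^1_{\A}\cdot d_{\Un(\A)}$. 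This is the bar-cobar resolution made explicit on hom-complexes: it asserts that $\alpha_{\A}$ induces a homotopy equivalence on each hom-complex and pins down an inverse and a homotopy.

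First I would treat the DG case, i.e.\ $\A$ viewed only as the DG-quiver $|\A|$ (so $m^{j}_{\A}=0$ for $j\ge 2$). Then the differential of $\Un(|\A|)$ is the part $d_{\Un(|\A|)}$ of $d_{\Un(\A)}$ keeping only the internal $m^1$ on each morphism together with the deconcatenation (cobar) terms. Since $d_{\Un(|\A|)}$ neither creates nor destroys the morphisms $f^{i}_{k}$, each hom-complex splits as a direct sum of subcomplexes $\Un(|\A|)(x,y)=\bigoplus_{N\ge 1}\Un(|\A|)(x,y)_{N}$ indexed by the total number $N$ of morphisms occurring; the $N=1$ summand is the single-block single-morphism part, canonically $|\A|(x,y)$ and equal to the image of $\alpha^1_{|\A|}$, while for $N\ge 2$ the summand is contractible --- this is the standard acyclicity of the weight-$\ge 2$ part of the cobar of a cofree conilpotent cocategory, with an explicit ``block-merging'' contracting homotopy $h$ that uses no unit. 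After the usual side-conditions adjustment one may assume $h\cdot\alpha^1_{|\A|}=0$, $\chi^1_{|\A|}\cdot h=0$, $h^2=0$, where $\chi^1_{|\A|}$ is the projection onto the $N=1$ summand, so $\chi^1_{|\A|}((f[1])[-1])=f$. This is a deformation retraction of $\Un(|\A|)(x,y)$ onto $|\A|(x,y)$.

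I would then obtain the general case by the Homological Perturbation Lemma applied to this retraction. Write $d_{\Un(\A)}=d_{\Un(|\A|)}+\delta$, where $\delta$ is exactly the remaining part of the differential of $\Un(\A)$, namely the terms that apply some $m^{j}_{\A}$ with $j\ge 2$ inside an inner block. For the filtration by the number $N$ of morphisms, $d_{\Un(|\A|)}$ and $h$ are $N$-graded while $\delta$ strictly lowers $N$; hence $\delta h$ is locally nilpotent and the perturbation series are locally finite. The Perturbation Lemma returns the perturbed differential on $\Un(|\A|)(x,y)$, which by construction of $\delta$ is precisely $d_{\Un(\A)}(x,y)$, together with
\begin{align*}
\chi^1_{\A}:=\chi^1_{|\A|}\cdot\textstyle\sum_{n\ge 0}(\delta h)^{n},\qquad H^1_{\A}:=\textstyle\sum_{n\ge 0}(h\delta)^{n}\cdot h,\qquad \alpha^1_{\infty}:=\textstyle\sum_{n\ge 0}(h\delta)^{n}\cdot\alpha^1_{|\A|},
\end{align*}
and the perturbed differential $m^1_{|\A|}+\chi^1_{|\A|}\,\delta\,\big(\textstyle\sum_{n\ge 0}(h\delta)^{n}\big)\,\alpha^1_{|\A|}$ on $\A(x,y)$. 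The crucial observation is $\delta\cdot\alpha^1_{|\A|}=0$: $\alpha^1_{|\A|}(f)=(f[1])[-1]$ has a single inner block of length one, on which no $m^{j}_{\A}$ with $j\ge 2$ can act. Hence $\alpha^1_{\infty}=\alpha^1_{|\A|}=\alpha^1_{\A}$ (the map of the statement is unchanged), the perturbed differential on $\A(x,y)$ is just $m^1_{\A}$, and the identities of the Perturbation Lemma specialize exactly to $\chi^1_{\A}\cdot\alpha^1_{\A}=\Id_{\A}$ and $\alpha^1_{\A}\cdot\chi^1_{\A}-\Id_{\Un(\A)}=d_{\Un(\A)}\cdot H^1_{\A}+H^1_{\A}\cdot d_{\Un(\A)}$. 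Unfolding these geometric series --- equivalently, the fixed-point recursions $\chi^1_{\A}=\chi^1_{|\A|}+\chi^1_{\A}\,\delta h$ and $H^1_{\A}=h+H^1_{\A}\,\delta h$ --- gives the explicit formulas (\ref{CHI}) and (\ref{H}) as iterated applications of the higher products of $\A$.

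The bulk of the work --- and the only non-formal point --- is the DG case: writing the contracting homotopy $h$ on $\Un(|\A|)(x,y)_{N}$, $N\ge 2$, with all Koszul signs correct (the shifts $[1]$, $[-1]$ of the (co)bar construction produce many signs), arranging the side conditions, and checking that the sign conventions of the Perturbation Lemma match the formula for $d_{\Un(\A)}$ recalled in the excerpt. Everything afterwards is a mechanical application of the lemma. An equivalent route, bypassing the perturbation formalism, is to write the formulas (\ref{CHI}) and (\ref{H}) directly and verify $\chi^1_{\A}\cdot\alpha^1_{\A}=\Id_{\A}$, the chain-map property of $\chi^1_{\A}$ (this is where the $\Ain$-relations of $\A$ enter), and the homotopy identity by induction on $N$; it is the same computation organized differently.
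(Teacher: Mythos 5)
Your proposal is correct and is essentially the paper's own argument: the paper defines exactly your block-merging contraction $h$ (called $R_{x,y}$ in (\ref{gurzo})), proves the contraction identity (\ref{lollz}) for the unperturbed differential $d_{\Un(|\A|)}$ of the underlying DG-quiver, and then, using that the higher-product part of $d_{\Un(\A)}$ strictly lowers the length filtration $L_n$, iterates the operator $r=\mathrm{Id}-d_{\Un(\A)}\cdot R-R\cdot d_{\Un(\A)}$ to define $H^1_{\A}=\sum_j(-1)^jR(r^j(f))$ and $\chi^1_{\A}=r^{n-1}$. This is precisely your homological perturbation argument with the geometric series unfolded by hand rather than quoted from the Perturbation Lemma.
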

Since Theorem \ref{adjun} fulfills the hypothesis of Theorem \ref{enzone} we have the following:
\begin{defn}\label{tenso}
Given two non unital $\Ain$-categories $\A$ and $\B$, we define $\A\otimes\B$ to be the graded quiver $|\A|\otimes|\B|$ with the $\Ain$-structure given by Theorem \ref{enzone} setting $f^1_{\A_i}=\alpha^1_{\A_i}$, $g^1_{\A_i}=\chi^1_{\A_i}$ and $H^1_{\A_i}$ (of Theorem \ref{adjun}). 
\end{defn}
Clearly by Theorem \ref{DC} we have that $\chi^1_\A$ extends to an $\Ain$-functor $\chi_{\A}:\Un(\A)\to\A$.\ 
We provide an explicit description of such a structure in Section \ref{explicit}.\
We note that, fixed $\alpha^1_{\A_i}$ and $\chi^1_{\A_i}$, then the $\Ain$-structure (obtained by Theorem \ref{enzone}) on $\A_1\otimes\A_2$ is defined up to equivalence.
\begin{proof}[Proof of Theorem \ref{adjun}]
We provide an explicit formula for the quasi-inverse $\chi^1_{\A}$ and the prenatural transformation $H^1_{\A}$.\
We fix two objects $x,y\in\A$, we have a filtration of $\Un(\A)(x,y)$ given by:
\[
\xymatrix{
F_{x,y}:=L_1(x,y)\subset ...\subset L_{n}(x,y)\subset ...\subset \Un(\A)(x,y).
}
\]
Here $L_n(x,y)$ is the DG $R$-module generated by the elements of the form 
\begin{align*}
(f^{j}_{m_j}[1]\otimes...\otimes f^{j}_1[1])[-1]\otimes...\otimes (f^{1}_{m_1}[1]\otimes...\otimes f^{1}_1[1])[-1]
\end{align*}
where $m_j+...+m_1\le n$.
We have a morphism of graded $R$-complexes:
$$R_{x,y}:U^{n}(\A)(x,y)\to U^{n}(\A)(x,y)[-1],$$
defined as follows:
\begin{align}\label{gurzo}
R_{x,y}\big(g_m[-1]\otimes...\otimes g_1[-1] \big):=
\begin{cases}
(f^{m}_{1}[1]\otimes f^{m-1}_{n_{m-1}}[1]\otimes...\otimes f^{m-1}_1[1])[-1]\otimes...\otimes g_1[-1], \mbox{ if $n_m=1$}\\
0, \mbox{ otherwise.}
\end{cases}
\end{align}
If $f\in L_{n}(x,y)$ then 
\begin{align}\label{lollz}
R_{x,y}\cdot d_{\Un(|\A|)}(f)+d_{\Un(|\A|)}\cdot R_{x,y}(f)=f.
\end{align}
Here $|\A|$ denotes the DG-quiver $(\A,m^1_{\A},0,0...)$ (the proof is in Appendix \ref{appendix}).\\
We have a morphism $r:L_n(x,y)\to L_{n-1}(x,y)$ of degree zero of chain complexes defined as
\begin{align*}
r(f):=f-d_{U^{n}(\A)}\cdot R_{x,y}(f)-R_{x,y}\cdot d_{U^{n}(\A)}(f).
\end{align*}
This is a morphism of chain complexes since
\begin{align*}
d_{\Un(\A)}r&:=d_{\Un(\A)}\big(\mbox{Id}_{\Un(\A)}(x,y)-d_{\Un(\A)}\cdot R_{x,y}-R_{x,y}\cdot d_{\Un(\A)} \big)\\
&=d_{\Un(\A)}-d_{\Un(\A)}\cdot R_{x,y}\cdot d_{\Un(\A)} \big)\\
&=\big(\mbox{Id}_{\Un(\A)(x,y)}-d_{\Un(\A)}\cdot R_{x,y}-R_{x,y}\cdot d_{\Un(\A)} \big)d_{\Un(\A)}\\
&=r d_{\Un(\A)}.
\end{align*}
If $f\in \Un(\A)(x,y)$ is homogeneous of length $n$, we define $H^1_{\A}$:
\begin{align}\label{H}
H^1_{\A}(f):=R_{x,y}(f)-R_{x,y}(r(f))+R_{x,y}(r^2(f))+...\pm R_{x,y}(r^{n-2}(f))=\displaystyle\sum_{j=0}^{n-2}(-1)^{j}R_{x,y}(r^j(f)).
\end{align}
This is a morphism of graded complexes $H^1_{\A}:U^{n}(\A)(x,y)\to U^{n}(\A)(x,y)$ of degree $-1$.\\
If $f$ is homogeneous of length $n$ then:
\begin{align*}
f-r^{n-1}(f)=d_{U^{n}(\A)}\cdot H^1_{\A}(f)+H^1_{\A}\cdot d_{U^{n}(\A)}(f).
\end{align*}
First we note $H^1_{\A}\cdot d_{U^{n}(\A)}(f)=\displaystyle\sum_{j=0}^{n-2}(-1)^{j}R_{x,y}\big(d_{U^{n}(\A)}(r^j(f)) \big)$ it follows:
\begin{align*}
d_{U^{n}(\A)}\cdot H^1_{\A}(f)+H^1_{\A}\cdot d_{U^{n}(\A)}(f)&= d_{U^{n}(\A)}\big(\displaystyle\sum_{j=0}^{n-2}(-1)^{j}R_{x,y}(r^j(f))\big) +  \displaystyle\sum_{j=0}^{n-2}(-1)^{j}R_{x,y}\big(d_{U^{n}(\A)}(r^j(f)) \big)\\
&=(f- r(f))+(r(f)-r^2(f))+...-r^{n-1}(f)\\
&=f-r^{n-1}(f). 
\end{align*}
We define $\chi^1_{\A}$ as follows:
\begin{align}\label{CHI}
\chi^1_{\A}(f):=\big(r^{n-1}(f)[-1]\big)[1]
\end{align}
Where $n$ is the length of $f$.\ Since $r$ is a chain complexes morphism of degree zero so is $\chi^1_{\A}$.
\end{proof}

\begin{rem}\label{chitild}
Let $\A$ be an $\Ain$-category then there exists an $\Ain$-functor $\tilde{\tilde{\chi}}_{\A}:\Un(\A)\to\A$ 
such that:
$$\alpha_{\A}\cdot\tilde{\tilde{\chi}}_{\A}\sim\Id_{\Un(\A)}$$
and 
$$\tilde{\tilde{\chi}}_{\A}\cdot \alpha_{\A}=\Id_{\A}.$$
First we consider the DG-category
$(\Un(\A),d_{\Un(\A)},\otimes)$.\\
We can take 
\begin{align*}
\xymatrix{
\big({\A}(x,y),m^1_{\A}\big)\ar@<0.5ex>[rr]^-{\alpha^1_{\A}}&&\ar@<0.5ex>[ll]^-{\chi^1_{\A}} \big({\Un(\A)}(x,y),d_{\Un(\A)}\big) \ar@(r,d)[]^{H^1}
}
\end{align*}
By Lemma \ref{HPT} we have an $\Ain$-structure $\tilde{\A}:=(\A,m^1_\A,\tilde{m}^2_\A,...,\tilde{m}^n_\A)$ and two $\Ain$-functors 
\begin{align*}
\tilde{\alpha}_\A:\tilde{\A}\to\Un(\A)
\end{align*}
and 
\begin{align*}
\tilde{\chi}_\A:\Un(\A)\to\tilde{\A}
\end{align*}
such that $\tilde{\chi}^1_{\A}=\chi^1_{\A}$, $\tilde{\alpha}^1_{\A}=\alpha^1_{\A}$ and $\tilde{\alpha}_\A\cdot\tilde{\chi}_\A\sim \Id_{\Un(\A)}$.\\
Since $\chi^1_\A\cdot\alpha^1_\A=\Id_\A$ by Lemma \ref{equivalone}, we have that $\tilde{\chi}_\A\cdot\alpha_\A:\A\to\tilde{\A}$ 
and ${\chi}_\A\cdot\tilde{\alpha}_\A:\tilde{\A}\to{\A}$ are equivalence of $\Ain$-categories.\\
We can define $\tilde{\tilde{\chi}}_{\A}:=(\tilde{\chi}_\A\cdot\alpha_{\A})^{-1}\cdot \tilde{\chi}_\A$.\\
Since 
$$\tilde{\alpha}_\A\cdot\tilde{\chi}_\A\sim\Id_{\Un(\A)}$$
we have: 
$$\tilde{\alpha}_\A\cdot\tilde{\chi}_\A\cdot\alpha_{\A}\sim\alpha_{\A}$$
so
$$\tilde{\alpha}_\A\sim\alpha_{\A}\cdot (\tilde{\chi}_\A\cdot\alpha_{\A})^{-1}.$$
Multiplying on the right by $\tilde{\chi}_\A$, we have:
$$\Id_{\Un(\A)}\sim\alpha_{\A}\cdot (\tilde{\chi}_\A\cdot\alpha_{\A})^{-1}\cdot \tilde{\chi}_\A$$
and we are done.
\end{rem}

\subsection{Properties of tensor product \ref{tenso}}

Clearly if $\A$ is cohomological unital (resp. cohomological unital) then also $\Un(\A)$ is so (since they are quasi-equivalent).\ 
In particular $\Un$ induces the adjunctions 
\begin{align*}
\xymatrix{
\aCat^{\tiny\mbox{u}} \ar@<0.5ex>[d]^-{\Un} \ar@^{^(->}[r]& \aCat^{\tiny\mbox{cu}} \ar@<0.5ex>[d]^-{\Un}\ar@^{^(->}[r]& \aCat^{\tiny\mbox{nu}}  \ar@<0.5ex>[d]^-{\Un}\\
\DgCat^{\tiny\mbox{u}} \ar@<0.5ex>[u]^-{i}  \ar@^{^(->}[r]&\DgCat^{\tiny\mbox{cu}}\ar@<0.5ex>[u]^-{i}  \ar@^{^(->}[r]& \DgCat^{\tiny\mbox{nu}}\ar@<0.5ex>[u]^-{i}
}
\end{align*}
If $\A$ and $\B$ are two unital (resp. cohomological unital) $\Ain$-categories then $\A\otimes\B$ is a unital (resp. cohomological unital) $\Ain$-category.\ 
Unfortunately this is no longer true if $\A$ is strictly unital.\ 
If $\A$ and $\B$ are strictly unital then $\A\otimes\B$ is not, a priori, strictly unital, since $\Un(\A)$ is not strictly unital.\ 
What we can do is to deform the $\Ain$-structure on $\A\otimes\B$ via formal diffeomorphisms in order to get an $\Ain$-category strictly unital which is isomorphic to the one provided by Definition \ref{tenso}.\ 

On the other hand, if $\A$ is a strictly unital $\Ain$-category, then $\Un(\A)$ is cohomological (non strictly) unital.\ 
Given an object $x\in\A$, the strictly unit $e_x$ is mapped to $(e_x[1])[-1]$ (by $\alpha_{\A}$) which is a cohomological (not strictly) unit of $x\in\Un(\A)$.\ 
In the same vein, if $\A$ and $\B$ are strictly unital $\Ain$-categories, then $\Un(\A)\otimes\Un(\B)$ is a cohomological (not strictly) unital DG-category.\ 
It implies that, a priori, $\A\otimes\B$ is not strictly unital.\ 

\begin{lem}
Let $(a,b)\in\A\otimes\B$ then the closed degree zero morphism 
\begin{align*}
\underline{e}:=e_{a}\otimes e_{b}
\end{align*}
where $e_{a}$ and $e_{b}$ are the identities of the objects $a$ and $b$ respectively, is such that 
\begin{align*}
m^2_{\A\otimes\B}(f\otimes g, \underline{e})=& f\otimes g
\end{align*}
and 
\begin{align*}
m^2_{\A\otimes\B}(\underline{e},f\otimes g)=& f\otimes g.
\end{align*}
For any $f\otimes g\in\A\otimes\B$.
\end{lem}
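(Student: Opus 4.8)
The plan is to make the $\Ain$-structure on $\A\otimes\B$ explicit in arities $\le 2$ and then read off the strict unitality of $\underline{e}=e_a\otimes e_b$ directly from the strict unitality of $\A$ and $\B$. First I would record the trivial arity-$1$ fact: $\underline{e}$ lies in the degree-zero summand of $(\A\otimes\B)\big((a,b),(a,b)\big)$ of $(\ref{Tensorello})$ and is $m^1_{\A\otimes\B}$-closed, since $m^1_{\A\otimes\B}(e_a\otimes e_b)=m^1_{\A}(e_a)\otimes e_b\pm e_a\otimes m^1_{\B}(e_b)=0$ because strict units are closed of degree zero (cf.\ the Remark following Definition~\ref{strictlyunital}).

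The heart of the argument is the identification of $m^2_{\A\otimes\B}$. By Definition~\ref{tenso} the $\Ain$-structure on $\A\otimes\B=|\A|\otimes|\B|$ is the one produced by Theorem~\ref{enzone}, hence by the homological perturbation of Theorem~\ref{HPT} applied to the DG-category $\Un(\A)\otimes\Un(\B)$ with $f^1=\alpha^1_{\A}\otimes\alpha^1_{\B}$, $g^1=\chi^1_{\A}\otimes\chi^1_{\B}$ and homotopy $H^1$. In arity $2$ Seidel's recursion collapses (the only splitting of $2$ forces $r=s_1=s_2=1$), so that
\[
m^2_{\A\otimes\B}(u,v)=(\chi^1_{\A}\otimes\chi^1_{\B})\Big(m^2_{\Un(\A)\otimes\Un(\B)}\big((\alpha^1_{\A}\otimes\alpha^1_{\B})(u),\,(\alpha^1_{\A}\otimes\alpha^1_{\B})(v)\big)\Big).
\]
Since $\alpha^1_{\A}(f)=(f[1])[-1]$, composition in $\Un(\A)=\Omega\Br(\A)$ is concatenation of cobar words, and $\chi^1_{\A}$ restricted to length-two cobar words is — by $(\ref{CHI})$ together with $\chi^1_{\A}\cdot\alpha^1_{\A}=\Id_{\A}$ (Theorem~\ref{adjun}), which forces the bar--cobar differential and the perturbation to leave only the $m^2_{\A}$-term — the map induced by $m^2_{\A}$, this unwinds (up to the Koszul sign of the DG tensor product) to
\[
m^2_{\A\otimes\B}(f\otimes g,\,f'\otimes g')=\pm\,m^2_{\A}(f,f')\otimes m^2_{\B}(g,g'),
\]
i.e.\ to the composition $(\ref{compoz})$ (property~4 of the tensor product).

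With $m^2_{\A\otimes\B}$ pinned down the conclusion is immediate. Substituting $f'\otimes g'=\underline{e}=e_a\otimes e_b$ and using strict unitality of $\A$ and $\B$ (Definition~\ref{strictlyunital}~i)), we get $m^2_{\A}(f,e_a)=f$ and $m^2_{\B}(g,e_b)=g$, while the Koszul sign is $+1$ because $\deg e_a=\deg e_b=0$; hence $m^2_{\A\otimes\B}(f\otimes g,\underline{e})=f\otimes g$. The computation of $m^2_{\A\otimes\B}(\underline{e},f\otimes g)$ is symmetric: now $m^2_{\A}(e_a,f)=(-1)^{\deg f}f$ and $m^2_{\B}(e_b,g)=(-1)^{\deg g}g$, and the two signs recombine with the conventional Koszul sign of $(\ref{compoz})$ to yield again $f\otimes g$ (equivalently, $\underline{e}$ satisfies both identities of Definition~\ref{strictlyunital}~i) for $\A\otimes\B$).

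The step I expect to be the real obstacle is the middle one: checking that the homological-perturbation recursion contributes nothing beyond $m^2_{\A}\otimes m^2_{\B}$ in arity $2$. This rests on $\chi^1\cdot\alpha^1=\Id$, on composition in $\Un(-)$ being concatenation, and on carefully tracking the cobar shifts $[1],[-1]$ and the Koszul signs they produce; everything after that is formal.
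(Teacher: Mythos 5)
Your proposal is correct and takes essentially the same route as the paper: the paper's proof simply invokes the computation of Example \ref{mdue}, which unwinds the perturbation recursion in arity $2$ to $m^2_{\A\otimes\B}(f\otimes g,\,f'\otimes g')=m^2_{\A}(f,f')\otimes m^2_{\B}(g,g')$ --- exactly your middle step --- after which strict unitality of $e_a$ and $e_b$ yields the claim. The only wrinkle is the sign in the left-unit identity (Definition \ref{strictlyunital}~i) gives $m^2(e_x,f)=(-1)^{\deg f}f$, so the stated equality holds only up to the paper's own sign conventions), but the paper's Example \ref{mdue} is equally cavalier about Koszul signs, so your treatment matches the source.
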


\begin{proof}
This follows directly from the calculation in Example \ref{mdue}.
\end{proof}

\begin{thm}
If $\A$ and $\B$ are strictly unital $\Ain$-categories then there exists an $\Ain$-category $\A{\otimes}\B$ 
which is isomorphic and has the same objects and morphisms of the $\Ain$-structure given by Definition \ref{tenso}. 
\end{thm}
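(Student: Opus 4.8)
The plan is to construct the strictly unital model $\A \otimes \B$ by deforming the $\Ain$-structure from Definition \ref{tenso} via a carefully chosen formal diffeomorphism, following the standard technique for promoting a cohomological unit to a strict one. First I would recall that, as observed in the discussion preceding the statement, the $\Ain$-structure on the underlying graded quiver $|\A|\otimes|\B|$ coming from Definition \ref{tenso} has the closed degree-zero morphism $\underline{e} = e_a \otimes e_b$ as a (merely cohomological) unit for each object $(a,b)$, by the preceding Lemma. So we are exactly in the situation of a cohomologically unital $\Ain$-category all of whose cohomological units $\underline e$ are already genuine closed idempotents with $m^2(\underline e,\underline e)=\underline e$; the obstruction to strict unitality is only condition ii) of Definition \ref{strictlyunital}, namely that higher products $m^{n\ge 3}$ with an $\underline e$ inserted may be nonzero, together with the possibility that $m^2(f,\underline e)$ and $m^2(\underline e, f)$ differ from $f$ by an $m^1$-exact term (here in fact they are equal to $f$ on the nose by the Lemma, which simplifies matters).

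Next I would invoke the general ``strictification'' lemma for units: given an $\Ain$-structure with a closed idempotent $\underline e$ acting as a strict two-sided unit for $m^2$, one can find a formal diffeomorphism $\phi = \{\phi_1 = \id, \phi_2, \phi_3,\dots\}$ on $|\A|\otimes|\B|$ such that $\phi_* m$ satisfies the vanishing condition ii) as well. The construction of $\phi$ is inductive in arity: having arranged that $(\phi_* m)^k$ vanishes whenever an $\underline e$ is inserted for $k < n$, one chooses $\phi_n$ to cancel the (necessarily $m^1$-closed, by the $\Ain$-relations and induction) ``bad part'' of $(\phi_* m)^n$ involving an $\underline e$; splitting off the span of $\underline e$ as a direct summand of the hom-complexes — which is possible since $m^1(\underline e) = 0$ and $R\cdot\underline e$ is a free rank-one summand of $(\A\otimes\B)(a\otimes b, a\otimes b)$ — provides a concrete formula for $\phi_n$. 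Since $\phi_1 = \id$, the resulting $\Ain$-category $\phi_*(\A\otimes\B)$ has literally the same objects and the same graded morphism spaces as the one of Definition \ref{tenso}, and by the general property of formal diffeomorphisms recalled before Theorem \ref{equivalone}, $\phi$ itself is an $\Ain$-functor $\A\otimes\B \to \phi_*(\A\otimes\B)$ which, having invertible linear term, is an equivalence of $\Ain$-categories by Theorem \ref{equivalone}. We then define $\A\otimes\B$ (strictly unital version) to be $\phi_*(\A\otimes\B)$.

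The main obstacle I anticipate is bookkeeping the signs and the inductive vanishing in the construction of $\phi$: one must check at each stage that the component of $(\phi_* m)^n$ with an $\underline e$ inserted that is not yet killed is indeed $m^1$-closed (so that a cancelling $\phi_n$ exists), which is where the $\Ain$-relations and the induction hypothesis get used, and one must verify that introducing $\phi_n$ does not disturb the lower-arity conditions already achieved (it does not, since $\phi_n$ has arity $n$ and the modification it induces in arity $< n$ is zero, while in arity $n$ the part affecting terms \emph{without} an $\underline e$ is a coboundary that can be absorbed). Because $\underline e$ acts strictly for $m^2$ already, the $n=2$ step is vacuous and $\phi_2$ can be taken to be zero; the first genuine step is $n = 3$, matching the historical difficulty of computing $m^3_{\A\otimes\B}$. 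I would present the $n=3$ case explicitly and then indicate that the general inductive step is formally identical, referring to the treatment of units via formal diffeomorphisms in the literature (e.g. \cite{Sei}, \cite{Lyu}) for the routine verification. Finally I would remark that $\phi_*(\A\otimes\B)$ is isomorphic — not merely equivalent — to the original as an $\Ain$-category if one prefers, since a formal diffeomorphism with $\phi_1=\id$ is invertible in the group of formal diffeomorphisms, so $\phi$ is an isomorphism in $\aCat$; this gives the claimed isomorphism in the statement.
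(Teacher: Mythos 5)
Your proposal is correct and follows essentially the same route as the paper: the paper's entire proof consists of observing (via the preceding Lemma) that $\underline{e}=e_a\otimes e_b$ is already a strict two-sided unit for $m^2_{\A\otimes\B}$ and then invoking the formal-diffeomorphism strictification of \cite[Lemma 2.1]{Sei}, which is exactly the inductive construction you unpack, including the key point (made explicit in the paper's following Remark) that the field hypothesis in Seidel's lemma is only needed to arrange condition i), which here holds on the nose. The only caution is that your appeal to $R\cdot\underline{e}$ being a free rank-one direct summand is not automatic over a general ring (the paper introduces ``split unit'' as a separate hypothesis precisely for this reason) and is in fact unnecessary: the cancelling $\phi_n$ can be written directly using insertions of $\underline{e}$ and the strict $m^2$-unitality, without splitting the hom-complexes.
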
 

\begin{proof}
We can make $\underline{e}:=e_{a}\otimes e_{b}$ a strict unit for $\A\otimes\B$ via a formal diffeomorphism, see \cite[Lemma 2.1]{Sei}.
\end{proof}

\begin{rem}
Note that \cite[Lemma 2.1]{Sei} says that, every cohomological unital $\Ain$-category is isomorphic to a strictly unital $\Ain$-category.\ This is true only if $R$ is a field.\ But it is always true that if we have a cohomological unital $\Ain$-category $\A$ with a unit $e$ satisfying condition i) of Definition \ref{strictlyunital} then we can make $e$ satisfying condition ii).\ In other words there exists an $\Ain$-category $\tilde{\A}$ which is isomorphic to $\A$.
\end{rem}

We recall that "$\cong$" denotes an equivalence of DG or $\Ain$-categories.\ 
It is not difficult to prove the following:
\begin{thm}\label{grgrgr}
Let $\A_1$ be an $\Ain$category, we have 
\begin{itemize}
\item[1.] (\emph{Reflexive}) 
Denoting by $R$ the $\Ain$category with one object and one morphism, we have an equivalence of $\Ain$-categories:
\begin{align}\label{equi1}
I:\A_1\otimes R\cong \A_1.
\end{align}
\item[2.] (\emph{Symmetric}) Given another $\Ain$category $\A_2$, we have an equivalence of $\Ain$-categories:
\begin{align}\label{equi2}
\mu:\A_1\otimes \A_2\cong \A_2\otimes\A_1.
\end{align}
\item[3.] (\emph{Associative}) Let $\A_3$ another $\Ain$category, we have an equivalence of $\Ain$-categories:
\begin{align}\label{equi3}
\sigma:(\A_1\otimes\A_2)\otimes\A_3\cong \A_1\otimes(\A_2\otimes\A_3).
\end{align}
\end{itemize}
\end{thm}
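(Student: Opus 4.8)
The plan is to reduce everything to the corresponding statements for DG-categories, which are classical, by means of the comparison functors $\eta$ and $\Xi$ of Theorem~\ref{enzone} together with the unit/counit functors $\alpha$, $\chi$ of Theorem~\ref{adjun}. The key observation is that for any non unital $\Ain$-category $\A$ we have, by Definition~\ref{tenso}, an $\Ain$-category $\Un(\A_1)\otimes\Un(\A_2)$ which is (by Theorem~\ref{enzone}, applied with $\C_{\A_i}=\Un(\A_i)$, $f^1_{\A_i}=\alpha^1_{\A_i}$, $g^1_{\A_i}=\chi^1_{\A_i}$) weakly equivalent to $\A_1\otimes\A_2$, and in fact equivalent to it as an $\Ain$-category once we use that $\chi^1_{\A_i}\cdot\alpha^1_{\A_i}=\Id$ (so that Theorem~\ref{equivalone} applies to $\eta_{\A_1\otimes\A_2}$ and $\Xi_{\A_1\otimes\A_2}$). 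So $\A_1\otimes\A_2\cong\Un(\A_1)\otimes\Un(\A_2)$ as $\Ain$-categories, where the right-hand side is the \emph{honest DG}-tensor product of two DG-categories.

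First I would prove item~1. The $\Ain$-category $R$ with one object and one morphism is a (strictly unital) DG-category, so $\Un(R)$ is the DG-category with the same objects and $\Un(R)(\ast,\ast)\simeq R$ concentrated in degree $0$ (one checks directly from the formula for $d_{\Un(R)}$ that all higher-length tensors are exact, so $\chi^1_R$ and $\alpha^1_R$ are mutually inverse quasi-isomorphisms); hence $\Un(R)\cong R$ as DG-categories. Then $\A_1\otimes R\cong\Un(\A_1)\otimes\Un(R)\cong\Un(\A_1)\otimes R=\Un(\A_1)$, and composing with the equivalence $\Un(\A_1)\cong\A_1$ (which holds because $\alpha^1_{\A_1}$ is a quasi-isomorphism with one-sided inverse $\chi^1_{\A_1}$, so Theorem~\ref{equivalone} applies to $\alpha_{\A_1}$) gives the desired $I$.

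For item~2, the DG-tensor product of DG-categories is strictly symmetric: the obvious swap functor $\Un(\A_1)\otimes\Un(\A_2)\to\Un(\A_2)\otimes\Un(\A_1)$, $a\otimes b\mapsto(-1)^{\deg a\deg b}\,b\otimes a$, is an \emph{isomorphism} of DG-categories. Transporting it along the equivalences $\A_i\otimes\A_j\cong\Un(\A_i)\otimes\Un(\A_j)$ yields the equivalence $\mu$. Item~3 is handled the same way: the DG-tensor product is strictly associative on DG-categories, $(\Un(\A_1)\otimes\Un(\A_2))\otimes\Un(\A_3)=\Un(\A_1)\otimes(\Un(\A_2)\otimes\Un(\A_3))$, and one transports this identification across the chain of equivalences
$$(\A_1\otimes\A_2)\otimes\A_3\ \cong\ \Un(\A_1\otimes\A_2)\otimes\Un(\A_3)\ \cong\ \bigl(\Un(\A_1)\otimes\Un(\A_2)\bigr)\otimes\Un(\A_3),$$
and symmetrically on the other side, using in the middle step that $\A_1\otimes\A_2\cong\Un(\A_1)\otimes\Un(\A_2)$ is an equivalence of $\Ain$-categories and hence $\Un$ carries it to an equivalence of DG-categories.

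\textbf{Main obstacle.} The delicate point is that Definition~\ref{tenso} only pins down $\A_1\otimes\A_2$ \emph{up to $\Ain$-equivalence} (it depends on the choice of homotopy $H^1$, and Theorem~\ref{enzone} only gives weak equivalence $\A_1\otimes\A_2\approx\Un(\A_1)\otimes\Un(\A_2)$ in general). To upgrade $\approx$ to $\cong$ one must invoke the extra hypothesis $g^1\cdot f^1=\Id$, i.e.\ $\chi^1_{\A_i}\cdot\alpha^1_{\A_i}=\Id_{\A_i}$ from Theorem~\ref{adjun}, which forces $\eta_{\A_1\otimes\A_2}^1$ to be a DG-quiver equivalence and hence, by Theorem~\ref{equivalone}, $\eta_{\A_1\otimes\A_2}$ to be a genuine $\Ain$-equivalence. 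The bookkeeping needed to chain these equivalences coherently — and to check that the resulting $I$, $\mu$, $\sigma$ are well-defined independently of the auxiliary homotopies — is the only real work; each individual step is then routine.
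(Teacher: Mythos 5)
Your overall route --- push everything into the DG world via $\Un$ and then invoke Theorem \ref{equivalone} --- is the same one the paper uses, but the pivotal step as you state it is wrong. You claim that $\chi^1_{\A_i}\cdot\alpha^1_{\A_i}=\Id$ upgrades $\eta_{\A_1\otimes\A_2}$ to a genuine $\Ain$-equivalence, hence $\A_1\otimes\A_2\cong\Un(\A_1)\otimes\Un(\A_2)$ (and, in item 1, that $\Un(R)\cong R$). This cannot hold: an equivalence in the sense of Definition \ref{equivalonze}(1) forces the first component to be an isomorphism on each hom-complex, whereas $\alpha^1_{\A}:\A(x,y)\to\Un(\A)(x,y)$ is only a split monomorphism --- the identity $\chi^1\cdot\alpha^1=\Id$ makes it split injective, while $\alpha^1\cdot\chi^1-\Id_{\Un(\A)}$ is merely null-homotopic, not zero. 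Since $\Un(\A)(x,y)$ contains all the higher tensor words it is strictly larger than $\A(x,y)$, so $\A\not\cong\Un(\A)$ and likewise $\Un(R)\not\cong R$; the paper itself only ever asserts $\A\approx\Un(\A)$ (Theorem \ref{teoremono}). Transporting the strict DG swap or associator along maps that are only weak equivalences yields $\approx$, not the $\cong$ claimed in the statement.

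The repair --- and this is what the paper actually does, cf.\ the explicit formulas for $\mu_{\A_1\A_2}$ and $\sigma_{\A_1\A_2\A_3}$ in the proof of Theorem \ref{symcat} --- is to apply Theorem \ref{equivalone} not to $\eta$ or $\Xi$ separately but to the full composite
\begin{align*}
\mu_{\A_1\A_2}:=(\tilde{\tilde{\chi}}_{\A_2}\otimes\tilde{\tilde{\chi}}_{\A_1})\cdot\mu^{\tiny\mbox{DG}}_{\Un(\A_1)\Un(\A_2)}\cdot(\alpha_{\A_1}\otimes\alpha_{\A_2}).
\end{align*}
Its first component is $(\chi^1_{\A_2}\otimes\chi^1_{\A_1})\cdot\mathrm{swap}\cdot(\alpha^1_{\A_1}\otimes\alpha^1_{\A_2})$, which by $\chi^1\cdot\alpha^1=\Id$ collapses to the signed swap $a\otimes b\mapsto\pm\, b\otimes a$ on the underlying DG quivers --- an honest isomorphism --- so Theorem \ref{equivalone} applies to the composite even though no individual factor is a strict equivalence. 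The same device handles the associator, and for item 1 one writes down the strict $\Ain$-functor $a\otimes r\mapsto r\cdot a$ directly (as in the right unitor of Theorem \ref{symcat}) rather than passing through $\Un(R)$. So your idea is salvageable, but as written the argument only establishes weak equivalence.
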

The first equivalence is straightforward, the second and the third ones are directly recovered from Theorem \ref{equivalone}.\ 
Note that Theorem \ref{grgrgr} does not guaranteed the functoriality of such equivalences.\\

We have a symmetric monoidal structure on the category of DG-categories (with any kind of unit) the monoidal unit 
is the commutative base ring $R$:
\begin{itemize}
\item[1.] (\emph{Right-left unitor}) 
Let $\mathcal{C}$ be a DG-category, we have an equivalence of DG-categories:
\begin{align*}
_rI^{\tiny\mbox{DG}}_{\mathcal{C}_1}:\mathcal{C}_1\otimes R&\cong \mathcal{C}_1\\
(c_1\otimes r_1\cdot 1_R)&\mapsto r_1\cdot c_1.
\end{align*}
and $_lI^{\tiny\mbox{DG}}_{\mathcal{C}_1}:R\otimes \mathcal{C}_1\cong \mathcal{C}_1$.
\item[2.] (\emph{Swap map}) Given another DG-category $\mathcal{C}_2$, we have an equivalence of DG-categories:
\begin{align*}
\mu^{\tiny\mbox{DG}}_{\mathcal{C}_1 \mathcal{C}_2}:\mathcal{C}_1\otimes \mathcal{C}_2&\cong \mathcal{C}_2\otimes\mathcal{C}_1\\
c_1\otimes c_2&\mapsto  c_2\otimes c_1.
\end{align*}
\item[3.] (\emph{Associator}) Let $\mathcal{C}_3$ be another DG-category, we have an equivalence of DG-categories:
\begin{align*}
\sigma^{\tiny\mbox{DG}}_{\mathcal{C}_1 \mathcal{C}_2 \mathcal{C}_3}:\mathcal{C}_1\otimes(\mathcal{C}_2 \otimes\mathcal{C}_3)&\cong (\mathcal{C}_1\otimes \mathcal{C}_2)\otimes\mathcal{C}_3\\
c_1\otimes (c_2\otimes c_3)&\mapsto (c_1\otimes c_2)\otimes c_3.\\
\end{align*}
\end{itemize}
The DG-functors above satisfy the coherence conditions, the unit coherence, the associative coherence and the inverse law.

\subsection{Tensor product of two $\Ain$-functors}\label{functoroni}

In this section we will prove the properties of the tensor product found in the previous section 
and we define the tensor product of two non unital (resp. cohomological unital, unital) $\Ain$-functors.\\

First, we recall that, providing $\A\otimes\B$ with the $\Ain$-structure of Theorem \ref{HPT}, we have that
$\alpha^1_{\A}\otimes\alpha^1_{\B}$ extends to an $\Ain$-functor 
\begin{align*}
\eta_{\A\otimes\B}:\A\otimes\B\to \Un(\A)\otimes \Un(\B).
\end{align*}
In the same vein, we have an $\Ain$-functor 
\begin{align*}
\Xi_{\A\otimes\B}:\Un(\A)\otimes \Un(\B)\to \A\otimes\B.
\end{align*}
which extends $\chi^1_{\A}\otimes\chi^1_{\A}$ and a prenatural transformation $T$ which extends the homotopy $T^1$, see equation (\ref{torcofo}).\\
We have the diagram:
\begin{align*}
\xymatrix{
\A\otimes\B \ar@<0.5ex>[rr]^-{\eta_{\A\otimes \B}}&&\ar@<0.5ex>[ll]^-{\Xi_{\A\otimes\B}} \Un(\A)\otimes\Un(\B)
}
\end{align*}
Note that $\eta \cdot \Xi - \Id_{\A\otimes\B} =\mathfrak{M}^1(T)$ so $\eta \cdot \Xi \sim \Id_{\A\otimes\B}$ (so $\eta \cdot \Xi \approx \Id_{\A\otimes\B}$).\\

By construction the followings properties are immediate:
\begin{thm}\label{teoremono}
Let $\A_1$ be an $\Ain$-category, we have: 
\begin{itemize}
\item[1.] $\A_1\approx \Un(\A_1)$.
\item[2.] Given another $\Ain$-category $\A_2$, we have: 
\begin{align*}
\Un(\A_1\otimes\A_2)\approx \Un(\A_1)\otimes \Un(\A_2)
\end{align*}
\item[1.+ 2.]  Combining the previous items we have:
\begin{align*}
\Un(\A_1\otimes\A_2)\approx \A_1\otimes\A_2\approx \Un(\A_1)\otimes \Un(\A_2)
\end{align*}
for every pair of $\Ain$-categories $\A_1$ and $\A_2$.
\end{itemize}
\end{thm}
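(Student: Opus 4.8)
The plan is to read all three assertions off weak equivalences that have already been produced, using only formal properties of $\sim$ and $\approx$. \emph{Item 1} is exactly Remark~\ref{chitild}: for the $\Ain$-category $\A_1$ that remark furnishes an $\Ain$-functor $\tilde{\tilde{\chi}}_{\A_1}:\Un(\A_1)\to\A_1$ with $\tilde{\tilde{\chi}}_{\A_1}\cdot\alpha_{\A_1}=\Id_{\A_1}$ and $\alpha_{\A_1}\cdot\tilde{\tilde{\chi}}_{\A_1}\sim\Id_{\Un(\A_1)}$. Since $\sim$ implies $\approx$ for $\Ain$-functors, this realises $\alpha_{\A_1}$ and $\tilde{\tilde{\chi}}_{\A_1}$ as a pair of quasi-inverses in the sense of weak equivalence, whence $\A_1\approx\Un(\A_1)$. (Alternatively, since $\Un(\A_1)$ is a DG-category one may invoke Theorem~\ref{DC} applied to $\alpha^1_{\A_1}$, which by Theorem~\ref{adjun} is a homotopy equivalence of the underlying complexes.)

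For \emph{Item 2} and the combined statement I would first record that weak equivalence of $\Ain$-categories is an equivalence relation: reflexivity and symmetry are immediate, and transitivity follows from compatibility of $\approx$ for $\Ain$-functors with left and right composition, since given quasi-inverse pairs $(\F_1,\G_1)$ and $(\F_2,\G_2)$ one has $\G_1\G_2\F_2\F_1\approx\G_1\F_1\approx\Id$, and symmetrically. Next, apply Item~1 to the $\Ain$-category $\A_1\otimes\A_2$ (well defined by Definition~\ref{tenso}) to obtain $\Un(\A_1\otimes\A_2)\approx\A_1\otimes\A_2$. On the other hand, by Definition~\ref{tenso} the $\Ain$-structure on $\A_1\otimes\A_2$ is the one produced by Theorem~\ref{enzone} with $\C_{\A_i}=\Un(\A_i)$, $f^1_{\A_i}=\alpha^1_{\A_i}$, $g^1_{\A_i}=\chi^1_{\A_i}$, and that theorem already gives the weak equivalence $\A_1\otimes\A_2\approx\Un(\A_1)\otimes\Un(\A_2)$ (via $\eta_{\A_1\otimes\A_2}$ and $g_{\C_{\A_1}}\otimes g_{\C_{\A_2}}$). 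Chaining the two weak equivalences by transitivity yields Item~2, and listing them yields the combined statement $\Un(\A_1\otimes\A_2)\approx\A_1\otimes\A_2\approx\Un(\A_1)\otimes\Un(\A_2)$.

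The only point that is not pure bookkeeping is the transitivity of $\approx$ at the level of $\Ain$-categories, which I would spell out as above; it rests on the compatibility of $\approx$ for functors with composition, itself inherited from the analogous property of $\sim$ recorded earlier. Everything else is a direct appeal to Theorem~\ref{enzone}, Theorem~\ref{adjun}, Definition~\ref{tenso} and Remark~\ref{chitild}, with no new computation required, so the main residual risk is merely making sure the hypotheses of Theorem~\ref{enzone} are genuinely satisfied by the data $(\alpha^1_{\A_i},\chi^1_{\A_i},H^1_{\A_i})$ of Theorem~\ref{adjun} --- which has already been checked there.
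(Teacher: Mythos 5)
Your proposal is correct and matches the paper's (implicit) argument: the paper simply states that these properties are ``immediate by construction,'' and the construction it has in mind is exactly the one you cite --- Item 1 from Remark~\ref{chitild} (equivalently Theorem~\ref{DC} applied to $\alpha^1_{\A_1}$, $\chi^1_{\A_1}$, $H^1_{\A_1}$), the weak equivalence $\A_1\otimes\A_2\approx\Un(\A_1)\otimes\Un(\A_2)$ from Theorem~\ref{enzone} via $\eta$ and $\Xi$, and then chaining. Your explicit remark on the transitivity of $\approx$ for $\Ain$-categories is a sensible piece of bookkeeping that the paper leaves unstated.
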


\begin{defn}\label{compofuncto}
Given two $\Ain$-functors $\F_1:\A_1\to\A_2$ and $\F_2:\A_3\to\A_4$ we can define:
\begin{align*}
\F_1``\otimes" \F_2:\A_1\otimes\A_3\to \A_2\otimes\A_4
\end{align*}
as the morphism fitting the diagram
\[
\xymatrix{
\A_1\otimes\A_3\ar@{-->}[rrr]\ar[d]_{\eta_{\A_1\otimes\A_3}}&&&\A_2\otimes \A_4\\
\Un(\A_1)\otimes \Un(\A_3)\ar[rrr]_{\Un(\F_1)\otimes^{\tiny\mbox{DG}} \Un(\F_2)}&&&\Un(\A_2)\otimes \Un(\A_4)\ar[u]_{\Xi_{\A_2\otimes\A_4}}
}
\]
In formula
\begin{align}\label{gronzo}
\F_1``\otimes" \F_2:=\Xi_{\A_2\otimes\A_4}\cdot \big( \Un(\F_1)\otimes^{\tiny\mbox{DG}} \Un(\F_2) \big)\cdot \eta_{\A_1\otimes\A_3}.
\end{align}
\end{defn}

\begin{thm}\label{trunchezfol}
Given two $\Ain$-categories $\B_1$, $\B_2$ and two $\Ain$-functors $\G_1:\B_1\to\B_2$ and $\G_2:\B_3\to\B_4$, we have:
\begin{align*}
(\F_1``\otimes"\G_1)\cdot (\F_2``\otimes"\G_2)\sim (\F_1\cdot\F_2)``\otimes" (\G_1\cdot\G_2).
\end{align*}
\end{thm}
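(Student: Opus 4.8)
The plan is to unwind both sides using Definition~\ref{compofuncto} and then to collapse the resulting composition with the help of the homotopy $\eta\cdot\Xi\sim\Id$ supplied by Theorem~\ref{enzone}, together with the strict functoriality of $\Un$ and of the DG tensor product. Throughout I assume, as is implicit in the statement, that $\A_4=\A_1$ and $\B_4=\B_1$, so that $\F_1\cdot\F_2$ and $\G_1\cdot\G_2$ are defined, and I fix once and for all the models of all the tensor products involved (so that the symbols $\eta_{\A_1\otimes\B_1}$ and $\Xi_{\A_1\otimes\B_1}$ occurring below refer to one and the same $\Ain$-category).

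First I would substitute the defining expression (\ref{gronzo}) for each ``$\otimes$'' on the left-hand side, obtaining the six-fold factorisation
\begin{align*}
(\F_1``\otimes"\G_1)\cdot(\F_2``\otimes"\G_2)=\Xi_{\A_2\otimes\B_2}\cdot\big(\Un(\F_1)\otimes^{\tiny\mbox{DG}}\Un(\G_1)\big)\cdot\big[\eta_{\A_1\otimes\B_1}\cdot\Xi_{\A_1\otimes\B_1}\big]\cdot\big(\Un(\F_2)\otimes^{\tiny\mbox{DG}}\Un(\G_2)\big)\cdot\eta_{\A_3\otimes\B_3},
\end{align*}
in which the bracketed factor is an $\Ain$-endofunctor of $\Un(\A_1)\otimes\Un(\B_1)$. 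By Theorem~\ref{enzone}, applied with $\C_{\A_i}=\Un(\A_i)$, there is a prenatural transformation $T$ with $\eta_{\A_1\otimes\B_1}\cdot\Xi_{\A_1\otimes\B_1}-\Id=\mathfrak{M}^1(T)$. Since $\sim$ is compatible with pre- and post-composition by $\Ain$-functors (\cite[(1h)]{Sei}), replacing the bracketed factor by the identity alters the whole composition only by a term $\mathfrak{M}^1(\widetilde{T})$, where $\widetilde{T}$ is obtained from $T$ by precomposing with $\big(\Un(\F_2)\otimes^{\tiny\mbox{DG}}\Un(\G_2)\big)\cdot\eta_{\A_3\otimes\B_3}$ and postcomposing with $\Xi_{\A_2\otimes\B_2}\cdot\big(\Un(\F_1)\otimes^{\tiny\mbox{DG}}\Un(\G_1)\big)$ (in the sense of the operators $\mathscr{R}_\bullet$, $\mathscr{L}_\bullet$ on $\Ain$-functor categories). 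This $\widetilde{T}$ is the explicit prenatural transformation announced in the introduction, and it yields
\begin{align*}
(\F_1``\otimes"\G_1)\cdot(\F_2``\otimes"\G_2)\sim\Xi_{\A_2\otimes\B_2}\cdot\big(\Un(\F_1)\otimes^{\tiny\mbox{DG}}\Un(\G_1)\big)\cdot\big(\Un(\F_2)\otimes^{\tiny\mbox{DG}}\Un(\G_2)\big)\cdot\eta_{\A_3\otimes\B_3}.
\end{align*}

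To finish, I would use that $\Un$ is a functor into $\DgCat^{\tiny\mbox{nu}}$ (the left adjoint in (\ref{gorgogo}) sends $\Ain$-functors to strict DG-functors, so $\Un(\F_1)\cdot\Un(\F_2)=\Un(\F_1\cdot\F_2)$, and likewise for the $\G$'s) and that $\otimes^{\tiny\mbox{DG}}$ is the bifunctorial DG tensor product recalled in the introduction, so that $\big(\Un(\F_1)\otimes^{\tiny\mbox{DG}}\Un(\G_1)\big)\cdot\big(\Un(\F_2)\otimes^{\tiny\mbox{DG}}\Un(\G_2)\big)=\Un(\F_1\cdot\F_2)\otimes^{\tiny\mbox{DG}}\Un(\G_1\cdot\G_2)$. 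Substituting this into the last display and recognising the right-hand side as exactly (\ref{gronzo}) for $(\F_1\cdot\F_2)``\otimes"(\G_1\cdot\G_2)$ gives the claim. The only real work, rather than citation, is exhibiting $\widetilde{T}$ explicitly, i.e.\ tracking $T$ through the pre- and post-composition; but this is routine once one knows that these operations are realised by $\Ain$-functors on the $\Fun_\infty$-categories, which intertwine $\mathfrak{M}^1$. The one point to watch is the bookkeeping that makes the middle $\eta_{\A_1\otimes\B_1}\cdot\Xi_{\A_1\otimes\B_1}$ an honest endofunctor of a single fixed model of $\Un(\A_1)\otimes\Un(\B_1)$; there is no analytic difficulty, the entire content being packaged into Theorem~\ref{enzone} and the compatibility of $\sim$ with composition.
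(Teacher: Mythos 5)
Your proposal is correct and is essentially identical to the paper's own proof: both unwind Definition~\ref{compofuncto}, cancel the middle factor $\eta\cdot\Xi\sim\Id$ using the compatibility of $\sim$ with pre- and post-composition, and then invoke the strict functoriality of $\Un$ and the bifunctoriality of $\otimes^{\tiny\mbox{DG}}$ to recognise the result as $(\F_1\cdot\F_2)``\otimes"(\G_1\cdot\G_2)$. The only difference is that you spell out the induced prenatural transformation $\widetilde{T}$ explicitly, which the paper leaves implicit.
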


\begin{proof}
It follows from the calculation:
\begin{align*}
(\F_1``\otimes"\G_1)\cdot(\F_2``\otimes"\G_2)&=\Xi_{\A_2\otimes\A_4}\cdot\big(\Un(\F_1)\otimes^{\tiny\mbox{DG}} \Un(\G_1)\big)\cdot\eta \cdot \Xi\cdot\big(\Un(\F_2)\otimes^{\tiny\mbox{DG}} \Un(\G_2)\big)\cdot\eta_{\A_1\otimes\A_3}\\
&\sim\Xi_{\A_2\otimes\A_4}\cdot\big((\Un(\F_1)\otimes^{\tiny\mbox{DG}} \Un(\G_1)) \cdot (\Un(\F_2)\otimes^{\tiny\mbox{DG}} \Un(\G_2))\big)\cdot\eta_{\A_1\otimes\A_3}\\
&=\Xi_{\A_2\otimes\A_4}\cdot\big(\Un(\F_1\cdot\F_2)\otimes^{\tiny\mbox{DG}} \Un(\G_1\cdot\G_2)\big)\cdot\eta_{\A_1\otimes\A_3}\\
&=(\F_1\cdot\F_2)``\otimes" (\G_1\cdot\G_2).
\end{align*}
\end{proof}

It follows immediately 

\begin{cor}\label{corzulo}
Given two $\Ain$-categories $\B_1$, $\B_2$ and two $\Ain$-functors $\G_1:\B_1\to\B_2$ and $\G_2:\B_3\to\B_4$, we have:
\begin{align*}
(\F_1``\otimes" \G_1)\cdot (\F_2``\otimes" \G_2)\approx (\F_1\cdot\F_2)``\otimes"  (\G_1\cdot\G_2).
\end{align*}
\end{cor}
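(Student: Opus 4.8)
The plan is to read this off from Theorem \ref{trunchezfol} with essentially no further work on the tensor construction. Theorem \ref{trunchezfol} already furnishes the stronger \emph{homotopy} relation
$$(\F_1``\otimes"\G_1)\cdot(\F_2``\otimes"\G_2)\sim(\F_1\cdot\F_2)``\otimes"(\G_1\cdot\G_2),$$
together with an explicit prenatural transformation realising it. It then suffices to invoke the general implication $\F\sim\G\Rightarrow\F\approx\G$ for $\Ain$-functors --- recorded in Section 2 (cf. \cite{Orn2}) and in the chain of implications of the introduction --- and to apply it to the two $\Ain$-functors above. This immediately produces the displayed $\approx$-relation, i.e. the corollary.

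Only two routine points need attention while writing this up. First, one must observe that the relations $\sim$ and $\approx$ are even applicable, i.e. that the two composites agree on objects: this holds because each factor $\F_i``\otimes"\G_i=\Xi\cdot(\Un(\F_i)\otimes^{\tiny\mbox{DG}}\Un(\G_i))\cdot\eta$ acts on objects through the underlying object maps of $\F_i$ and $\G_i$ (since $\eta$ and $\Xi$ are the identity on objects and $\Un(\F_i)$ acts as $\F_i^0$), and these object maps commute with composition. Second, one should note that the implication $\sim\Rightarrow\approx$ is being used at exactly the level of generality needed here, which is what the cited results provide.

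I do not expect a genuine obstacle: all of the substance is already contained in Theorem \ref{trunchezfol}, and the passage from $\sim$ to $\approx$ is a formal consequence of the definitions of the two homotopy relations on $\Ain$-functors. Equivalently, one may phrase the argument by saying that the canonical projection from $\Ain$-functors modulo $\sim$ to $\Ain$-functors modulo $\approx$ (passing to the coarser relation) carries the identity of Theorem \ref{trunchezfol} to the one claimed here.
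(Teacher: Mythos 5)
Your proposal is correct and matches the paper exactly: the paper introduces Corollary \ref{corzulo} with the single phrase ``It follows immediately'' after Theorem \ref{trunchezfol}, i.e.\ it also just passes from the homotopy $\sim$ produced there to the coarser relation $\approx$ via the standard implication $\F\sim\G\Rightarrow\F\approx\G$. Your two added sanity checks (agreement on objects and applicability of the implication) are reasonable but not recorded in the paper, which treats the step as immediate.
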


\begin{lem}\label{drododo}
Let $\F\sim\G$ be two $\Ain$-functors and $\C$ an $\Ain$-category, 
then $\F\otimes\Id_{\C}\sim\G``\otimes" \Id_{\C}$ and $\Id_{\C}``\otimes"\F\sim\Id_{\C}``\otimes" \G$.
\end{lem}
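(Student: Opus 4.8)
The strategy is to reduce the statement about the ``$\otimes$'' of $\Ain$-functors to the corresponding statement for the DG-tensor product, which is exactly Lemma \ref{stupidone}, using the fact that composition of $\Ain$-functors preserves homotopy (\cite[(1h)]{Sei}) together with the diagrammatic definition (\ref{gronzo}). First I would unwind $\F``\otimes"\Id_{\C}$ and $\G``\otimes"\Id_{\C}$ via Definition \ref{compofuncto}: both are of the form $\Xi_{\A_2\otimes\C}\cdot\big(\Un(-)\otimes^{\tiny\mbox{DG}}\Un(\Id_{\C})\big)\cdot\eta_{\A_1\otimes\C}$, where the middle factor is $\Un(\F)\otimes^{\tiny\mbox{DG}}\Un(\Id_\C)$ in one case and $\Un(\G)\otimes^{\tiny\mbox{DG}}\Un(\Id_\C)$ in the other, and the outer factors $\Xi$ and $\eta$ are the \emph{same} in both cases (they depend only on the source and target $\Ain$-categories, not on the functor). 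Since $\sim$ is preserved by left and right composition with a fixed $\Ain$-functor, it therefore suffices to show
\begin{align*}
\Un(\F)\otimes^{\tiny\mbox{DG}}\Un(\Id_\C)\sim \Un(\G)\otimes^{\tiny\mbox{DG}}\Un(\Id_\C).
\end{align*}

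Next I would use that $\Un:\aCat^{\tiny\mbox{nu}}\to\DgCat^{\tiny\mbox{nu}}$ is a functor and, crucially, that it sends homotopic $\Ain$-functors to homotopic DG-functors. Concretely, if $T$ is the prenatural transformation witnessing $\F-\G=\mathfrak{M}^1(T)$, one obtains from the explicit formula for $d_{\Un(\A)}$ and the bar/cobar description of $\Un$ a prenatural transformation $\Un(T)$ on $\Un(\F),\Un(\G)$ with $\Un(\F)-\Un(\G)=\mathfrak{M}^1(\Un(T))$; this is a routine check using that $\Un$ is given by cobar of bar and that prenatural transformations correspond to coderivations which behave well under these constructions. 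Granting $\Un(\F)\sim\Un(\G)$, Lemma \ref{stupidone} applied to the DG-categories $\Un(\A_1),\Un(\A_2),\Un(\C)$ and the DG-functors $\Un(\F),\Un(\G)$ (note $\Un(\Id_{\C})=\Id_{\Un(\C)}$) gives
\begin{align*}
\Un(\F)\otimes^{\tiny\mbox{DG}}\Id_{\Un(\C)}\sim\Un(\G)\otimes^{\tiny\mbox{DG}}\Id_{\Un(\C)}\quad\text{and}\quad \Id_{\Un(\C)}\otimes^{\tiny\mbox{DG}}\Un(\F)\sim\Id_{\Un(\C)}\otimes^{\tiny\mbox{DG}}\Un(\G),
\end{align*}
and composing with $\Xi$ and $\eta$ on the two sides yields $\F``\otimes"\Id_{\C}\sim\G``\otimes"\Id_{\C}$ and $\Id_{\C}``\otimes"\F\sim\Id_{\C}``\otimes"\G$.

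The main obstacle I anticipate is the claim that $\Un$ preserves the homotopy relation $\sim$, i.e.\ the construction of $\Un(T)$ from $T$ and the verification $\Un(\F)-\Un(\G)=\mathfrak{M}^1(\Un(T))$; everything else is a formal consequence of Lemma \ref{stupidone} and the compatibility of $\sim$ with composition. If one prefers to avoid this point, an alternative is to prove it by hand: write the explicit prenatural transformation on $\F``\otimes"\Id_{\C}$ directly as $\tilde{T}:=\Xi_{\A_2\otimes\C}\cdot\big(\Un(T)\otimes^{\tiny\mbox{DG}}\Id_{\Un(\C)}\big)\cdot\eta_{\A_1\otimes\C}$ (interpreting the middle term via $\tilde{S}$ of Lemma \ref{stupidone}), and check $\mathfrak{M}^1(\tilde{T})=\F``\otimes"\Id_{\C}-\G``\otimes"\Id_{\C}$ using that $\eta$ and $\Xi$ are $\Ain$-functors and that $\mathfrak{M}^1$ is compatible with composition; this is essentially the same computation packaged differently.
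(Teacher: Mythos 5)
Your proposal is correct and its skeleton is exactly that of the paper's proof: unwind Definition \ref{compofuncto}, observe that $\eta$ and $\Xi$ do not depend on the functor, reduce to $\Un(\F)\otimes^{\tiny\mbox{DG}}\Id_{\Un(\C)}\sim\Un(\G)\otimes^{\tiny\mbox{DG}}\Id_{\Un(\C)}$ via compatibility of $\sim$ with composition, and invoke Lemma \ref{stupidone}. The one place where you diverge is precisely the step you flag as the main obstacle, namely that $\F\sim\G$ implies $\Un(\F)\sim\Un(\G)$. You propose to prove this by constructing a coderivation $\Un(T)$ through the bar/cobar description and checking $\Un(\F)-\Un(\G)=\mathfrak{M}^1(\Un(T))$; this is doable but is a nontrivial computation, not quite the ``routine check'' you suggest. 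The paper sidesteps it entirely: from the naturality square $\Un(\F)\cdot\alpha_{\A}=\alpha_{\B}\cdot\F$ and the homotopy $\alpha_{\A}\cdot\tilde{\tilde{\chi}}_{\A}\sim\Id_{\Un(\A)}$ already established in Remark \ref{chitild}, one gets $\Un(\F)\sim\alpha_{\B}\cdot\F\cdot\tilde{\tilde{\chi}}_{\A}$, and then $\F\sim\G$ gives $\Un(\F)\sim\alpha_{\B}\cdot\F\cdot\tilde{\tilde{\chi}}_{\A}\sim\alpha_{\B}\cdot\G\cdot\tilde{\tilde{\chi}}_{\A}\sim\Un(\G)$ using only the compatibility of $\sim$ with left and right composition. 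This buys you the key sub-claim for free from results already in hand, whereas your route requires a genuinely new verification about coderivations under cobar--bar; if you keep your version, that verification must actually be written out.
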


\begin{proof}
We recall that we have the following commutative diagram
\begin{align*}
\xymatrix{
\A\ar[d]^{\F}\ar[r]^-{\alpha_{\A}}&\Un(\A)\ar[d]^{\Un(\F)}\\
\B\ar[r]^-{\alpha_{\B}}&\Un(\B)
}
\end{align*}
So $\Un(\F)\cdot\alpha_{\A}=\alpha_{\B}\cdot\F$ and $\Un(\F)\sim\alpha_{\B}\cdot\F\cdot\tilde{\tilde{\chi}}_{\A}$.\
It implies that, if $\F\sim\G$ then $\Un(\F)\sim\Un(\G)$.\ 
By Lemma \ref{stupidone} we have $\Un(\F)\otimes^{\tiny\mbox{DG}}\Id_{}\sim \Un(\G)\otimes^{\tiny\mbox{DG}}\Id$.\ 
It follows:
\begin{align*}
\F``\otimes"\Id_{\C}&:= \Xi\cdot \big( \Un(\F)\otimes^{\tiny\mbox{DG}}\Id_{\Un(\C)}\big) \cdot \eta\\
&\sim \Xi\cdot \big(\Un(\G)\otimes^{\tiny\mbox{DG}}\Id_{\Un(\C)}\big)\cdot \eta \\
&=\G``\otimes"\Id_{\C}.
\end{align*}
\end{proof}

\begin{lem}
Given an $\Ain$-category $\A$ we have $\alpha_{\Un(\A)}\sim\Un(\alpha_\A)$ and $\tilde{\tilde{\chi}}_{\Un(\A)}\sim\Un(\tilde{\tilde{\chi}}_\A)$.
\end{lem}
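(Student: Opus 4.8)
The plan is to exploit the naturality (up to homotopy) of the unit $\alpha$ and the counit-like map $\tilde{\tilde{\chi}}$ of the adjunction $\Un\dashv i$, together with the fact that $\Un$ is a functor on the nose. The two claimed homotopies, $\alpha_{\Un(\A)}\sim\Un(\alpha_\A)$ and $\tilde{\tilde{\chi}}_{\Un(\A)}\sim\Un(\tilde{\tilde{\chi}}_\A)$, both express that the canonical equivalences relating $\A$, $\Un(\A)$ and $\Un\Un(\A)$ commute in the homotopy category, so the strategy is to compare both sides after composing with the relevant $\alpha$'s, where things become strict equalities.

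For the first claim, recall that $\alpha$ is the unit of the adjunction, so we have the strict commuting square $\Un(\F)\cdot\alpha_\A=\alpha_\B\cdot\F$ for any $\Ain$-functor $\F$ (this is used in the proof of Lemma \ref{drododo}). Applying this with $\F=\alpha_\A:\A\to\Un(\A)$ gives $\Un(\alpha_\A)\cdot\alpha_\A=\alpha_{\Un(\A)}\cdot\alpha_\A$, i.e.\ the two $\Ain$-functors $\Un(A)\to\Un\Un(\A)$ agree after precomposition with $\alpha_\A$. Now I would compose on the right with $\tilde{\tilde{\chi}}_\A:\Un(\A)\to\A$ and use $\alpha_\A\cdot\tilde{\tilde{\chi}}_\A\sim\Id_{\Un(\A)}$ (Remark \ref{chitild}), together with the fact that $\sim$ is compatible with left and right composition of $\Ain$-functors. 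This yields
\begin{align*}
\Un(\alpha_\A)=\Un(\alpha_\A)\cdot\Id_{\Un(\A)}\sim\Un(\alpha_\A)\cdot\alpha_\A\cdot\tilde{\tilde{\chi}}_\A=\alpha_{\Un(\A)}\cdot\alpha_\A\cdot\tilde{\tilde{\chi}}_\A\sim\alpha_{\Un(\A)}\cdot\Id_{\Un(\A)}=\alpha_{\Un(\A)},
\end{align*}
which is exactly the first assertion.

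For the second claim, the analogue of the naturality square for $\tilde{\tilde{\chi}}$ is only a homotopy, not an equality: from the defining properties $\tilde{\tilde{\chi}}_\A\cdot\alpha_\A=\Id_\A$ and $\alpha_\A\cdot\tilde{\tilde{\chi}}_\A\sim\Id_{\Un(\A)}$ one checks that $\tilde{\tilde{\chi}}$ is a homotopy-natural transformation, so $\tilde{\tilde{\chi}}_\B\cdot\Un(\F)\sim\F\cdot\tilde{\tilde{\chi}}_\A$. I would again reduce to a strict identity by precomposing with $\alpha_{\Un(\A)}$: using $\tilde{\tilde{\chi}}_{\Un(\A)}\cdot\alpha_{\Un(\A)}=\Id_{\Un(\A)}$ on one side, and on the other side $\Un(\tilde{\tilde{\chi}}_\A)\cdot\alpha_{\Un(\A)}\sim\Un(\tilde{\tilde{\chi}}_\A)\cdot\Un(\alpha_\A)=\Un(\tilde{\tilde{\chi}}_\A\cdot\alpha_\A)=\Un(\Id_\A)=\Id_{\Un(\A)}$ (invoking the first claim, functoriality of $\Un$, and compatibility of $\sim$ with composition). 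Since both composites with $\alpha_{\Un(\A)}$ land in the same $\sim$-class, and $\alpha_{\Un(\A)}$ admits the homotopy-inverse $\tilde{\tilde{\chi}}_{\Un(\A)}$ (so precomposition with it is injective on $\sim$-classes, by the same right-composition-with-$\tilde{\tilde{\chi}}$ trick), we conclude $\tilde{\tilde{\chi}}_{\Un(\A)}\sim\Un(\tilde{\tilde{\chi}}_\A)$.

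\textbf{The main obstacle} I anticipate is purely bookkeeping: making sure that in each step where I ``cancel'' a pair $\alpha_\A\cdot\tilde{\tilde{\chi}}_\A$ or $\tilde{\tilde{\chi}}_\A\cdot\alpha_\A$ I am entitled to do so, i.e.\ that the homotopy relation $\sim$ between $\Ain$-functors genuinely passes through left and right composition (this is \cite[(1h)]{Sei}, already recorded in the excerpt) and that replacing a functor by a $\sim$-equivalent one inside a longer composite is legitimate. There is no geometric or combinatorial difficulty here beyond the homotopy-naturality of $\tilde{\tilde{\chi}}$, which itself follows mechanically from its construction in Remark \ref{chitild} as $(\tilde{\chi}_\A\cdot\alpha_\A)^{-1}\cdot\tilde{\chi}_\A$ and the homotopy $\tilde{\alpha}_\A\cdot\tilde{\chi}_\A\sim\Id_{\Un(\A)}$.
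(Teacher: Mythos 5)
Your proposal is correct and follows essentially the same route as the paper: naturality of the unit $\alpha$ gives the strict square, right-composition with $\tilde{\tilde{\chi}}_\A$ and the homotopy $\alpha_\A\cdot\tilde{\tilde{\chi}}_\A\sim\Id_{\Un(\A)}$ yield the first claim, and the second follows by comparing both functors after precomposition with $\alpha_{\Un(\A)}$ and cancelling via its homotopy inverse. You merely spell out the cancellation step that the paper compresses into ``we are done.''
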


\begin{proof}
We have the following commutative diagram:
\begin{align*}
\xymatrix{
\A\ar[d]^{\alpha_\A}\ar[r]^-{\alpha_{\A}}&\Un(\A)\ar[d]^{\Un(\alpha_\A)}\\
\Un(\A)\ar[r]^-{\alpha_{\Un(\A)}}&\Un(\Un(\A))
}
\end{align*}
so $\Un(\alpha_\A)\cdot\alpha_{\A}=\alpha_{\Un(\A)}\cdot\alpha_\A$.\ 
Multiplying by $\tilde{\tilde{\chi}}_\A$, 
we have 
$\Un(\alpha_\A)\cdot\alpha_{\A}\cdot\tilde{\tilde{\chi}}_\A=\alpha_{\Un(\A)}\cdot\alpha_\A\cdot\tilde{\tilde{\chi}}_\A$ so
$\alpha_{\Un(\A)}\sim\Un(\alpha_\A)$.\\
Since 
$$\tilde{\tilde{\chi}}_{\Un(\A)}\cdot \alpha_{\Un(\A)}\sim\Un(\tilde{\tilde{\chi}}_{\A})\cdot\Un(\alpha_{\A})$$
we are done.
\end{proof}

By Lemma \ref{drododo} and Theorem \ref{trunchezfol} we have:

\begin{thm}
The tensor product defined in Theorem \ref{tenso} and in Theorem \ref{compofuncto} defines a bifunctor 
\begin{align*}
``\otimes" :\aCat/_{\sim}\mbox{ $\times$ } \aCat/_{\sim} \to \aCat/_{\sim}.
\end{align*}
\end{thm}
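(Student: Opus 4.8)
The plan is to verify the two conditions that a bifunctor on quotient categories must satisfy: well-definedness on morphisms (i.e.\ the assignment respects the equivalence relation $\sim$ on both arguments) and functoriality (i.e.\ preservation of composition and identities, modulo $\sim$). The well-definedness is the content of Lemma \ref{drododo} together with a symmetry argument: if $\F_1\sim\G_1$ and $\F_2\sim\G_2$, then I would write $\F_1``\otimes"\F_2 = (\F_1``\otimes"\Id)\cdot(\Id``\otimes"\F_2)$ up to $\sim$, using Corollary \ref{corzulo} to split the tensor as a composition of two "partial" tensors, and then apply Lemma \ref{drododo} to each factor together with the compatibility of $\sim$ with composition (cited from \cite[(1h)]{Sei}). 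Concretely: $\F_1``\otimes"\F_2 \approx (\F_1``\otimes"\Id_{\A_3})\cdot(\Id_{\A_1}``\otimes"\F_2) \sim (\G_1``\otimes"\Id_{\A_3})\cdot(\Id_{\A_1}``\otimes"\G_2) \approx \G_1``\otimes"\G_2$, where I must be careful that the middle relations are genuinely $\sim$ and not merely $\approx$ — this forces me to phrase Corollary \ref{corzulo} at the level of $\sim$ rather than $\approx$, which is already available from Theorem \ref{trunchezfol}.

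Next I would check functoriality. Preservation of composition is exactly Theorem \ref{trunchezfol}: $(\F_1``\otimes"\G_1)\cdot(\F_2``\otimes"\G_2)\sim(\F_1\cdot\F_2)``\otimes"(\G_1\cdot\G_2)$, which says precisely that the two composites agree as morphisms in $\aCat/_{\sim}$. For the identities I would compute $\Id_{\A_1}``\otimes"\Id_{\A_2}$ directly from formula (\ref{gronzo}): it equals $\Xi_{\A_1\otimes\A_2}\cdot(\Un(\Id_{\A_1})\otimes^{\tiny\mbox{DG}}\Un(\Id_{\A_2}))\cdot\eta_{\A_1\otimes\A_2}$; since $\Un$ is a functor, $\Un(\Id)=\Id$, so the middle map is $\Id_{\Un(\A_1)\otimes\Un(\A_2)}$, and the composite collapses to $\Xi_{\A_1\otimes\A_2}\cdot\eta_{\A_1\otimes\A_2}$, which by construction satisfies $\Xi\cdot\eta\sim\Id_{\A_1\otimes\A_2}$ (noted right after Theorem \ref{teoremono}). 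Hence $\Id``\otimes"\Id\sim\Id$, so the identities are preserved in the quotient.

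Finally I would assemble these pieces: the object assignment $(\A_1,\A_2)\mapsto\A_1\otimes\A_2$ is already defined (Definition \ref{tenso}), the morphism assignment descends to $\sim$-classes by the first paragraph, and the previous paragraph gives compatibility with composition and identities. Therefore $``\otimes"$ is a well-defined bifunctor $\aCat/_{\sim}\times\aCat/_{\sim}\to\aCat/_{\sim}$.

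The main obstacle I anticipate is bookkeeping the distinction between $\sim$ and $\approx$ throughout. Lemma \ref{drododo} is stated with $\sim$, which is good, but the decomposition $\F_1``\otimes"\F_2 = (\F_1``\otimes"\Id)\cdot(\Id``\otimes"\F_2)$ only holds up to a homotopy coming from Theorem \ref{trunchezfol}, and I must confirm that that homotopy lives at the level of $\sim$ and not just $\approx$ — it does, since Theorem \ref{trunchezfol} is genuinely an $\sim$-statement. A secondary subtlety is that Lemma \ref{drododo} handles one-sided partial tensors; I need the mild observation that $\Id_{\C}``\otimes"\F$ and $\F``\otimes"\Id_{\C}$ are interchanged by the symmetry equivalence $\mu$ of Theorem \ref{grgrgr}, or else simply that the proof of Lemma \ref{drododo} applies verbatim on the left, which the lemma statement already asserts. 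No new calculation is needed; everything reduces to invoking Theorem \ref{trunchezfol}, Corollary \ref{corzulo}, Lemma \ref{drododo}, and the compatibility of $\sim$ with composition.
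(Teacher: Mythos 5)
Your proposal is correct and takes essentially the same route as the paper, which deduces the theorem directly from Lemma \ref{drododo} and Theorem \ref{trunchezfol}; your decomposition of $\F_1``\otimes"\F_2$ into partial tensors combined with the compatibility of $\sim$ with composition is exactly the intended argument. You are in fact more explicit than the paper (which omits the identity check entirely); just note that the relation $\Xi_{\A_1\otimes\A_2}\cdot\eta_{\A_1\otimes\A_2}\sim\Id_{\A_1\otimes\A_2}$ you invoke is only asserted in Theorem \ref{enzone} for the opposite composite $\eta\cdot\Xi$ on $\Un(\A_1)\otimes\Un(\A_2)$, so it deserves the extra observation that $\chi^1_{\A}\cdot\alpha^1_{\A}=\Id$ holds strictly at the chain level.
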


It is not difficult to prove that:
\begin{thm}
If $\F$ and $\G$ are cohomological (resp. strictly) unital $\Ain$-functors then $\F\otimes\G$ is a cohomological (resp. strictly) unital $\Ain$functor.
\end{thm}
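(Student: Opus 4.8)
The plan is to read off the claim from the factorisation of Definition \ref{compofuncto},
\begin{align*}
\F``\otimes"\G=\Xi_{\A_2\otimes\A_4}\cdot\big(\Un(\F)\otimes^{\tiny\mbox{DG}}\Un(\G)\big)\cdot\eta_{\A_1\otimes\A_3},
\end{align*}
together with the elementary remark that a composition of cohomological unital $\Ain$-functors is cohomological unital: by formula (\ref{compo}) one has $(\mathscr{H}\cdot\mathscr{K})^1=\mathscr{H}^1\cdot\mathscr{K}^1$ and $(\mathscr{H}\cdot\mathscr{K})^0=\mathscr{H}^0\cdot\mathscr{K}^0$, so $\Ho(\mathscr{H}\cdot\mathscr{K})=\Ho(\mathscr{H})\cdot\Ho(\mathscr{K})$, and a composition of unital functors of graded categories is unital. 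Thus it suffices to show that each of the three factors $\eta_{\A_1\otimes\A_3}$, $\Un(\F)\otimes^{\tiny\mbox{DG}}\Un(\G)$ and $\Xi_{\A_2\otimes\A_4}$ is cohomological unital; and, for the strictly unital case, to keep track of the fact that when $\F,\G$ are strictly unital the relevant identities hold on the nose rather than up to homotopy.

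For the cohomological unital case I would treat the three factors in turn. The functor $\eta_{\A_1\otimes\A_3}$ extends $\alpha^1_{\A_1}\otimes\alpha^1_{\A_3}$, and $\alpha^1_{\A_i}$ sends a representative $e_x$ of the cohomological unit of $x$ to $(e_x[1])[-1]$, which — as recalled in the preceding subsection — represents the cohomological unit of $x$ in $\Un(\A_i)$; hence $\eta^1$ carries the representative $e_{x_1}\otimes e_{x_3}$ of the cohomological unit of $(x_1,x_3)$ in $\A_1\otimes\A_3$ to a representative of the cohomological unit in $\Un(\A_1)\otimes\Un(\A_3)$, so $\Ho(\eta)$ preserves identities. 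Dually $\Xi_{\A_2\otimes\A_4}$ extends $\chi^1_{\A_2}\otimes\chi^1_{\A_4}$, and since $\chi^1_{\A}\cdot\alpha^1_{\A}=\Id$ (Theorem \ref{adjun}) it sends $(e_{x_2}[1])[-1]\otimes(e_{x_4}[1])[-1]$ back to $e_{x_2}\otimes e_{x_4}$, so $\Xi$ is cohomological unital. For the middle factor I would first note that $\Un(\F)$ is a cohomological unital DG-functor: on a length-one element $\Un(\F)\big((g[1])[-1]\big)=(\F^1(g)[1])[-1]$, and since $\F$ is cohomological unital $\F^1(e_x)$ is cohomologous to $e_{\F x}$, so $\Un(\F)\big((e_x[1])[-1]\big)$ is cohomologous to $(e_{\F x}[1])[-1]$ (apply the chain map $\alpha^1_{\A_2}$); likewise for $\Un(\G)$. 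Then $\big(\Un(\F)\otimes^{\tiny\mbox{DG}}\Un(\G)\big)$ sends $(e_x[1])[-1]\otimes(e_y[1])[-1]$ to $\Un(\F)\big((e_x[1])[-1]\big)\otimes\Un(\G)\big((e_y[1])[-1]\big)$, which by Lemma \ref{stupido}(2) is homotopic to $(e_{\F x}[1])[-1]\otimes(e_{\G y}[1])[-1]$, the corresponding representative in the target. This finishes the cohomological unital case.

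For the strictly unital case, $\F$ and $\G$ are in particular cohomological unital, so $\F``\otimes"\G$ is cohomological unital by the above; moreover strict unitality of $\F,\G$ forces $\F^1(e_x)=e_{\F x}$, hence $\Un(\F)\big((e_x[1])[-1]\big)=(e_{\F x}[1])[-1]$, on the nose, and tracing these through the three factors (using once more $\chi^1\cdot\alpha^1=\Id$) gives $\big(\F``\otimes"\G\big)^1(e_{x_1}\otimes e_{x_3})=e_{\F x_1}\otimes e_{\G x_3}$ exactly, i.e.\ condition i) of strict unitality. To upgrade to condition ii) I would pass to the strictly unital models: by the rectification theorem stated above (via \cite[Lemma 2.1]{Sei}) the categories $\A_1\otimes\A_3$ and $\A_2\otimes\A_4$ are isomorphic — through formal diffeomorphisms that are the identity on $m^1$ and fix $e_{x_1}\otimes e_{x_3}$, resp.\ $e_{\F x_1}\otimes e_{\G x_3}$ — to strictly unital $\Ain$-categories, and one defines $\F\otimes\G$ as the transport of $\F``\otimes"\G$ along these isomorphisms.

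I expect the main obstacle to be exactly this last step, namely verifying that the transported functor satisfies condition ii) and not merely condition i), since conjugating a cohomological unital $\Ain$-functor by unit-fixing formal diffeomorphisms does not by itself annihilate the higher terms $\big(\F``\otimes"\G\big)^{n\ge2}(\dots,e_{x_1}\otimes e_{x_3},\dots)$. I would resolve it either by a relative obstruction argument — rectifying the source, the target and the functor simultaneously, using that the functor already preserves the unit in degree one — or, since $``\otimes"$ is in any case only well defined up to the homotopy relation $\sim$ (compare Theorem \ref{trunchezfol}), by invoking that $\Fun^{su}_{\infty}\hookrightarrow\Fun^{c}_{\infty}$ is a quasi-equivalence once the source has split unit, so that $\F``\otimes"\G$ is homotopic to a strictly unital $\Ain$-functor. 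Everything before this point is formal; the genuine work lies in reconciling the construction with strict units.
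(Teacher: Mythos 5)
The paper offers no proof of this statement (it is prefaced by ``It is not difficult to prove that:''), so there is no argument to compare yours against; your factor-by-factor strategy through the factorisation of Definition \ref{compofuncto} is the natural reading of what the author intends. The cohomological-unital half of your argument is correct and essentially complete: $\eta$ extends $\alpha^1_{\A_1}\otimes\alpha^1_{\A_3}$, $\Xi$ extends $\chi^1_{\A_2}\otimes\chi^1_{\A_4}$ with $\chi^1\cdot\alpha^1=\Id$ by Theorem \ref{adjun}, $\Un(\F)$ sends $(e_x[1])[-1]$ to $(\F^1(e_x)[1])[-1]$, and a composition of $\Ain$-functors whose $H(-)$ preserves identities again preserves identities. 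The one point you leave implicit is that $e_{x_1}\otimes e_{x_3}$ really does represent the cohomological unit of $(x_1,x_3)$ in $\A_1\otimes\A_3$; this follows from Example \ref{mdue} ($m^2_{\A\otimes\B}(f\otimes g, e_a\otimes e_b)=m^2_{\A}(f,e_a)\otimes m^2_{\B}(g,e_b)$) but deserves a sentence.

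The strictly unital half contains a genuine gap, which to your credit you identify yourself: condition ii) of Definition \ref{strictlyunital}. The factorisation passes through $\Un(\A_1)\otimes\Un(\A_3)$, which is only cohomologically unital, so the higher components $(\F``\otimes"\G)^{n\ge 2}$ evaluated on sequences containing $e_{x_1}\otimes e_{x_3}$ have no reason to vanish, and conjugating by the unit-fixing formal diffeomorphisms that rectify source and target does not by itself annihilate them --- a formal diffeomorphism mixes all components, and over a general ring $R$ one cannot simply quote \cite[Lemma 2.1]{Sei} for functors, as the paper's own remark about that lemma warns. Your fallback, invoking that $\mbox{Fun}^{su}_{\infty}\hookrightarrow\mbox{Fun}^{c}_{\infty}$ is a quasi-equivalence when the source has split unit, only yields a strictly unital functor \emph{homotopic} to $\F``\otimes"\G$; that proves the statement up to $\sim$, which is adequate for every subsequent use in the paper (where $\otimes$ is anyway only a bifunctor on $\aCat^{\star}/_{\sim}$, cf.\ Theorem \ref{trunchezfol}), but is strictly weaker than the literal claim, and the split-unit hypothesis on $\A_1\otimes\A_3$ would itself have to be verified. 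Either carry out the relative rectification argument in detail or restate the strictly unital case up to homotopy; as written, that case is a plausible plan rather than a proof.
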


Let $\A$ be a unital $\Ain$-category, we denote by $\Rep(\A)$, the (strictly unital) DG-category of representable $\Ain$-modules over $\A$ (see \cite[14.7]{BLM}, \cite[\S7, \S9]{Fuk} or \cite[(1j)]{Sei}).\ 
We have the following result:

\begin{thm}[Yoneda]\label{Yonni}
Given a unital $\Ain$-category $\A$:
\begin{itemize}
\item[1.]  There exists a unital $\Ain$-functor 
\begin{align*}
\REP:\A\to\Rep(\A)
\end{align*}
\item[2.] There exists a unital $\Ain$-functor
\begin{align*}
\Pi:\Rep(\A)\to\A
\end{align*}
\end{itemize}
such that $\REP\cdot\Pi\approx\Id_{\tiny\Rep(\A)}$ and $\Pi\cdot\REP\approx\Id_{\A}$.\\
Moreover, if $\A$ is strictly unital then 
\begin{itemize}
\item[2.] Fixed two objects $a$, $a'\in\A$, we can define the morphism of chain complexes:
\begin{align*}
\Pi_1:\Fun(\Rep(a),\Rep(a'))\to\HomA(a,a')
\end{align*}
as:
\begin{align}\label{Pi1}
\Pi_1(T):=T_0(a)(\id_a).
\end{align}
\end{itemize}
The chain maps $\REP_1$ and $\Pi_1$ satisfy 1.-2. of Theorem \ref{DC} and the corresponding $\Ain$-functors $\REP$ and $\Pi$ are strictly unital. 
\end{thm}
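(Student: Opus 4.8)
The plan is to take $\REP$ to be the classical $\Ain$-Yoneda embedding and to obtain $\Pi$ as a quasi-inverse furnished by Theorem \ref{DC}. First I would recall the construction of the unital $\Ain$-functor $\REP:\A\to\Rep(\A)$: on objects it sends $a$ to the representable module $\Rep(a):=\A(-,a)$ with module structure maps assembled from the $m^{\bullet}_{\A}$, and for $n\ge 1$ the component $\REP^n(a_n,\dots,a_1)$ is the module pre-morphism whose slots are $(-)\mapsto \pm\, m^{n+1}_{\A}(a_n,\dots,a_1,-)$. The $\Ain$-functor equations for these data are a repackaging of the $\Ain$-relations of $\A$; this, together with the fact that $\REP$ sends a (cohomological, resp.\ strict) unit $e_a$ of $a$ to a (cohomological, resp.\ strict) unit of $\Rep(a)$ and that $\REP^{n}$ vanishes on a strict-unit slot for $n\ge 2$ because the defining $m^{n+1}_{\A}$ does, is the content of \cite[14.7]{BLM}, \cite[\S9]{Fuk}, \cite[(1j)]{Sei}, which I would cite.

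The core of the argument is the $\Ain$-Yoneda lemma: for all $a,a'\in\A$ the chain map $\REP^1:\A(a,a')\to\Fun(\Rep(a),\Rep(a'))$ is a quasi-isomorphism. Here I would introduce the evaluation chain map $\Pi_1(T):=T_0(a)(\id_a)$ — using the strict unit $\id_a=e_a$ in the strictly unital case, a representative of the cohomological unit otherwise — check that it is a chain map, and verify $\Pi_1\cdot\REP^1=\Id_{\A(a,a')}$ on the nose (in the strictly unital case) directly from the unit axiom $m^2_{\A}(f,e_a)=\pm f$ and the vanishing of $m^{\ge 3}_{\A}$ on a unit slot. The remaining step, and the one I expect to be the main obstacle, is to produce an explicit homotopy $h$ with $\Id_{\Fun(\Rep(a),\Rep(a'))}-\REP^1\cdot\Pi_1=\mathfrak{M}^1(h)$; its components insert the unit into $T$, and the verification is a telescoping computation in which the $\Ain$ sign convention must be handled carefully, following \cite[\S9]{Fuk} and \cite{Lyu} adapted to our conventions.

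Granting this, parts 1.\ and 2.\ follow formally. Since $\Rep(\A)$ is by construction the DG-category whose object set is $\mbox{Ob}(\A)$ (with $a$ identified with $\Rep(a)$), the map $\REP^0:\mbox{Ob}(\A)\to\mbox{Ob}(\Rep(\A))$ is a bijection, and the previous paragraph shows $\REP^1$ has homotopy inverse $\Pi_1$ with homotopy $h$; thus hypotheses 1.\ and 2.\ of Theorem \ref{DC} hold with $\mathscr{F}=\REP$, $\mathscr{G}^1=\Pi_1$, $T^1=h$. Theorem \ref{DC} then yields the unital $\Ain$-functor $\Pi:\Rep(\A)\to\A$ extending $\Pi_1$, with $\Pi\cdot\REP\approx\Id_{\A}$ and $\REP\cdot\Pi\approx\Id_{\Rep(\A)}$. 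In the merely unital case one may instead cite Lyubashenko's Yoneda theorem for the quasi-equivalence statement and then apply Theorem \ref{DC} in the same fashion.

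For the strictly unital refinement I would track strict units through the explicit formulas. Then $\Rep(\A)$ is strictly unital and, as noted, $\REP$ kills strict-unit slots in all components $\REP^{\ge 2}$, so $\REP$ is strictly unital. For $\Pi$ I would use the explicit recursions for $\Pi^n$ and $T^n$ displayed after Theorem \ref{DC} (applicable since $\Rep(\A)$ is a DG-category) and show by induction that $\nu^n$, hence $\Pi^n$ and $T^n$, vanish whenever a strict unit occupies one of the slots, using strict unitality of $m^2_{\Rep(\A)}$ together with the fact that $\Pi_1$ evaluates at $\id_a$. This gives that $\REP$ and $\Pi$ are strictly unital, which completes the proof.
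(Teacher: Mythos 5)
Your proposal is correct and follows essentially the same route as the paper, whose own proof is little more than a citation: the unital case to \cite[1.5 Proposition]{BLM}, the construction of $\REP$ and the checks on $\Pi_1$ to \cite{Orn3} and \cite[Definition 7.28, Lemmas 9.8, 9.21, 9.22]{Fuk}, with the strict unitality of $\Pi$ left to the reader. Your sketch fills in exactly the steps those references supply — Yoneda embedding, evaluation at the unit, the quasi-isomorphism/homotopy, extension via Theorem \ref{DC} using the explicit recursion valid because $\Rep(\A)$ is a DG-category, and the inductive unit-tracking for strict unitality of $\Pi$ — so it is consistent with the paper's argument.
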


\begin{proof}
The case of unital case is \cite[1.5 Proposition]{BLM}.\
If $\A$ is strictly unital, the Definition of $\REP$ can be found in \cite{Orn3} and then the various check regarding $\Pi_1$ can be found in \cite[Definition 7.28 Lemma 9.8-9.21-9.22]{Fuk}.\ 
It remains to prove that, if $\A$ is strictly unital then $\Pi$ is strictly unital this check is left to the reader.
\end{proof}

\begin{lem}\label{storgheloz}
Let $\F\approx\G:\A\to\B$ be two $\Ain$-functors where $\B$ is a unital $\Ain$-category and $\C$ is a unital $\Ain$-category, 
then $\F``\otimes" \Id_{\C}\approx\G``\otimes" \Id_{\C}$ and $\Id_{\C}``\otimes" \F\approx\Id_{\C}``\otimes" \G$.
\end{lem}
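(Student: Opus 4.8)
The plan is to mirror the proof of Lemma \ref{drododo}, replacing $\sim$ by $\approx$ and using Lemma \ref{stupidonzio} (the $\approx$-analogue of Lemma \ref{stupidone}) at the step where Lemma \ref{drododo} invoked Lemma \ref{stupidone}. First I would record the formal fact that $\approx$ is compatible with left and right composition: if $\F\approx\G$ then $\mathscr{H}\cdot\F\approx\mathscr{H}\cdot\G$ and $\F\cdot\mathscr{K}\approx\G\cdot\mathscr{K}$ for any composable $\Ain$-functors $\mathscr{H}$, $\mathscr{K}$. Indeed, the induced $\Ain$-functors $\mathscr{L}_{\mathscr{H}}$ and $\mathscr{R}_{\mathscr{K}}$ between the $\Ain$-categories of $\Ain$-functors descend to functors on the $H^0$'s, and functors preserve isomorphisms; since $\F\approx\G$ means exactly that $\F$, $\G$ are isomorphic in $H^0(\Fun_{\infty}(\A,\B))$, the claim follows.

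Next I would prove $\F\approx\G\ \Rightarrow\ \Un(\F)\approx\Un(\G)$. Recall from the proof of Lemma \ref{drododo} that $\Un(\F)\sim\alpha_{\B}\cdot\F\cdot\tilde{\tilde{\chi}}_{\A}$, and likewise for $\G$. Since $\sim$ implies $\approx$, and since $\alpha_{\B}\cdot\F\cdot\tilde{\tilde{\chi}}_{\A}\approx\alpha_{\B}\cdot\G\cdot\tilde{\tilde{\chi}}_{\A}$ by the compatibility just noted, transitivity of $\approx$ gives $\Un(\F)\approx\Un(\G)$. Here $\Un(\A)$ is a non-unital DG-category, $\Un(\B)$ is a unital DG-category (as $\Un$ restricts to $\aCat^{u}\to\DgCat^{u}$) and $\Un(\C)$ is a DG-category, so we are exactly in the situation of Lemma \ref{stupidonzio}.

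Finally, applying Lemma \ref{stupidonzio} to $\Un(\F)\approx\Un(\G)$ yields
\[
\Un(\F)\otimes^{\tiny\mbox{DG}}\Id_{\Un(\C)}\approx\Un(\G)\otimes^{\tiny\mbox{DG}}\Id_{\Un(\C)},
\]
and symmetrically $\Id_{\Un(\C)}\otimes^{\tiny\mbox{DG}}\Un(\F)\approx\Id_{\Un(\C)}\otimes^{\tiny\mbox{DG}}\Un(\G)$. Using $\Un(\Id_{\C})=\Id_{\Un(\C)}$ and unwinding Definition \ref{compofuncto},
\[
\F``\otimes" \Id_{\C}=\Xi_{\B\otimes\C}\cdot\big(\Un(\F)\otimes^{\tiny\mbox{DG}}\Id_{\Un(\C)}\big)\cdot\eta_{\A\otimes\C},
\]
so pre- and post-composing the displayed $\approx$ with the $\Ain$-functors $\eta_{\A\otimes\C}$ and $\Xi_{\B\otimes\C}$ (again by compatibility of $\approx$ with composition) gives $\F``\otimes" \Id_{\C}\approx\G``\otimes" \Id_{\C}$; the case $\Id_{\C}``\otimes" \F$ is identical. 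The only non-formal ingredient is Lemma \ref{stupidonzio}; the mildly delicate point — which I expect to be the main obstacle, since unlike $\sim$ it is not spelled out in the excerpt — is the stability of $\approx$ under composition with $\Ain$-functors, but this follows at once from functoriality of $H^0$ applied to the composition $\Ain$-functors $\mathscr{L}_{\bullet}$ and $\mathscr{R}_{\bullet}$.
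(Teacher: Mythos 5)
Your first step --- deducing $\Un(\F)\approx\Un(\G)$ from $\Un(\F)\sim\alpha_{\B}\cdot\F\cdot\tilde{\tilde{\chi}}_{\A}$ together with the compatibility of $\approx$ with pre- and post-composition --- is exactly the paper's first step, and your justification of that compatibility via $H^0$ of $\mathscr{L}_{\bullet}$, $\mathscr{R}_{\bullet}$ is fine. The gap is the next step, where you apply Lemma \ref{stupidonzio} with $\Un(\C)$ as the third factor. In the paper's conventions an unadorned ``DG-category'' carries strict units, and the proof of Lemma \ref{stupidonzio} genuinely uses them: the natural transformation is defined by $\tilde{S}^0(x\otimes y):=S^0(x)\otimes 1_y$, and the computation showing that $\mathfrak{M}^2(\tilde{S},\tilde{T})$ collapses to $\mathfrak{M}^2(S,T)\otimes(\cdots)$ relies on $1_y$ being a strict identity. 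But $\Un(\C)$ is never strictly unital when $\C$ is a genuine $\Ain$-category: as the paper notes in the subsection on properties of the tensor product, the image $(e_x[1])[-1]$ of a unit of $\C$ is only a cohomological (homotopy) unit of $\Un(\C)$. So the sentence ``$\Un(\C)$ is a DG-category, so we are exactly in the situation of Lemma \ref{stupidonzio}'' is precisely where the argument fails; with only a homotopy unit the verification of $\mathfrak{M}^2(\tilde{S},\tilde{T})\approx\Id$ acquires uncontrolled correction terms.

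This is the reason the paper's proof of Lemma \ref{storgheloz} is longer than that of Lemma \ref{drododo}: it inserts a Yoneda detour. One replaces the homotopy-unital category by the \emph{strictly} unital DG-category $\Rep(\C'')$ via the functors $\REP$ and $\Pi$ of Theorem \ref{Yonni}, applies Lemma \ref{stupidonzio} there (where strict units exist), and then transports the relation back by composing with $\Id\otimes\Pi$ and $\Id\otimes\REP$, using Corollary \ref{corzulo} and $\Pi\cdot\REP\approx\Id$. To repair your argument you must either insert this $\REP$/$\Pi$ reduction, or prove a strengthened version of Lemma \ref{stupidonzio} valid for a merely homotopy-unital third factor (e.g.\ defining $\tilde{S}^0$ with a chosen cohomological unit and correcting the identities by the homotopies $\mathcal{H}$ of the unitality data); neither is automatic from what you have written.
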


\begin{proof}
Following the proof of Lemma \ref{drododo} we have that 
$\Un(\F)\cdot\alpha_{\A}=\alpha_{\B}\cdot\F$ 
and $\Un(\F)\sim\alpha_{\B}\cdot\F\cdot\chi_{\A}$.\
It implies that, if $\F\approx\G$ then $\Un(\F)\approx\Un(\G)$.\

Given a unital $\Ain$-category $\C''$, we have two unital $\Ain$-functors 
$\Rep:\C''\to\Rep(\C'')$ and $\Pi:\Rep(\C'')\to\C''$ (see Theorem \ref{Yonni})\
We denote by $\C'$ the DG category $\Rep(\C'')$.\
By Lemma \ref{stupidonzio} we have $\Un(\F)\otimes\Id_{\C'}\approx \Un(\G)\otimes\Id_{\C'}$.\ So
\begin{align*}
(\Un(\F)\otimes\Id_{\C'})\cdot (\Id_{\B}\otimes\Pi) \approx (\Un(\G)\otimes\Id_{\C'})\cdot(\Id_{\B}\otimes\Pi).
\end{align*}
By Corollary \ref{corzulo} we have 
\begin{align}\label{gorzull}
(\Un(\F) ``\otimes" \Pi) \approx (\Un(\G)``\otimes" \Pi).
\end{align}
Multiplying (\ref{gorzull}) by $(\Id_{\B}\otimes\Rep)$ we get:
\begin{align*}
(\Un(\F) \otimes\Id_{\C''}) \approx (\Un(\G)\otimes\Id_{\C''}).
\end{align*}
It follows:
\begin{align*}
\F``\otimes" \Id_{\C}&:= \Xi\cdot \big( \Un(\F)\otimes^{\tiny\mbox{DG}}\Id_{\Un(\C)}\big) \cdot \eta\\
&\approx \Xi\cdot \big(\Un(\G)\otimes^{\tiny\mbox{DG}}\Id_{\Un(\C)}\big)\cdot \eta \\
&=\G``\otimes" \Id_{\C}
\end{align*}
and we are done.
\end{proof}

By Lemma \ref{storgheloz} and Corollary \ref{corzulo} we have:

\begin{thm}
The tensor product defined in Theorem \ref{tenso} and in Definition \ref{compofuncto}, defines a bifunctor 
\begin{align*}
``\otimes":\aCat^{\star}/_{\approx}\mbox{ $\times$ } \aCat^{\star}/_{\approx} \to \aCat^{\star}/_{\approx}.
\end{align*}
Where $\star\in\mathcal{f}\mbox{su}, \mbox{ u} \mathcal{g}$ 
\end{thm}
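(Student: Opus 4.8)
The plan is to verify that the assignment $([\F_1],[\F_2])\mapsto[\F_1``\otimes"\F_2]$ is a well-defined functor on $\aCat^{\star}/_{\approx}\times\aCat^{\star}/_{\approx}$, for $\star\in\{su,u\}$. Concretely there are three things to check: (i) that $[\F_1``\otimes"\F_2]$ depends only on $[\F_1]$ and $[\F_2]$; (ii) that $[\Id_{\A_1}``\otimes"\Id_{\A_2}]=[\Id_{\A_1\otimes\A_2}]$; and (iii) that composition is preserved, i.e. $[(\F_1``\otimes"\G_1)\cdot(\F_2``\otimes"\G_2)]=[(\F_1\cdot\F_2)``\otimes"(\G_1\cdot\G_2)]$. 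Two standing facts will be used throughout. First, in the unital case, and in the strictly unital case (recall strictly unital $\subset$ unital by (\ref{inclusone})), all objects and functors produced by Definitions \ref{tenso} and \ref{compofuncto} — tensor products, the $\eta$'s, $\Xi$'s, $\Un(\F)$'s, and their composites — remain in $\aCat^{\star}$, by the unitality statements established above. Second, $\approx$ is a congruence on $\aCat^{\star}$: pre-composing or post-composing a pair of $\approx$-equivalent $\Ain$-functors with a fixed $\Ain$-functor preserves $\approx$. This is exactly the manipulation already used inside the proof of Lemma \ref{storgheloz}, and is what makes $\aCat^{\star}/_{\approx}$ a category in the first place.

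Items (ii) and (iii) are short. Item (iii) is precisely Corollary \ref{corzulo}. For (ii), by the defining formula (\ref{gronzo}), functoriality of $\Un$, and the fact that $\Id\otimes^{\tiny\mbox{DG}}\Id=\Id$,
\begin{align*}
\Id_{\A_1}``\otimes"\Id_{\A_2}=\Xi_{\A_1\otimes\A_2}\cdot\bigl(\Un(\Id_{\A_1})\otimes^{\tiny\mbox{DG}}\Un(\Id_{\A_2})\bigr)\cdot\eta_{\A_1\otimes\A_2}=\Xi_{\A_1\otimes\A_2}\cdot\eta_{\A_1\otimes\A_2},
\end{align*}
and $\Xi_{\A_1\otimes\A_2}$ and $\eta_{\A_1\otimes\A_2}$ are mutually quasi-inverse by Theorem \ref{enzone}: the functor $g_{\C}\otimes g_{\C}$ built there is a two-sided $\approx$-inverse of $\eta$ while $\Xi$ is a right $\sim$-inverse, so $\Xi\approx g_{\C}\otimes g_{\C}$, whence $\Xi_{\A_1\otimes\A_2}\cdot\eta_{\A_1\otimes\A_2}\approx\Id_{\A_1\otimes\A_2}$.

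The real content is (i): to compare $\F_1``\otimes"\F_2$ with $\F_1'``\otimes"\F_2'$ when $\F_1\approx\F_1':\A_1\to\A_2$ and $\F_2\approx\F_2':\A_3\to\A_4$. The idea is to reduce the two-variable problem to two one-variable ones via (iii). First, Corollary \ref{corzulo} gives the factorizations
\begin{align*}
\F_1``\otimes"\F_2\approx(\Id_{\A_2}``\otimes"\F_2)\cdot(\F_1``\otimes"\Id_{\A_3}),\qquad \F_1'``\otimes"\F_2'\approx(\Id_{\A_2}``\otimes"\F_2')\cdot(\F_1'``\otimes"\Id_{\A_3}).
\end{align*}
Next, Lemma \ref{storgheloz} (applicable since $\A_1,\dots,\A_4$ are unital) is applied to each factor separately: from $\F_1\approx\F_1'$ it yields $\F_1``\otimes"\Id_{\A_3}\approx\F_1'``\otimes"\Id_{\A_3}$, and from $\F_2\approx\F_2'$ it yields $\Id_{\A_2}``\otimes"\F_2\approx\Id_{\A_2}``\otimes"\F_2'$. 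Replacing the two factors one at a time, using the congruence property of $\approx$, gives
\begin{align*}
(\Id_{\A_2}``\otimes"\F_2)\cdot(\F_1``\otimes"\Id_{\A_3})\approx(\Id_{\A_2}``\otimes"\F_2')\cdot(\F_1``\otimes"\Id_{\A_3})\approx(\Id_{\A_2}``\otimes"\F_2')\cdot(\F_1'``\otimes"\Id_{\A_3}),
\end{align*}
and combining with the two displays above yields $\F_1``\otimes"\F_2\approx\F_1'``\otimes"\F_2'$, establishing (i). Together (i)--(iii) give the asserted bifunctor $``\otimes":\aCat^{\star}/_{\approx}\times\aCat^{\star}/_{\approx}\to\aCat^{\star}/_{\approx}$.

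I expect no serious difficulty beyond Corollary \ref{corzulo} and Lemma \ref{storgheloz}, which are already in hand; the argument runs exactly parallel to the earlier theorem for $\aCat/_{\sim}$, with Lemma \ref{storgheloz} playing the role that Lemma \ref{drododo} played there. The one point that must be handled with care is the congruence property of $\approx$ — the analogue for $\approx$ of the compatibility of $\sim$ with composition quoted from Seidel — since it is used both to make $\aCat^{\star}/_{\approx}$ a category and to perform the factor-by-factor replacement in step (i); if it is not already recorded in the paper, it should be stated as a preliminary lemma (it follows from the functoriality of $\Fun_{\infty}(-,-)$ in both variables, i.e. from $H^0$ of the composition $\Ain$-functors being composition functors on the $H^0$-categories).
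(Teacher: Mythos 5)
Your argument is correct and is essentially the paper's own proof: the paper simply states that the theorem follows from Lemma \ref{storgheloz} and Corollary \ref{corzulo}, which is exactly the factor-by-factor replacement and composition-preservation scheme you spell out. The extra details you supply (the identity check via $\Xi\cdot\eta\approx\Id$ and the congruence property of $\approx$) are left implicit in the paper but are the standard ingredients needed to make that one-line deduction rigorous.
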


\begin{rem}
We point out this important fact about our construction: given $\F$ and $\G$ two $\Ain$-functors, $U^{\tiny\mbox{n}}(\F)$ and $\Un(\G)$ are DG-functors.\
So, we can define the tensor product of two $\Ain$-functors $\F$ and $\G$ via the the product (as DG-functors) $\Un(\F)\otimes\Un(\G)$.\
Note that, to fulfill the hypothesis of the main construction (Theorem \ref{enzone}) we do not need any functoriality.\\
For example, we can provide a tensor product of two strictly unital $\Ain$-categories via Yoneda.\
Namely, by Theorem \ref{Yonni} and Theorem \ref{enzone} one can provide an $\Ain$-structure on the DG quiver $|\A|\otimes|\B|$.\

The downside of this construction is that, given an $\Ain$-functor $\F:\A\to\B$, the pushforward $\REP(\F):\Rep(\A)\to\Rep(\B)$ is an $\Ain$functor (not a DG functor).\
So we cannot define the tensor product of two $\Ain$-functors $\F\otimes\G$ as in Definition \ref{compofuncto}.\ 

On the other hand, it is possible to define the tensor product $\F\otimes\G$ as in \cite[Definition 3.9]{Amo} but, in this case, the relation between
$(\F\otimes\G)\cdot(\F'\otimes\G')$ and $\F\cdot\F'\otimes\G\cdot\G'$ is not clear (see \cite[pp 15]{Amo}).\ 

\end{rem}

\subsection{Symmetric monoidal structure}\label{sms}

In the previous subsection we gave the notion of tensor product $``\otimes"$ between two $\Ain$-functors.\\
We can define the tensor product $\otimes$ of two $\Ain$-functors $\F:\A\to\B$ and $\G:\A'\to\B'$ as follows:
\begin{align}\label{sanzo}
\F\otimes \G:=(\tilde{\tilde{\chi}}_{\A'}``\otimes"\tilde{\tilde{\chi}}_{\B'})\cdot(\Un(\F)\otimes^{\tiny\mbox{DG}}\Un(\G))\cdot (\alpha_\A``\otimes"\alpha_\B).
\end{align}

\begin{lem}\label{latrunk}
The tensor product $\otimes$ in equation (\ref{sanzo}) is well defined.\ 
\end{lem}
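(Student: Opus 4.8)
The plan is to unwind definition~(\ref{sanzo}) and check that each of the three composable pieces is a well-defined $\Ain$-functor, so that their composite is too. Recall that $\F\otimes\G$ is declared as
\[
\F\otimes\G:=(\tilde{\tilde{\chi}}_{\A'}``\otimes"\tilde{\tilde{\chi}}_{\B'})\cdot(\Un(\F)\otimes^{\tiny\mbox{DG}}\Un(\G))\cdot (\alpha_\A``\otimes"\alpha_\B).
\]
First I would note that $\alpha_\A:\A\to\Un(\A)$ and $\alpha_\B:\B\to\Un(\B)$ are genuine $\Ain$-functors (equation~(\ref{alfone})), hence by Definition~\ref{compofuncto} the expression $\alpha_\A``\otimes"\alpha_\B:\A\otimes\B\to\Un(\A)\otimes\Un(\B)$ is a well-defined $\Ain$-functor with the correct source and target. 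Symmetrically, $\tilde{\tilde{\chi}}_{\A'}:\Un(\A')\to\A'$ and $\tilde{\tilde{\chi}}_{\B'}:\Un(\B')\to\B'$ are $\Ain$-functors by Remark~\ref{chitild}, so $\tilde{\tilde{\chi}}_{\A'}``\otimes"\tilde{\tilde{\chi}}_{\B'}:\Un(\A')\otimes\Un(\B')\to\A'\otimes\B'$ is likewise a well-defined $\Ain$-functor via Definition~\ref{compofuncto}.

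Next I would address the middle term. Since $\F:\A\to\B$ and $\G:\A'\to\B'$ are $\Ain$-functors, by the adjunction~(\ref{gorgogo}) the images $\Un(\F):\Un(\A)\to\Un(\B)$ and $\Un(\G):\Un(\A')\to\Un(\B')$ are \emph{strict} DG-functors between DG-categories; hence their DG-tensor product $\Un(\F)\otimes^{\tiny\mbox{DG}}\Un(\G):\Un(\A)\otimes\Un(\A')\to\Un(\B)\otimes\Un(\B')$ is a well-defined DG-functor by formula~(\ref{functoro}), and in particular a (strict) $\Ain$-functor. The three source/target types then match up: the target $\Un(\A)\otimes\Un(\A')$ of $\alpha_\A``\otimes"\alpha_{\A'}$ (note the source pairing is $(\A,\A')$, not $(\A,\B)$) equals the source of $\Un(\F)\otimes^{\tiny\mbox{DG}}\Un(\G)$, whose target $\Un(\B)\otimes\Un(\B')$ equals the source of $\tilde{\tilde{\chi}}_{\B}``\otimes"\tilde{\tilde{\chi}}_{\B'}$. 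Composing by the composition of $\Ain$-functors~(\ref{compo}) then yields an $\Ain$-functor $\A\otimes\A'\to\B\otimes\B'$, which is the asserted target of $\F\otimes\G$.

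Finally I would observe that there is a choice hidden in the construction, namely the choice of $\tilde{\tilde{\chi}}$ (which depends on the homotopy data in Remark~\ref{chitild}) and, underneath, the choice of homotopy $H^1$ in Definition~\ref{tenso}/Theorem~\ref{enzone}. By Theorem~\ref{enzone} the $\Ain$-structure on $\A\otimes\B$ is independent of $H^1$ up to equivalence of $\Ain$-categories, and correspondingly the $\Ain$-functors $\eta,\Xi,\alpha_\A``\otimes"\alpha_\B,\tilde{\tilde{\chi}}``\otimes"\tilde{\tilde{\chi}}$ are determined up to the homotopy relation $\sim$ (using that $\sim$ is compatible with composition, \cite[(1h)]{Sei}, together with Lemma~\ref{drododo} and Theorem~\ref{trunchezfol}). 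Hence the class of $\F\otimes\G$ in $\aCat/_\sim$ is unambiguous. The main obstacle I anticipate is purely bookkeeping: being careful that the object pairings line up (the ``outer'' functors pair $(\A,\A')$ and $(\B,\B')$ whereas earlier lemmas were stated for pairs $(\A,\B)$), and tracking which intermediate objects are bona fide DG-categories — so that the middle arrow really is the elementary DG-tensor of DG-functors rather than the more delicate $``\otimes"$ of $\Ain$-functors. Once those identifications are in place, well-definedness is immediate.
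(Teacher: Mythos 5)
Your reading of ``well defined'' does not match the content that the paper's proof actually establishes, and the step carrying the real mathematical weight is missing. What you verify is essentially formal: the three factors in (\ref{sanzo}) are $\Ain$-functors with matching sources and targets (you are right that $\Un(\F)\otimes^{\tiny\mbox{DG}}\Un(\G)$ is an honest DG-functor, and right to flag that the subscripts should pair $(\A,\A')$ and $(\B,\B')$), so their composite is automatically an $\Ain$-functor by (\ref{compo}) --- no lemma is needed for that. The issue the lemma is meant to address is a consistency problem internal to the definition: formula (\ref{sanzo}) defines the new tensor $\otimes$ of arbitrary $\Ain$-functors \emph{in terms of} the previously defined tensor $``\otimes"$ applied to the particular structural functors $\alpha$ and $\tilde{\tilde{\chi}}$. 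For this to be coherent one must check that the two operations agree, up to $\sim$, on those very functors, i.e.\ that $\alpha_{\A}\otimes\alpha_{\B}\sim\alpha_{\A}``\otimes"\alpha_{\B}$ and $\tilde{\tilde{\chi}}_{\A}\otimes\tilde{\tilde{\chi}}_{\B}\sim\tilde{\tilde{\chi}}_{\A}``\otimes"\tilde{\tilde{\chi}}_{\B}$. This is exactly what the paper proves, and it is not automatic: one expands $\alpha_{\A}\otimes\alpha_{\B}$ via (\ref{sanzo}), replaces $\Un(\alpha_{\A})$ by $\alpha_{\Un(\A)}$ up to $\sim$ (the unnumbered lemma following Lemma \ref{drododo}), applies Theorem \ref{trunchezfol} to merge the two $``\otimes"$-factors, and uses $\tilde{\tilde{\chi}}_{\Un(\A)}\cdot\alpha_{\Un(\A)}=\Id$ to collapse the composite back to $\alpha_{\A}``\otimes"\alpha_{\B}$. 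None of this appears in your argument.

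Your closing paragraph on independence of the choices of $H^1$ and of $\tilde{\tilde{\chi}}$ raises a legitimate concern, but as written it is an assertion rather than a proof: Theorem \ref{enzone} only gives independence of the $\Ain$-\emph{structure} up to equivalence of $\Ain$-categories, and upgrading that to the claim that the functors $\eta$, $\Xi$, $\alpha_{\A}``\otimes"\alpha_{\B}$ and $\tilde{\tilde{\chi}}_{\A}``\otimes"\tilde{\tilde{\chi}}_{\B}$ are ``determined up to $\sim$'' would itself require a computation of precisely the kind you are omitting. I would rebuild the proof around the displayed chain of homotopies described above.
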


\begin{proof}
It suffices to prove that $\alpha_{\A}\otimes \alpha_\B\sim\alpha_{\A}``\otimes"\alpha_{\B}$ and 
$\chi_{\A}\otimes \chi_\B\sim\chi_{\A}``\otimes"\chi_{\B}$, for every $\Ain$-categories $\A$ and $\B$.\\
We calculate:
\begin{align*}
\alpha_{\A}\otimes \alpha_\B&:=(\tilde{\tilde{\chi}}_{\Un(\A)}``\otimes"\tilde{\tilde{\chi}}_{\Un(\B)})\cdot(\Un(\alpha_\A)\otimes^{\tiny\mbox{DG}}\Un(\alpha_\B))\cdot (\alpha_\A``\otimes"\alpha_\B)\\
&=(\tilde{\tilde{\chi}}_{\Un(\A)}``\otimes"\tilde{\tilde{\chi}}_{\Un(\B)})\cdot(\Un(\alpha_\A) ``\otimes"\Un(\alpha_\B))\cdot (\alpha_\A``\otimes"\alpha_\B)\\
&\sim (\tilde{\tilde{\chi}}_{\Un(\A)}``\otimes"\tilde{\tilde{\chi}}_{\Un(\B)})\cdot(\alpha_{\Un(\A)}``\otimes"\alpha_{\Un(\B)})\cdot (\alpha_\A``\otimes"\alpha_\B)\\
&\sim (\tilde{\tilde{\chi}}_{\Un(\A)}\cdot \alpha_{\Un(\A)})``\otimes"(\tilde{\tilde{\chi}}_{\Un(\B)}\cdot \alpha_{\Un(\B)})\cdot (\alpha_\A``\otimes"\alpha_\B)\\
&= \alpha_\A``\otimes"\alpha_\B.
\end{align*}
\end{proof}

The tensor product $\otimes$ between two $\Ain$-functors has the same properties of $``\otimes"$.\ Namely:

\begin{itemize}
\item If $\mathsf{F}:\mathcal{C}_1\to\mathcal{C}'_1$, $\mathsf{G}:\mathcal{C}_2\to\mathcal{C}'_2$ such that $\mathsf{F},\mathsf{G}\in\DgCat$ then 
$$\mathsf{F}\otimes^{\tiny\mbox{DG}}\mathsf{G}=\mathsf{F}\otimes\mathsf{G}.$$
\item If $\F\sim\G$ then $\F\otimes\Id\sim \G\otimes\Id$ and $\Id\otimes\F\sim\Id\otimes\G$.
\item If $\F:\A\to\B$ and $\G:\A'\to\B'$ then 
$$(\F\otimes\G)\cdot(\F'\otimes\G')\sim (\F\cdot\F')\otimes(\G\cdot\G').$$
\item If $\F\approx\G$ then $\F\otimes\Id\approx \G\otimes\Id$ and $\Id\otimes\F\approx\Id\otimes\G$.
\end{itemize}



\begin{thm}\label{gronzulcore}
The tensor product defined in Theorem \ref{tenso} and in equation (\ref{sanzo}) defines a bifunctor 
\begin{align*}
\otimes :\aCat^{\ddagger}/_{\sim}\mbox{ $\times$ } \aCat^{\ddagger}/_{\sim} \to \aCat^{\ddagger}/_{\sim}.
\end{align*}
and a bifunctor:
\begin{align*}
\otimes :\aCat^{\star}/_{\approx}\mbox{ $\times$ } \aCat^{\star}/_{\approx} \to \aCat^{\star}/_{\approx}.
\end{align*}
Where $\ddagger=\mathcal{f}\mbox{su, nu, u, cu}\mathcal{g}$ and ${\star}=\mathcal{f}\mbox{su, u}\mathcal{g}$.
\end{thm}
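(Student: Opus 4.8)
The plan is to check, one after another, the three conditions that make $\otimes$ a bifunctor, each time reducing to a statement already available in the excerpt. On objects the rule $(\A_1,\A_2)\mapsto\A_1\otimes\A_2$ is the one of Definition \ref{tenso}, which is an honest assignment (not merely one up to equivalence) once the auxiliary data $\alpha^1_{\A_i},\chi^1_{\A_i},H^1_{\A_i}$ has been fixed; on morphisms $\otimes$ is the operation of equation (\ref{sanzo}), which is independent of those choices by Lemma \ref{latrunk} and which sends a pair of $\ddagger$-unital (resp. $\star$-unital) $\Ain$-functors to a $\ddagger$-unital (resp. $\star$-unital) $\Ain$-functor by the preservation-of-unitality results obtained above, so $\otimes$ really does carry pairs of morphisms of $\aCat^{\ddagger}$ (resp. $\aCat^{\star}$) into morphisms of the same category. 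It therefore remains to verify: (i) compatibility with identities; (ii) compatibility with composition; (iii) that $\otimes$ descends to the quotients by $\sim$ (resp. $\approx$).

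For (i), expand (\ref{sanzo}) at $(\Id_{\A_1},\Id_{\A_2})$. Since $\Un$ is a functor we have $\Un(\Id_{\A_i})=\Id_{\Un(\A_i)}$, so the middle DG-factor is an identity and $\Id_{\A_1}\otimes\Id_{\A_2}=(\tilde{\tilde{\chi}}_{\A_1}``\otimes"\tilde{\tilde{\chi}}_{\A_2})\cdot(\alpha_{\A_1}``\otimes"\alpha_{\A_2})$. By Theorem \ref{trunchezfol} this is $\sim(\tilde{\tilde{\chi}}_{\A_1}\cdot\alpha_{\A_1})``\otimes"(\tilde{\tilde{\chi}}_{\A_2}\cdot\alpha_{\A_2})$, which equals $\Id_{\A_1}``\otimes"\Id_{\A_2}$ by Remark \ref{chitild}, and the latter is $\sim\Id_{\A_1\otimes\A_2}$ because $``\otimes"$ has already been shown to be a bifunctor on $\aCat/_{\sim}$ (and its object map coincides with that of $\otimes$). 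For (ii), the relation $(\F_1\otimes\G_1)\cdot(\F_2\otimes\G_2)\sim(\F_1\cdot\F_2)\otimes(\G_1\cdot\G_2)$ is one of the bulleted properties of $\otimes$ recorded just above the statement, so composition is preserved after passing to $\sim$-classes; since $\F\sim\G$ implies $\F\approx\G$, it is also preserved after passing to $\approx$-classes.

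For (iii), given $\F_i\sim\G_i:\A_i\to\B_i$ for $i=1,2$, the composition property just used gives $\F_1\otimes\F_2\sim(\Id_{\B_1}\otimes\F_2)\cdot(\F_1\otimes\Id_{\A_2})$ and, likewise, $\G_1\otimes\G_2\sim(\Id_{\B_1}\otimes\G_2)\cdot(\G_1\otimes\Id_{\A_2})$; then $\F_1\otimes\Id_{\A_2}\sim\G_1\otimes\Id_{\A_2}$ and $\Id_{\B_1}\otimes\F_2\sim\Id_{\B_1}\otimes\G_2$ by the bulleted properties, and the compatibility of $\sim$ with left and right composition (\cite[(1h)]{Sei}) yields $\F_1\otimes\F_2\sim\G_1\otimes\G_2$, so $\otimes$ descends to $\aCat^{\ddagger}/_{\sim}$. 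The $\approx$-version (with $\ddagger$ replaced by $\star\in\{\mbox{su},\mbox{u}\}$, a setting in which every category involved is unital) is proved by the same three steps verbatim, this time invoking the $\approx$-versions of the tensor-with-identity properties (Lemma \ref{storgheloz}) and the fact that $\approx$, being isomorphism in $H^0(\Fun_{\infty}(\A,\B))$, is compatible with composition of $\Ain$-functors. The main point to watch — and the step I would treat most carefully — is matching generalities: the tensor-with-identity homotopies are available for every flavour of unitality in the $\sim$-case (Lemma \ref{drododo}, stated for arbitrary $\Ain$-categories) but only for the unital flavours in the $\approx$-case (Lemma \ref{storgheloz} needs unital targets), which is exactly why the second bifunctor is asserted only for $\star\in\{\mbox{su},\mbox{u}\}$. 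I do not expect any new computation, since Theorem \ref{trunchezfol} and Lemmas \ref{drododo}, \ref{storgheloz}, \ref{latrunk} already carry the analytic weight.
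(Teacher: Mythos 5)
Your proposal is correct and follows essentially the same route as the paper, which states Theorem \ref{gronzulcore} as an immediate consequence of the bulleted properties of $\otimes$ (themselves reduced to the $``\otimes"$ statements via Lemma \ref{latrunk}, Theorem \ref{trunchezfol}, and Lemmas \ref{drododo} and \ref{storgheloz}). Your explicit verification of identities, composition, and descent to the $\sim$- and $\approx$-quotients — including the observation that the $\approx$-case is restricted to the unital flavours because Lemma \ref{storgheloz} requires unital targets — is exactly the argument the paper leaves implicit.
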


\begin{thm}\label{symcat}
The categories $(\aCat^{\ddagger}/\sim, \otimes,R)$ and $(\aCat^{\star}/\approx,\otimes,R)$ are symmetric monoidal, 
where $\ddagger=\mathcal{f}\mbox{su, nu, u, cu}\mathcal{g}$ and $\star=\mathcal{f}\mbox{su, u}\mathcal{g}$.
\end{thm}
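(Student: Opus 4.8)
The bifunctor $\otimes$ is provided by Theorem \ref{gronzulcore}, and the monoidal unit is $R$. The associator $\sigma$, the swap $\mu$ and the unitors come from Theorem \ref{grgrgr}: being equivalences of $\Ain$-categories, hence possessing strict two-sided inverse $\Ain$-functors, they are already isomorphisms in $\aCat^{\ddagger}$, a fortiori in $\aCat^{\ddagger}/_{\sim}$ and, since $\sim\Rightarrow\approx$, in $\aCat^{\star}/_{\approx}$. So the content of the theorem is: (i) these structure isomorphisms are natural in $\aCat^{\ddagger}/_{\sim}$ (resp. in $\aCat^{\star}/_{\approx}$), and (ii) they satisfy the coherence axioms — pentagon, triangle, hexagon, and $\mu\circ\mu=\Id$.

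For both (i) and (ii) the plan is to reduce everything to $(\DgCat,\otimes^{\tiny\mbox{DG}},R)$, which is symmetric monoidal with explicit, strictly coherent data $\sigma^{\tiny\mbox{DG}},\mu^{\tiny\mbox{DG}},I^{\tiny\mbox{DG}}$ (recalled in \S\ref{sms}). The reduction rests on three inputs. First, the cobar--bar functor $\Un$: the $\Ain$-functor $\alpha_{\A}\colon\A\to\Un(\A)$ is natural and becomes an isomorphism in $\aCat/_{\sim}$ with inverse $\tilde{\tilde{\chi}}_{\A}$ (Theorem \ref{adjun}, Remark \ref{chitild}), while $\A\otimes\B$ is identified with $\Un(\A)\otimes^{\tiny\mbox{DG}}\Un(\B)$ up to $\sim$ by the comparison $\Ain$-functors $\eta_{\A\otimes\B}$ and $\Xi_{\A\otimes\B}$ with $\eta\cdot\Xi\sim\Id$ (\S\ref{functoroni}). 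Second, by construction $\otimes$ of two $\Ain$-functors is built out of $\otimes^{\tiny\mbox{DG}}$ of DG-functors (Definition \ref{compofuncto}, equation (\ref{sanzo})). Third, the stability Lemmas \ref{stupido} and \ref{stupidone} (resp. \ref{stupidonzio}) ensure that these identifications are compatible with composition and with tensoring with an identity functor. Putting these together, one shows that $\sigma,\mu$ and the unitors are transported, up to $\sim$, from $\sigma^{\tiny\mbox{DG}},\mu^{\tiny\mbox{DG}},I^{\tiny\mbox{DG}}$; naturality and coherence are then inherited from their DG counterparts.

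Concretely, naturality of $\sigma$ is the relation $\sigma\cdot\big((\F_1\otimes\F_2)\otimes\F_3\big)\sim\big(\F_1\otimes(\F_2\otimes\F_3)\big)\cdot\sigma$; unravelling both sides through $\eta$, $\Xi$ and equation (\ref{sanzo}) reduces it to the strict naturality of $\sigma^{\tiny\mbox{DG}}$ with respect to $\Un(\F_1)\otimes^{\tiny\mbox{DG}}\Un(\F_2)\otimes^{\tiny\mbox{DG}}\Un(\F_3)$, modulo a finite chain of homotopies arising from $\eta\cdot\Xi\sim\Id$ and Theorem \ref{trunchezfol} — and these homotopies are precisely what vanishes on passing to $\aCat^{\ddagger}/_{\sim}$. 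The same unravelling turns the pentagon, triangle, hexagon and $\mu\circ\mu=\Id$ for $\sigma,\mu,I$ into the corresponding strict identities for $\sigma^{\tiny\mbox{DG}},\mu^{\tiny\mbox{DG}},I^{\tiny\mbox{DG}}$. The $\aCat^{\star}/_{\approx}$ statement is the same argument with $\sim$ replaced by $\approx$ throughout, using Lemma \ref{stupidonzio} in place of Lemma \ref{stupidone} and Lemma \ref{storgheloz} in place of Lemma \ref{drododo}; the unitality hypotheses of Lemma \ref{stupidonzio} are exactly what restrict this half of the statement to $\star\in\{\mbox{su},\mbox{u}\}$, and in the strictly unital case one first rectifies $\A\otimes\B$ — which a priori is only cohomological unital — to an isomorphic strictly unital $\Ain$-category via a formal diffeomorphism.

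The step I expect to be the main obstacle is that none of $\Un$, $i$ or $\otimes$ is strictly monoidal or strictly functorial: each is so only up to $\sim$ (resp. $\approx$), so all the identifications above are genuinely $2$-categorical, and the reduction to $\DgCat$ has to be organised so that the witnessing homotopies compose coherently. Making this precise amounts to exhibiting $\Un$ and $i$ as symmetric monoidal (pseudo)functors between $\aCat^{\ddagger}/_{\sim}$, $\aCat^{\star}/_{\approx}$ and their DG counterparts, and this is where Lemmas \ref{stupido}--\ref{stupidonzio} do the real work; once it is in place, symmetric monoidality of $\aCat^{\ddagger}/_{\sim}$ and $\aCat^{\star}/_{\approx}$ is a formal consequence of that of $(\DgCat,\otimes^{\tiny\mbox{DG}},R)$.
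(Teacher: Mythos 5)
Your proposal is correct and follows essentially the same route as the paper: the structure maps are obtained by transporting $\sigma^{\tiny\mbox{DG}}$, $\mu^{\tiny\mbox{DG}}$ and the DG unitors through $\alpha$, $\tilde{\tilde{\chi}}$, $\eta$ and $\Xi$, and naturality and coherence are inherited from $(\DgCat,\otimes,R)$ modulo homotopies that die in the quotients $/_\sim$ and $/_\approx$. If anything, your write-up is more explicit than the paper's (which simply defines the transported $\mu$ and $\sigma$ and asserts that coherence "comes directly" from the DG case) about where the stability lemmas and the failure of strict functoriality of $\Un$ enter.
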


\begin{proof}
We define the right-left unitor, the swap map and the associator as follows:
\begin{itemize}
\item[1.] Given an $\Ain$-category $\A_1$, the \emph{right unitor} is the strict $\Ain$-functor
\begin{align*}
_rI_{\A_1}:\A_1\otimes R&\cong \A_1\\
_rI_{\A_1}^1(c_1\otimes r_1\cdot 1_R)&:= r_1\cdot c_1
\end{align*}
the \emph{left unitor} is the strict $\Ain$-functor
\begin{align*}
_lI_{\A_1}:R\otimes\A_1&\cong \A_1\\
_lI_{\A_1}^1(r_1\cdot 1_R\otimes c_1)&:= r_1\cdot c_1.
\end{align*}
\item[2.] Given another $\Ain$-category $\A_2$, the \emph{swap map} 
\begin{align*}
\mu_{\A_1 \A_2}:\A_1\otimes \A_2&\cong \A_2\otimes\A_1
\end{align*}
is the equivalence of $\Ain$-categories defined as 
$$\mu_{\A_1 \A_2}:=(\tilde{\tilde{\chi}}_{\A_2}\otimes \tilde{\tilde{\chi}}_{\A_1})\cdot \mu^{\tiny\mbox{DG}}_{\Un(\A_1)\Un(\A_2)} \cdot (\alpha_{\A_1}\otimes\alpha_{\A_2}).$$
\item[3.] Let $\A_3$ be another $\Ain$-category, the \emph{associator} 
\begin{align*}
\sigma_{\A_1 \A_2 \A_3}:\A_1\otimes(\A_2 \otimes\A_3)&\cong (\A_1\otimes \A_2)\otimes\A_3
\end{align*}
is the equivalence of $\Ain$-categories, defined as
$$\sigma_{\A_1\A_2\A_3}:=\big((\tilde{\tilde{\chi}}_{\A_1}\otimes \tilde{\tilde{\chi}}_{\A_2})\otimes \tilde{\tilde{\chi}}_{\A_3}\big)\cdot\sigma^{\tiny\mbox{DG}}_{\Un(\A_1)\Un(\A_2)\Un(\A_3)}\cdot\big( \alpha_{\A_1}\otimes(\alpha_{\A_2}\otimes\alpha_{\A_3})\big).$$
\end{itemize}
The coherence conditions, the unit coherence, the associative coherence and the inverse law comes from directly from the coherence conditions of DG-categories.
\end{proof}




\newpage

\section{Derived tensor product and Internal Homs}

The tensor product $\otimes$ makes $\DgCat$ a symmetric monoidal category.\
This structure is closed and the internal hom is given by 
$$[\A,\B]:=\mbox{Nat}(\A,\B)$$
Where $\mbox{Nat}(\A,\B)$ denotes the DG-category of DG-functors between $\A$ and $\B$, whose morphisms are (the DG enriched) natural transformations, see \cite{Orn2}.\\
On the other hand, the category $\Ho(\DgCat)$ has an induced symmetric monoidal category.\ 
The derived tensor product is given by 
\begin{align*}
\A\otimes^{\mathbb{L}}\B:=\A\otimes\B^{\tiny\mbox{cof}}.
\end{align*}
Where $\B^{\tiny\mbox{cof}}$ is a cofibrant resolution of $\B$ in the model structure of $\DgCat$ (see \cite{Tab}).\\
Note that every cofibrant object in $\DgCat$ is h-projective morphisms (see \cite[Proposition 2.3]{Toe}).\\

In the case of the categories $\Ho(\DgCat^u)$, $\Ho(\aCat^u)$ and $\Ho(\aCat)$, 
we can define the \emph{derived tensor product} as follows:
\begin{align*}
\A\otimes^{\mathbb{L}}\B:=\A\otimes\B^{\tiny\mbox{hp}}
\end{align*}
where $\B^{\tiny\mbox{hp}}$ denotes a h-projective resolution of $\B$.

\begin{rem}
If $\A$ is a h-projective $\Ain$-category then $\Un(\A)$ is h-projective.\ Note that in general $\Rep(\A)$ (and $U(\A)$) is not h-projective.
\end{rem}

In this section we prove that the categories $(\Ho(\DgCat^{u}),\otimes^{\mathbb{L}},R)$ and 
$(\Ho(\aCat^{(u)}),\otimes^{\mathbb{L}},R)$
are closed symmetric monoidal category.\\ 
More precisely we prove the following:
\begin{thm}\label{INTERNALHOM}
Given $\A_1,\A_2,\A_3\in\star$, there exists a natural bijection of sets
\begin{align*}
{\mbox{Ho}(\star)}(\A_1\otimes^{\mathbb{L}}\A_2,\A_3)\simeq{\mbox{Ho}(\star)}(\A_1, \Fun^u_{\infty}(\A^{\tiny\mbox{hp}}_2,\A_3)),
\end{align*}
where $\A^{\tiny\mbox{hp}}_2$ is a h-projective resolution of $\A_2$ and $\star\in\mathcal{f}\DgCat,\DgCat^u,\aCat^u,\aCat\mathcal{g}$.\\
Moreover, if $\A_1,\A_2,\A_3\in\aCat$ and $\A_1\otimes^{\mathbb{L}}\A_2:=\A_1\otimes\tilde{\A}_2$, such that $\tilde{\A}_2$ has split unit, then 
\begin{align*}
{\mbox{Ho}(\aCat)}(\A_1\otimes^{\mathbb{L}}\A_2,\A_3)\simeq{\mbox{Ho}(\aCat)}(\A_1, \Fun_{\infty}(\A_2^{\tiny\mbox{hp}},\A_3)).
\end{align*}
\end{thm}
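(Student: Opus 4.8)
# Proof Proposal for Theorem \ref{INTERNALHOM}

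The plan is to reduce everything to the DG case via the equivalences of homotopy categories in Theorem \ref{equivalonze} and the compatibility of $\Un$ with tensor products (Theorem \ref{teoremono}), then invoke the known internal-hom description for $\Ho(\DgCat)$ from \cite{COS2}. First I would fix an h-projective resolution $\A_2^{\tiny\mbox{hp}}$ of $\A_2$ and observe that, since $\A_2^{\tiny\mbox{hp}}$ is h-projective, $\A_1\otimes^{\mathbb{L}}\A_2 = \A_1\otimes\A_2^{\tiny\mbox{hp}}$ is well-defined up to equivalence (Theorem \ref{enzone} gives the $\Ain$-structure up to equivalence, and Lemmas \ref{drododo}, \ref{storgheloz} make the construction functorial on $\aCat/_\sim$ and $\aCat^{\star}/_\approx$). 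The key input is that in $\Ho(\aCat)$ (and in the unital variants) every object is isomorphic to an h-projective one, and $\Ho(\aCat)(\A,\B)\simeq\aCat^u(\A,\B)/_\approx$ when $\A$ is h-projective unital (resp.\ $\aCat(\A,\B)/_\approx$ when $\A$ is h-projective with split unit), by the last two theorems quoted in the Background.

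The core argument proceeds in three steps. \emph{Step 1: the DG case.} For $\star\in\{\DgCat,\DgCat^u\}$ this is essentially \cite[Section 4.3]{Kel} together with \cite{COS2}: $\Ho(\DgCat^u)(\A_1\otimes^{\mathbb{L}}\A_2,\A_3)\simeq\Ho(\DgCat^u)(\A_1,\mathbf{R}\mathrm{Hom}(\A_2,\A_3))$ and the derived internal hom is computed by $\Fun^u_{\infty}(\A_2^{\tiny\mbox{hp}},\A_3)$; here one uses that $\Fun^u_{\infty}(\A_2^{\tiny\mbox{hp}},\A_3)$ is the correct derived functor category because $\A_2^{\tiny\mbox{hp}}$ is h-projective. \emph{Step 2: transfer to $\Ain$.} Using Theorem \ref{equivalonze}, the square of equivalences $\Ho(\DgCat^{(u)})\simeq\Ho(\aCat^{(u)}_{\DgCat})\simeq\Ho(\aCat^{(u)})$ is induced by $\Un\dashv i$; apply it to both sides. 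On the left, $\Un(\A_1\otimes\A_2^{\tiny\mbox{hp}})\approx\Un(\A_1)\otimes\Un(\A_2^{\tiny\mbox{hp}})$ by Theorem \ref{teoremono}, and $\Un(\A_2^{\tiny\mbox{hp}})$ is h-projective (Remark in Section 4), so the left-hand derived tensor matches the DG one. On the right, one needs a quasi-equivalence $\Un\big(\Fun^u_{\infty}(\A_2^{\tiny\mbox{hp}},\A_3)\big)\simeq \mathrm{Nat}\big(\Un(\A_2^{\tiny\mbox{hp}}),\Un(\A_3)\big)$, i.e.\ that passing to $\Un$ intertwines the $\Ain$-functor category with the DG natural-transformation category up to quasi-equivalence; this follows because $\Un$ is a homotopy equivalence on hom-complexes and $\mathfrak{M}^{n}$ is built from the $m^n$'s via the bar–cobar formalism. \emph{Step 3: naturality.} Both bijections are natural in $\A_1$ by construction (composition in $\Ho$), and naturality in $\A_3$ follows from functoriality of $\Un$ and of the assignments $\F\mapsto\F``\otimes"\Id$, using Lemmas \ref{drododo} and \ref{storgheloz}; independence of the choice of h-projective resolution $\A_2^{\tiny\mbox{hp}}$ is Theorem \ref{risoluzioni} (functoriality up to $\approx$).

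For the second, sharper statement — where $\A_1,\A_2,\A_3$ are merely $\Ain$-categories, $\tilde{\A}_2$ is h-projective with split unit, and the internal hom is $\Fun_{\infty}(\tilde{\A}_2,\A_3)$ rather than $\Fun^u_{\infty}$ — I would use that when $\tilde{\A}_2$ has split unit the inclusion $\Fun^{su}_{\infty}(\tilde{\A}_2,\A_3)\hookrightarrow\Fun^u_{\infty}(\tilde{\A}_2,\A_3)=\Fun^c_{\infty}(\tilde{\A}_2,\A_3)$ is a quasi-equivalence (stated after \eqref{inclusone} in the Background), so the two internal homs agree in $\Ho(\aCat)$; then combine with $\Ho(\aCat)(\tilde{\A}_2\otimes\A_1,\A_3)\simeq\aCat(\tilde{\A}_2\otimes\A_1,\A_3)/_\approx$ since a tensor with an h-projective-with-split-unit category is again h-projective with split unit (degree-wise the unit $R\cdot(e_a\otimes e_b)$ splits off). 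The hom-set $\aCat(\tilde{\A}_2\otimes\A_1,\A_3)/_\approx$ is then rewritten as $\aCat(\A_1,\Fun_{\infty}(\tilde{\A}_2,\A_3))/_\approx$ by the usual tensor–hom adjunction on the nose at the level of bar constructions, and the latter is $\Ho(\aCat)(\A_1,\Fun_{\infty}(\tilde{\A}_2,\A_3))$ once one checks $\Fun_{\infty}(\tilde{\A}_2,\A_3)$ is suitably h-projective — which holds because $\tilde{\A}_2$ is, making its hom-complexes h-projective.

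The main obstacle I expect is Step 2's comparison $\Un\big(\Fun^u_{\infty}(\A_2^{\tiny\mbox{hp}},\A_3)\big)\simeq\mathrm{Nat}\big(\Un(\A_2^{\tiny\mbox{hp}}),\Un(\A_3)\big)$: one must verify that the bar–cobar machinery identifies prenatural transformations with DG natural transformations up to quasi-equivalence and that this identification is compatible with composition, so that the adjunction $\otimes\dashv\Fun$ transported from $\DgCat$ genuinely lands on $\Fun_{\infty}$-categories and not some ad hoc replacement. A secondary subtlety is keeping track of which flavour of unitality ($su$ vs.\ $u$ vs.\ $cu$) survives each functor $\Un$, $i$, $\otimes$, $\Fun_{\infty}$, since $\Un$ of a strictly unital category is only cohomological-unital — this is why the statement is phrased with $\Fun^u_{\infty}$ in general and the split-unit hypothesis is needed to pass back to $\Fun_{\infty}$.
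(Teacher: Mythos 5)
Your overall strategy --- reduce to the DG case via the adjunction $\Un\dashv i$ and the equivalences of Theorem \ref{equivalonze}, use $\Un(\A_1\otimes\A_2)\approx\Un(\A_1)\otimes\Un(\A_2)$ on the left-hand side, and quote the internal-hom description of \cite{COS2} --- is the same as the paper's. But the step you yourself flag as the main obstacle is where the argument actually breaks: you require a quasi-equivalence $\Un\big(\Fun^u_{\infty}(\A^{\tiny\mbox{hp}}_2,\A_3)\big)\simeq\mbox{Nat}\big(\Un(\A^{\tiny\mbox{hp}}_2),\Un(\A_3)\big)$. No such comparison holds in general, and none is needed. The DG category $\mbox{Nat}(-,-)$ of strict DG functors is the internal hom for the \emph{underived} monoidal structure on $\DgCat$; it does not compute the internal hom of $\Ho(\DgCat)$ even for h-projective source --- that failure is exactly what Theorem \ref{IntHom} is designed to circumvent by describing the derived internal hom directly as the $\Ain$-functor category $\Fun^u_{\infty}(\A_2,\A_3)$. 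The paper therefore never leaves the $\Fun^u_{\infty}$ world: after transporting to $\DgCat^u$ it applies Theorem \ref{corzel} to obtain $\Fun^u_{\infty}\big(\Un(\A^{\tiny\mbox{hp}}_2),\Un(\A_3)\big)$, then compares this with $\Fun^u_{\infty}(\A^{\tiny\mbox{hp}}_2,\A_3)$ using the equivalences $\A\approx\Un(\A)$ (via $\alpha$ and $\tilde{\tilde{\chi}}$) and the induced functors $\mathscr{R}$, $\mathscr{L}$ on functor categories, together with $\Ho(\aCat^u)(\A,\B)\simeq\Ho(\DgCat^u)(\Un(\A),\Un(\B))$ and $\Fun^u_{\infty}(\B,\C)\approx\Un(\Fun^u_{\infty}(\B,\C))$. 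Replacing your $\mbox{Nat}$ comparison by this one closes your Step 2.

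Two smaller points. First, in the split-unit case the hypothesis needed for $\Ho(\aCat)(\A,\mathscr{D})\simeq\aCat(\A,\mathscr{D})/_{\approx}$ sits on the \emph{source} $\A$ (h-projective with split unit), not on the target: there is no need --- and no tool in the paper --- to check that $\Fun_{\infty}(\tilde{\A}_2,\A_3)$ is h-projective; one resolves $\A_1$ instead, and the only role of the split-unit hypothesis is to make the inclusion $\Fun^{su}_{\infty}(\tilde{\A}_2,\A_3)\hookrightarrow\Fun^u_{\infty}(\tilde{\A}_2,\A_3)$ a quasi-equivalence, as you correctly note. Second, your appeal to ``the usual tensor--hom adjunction on the nose at the level of bar constructions'' is precisely the content of Lemmas \ref{isonzo} and \ref{tensoronzi}, which pass through the bifunctor category $\aCat^u(\A,\B,\C)$ and require verifying that $\overline{\A_+\otimes\B_+}$ is unital; as written, that one sentence is carrying the entire weight of the adjunction without proof.
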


\subsection{Internal Hom}
We recall that the category of DG categories $\Ho(\DgCat)$ is closed symmetric monoidal.\ 
The internal hom of $\Ho(\DgCat)$ has a very explicit incarnation in terms of $\Ain$-functors:  
\begin{thm}[\cite{COS2}]\label{IntHom}
Given $\A_1,\A_2,\A_3\in\DgCat$, and $\A_2$ h-projective there exists a natural bijection of sets
\begin{align*}
\xi:{\mbox{Ho}(\DgCat)}(\A_1\otimes\A_2,\A_3)\simeq{\mbox{Ho}(\DgCat)}(\A_1, \Fun^u_{\infty}(\A_2,\A_3)).
\end{align*}
\end{thm}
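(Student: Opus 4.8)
The plan is to derive Theorem \ref{IntHom} from the general closed symmetric monoidal structure on $\Ho(\DgCat)$ (à la Toën) together with an explicit identification of the internal hom with the functor category $\Fun^u_\infty(\A_2,\A_3)$. Recall that $\Ho(\DgCat)$ is closed symmetric monoidal, with tensor the derived tensor $\otimes^{\mathbb{L}}$ and internal hom $\mathrm{RHom}(\A_2,\A_3)$ a DG-model for the full DG-subcategory of the derived category of $\A_2$-$\A_3$-bimodules spanned by the right quasi-representable ones; this yields the tensor--hom adjunction
\[
\Ho(\DgCat)(\A_1\otimes^{\mathbb{L}}\A_2,\A_3)\cong\Ho(\DgCat)(\A_1,\mathrm{RHom}(\A_2,\A_3)).
\]
Since $\A_2$ is h-projective we have $\A_2^{\tiny\mbox{hp}}=\A_2$, so $\A_1\otimes^{\mathbb{L}}\A_2=\A_1\otimes\A_2$ and the left-hand side is exactly $\Ho(\DgCat)(\A_1\otimes\A_2,\A_3)$. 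Thus everything reduces to proving the quasi-equivalence $\mathrm{RHom}(\A_2,\A_3)\simeq\Fun^u_\infty(\A_2,\A_3)$ for $\A_2$ h-projective.

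To establish this identification I would construct an explicit functor sending a unital $\Ain$-functor $\F:\A_2\to\A_3$ to its graph bimodule $\Gamma_{\F}$ (objectwise $\A_3(-,\F^0(-))$, with the bimodule structure twisted by the higher components $\F^{n}$) and a prenatural transformation $T:\F\Rightarrow\G$ to the induced morphism of bimodules. The three things to check are: (i) each $\Gamma_{\F}$ is right quasi-representable, so the functor lands in $\mathrm{RHom}(\A_2,\A_3)$; (ii) on hom-complexes it is a quasi-isomorphism, i.e. the complex of prenatural transformations with its $\mathfrak{M}^1$-differential computes the bimodule $\mathrm{RHom}$ of the $\Gamma_{\F}$'s; and (iii) it is essentially surjective on $H^0$, i.e. every right quasi-representable bimodule is quasi-isomorphic to some $\Gamma_{\F}$. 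For (iii) I would use the $\Ain$-Yoneda machinery of Theorem \ref{Yonni}: a right quasi-representable bimodule is objectwise quasi-represented, and the pair $\REP$, $\Pi$ reconstructs, up to $\approx$, the $\Ain$-functor $\F$ whose graph it is. Combining the identification with the adjunction above gives the stated bijection $\xi$.

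The main obstacle is point (ii) (together with (iii)): proving that, for $\A_2$ h-projective, $\Fun^u_\infty(\A_2,\A_3)$ is quasi-equivalent to the bimodule model of the internal hom --- this is the technical heart of \cite{COS2}. The delicate issue is that the complex of prenatural transformations, equipped with $\mathfrak{M}^1$, must compute the \emph{derived} bimodule Hom without any further resolution; this is precisely where h-projectivity of $\A_2$ is indispensable, and the statement fails in its absence. Once the quasi-equivalence is in place, naturality of $\xi$ in $\A_1$ and $\A_3$ follows formally from the naturality of the graph construction and of Toën's adjunction, and can be recorded at the end with little additional work.
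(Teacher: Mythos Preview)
Your argument is coherent, but it takes a genuinely different route from the one the paper records. The paper's sketch (diagram (\ref{diagrammozzo})) never invokes To\"en's internal hom or right quasi-representable bimodules. Instead it builds $\xi$ as a composite of explicit bijections: resolve $\A_1$ by an h-projective $\A_1^{\tiny\mbox{hp}}$ (steps $(A)$, $(F)$), use that for h-projective sources the localization map $\aCat^u(-,-)/\approx \to \Ho(\DgCat)(-,-)$ is a bijection (steps $(B)$, $(E)$), then pass through the $\Ain$-bifunctor category $\aCat^u(\A_1^{\tiny\mbox{hp}},\A_2;\A_3)/\approx$ via the two isomorphisms $(C)$ and $(D)$ of Lemma~\ref{isonzo} type. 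The whole point of \cite{COS2} is to establish the adjunction \emph{directly} in terms of $\Fun^u_\infty$, without comparing to a pre-existing bimodule model.

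What each approach buys: yours is conceptually economical once To\"en's theorem is granted, but it shifts all the work into the quasi-equivalence $\mathrm{RHom}(\A_2,\A_3)\simeq\Fun^u_\infty(\A_2,\A_3)$, and your steps (ii)--(iii) are essentially as hard as the theorem itself (indeed this is closer to the strategy of the earlier \cite{COS}, which worked over a field). The paper's route is self-contained---it does not need To\"en's closed monoidal structure as input---and its use of h-projectivity is also located differently: $\A_2$ h-projective ensures $\A_1^{\tiny\mbox{hp}}\otimes\A_2$ is again h-projective, so that step $(B)$ applies; it is not used, as you suggest, to control a derived bimodule Hom. Your outline is not wrong, but be aware that what you call ``the technical heart of \cite{COS2}'' is not how \cite{COS2} actually argues.
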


We extend this result to the categories $\DgCat^u$, $\aCat^u$ and $\aCat$.\\
To give a sketch of proof of Theorem \ref{IntHom}, we recall that, given three unital $\Ain$-categories $\A$, $\B$, $\C$, then
$$\aCat^u(\A,\B,\C)$$ 
denotes the unital $\Ain$-category whose objects are the $\Ain$-bifunctors from $\A$ and $\B$ to $\C$.\\
A bifunctor $F\in\aCat^u(\A,\B,\C)$ is a DG-functor 
between the DG cocategories:
\begin{align*}
\overline{B_{\infty}(\A)_{+}\otimes B_{\infty}(\B)_{+}}\to B_{\infty}(\C)_{+}
\end{align*}
Note that $\aCat^u(\A,\B,\C)$ has the same structure of $\C$, namely if $\C\in\star$ then $\aCat^u(\A,\B,\C)\in\star$ (see \cite[\S2]{Orn2}).

\begin{proof}[(Sketch of) proof of Theorem \ref{IntHom}]
The isomorphism $\xi$ is given by the following natural isomorphism of sets (see \cite[(5.1)]{COS2}):
\begin{align}\label{diagrammozzo}
\xymatrix{
\Ho(\DgCat)(\A\otimes\B,\C)\ar[d]^{1:1}\ar@{-->}[rr]^{\xi}&&\ar@{-->}[ll]\Ho(\DgCat)(\A,\Fun_{\infty}^u(\B,\C))\ar[d]^{(F)}\\
\Ho(\DgCat)(\A^{\tiny\mbox{hp}}\otimes\B,\C)\ar[u]^{(A)}\ar[d]^{1:1}&&\Ho(\DgCat)(\A^{\tiny\mbox{hp}},\Fun_{\infty}^u(\B,\C))\ar[d]^{(E)}\ar[u]^{1:1}\\
\aCat^u(\A^{\tiny\mbox{hp}}\otimes\B,\C)/{\approx}\ar[u]^{(B)}\ar[r]^-{1:1}&\ar[l]^-{(C)}\aCat^u(\A^{\tiny\mbox{hp}},\B,\C)/{\approx}\ar[r]^-{1:1}&\ar[l]^-{(D)}\aCat^u(\A^{\tiny\mbox{hp}},\Fun^u_{\infty}(\B,\C))/\approx\ar[u]^-{1:1}
}
\end{align}
\end{proof}

\begin{lem}\label{isonzo}
Let $\A$, $\B$, $\C$ be three unital $\Ain$-categories we have an isomorphism of sets:
\begin{align}\label{iso1}
\aCat^u(\A,\B,\C)\simeq \aCat^u(\A,\Fun^u(\B,\C))
\end{align}
and
\begin{align}\label{iso2}
\aCat^u(\A,\B,\C)\simeq\aCat^u(\B,\A,\C).
\end{align}
\end{lem}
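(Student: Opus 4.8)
The plan is to establish both isomorphisms (\ref{iso1}) and (\ref{iso2}) by unwinding the coalgebraic description of $\Ain$-bifunctors and exploiting the standard tensor–hom adjunction and symmetry at the level of DG cocategories. Recall from the discussion preceding the lemma that an $\Ain$-bifunctor $F\in\aCat^u(\A,\B,\C)$ is, by definition, a DG-functor of DG cocategories
\[
\overline{B_{\infty}(\A)_{+}\otimes B_{\infty}(\B)_{+}}\to B_{\infty}(\C)_{+},
\]
and that an $\Ain$-functor $\A\to\Fun^u(\B,\C)$ unwinds, via the bar construction $B_{\infty}$ and the explicit description of the $\Ain$-structure $\mathfrak{M}^{\bullet}$ on $\Fun^u(\B,\C)$, to a DG-functor $\overline{B_{\infty}(\A)_{+}}\to B_{\infty}\big(\Fun^u(\B,\C)\big)_{+}$. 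The key point is then the identification
\[
B_{\infty}\big(\Fun^u(\B,\C)\big)_{+}\ \cong\ \underline{\mathrm{Hom}}\big(B_{\infty}(\B)_{+},B_{\infty}(\C)_{+}\big)
\]
of internal homs in the category of DG cocategories, which is essentially the content of Lyubashenko's construction of the $\Ain$-structure on functor categories \cite{Lyu}; once this identification is in place, (\ref{iso1}) becomes the ordinary tensor–hom adjunction for DG (co)modules/cocategories.

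First I would make precise the tensor–hom adjunction we need: for DG cocategories $X,Y,Z$ (with the $+$-decorations and the reduction $\overline{(\,\cdot\,)}$ as in \cite{COS2}, \cite{Orn2}), DG-functors $\overline{X\otimes Y}\to Z$ correspond naturally and bijectively to DG-functors $X\to\underline{\mathrm{Hom}}(Y,Z)$, where the internal hom is formed degreewise and its differential is the usual commutator $[d,-]$. This is a formal consequence of the closed symmetric monoidal structure on DG $R$-modules, transported to the cocategory setting; I would cite the relevant lemma in \cite{Orn2} rather than reprove it. Second, I would check that $\underline{\mathrm{Hom}}(B_{\infty}(\B)_{+},B_{\infty}(\C)_{+})$ is exactly $B_{\infty}(\Fun^u(\B,\C))_{+}$ on the nose — objects are prenatural-transformation-free, i.e.\ the objects of $\Fun^u(\B,\C)$ are DG-functors of cocategories $B_{\infty}(\B)_{+}\to B_{\infty}(\C)_{+}$, which are precisely unital $\Ain$-functors $\B\to\C$, and the coaugmentation coideal carries the prenatural transformations with differential $\mathfrak{M}^1$ and cocomposition encoding $\mathfrak{M}^2$; this is the translation of \cite[\S5]{Lyu} into the bar picture. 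Composing these two bijections gives (\ref{iso1}), and naturality in all three variables is automatic since every step is natural. For (\ref{iso2}), I would use the braiding (symmetry) isomorphism $B_{\infty}(\A)_{+}\otimes B_{\infty}(\B)_{+}\cong B_{\infty}(\B)_{+}\otimes B_{\infty}(\A)_{+}$ in the symmetric monoidal category of DG cocategories, which induces a bijection on the sets of DG-functors out of the reduced tensor product into $B_{\infty}(\C)_{+}$, i.e.\ precisely a bijection $\aCat^u(\A,\B,\C)\simeq\aCat^u(\B,\A,\C)$.

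Finally I would verify that all the constructions preserve the relevant unitality: since $\B$ and $\C$ are unital, $\Fun^u(\B,\C)$ is unital (this is the remark in the excerpt that $\aCat^u(\A,\B,\C)\in\star$ whenever $\C\in\star$, together with $\Fun^u(\B,\C)=\aCat^u$ applied appropriately), so the right-hand sides of (\ref{iso1}) and (\ref{iso2}) live in the same world as the left-hand sides and the bijections are between the intended sets. The main obstacle, I expect, is not any single hard estimate but rather the bookkeeping in the second step: pinning down the Koszul signs and the reduced/coaugmented conventions so that the internal-hom cocategory genuinely matches $B_{\infty}(\Fun^u(\B,\C))_{+}$ with Lyubashenko's $\mathfrak{M}^{\bullet}$, rather than some sign-twisted variant. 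Once the dictionary between \cite{Lyu} and the cocategorical internal hom is fixed, both isomorphisms drop out formally, and one should be careful to record that the bijections descend to the quotients by $\approx$ used elsewhere (they do, since $\approx$ is defined via $H^0$ of the functor $\Ain$-categories, which is preserved under the above identifications).
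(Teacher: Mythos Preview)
Your proposal is correct and aligns with the paper's approach: the paper simply records that (\ref{iso1}) is a particular case of \cite[Proposition 5.1]{COS2} and that (\ref{iso2}) follows directly from the definition, which is exactly the content you unwind (tensor--hom adjunction at the level of DG cocategories together with the identification $B_{\infty}(\Fun^u(\B,\C))_{+}\cong\underline{\mathrm{Hom}}(B_{\infty}(\B)_{+},B_{\infty}(\C)_{+})$ for (\ref{iso1}), and the braiding on $B_{\infty}(\A)_{+}\otimes B_{\infty}(\B)_{+}$ for (\ref{iso2})). Your additional care about signs, unitality, and compatibility with $\approx$ is appropriate but not needed for the lemma as stated, which is an isomorphism of sets prior to passing to any quotient.
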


\begin{proof}
The isomorphism (\ref{iso1}) is a particular case of \cite[Proposition 5.1]{COS2} 
and (\ref{iso2}) follows directly from the definition.
\end{proof}

\begin{thm}\label{corzel}
Given $\A_1,\A_2,\A_3\in\DgCat^u$ and $\A_2$ h-projective, there exists a natural bijection of sets
\begin{align*}
{\mbox{Ho}(\DgCat^u)}(\A_1\otimes\A_2,\A_3)\simeq{\mbox{Ho}(\DgCat^u)}(\A_1, \Fun^u_{\infty}(\A_2,\A_3)),
\end{align*}
\end{thm}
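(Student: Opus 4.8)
The plan is to reduce the statement about $\DgCat^u$ to the already–established Theorem~\ref{IntHom} about $\DgCat$, by exploiting that both sides of the desired bijection are, up to natural identifications, computed inside the larger category $\aCat^u$ where the unital and strictly unital settings are interchangeable. Concretely, I would first observe that for a unital DG-category $\A_2$ there is an h-projective \emph{DG}-category $\A_2^{\mathrm{hp}}$ together with a quasi-equivalence $\A_2^{\mathrm{hp}}\to\A_2$; this exists by Theorem~\ref{risoluzioni} (the h-projective resolution is itself a DG-category when $\A_2$ is, since $\Psi_{\A_2}$ is a strict $\Ain$-functor). Because $\otimes^{\mathbb L}$ in $\Ho(\DgCat^u)$ is defined via such a resolution and $\Ho$ inverts quasi-equivalences, the left-hand side is identified with ${\Ho(\DgCat^u)}(\A_1\otimes\A_2^{\mathrm{hp}},\A_3)$, and we may as well assume $\A_2$ is h-projective from the start.

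The core step is then to run the same zig-zag of natural bijections as in diagram~(\ref{diagrammozzo}), but one level up: replace $\A_1$ by an h-projective resolution $\A_1^{\mathrm{hp}}$ (again a DG-category), use Theorem~\ref{fofofofofff}/the description of morphisms in $\Ho$ out of h-projective objects to pass from $\Ho(\DgCat^u)(\A_1^{\mathrm{hp}}\otimes\A_2,\A_3)$ to $\aCat^u(\A_1^{\mathrm{hp}}\otimes\A_2,\A_3)/{\approx}$ (edges $(A)$,$(B)$ of the diagram; here one must check these identifications are valid in the $u$-setting, not just the strictly unital one, which is exactly what Theorem~\ref{patturzo} and the equivalences~(\ref{categoriez}) provide), then apply Lemma~\ref{isonzo}: the isomorphism~(\ref{iso1}) turns $\aCat^u(\A_1^{\mathrm{hp}},\A_2,\A_3)$ into $\aCat^u(\A_1^{\mathrm{hp}},\Fun^u(\A_2,\A_3))$, and~(\ref{iso2}) together with the bifunctor/tensor adjunction handles the passage $\aCat^u(\A_1^{\mathrm{hp}}\otimes\A_2,\A_3)\cong\aCat^u(\A_1^{\mathrm{hp}},\A_2,\A_3)$. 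Finally climb back up the right-hand column: $\aCat^u(\A_1^{\mathrm{hp}},\Fun^u_\infty(\A_2,\A_3))/{\approx}$ is identified with $\Ho(\DgCat^u)(\A_1^{\mathrm{hp}},\Fun^u_\infty(\A_2,\A_3))$ and then, since $\A_1^{\mathrm{hp}}\to\A_1$ is a quasi-equivalence, with $\Ho(\DgCat^u)(\A_1,\Fun^u_\infty(\A_2,\A_3))$. Every individual arrow is either a bijection of sets established in \cite{COS2}/\cite{Orn2} or an instance of inverting a quasi-equivalence in $\Ho$, so composing them yields the asserted natural bijection $\xi$.

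Two points require care. First, one must verify that $\Fun^u_\infty(\A_2,\A_3)$ really is a \emph{unital} $\Ain$-category (so that it is a legitimate object of $\DgCat^u$ after we note that, $\A_2$ being h-projective, the relevant internal hom has DG-projective hom-complexes): this follows from the discussion after~(\ref{inclusone}) and the fact that $\aCat^u(\A,\B,\C)\in\star$ whenever $\C\in\star$, quoted just before the sketch proof of Theorem~\ref{IntHom}. Second, naturality of $\xi$ in all three variables must be tracked through the zig-zag; since each piece is natural (the resolutions are functorial up to $\approx$ by Theorem~\ref{risoluzioni}, and Lemma~\ref{isonzo} is natural by construction), naturality survives, but the bookkeeping with $/{\approx}$ versus honest equalities is the place where a careless argument would break.

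The main obstacle I expect is precisely the transition between the strictly unital framework in which \cite{COS2} proves Theorem~\ref{IntHom} and the merely unital framework here: one needs that the vertical identifications $(A)$,$(B)$,$(E)$,$(F)$ of~(\ref{diagrammozzo}) — ``morphisms in $\Ho$ out of an h-projective object are $\Ain$-functors modulo $\approx$'' — hold verbatim for $\DgCat^u$ and $\aCat^u$. This is supplied by Theorem~\ref{patturzo} (which upgrades $\approx\Rightarrow[\F]=[\G]$ to the unital setting) together with the last displayed theorem of Section~2 (the isomorphism $\Ho(\aCat)(\A,\B)\simeq\aCat^u(\A,\B)/{\approx}$ for h-projective unital $\A$), so the obstacle is real but already dispatched by the background results; the proof is then essentially the observation that diagram~(\ref{diagrammozzo}) is robust under replacing ``su'' by ``u'' everywhere.
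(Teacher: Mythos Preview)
Your approach is exactly the paper's: rerun diagram~(\ref{diagrammozzo}) with $\DgCat$ replaced by $\DgCat^u$ and check that each edge survives the passage from strictly unital to unital. You correctly identify that edges $(B')$ and $(E')$ require Theorem~\ref{patturzo} together with the equivalences of Theorem~\ref{equivalonze}.

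However, you misplace the one genuinely new ingredient. Edge $(C')$, the tensor/bifunctor passage
\[
\aCat^u(\A^{\mathrm{hp}}\otimes\B,\C)/{\approx}\;\cong\;\aCat^u(\A^{\mathrm{hp}},\B,\C)/{\approx},
\]
is \emph{not} handled by Lemma~\ref{isonzo}: neither (\ref{iso1}) nor (\ref{iso2}) says anything about the tensor product. In the strictly unital setting this step is \cite[Proposition~5.2]{COS2}, and that proof needs (via \cite[Lemma~1.22]{COS2}) that the intermediate object $\overline{\A_+\otimes\B_+}$ is a \emph{unital} DG-category. When $\A$ and $\B$ are merely unital this must be verified separately: the paper does so as Lemma~\ref{tensoronzi}, exhibiting the unit $e_a\otimes\Id_b+\Id_a\otimes e_b-e_a\otimes e_b$ together with its homotopy $\mathcal{H}_a\otimes(d_\B\mathcal{H}_b+\mathcal{H}_b d_\B)$. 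This is precisely the step your proposal treats as automatic (``the bifunctor/tensor adjunction''), and it is the actual technical content of the paper's proof of Theorem~\ref{corzel}.

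A minor side remark: your concern that $\Fun^u_\infty(\A_2,\A_3)$ be ``a legitimate object of $\DgCat^u$'' is misdirected --- it is in general only a unital $\Ain$-category, and the right-hand side is read through the equivalence $\Ho(\DgCat^u)\simeq\Ho(\aCat^u)$ of Theorem~\ref{equivalonze}, not by claiming it is literally a DG-category.
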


To prove Theorem \ref{corzel} we need two more results.

\begin{lem}\label{tensoronzi}
Let $\A$, $\B$ and $\C$ be three unital DG-categories, we have an isomorphism of sets:
$$\aCat^u(\A\otimes\B,\C)/{\approx}\to \aCat^u(\A,\B,\C)/{\approx}$$
\end{lem}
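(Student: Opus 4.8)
The plan is to compare the two sides through their bar constructions and an Eilenberg--Zilber-type equivalence. Recall that $\aCat^u(\A\otimes\B,\C)$ is the set of (unital) morphisms of DG cocategories $\Br(\A\otimes\B)_+\to\Br(\C)_+$, while $\aCat^u(\A,\B,\C)$ is, by definition, the set of morphisms of DG cocategories $\overline{\Br(\A)_+\otimes\Br(\B)_+}\to\Br(\C)_+$; in each case the $\approx$-class of a (bi)functor is its isomorphism class in the $H^0$ of the ambient $\Ain$-category of (pre)natural transformations. It therefore suffices to produce morphisms of DG cocategories
\[
\Psi\colon\Br(\A)_+\otimes\Br(\B)_+\to\Br(\A\otimes\B)_+,\qquad
\Phi\colon\Br(\A\otimes\B)_+\to\Br(\A)_+\otimes\Br(\B)_+
\]
with $\Phi\cdot\Psi=\Id$ and $\Psi\cdot\Phi\sim\Id$ (the homotopy being realized by a prenatural transformation), both compatible with the coaugmentation coideals and taking unital data to unital data; then $G\mapsto G\cdot\Psi$ and $F\mapsto F\cdot\Phi$ will be the desired mutually inverse maps.

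For $\Psi$ I would take the shuffle (Eilenberg--Zilber) map; here the hypothesis that $\A$ and $\B$ are \emph{unital} is essential. On a generator $(a_1[1]\otimes\cdots\otimes a_k[1])\otimes(b_1[1]\otimes\cdots\otimes b_l[1])$ it is the signed sum, over all $(k,l)$-shuffles, of the bar word of $\A\otimes\B$ whose letters are $a_i\otimes e$ in the slots occupied by $\A$-entries and $e\otimes b_j$ in the slots occupied by $\B$-entries, $e$ being the relevant identity morphism. Dually, $\Phi$ is the Alexander--Whitney map, built from the (componentwise) composition of the DG-category $\A\otimes\B$ by contracting a bar word of length $n$ into its front and back parts. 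That $\Psi$ and $\Phi$ are chain maps and morphisms of cocategories, that $\Phi\cdot\Psi=\Id$, and that $\Psi\cdot\Phi$ is chain-homotopic to $\Id$ through a $(\Psi\cdot\Phi,\Id)$-coderivation $H$ --- that is, a prenatural transformation $\Psi\cdot\Phi\Rightarrow\Id$ with $\Psi\cdot\Phi-\Id=\mathfrak{M}^1(H)$ --- is the multi-object, shifted cocategory incarnation of the classical Eilenberg--Zilber theorem relating the bar construction of a tensor product with the tensor product of bar constructions. Everything is carried out objectwise over $\mbox{Ob}(\A)\times\mbox{Ob}(\B)$, and $\Psi$, $\Phi$ manifestly restrict to the coaugmentation coideals and match unital bifunctors with unital $\Ain$-functors.

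Finally, set $G\mapsto G\cdot\Psi$ (from $\aCat^u(\A\otimes\B,\C)$ to $\aCat^u(\A,\B,\C)$) and $F\mapsto F\cdot\Phi$. Since $\Phi\cdot\Psi=\Id$, the composite $F\mapsto F\cdot\Phi\mapsto F\cdot\Phi\cdot\Psi=F$ is the identity. For the other composite, $G\cdot\Psi\cdot\Phi\sim G$: indeed $\Psi\cdot\Phi\sim\Id$ and $\sim$ is compatible with left composition (\cite[(1h)]{Sei}), so $G\cdot(\Psi\cdot\Phi)\sim G\cdot\Id=G$, concretely via the prenatural transformation $G\cdot H$. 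Since precomposition by the fixed cocategory morphisms $\Psi$ and $\Phi$ is compatible with $\sim$ and with $\approx$, both assignments descend to well-defined maps on $\approx$-classes, and by the foregoing they are mutually inverse bijections; this proves the lemma. I expect the only real difficulty to be the sign bookkeeping for $\Psi$ and $\Phi$ at the level of the \emph{shifted} bar complexes --- precisely the computation the $\Un$-trick lets one avoid elsewhere in the paper --- together with checking that the Eilenberg--Zilber homotopy $H$ has the correct coderivation bidegree so that $G\cdot H$ is a genuine prenatural transformation; one can carry this out by hand or import the classical cosimplicial Eilenberg--Zilber comparison along the bar-construction dictionary, working objectwise over $\mbox{Ob}(\A)\times\mbox{Ob}(\B)$.
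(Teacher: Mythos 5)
Your overall strategy --- comparing $\Br(\A\otimes\B)$ with $\overline{\Br(\A)_+\otimes\Br(\B)_+}$ via shuffle and Alexander--Whitney maps --- is essentially the content of \cite[Proposition 5.2]{COS2}, which the paper's proof simply cites, so the route is not wrong in spirit. But there is a genuine gap exactly at the point you declare ``manifest''. In this paper ``unital DG-category'' means unital in Lyubashenko's homotopy sense: the unit $e_a$ only satisfies $\Id-m_2(-,e_a)=m_1\cdot\mathcal{H}_a+\mathcal{H}_a\cdot m_1$, so in particular $e_a\cdot e_a\neq e_a$ and $f\cdot e_a\neq f$ on the nose. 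Your shuffle map $\Psi$ inserts letters $a_i\otimes e$ and $e\otimes b_j$, and the verification that $\Psi$ is a chain map and a cofunctor uses the strict identities $e\cdot e=e$ and $a\cdot e=a$ whenever the bar differential multiplies adjacent letters; with only homotopy units these fail, and $\Psi$ is not a morphism of DG cocategories. Likewise your Alexander--Whitney map $\Phi$ needs an augmentation (a splitting off of the unit), which a Lyubashenko-unital category does not carry.

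The standard repair is to adjoin formal strict units, i.e.\ to work with $\A_+$, $\B_+$ and with $\overline{\A_+\otimes\B_+}$, where your $\Psi$ and $\Phi$ do make sense. But then the burden of the lemma shifts to showing that $\overline{\A_+\otimes\B_+}$ is again a \emph{unital} DG-category in the homotopy sense, so that (via \cite[Lemma 1.22]{COS2}) unital functors out of it correspond, up to $\approx$, to unital functors out of $\A\otimes\B$. That is precisely the one thing the paper's proof actually verifies: it exhibits the homotopy unit $e_a\otimes\Id_b+\Id_a\otimes e_b-e_a\otimes e_b$ of the object $(a,b)$ together with the homotopy $\mathcal{H}_a\otimes(d_{\B}\mathcal{H}_b+\mathcal{H}_b d_{\B})$ realizing (\ref{unitrap}). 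Your proposal omits this step entirely, and it is the crux; the sign bookkeeping and the coderivation property of the Eilenberg--Zilber homotopy, which you single out as the main difficulty, are comparatively routine.
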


\begin{proof}
The proof is the same of \cite[Proposition 5.2]{COS2}.\ 
The only thing to prove, in order to use \cite[Lemma 1.22]{COS2}, is that the category
$$\overline{\A_+\otimes\B_+}$$
is a unital DG-category, if $\A$ and $\B$ are unital DG-categories.\\
We denote by $e_a$ and $e_b$ the units of $a\in\A$ and $b\in\B$ satisfying (\ref{unitrap}) (with homotopies $\mathcal{H}_a$ and $\mathcal{H}_b$ respectively).\ Denoting by $\Id_a$ and $\Id_b$ the units added in $\A_+$ and $\B_+$,
we take the morphism
\begin{align}\label{unitor}
e_a\otimes\Id_b+\Id_a\otimes e_b-e_a\otimes e_b\in\Hom_{\overline{\A_+\otimes\B_+}}\big( (a,b),(a,b)\big).
\end{align}
This is the unit of the object $(a,b)\in\A\otimes\B$ we can take $\mathcal{H}_a\otimes(d_{\B}\mathcal{H}_b+\mathcal{H}_b d_{\B})$ satisfying (\ref{unitrap}).\ 
\end{proof}

We recall the following result \cite[Theorem 6.19]{Orn2}:

\begin{thm}\label{patturzo}
We have an morphism of sets:
\begin{align}\label{gollork}
\aCat^u(\A,\B)/{\approx}&\to \Ho(\aCat^u)(\A,\B)\\
\delta:\F&\mapsto[\F].
\end{align}
Moreover, if $\A$ is h-projective then $\delta$ is an isomorphism.
\end{thm}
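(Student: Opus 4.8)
The plan is to treat the two assertions in turn. That $\delta$ is well defined amounts to the fact, recorded above, that $\F\approx\G$ forces $[\F]=[\G]$ in $\Ho(\aCat^u)$; hence $\F\mapsto[\F]$ descends to the quotient $\aCat^u(\A,\B)/{\approx}$, and no further work is needed for the first assertion. The substance of the statement is the bijectivity of $\delta$ under the hypothesis that $\A$ be h-projective, and this is where I would concentrate.

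For bijectivity I would argue by ``straightening'' the morphisms and the relations of the localization. First I would observe that, since the quasi-equivalences of $\aCat^u$ admit a homotopy calculus of fractions (this can be imported from the model structure on $\DgCat^u$ through the equivalence $\Ho(\aCat^u)\simeq\Ho(\aCat)$ of the earlier grid of equivalences, or set up directly from Theorem \ref{risoluzioni}), every morphism of $\Ho(\aCat^u)(\A,\B)$ is represented by a roof $\A\xleftarrow{v}\C\xrightarrow{g}\B$ with $v$ a quasi-equivalence and $g$ a unital $\Ain$-functor. Applying the functorial resolution $(-)^{\mathrm{cm}}$ and using the natural quasi-equivalence $\Psi$ of Theorem \ref{risoluzioni} --- which becomes an isomorphism in $\Ho(\aCat^u)$ --- one reduces to the case in which $\C$, and hence $v$, lie among h-projective unital $\Ain$-categories. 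Because $\A$ is h-projective, such a $v$ can be inverted up to $\approx$ by an honest unital $\Ain$-functor $v':\A\to\C$ (this is the lifting property of h-projective objects against the trivially-fibration-like quasi-equivalences; after reducing to the bijective-on-objects case, Theorem \ref{DC} produces the quasi-inverse). Then the given morphism equals $[g\cdot v']$, so $\delta$ is surjective. For injectivity one runs the same reduction on the data witnessing an equality $[\F]=[\G]$: after resolving, the witnessing zig-zag lies over h-projective categories and collapses, again via Theorems \ref{DC} and \ref{risoluzioni}, to a single chain of $\approx$'s from $\F$ to $\G$, so that $\F\approx\G$.

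I expect the straightening step to be the main obstacle: it requires knowing that the h-projective (in particular the semi-free) unital $\Ain$-categories are the analogues of the cofibrant objects while every unital $\Ain$-category is ``fibrant'', i.e.\ essentially a cofibration-category structure on $\aCat^u$ with the quasi-equivalences as weak equivalences. Granting this package, the statement is the standard identification of $\Ho$-hom-sets out of a cofibrant object with homotopy classes of honest maps, the homotopy relation being exactly $\approx$. A quicker route, avoiding most of this, is to compose $\delta$ with the equivalence $\Ho(\aCat^u)\xrightarrow{\ \simeq\ }\Ho(\aCat)$ induced by the inclusion: under it $\delta$ becomes $\F\mapsto[\F]\in\Ho(\aCat)(\A,\B)$, which for h-projective $\A$ is a bijection by the result of \cite{Orn2} recalled above, whence $\delta$ is a bijection as well.
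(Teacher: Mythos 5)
The paper itself gives no proof of this statement: it is recalled verbatim as \cite[Theorem 6.19]{Orn2}, so there is no internal argument to compare yours against. Within the logic of this paper your second, ``quicker'' route is the right one, and it is essentially a two-line derivation from results already on record: the recalled isomorphism $\Ho(\aCat)(\A,\B)\simeq\aCat^u(\A,\B)/_{\approx}$ for $\A$ h-projective, composed with the equivalence $\Ho(\aCat)\simeq\Ho(\aCat^u)$ from the grid of Theorem \ref{equivalonze}; since both identifications are induced by inclusion and localization, the composite is indeed $\F\mapsto[\F]$, i.e.\ $\delta$. Be aware, though, that this merely relocates the content to the other citation of \cite{Orn2} (at the source the two statements are presumably proved together), so it is a legitimate derivation inside this paper rather than an independent proof. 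Your first, longer argument is a plausible sketch of what such an independent proof must look like, but as written it has genuine gaps, which you partly flag yourself: a homotopy calculus of fractions for the quasi-equivalences of $\aCat^u$ over an arbitrary commutative ring is not available off the shelf ($\DgCat^u$, unlike $\DgCat$, is not known to carry a Quillen model structure in this generality), and the inversion of a quasi-equivalence $v:\C\to\A$ against an h-projective $\A$ is not a formal consequence of Theorem \ref{DC}, since $v$ need not be bijective on objects and h-projectivity is a condition on hom-complexes rather than an abstract cofibrancy; the reduction ``to the bijective-on-objects case'' is itself the hard step. Making those points precise is exactly the work carried out in \cite{Orn2}. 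In summary: the well-definedness part and the quick route are correct; the long route should either be completed along the lines of \cite{Orn2} or dropped in favour of the citation.
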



\begin{proof}[Proof of Theorem \ref{corzel}]
We take (\ref{diagrammozzo}) replacing $\DgCat$ with $\DgCat^u$:
\begin{align}\label{diagrammozzone}
\xymatrix{
\Ho(\DgCat^u)(\A\otimes\B,\C)\ar[d]^{1:1}\ar@{-->}[rr]^{\xi'}&&\ar@{-->}[ll]\Ho(\DgCat^u)(\A,\Fun_{\infty}^u(\B,\C))\ar[d]^{(F)}\\
\Ho(\DgCat^u)(\A^{\tiny\mbox{hp}}\otimes\B,\C)\ar[u]^{(A)}\ar[d]^{1:1}&&\Ho(\DgCat^u)(\A^{\tiny\mbox{hp}},\Fun_{\infty}^u(\B,\C))\ar[d]^{(E')}\ar[u]^{1:1}\\
\aCat^u(\A^{\tiny\mbox{hp}}\otimes\B,\C)/{\approx}\ar[u]^{(B')}\ar[r]^-{1:1}&\ar[l]^-{(C')}\aCat^u(\A^{\tiny\mbox{hp}},\B,\C)/{\approx}\ar[r]^-{1:1}&\ar[l]^-{(D)}\aCat^u(\A^{\tiny\mbox{hp}},\Fun^u_{\infty}(\B,\C))/\approx\ar[u]^-{1:1}
}
\end{align}
The isomorphisms $(A)$, $(F)$ hold and $(D)$ comes from Lemma \ref{isonzo}.\\ 
Moreover Theorem \ref{equivalonze} and Theorem \ref{patturzo} provide the isomorphisms $(B')$/$(E')$ and the isomorphism $(C)$ is due to Lemma \ref{tensoronzi}.
\end{proof}

\begin{proof}[Proof of Theorem \ref{INTERNALHOM}]
Given $\A_1,\A_2,\A_3\in\aCat^u$, we have:
\begin{align*}
\Ho(\aCat^u)(\A_1\otimes^{\mathbb{L}}\A_2,\A_3)&:= \Ho(\aCat^u)(\A_1\otimes\A_2^{\tiny\mbox{hp}},\A_3)\\
&\simeq \Ho(\aCat^u)(\A_1^{\tiny\mbox{hp}}\otimes\A_2^{\tiny\mbox{hp}},\A_3)\\
&\simeq \Ho(\DgCat^u)(U^n(\A_1^{\tiny\mbox{hp}}\otimes\A_2^{\tiny\mbox{hp}}),U^n(\A_3))\\
&\simeq \Ho(\aCat^u)(U^n(\A_1^{\tiny\mbox{hp}}\otimes\A_2^{\tiny\mbox{hp}}),U^n(\A_3))\\
&\simeq \Ho(\aCat^u)(U^n(\A_1^{\tiny\mbox{hp}})\otimes U^n(\A_2^{\tiny\mbox{hp}}),U^n(\A_3))\\
&\simeq \Ho(\DgCat^u)(U^n(\A_1^{\tiny\mbox{hp}})\otimes U^n(\A_2^{\tiny\mbox{hp}}),U^n(\A_3))\\
&\simeq \Ho(\DgCat^u)\big(U^n(\A_1^{\tiny\mbox{hp}}), \Fun^u_{\infty}( U^n(\A_2^{\tiny\mbox{hp}}),U^n(\A_3))\big)\\
&\simeq \Ho(\DgCat^u)\big(U^n(\A_1^{\tiny\mbox{hp}}), U^n(\Fun^u_{\infty}( U^n(\A_2^{\tiny\mbox{hp}}),U^n(\A_3)))\big)\\
&\simeq \Ho(\aCat^u)\big(\A_1^{\tiny\mbox{hp}}, \Fun^u_{\infty}( U^n(\A_2^{\tiny\mbox{hp}}),U^n(\A_3))\big)\\
&\simeq \Ho(\aCat^u)\big(\A_1^{\tiny\mbox{hp}}, \Fun^u_{\infty}( \A_2^{\tiny\mbox{hp}},\A_3)\big)\\
&\simeq \Ho(\aCat^u)\big(\A_1, \Fun^u_{\infty}( \A_2^{\tiny\mbox{hp}},\A_3)\big)
\end{align*}
The same is true if $\A_1,\A_2,\A_3\in\aCat$.\ Moreover if $\A^{\tiny\mbox{hp}}_2$ has split unit then 
$$\Fun^u_{\infty}( \A_2^{\tiny\mbox{hp}},\A_3)\simeq \Fun_{\infty}( \A_2^{\tiny\mbox{hp}},\A_3)$$
and we are done.
\end{proof}

\begin{thm}\label{closedsm}
The symmetric monoidal category $(\aCat^{u}/\approx,\otimes,R)$ is closed.\ 
An internal hom $[\B,\C]$ is given by $\Fun^{u}(\B,\C)$.
\end{thm}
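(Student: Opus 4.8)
The plan is to reduce the assertion to the purely DG setting—where the corresponding adjunction is essentially contained in Lemmas~\ref{tensoronzi} and \ref{isonzo}—and then transport it back along the cobar--bar replacement $\Un$. Recall that by Theorem~\ref{symcat} the triple $(\aCat^{u}/{\approx},\otimes,R)$ is symmetric monoidal, so to see that it is closed with internal hom $[\B,\C]=\Fun^{u}(\B,\C)$ it suffices to produce, for each fixed unital $\Ain$-category $\B$, a bijection
\[
\aCat^{u}(\A\otimes\B,\C)/{\approx}\;\simeq\;\aCat^{u}(\A,\Fun^{u}(\B,\C))/{\approx}
\]
natural in $\A$ (contravariantly) and in $\C$ (covariantly); naturality in these two variables is exactly what upgrades the bijection to an adjunction $(-\otimes\B)\dashv\Fun^{u}(\B,-)$. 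Throughout I use that weakly equivalent $\Ain$-categories become isomorphic objects of $\aCat^{u}/{\approx}$, so hom-sets into or out of them are naturally identified; the inputs here are Theorem~\ref{teoremono}, which gives $\A\otimes\B\approx\Un(\A)\otimes^{\tiny\mbox{DG}}\Un(\B)$, $\A\approx\Un(\A)$ and $\C\approx\Un(\C)$, and the fact that $\Un$ carries (cohomological) unital $\Ain$-categories to (cohomological) unital DG-categories.

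Concretely, I would assemble the chain of natural bijections
\begin{align*}
\aCat^{u}(\A\otimes\B,\C)/{\approx}
&\simeq\aCat^{u}\big(\Un(\A)\otimes^{\tiny\mbox{DG}}\Un(\B),\Un(\C)\big)/{\approx}\\
&\simeq\aCat^{u}\big(\Un(\A),\Un(\B),\Un(\C)\big)/{\approx}\\
&\simeq\aCat^{u}\big(\Un(\A),\Fun^{u}(\Un(\B),\Un(\C))\big)/{\approx}\\
&\simeq\aCat^{u}\big(\A,\Fun^{u}(\Un(\B),\Un(\C))\big)/{\approx}\\
&\simeq\aCat^{u}\big(\A,\Fun^{u}(\B,\C)\big)/{\approx}.
\end{align*}
Lines one and four are the object-replacements just described. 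Line two is Lemma~\ref{tensoronzi}, applied to the unital DG-categories $\Un(\A),\Un(\B),\Un(\C)$. Line three is the isomorphism (\ref{iso1}) of Lemma~\ref{isonzo}, which descends to $\approx$-classes—this is precisely step $(D)$ of diagram~(\ref{diagrammozzo}) used in the proof of Theorem~\ref{IntHom}. Line five rests on the auxiliary claim $\Fun^{u}(\B,\C)\approx\Fun^{u}(\Un(\B),\Un(\C))$, treated next.

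For that claim I would use the induced $\Ain$-functors $\mathscr{R}_{-}$ and $\mathscr{L}_{-}$: set $\Phi:=\mathscr{L}_{\alpha_{\C}}\cdot\mathscr{R}_{\tilde{\tilde{\chi}}_{\B}}$ and $\Psi:=\mathscr{L}_{\tilde{\tilde{\chi}}_{\C}}\cdot\mathscr{R}_{\alpha_{\B}}$, so $\Phi(H)=\alpha_{\C}\cdot H\cdot\tilde{\tilde{\chi}}_{\B}$ and $\Psi(K)=\tilde{\tilde{\chi}}_{\C}\cdot K\cdot\alpha_{\B}$. Since precomposition and postcomposition are strictly functorial and commute, $\Psi\cdot\Phi=\mathscr{L}_{\tilde{\tilde{\chi}}_{\C}\cdot\alpha_{\C}}\cdot\mathscr{R}_{\tilde{\tilde{\chi}}_{\B}\cdot\alpha_{\B}}=\mathscr{L}_{\Id}\cdot\mathscr{R}_{\Id}=\Id$ using $\tilde{\tilde{\chi}}_{\A}\cdot\alpha_{\A}=\Id_{\A}$ from Remark~\ref{chitild}, while $\Phi\cdot\Psi=\mathscr{L}_{\alpha_{\C}\cdot\tilde{\tilde{\chi}}_{\C}}\cdot\mathscr{R}_{\alpha_{\B}\cdot\tilde{\tilde{\chi}}_{\B}}\sim\Id$ because $\alpha_{\A}\cdot\tilde{\tilde{\chi}}_{\A}\sim\Id_{\Un(\A)}$ and $\mathscr{L}_{-},\mathscr{R}_{-}$ carry $\sim$-homotopic functors to $\sim$-homotopic ones (compatibility of $\sim$ with composition). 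Hence $\Phi,\Psi$ realize $\Fun^{u}(\B,\C)\approx\Fun^{u}(\Un(\B),\Un(\C))$, and a routine check using naturality of $\alpha$ shows this identification is compatible, via $\mathscr{L}_{-}$, with corestriction along unital $\Ain$-functors, which provides naturality in $\C$.

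The genuinely delicate point is bookkeeping rather than anything deep: one must verify that \emph{every} arrow in the chain is $\approx$-equivariant (defined on classes, not merely on representatives) and natural in $\A$ and in $\C$. For the $\Un$-replacements, naturality in $\A$ amounts to $\eta$, $\alpha$ and $\tilde{\tilde{\chi}}$ being natural up to $\approx$—e.g.\ $\eta_{\A'\otimes\B}\cdot(\F\otimes\Id_{\B})\approx(\Un(\F)\otimes^{\tiny\mbox{DG}}\Id)\cdot\eta_{\A\otimes\B}$, which follows from $\eta\cdot\Xi\approx\Id$ and Definition~\ref{compofuncto}—while naturality of lines two and three is already contained in the proof of Theorem~\ref{IntHom}. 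Once this is checked, the composite bijection is natural in $\A$ and $\C$, so $\Fun^{u}(\B,-)$ is right adjoint to $-\otimes\B$ on $\aCat^{u}/{\approx}$; that is, $(\aCat^{u}/{\approx},\otimes,R)$ is closed with internal hom $[\B,\C]=\Fun^{u}(\B,\C)$.
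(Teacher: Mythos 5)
Your proposal is correct and follows essentially the same route as the paper: the paper's proof is a one-line reduction to the DG case citing exactly the results you use (Theorem \ref{teoremono}, Lemma \ref{isonzo}, Lemma \ref{tensoronzi}), and your chain of bijections is the natural expansion of that argument. The only material you add is the explicit verification that $\Fun^{u}(\B,\C)\approx\Fun^{u}(\Un(\B),\Un(\C))$ via $\mathscr{L}$ and $\mathscr{R}$, a step the paper leaves implicit but which is needed and which you handle correctly.
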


\begin{proof}
Since $\A\approx\Un(\A)$ and $\Un(\A)$ is a unital DG-category, it follows directly from Theorem \ref{teoremono}, Lemma \ref{isonzo} and Lemma \ref{tensoronzi}. 
\end{proof}

\newpage

\section{Explicit Formulas}\label{explicit}

Let $\A$ and $\B$ be two $\Ain$categories, in this section we calculate explicitly $m^n_{\A\otimes\B}$ for $n=1,2,3$ and we give a recursive formula for any $n\ge1$.\ See formulas (\ref{multer}) and (\ref{zonzo}).\\
By Theorem \ref{adjun} we have:
\begin{align*}
\alpha^1_{\A}\cdot\chi^1_{\A}-\mbox{Id}_{\Un(\A)}=d_{\Un(\A)}\cdot H_{\A}+H_{\A}\cdot d_{\Un(\A)}.
\end{align*}
Clearly the same is true for the category $\B$.\\
We recall that the $\Ain$-structure on $\A\otimes\B$ is given by Theorem \ref{enzone}.\\ 
In particular we use Theorem \ref{HPT} on the diagram:
\begin{align*}
\xymatrix{
\big({\A\otimes\B}\big)(x,y)\ar@<0.5ex>[rr]^-{\alpha^1_{\A}\otimes\alpha^1_{\B}}&&\ar@<0.5ex>[ll]^-{\chi^1_{\A}\otimes\chi^1_{\B}} \big({\Un(\A)\otimes \Un(\B)}\big)(x,y) \ar@(r,d)[]^{T^1}
}
\end{align*}
Where $T^1$ is given by Lemma \ref{stupido}:
\begin{align}\label{torcofo}
T^1:=H_{\B}\otimes \big(\alpha^1_{\A}\cdot\chi^1_{\A}\big)+\mbox{Id}_{\Un(\B)}\otimes H_{\A}.
\end{align}

\begin{rem} 
On the other hand we can take also the homotopy 
\begin{align*}
T_r^1:=H_{\B}\otimes \big(\alpha^1_{\A}\cdot\chi^1_{\A}\big)+\mbox{Id}_{\Un(\B)}\otimes H_{\A}.
\end{align*}
\end{rem}

\begin{notat}
We introduce a new notation in this section in order to make the exposition and the formulas (hopefully) lighter.\ 
We denote simply by $f$ the double shift $(f[1])[-1]$, we denote with the underline notation $\underline{f_n\otimes...\otimes f_1}$ the morphism $(f_n[1]\otimes...\otimes f_1[1])[-1]$.\ With this notation the morphism (\ref{gurzo}) is given by 
\begin{align*}
R(\underline{f_n\otimes...\otimes f_1}\otimes g):=0,\\
R(f_n\otimes g):= \underline{f_n\otimes g}.
\end{align*}
Where $g\in U(\A)$ and $f_j\in\A$ for every $j=1,...,n$.
\end{notat}

\begin{exmp}
Let us make a few of calculations:\\
If $f$ has length 1 we have: 
\begin{itemize}
\item[1.] 
$r(f)=f$,\\
$\chi^1_{\A}(f)=f$,\\
$R(f)=H_{\A}(f)=0$
\end{itemize}
If $f$ has length 2 then:
\begin{itemize}
\item[2a.] 
$\chi^1_{\A}(\underline{a_2\otimes a_1})=0$,\\ 
$H_{\A}(\underline{a_2\otimes a_1})=0$.\\
\item[2b.] $\chi^1_{\A}({a_2\otimes a_1})=m^2_{\A}(a_2,a_1)$,\\ 
$H_{\A}({a_2\otimes a_1})=\underline{a_2\otimes a_1}$.
\end{itemize}
If $f$ has length 3 then:
\begin{itemize}
\item[3a.] 
$r({a_3\otimes a_2\otimes a_1})= m^2(a_3,a_2)\otimes a_3$,\\
$\chi^1_{\A}({a_3\otimes a_2\otimes a_1})=m^2(m^2(a_3,a_2),a_1)$, \\
$H_{\A}({a_3\otimes a_2\otimes a_1})=\underline{a_3\otimes a_2}\otimes a_1+ \underline{m^2_{\A}(a_3,a_2)\otimes a_1}$.\\
\item[3b.] 
$r(\underline{a_3\otimes a_2}\otimes a_1)= \underline{m^2(a_3,a_2)\otimes a_1}$,\\
$\chi^1_{\A}(\underline{a_3\otimes a_2}\otimes a_1)=0$, \\
$H_{\A}(\underline{a_3\otimes a_2}\otimes a_1)=0$.\\
\item[3c.] 
$r(a_3\otimes\underline{a_2\otimes a_1})=\underline{m^2(a_3,a_2)\otimes a_1}+m^3(a_3,a_2,a_1)$,\\
$\chi^1_{\A}(a_3\otimes \underline{a_2\otimes a_1})=m^3(a_3,a_2,a_1)$,\\
$H_{\A}(a_3\otimes \underline{a_2\otimes a_1})=\underline{a_3\otimes a_2\otimes a_1}$.\\
\item[3d.] 
$r(\underline{a_3\otimes a_2\otimes a_1})=0$,\\
$\chi^1_{\A}( \underline{a_3\otimes a_2\otimes a_1})=0$, \\
$H_{\A}( \underline{a_3\otimes a_2\otimes a_1})=0$.
\end{itemize}
\end{exmp}

The $\Ain$functor $\eta_{\A\otimes\B}:\A\otimes\B\to \Un(\A)\otimes \Un(\B)$ is given recursively by the formula:
\begin{align}\label{zonzo}
\eta_{\A\otimes\B}^n(f_n,...,f_1):=&\displaystyle\sum_{j=1}^{n-1} (\mbox{Id}_{\A} \otimes H_{\B}) \big( m^2_{\Un(\A)\otimes \Un(\B)}(\eta_{\A\otimes\B}^{n-j}(f_{n},...,f_{j+1}), \eta_{\A\otimes\B}^{j}(f_{j},...,f_1))\big)\\
+&\displaystyle\sum_{j=1}^{n-1} (H_{\A}\otimes \alpha^1_{\B}\cdot\chi^1_{\B})\big( m^2_{\Un(\A)\otimes \Un(\B)}(\eta_{\A\otimes\B}^{n-j}(f_{n},...,f_{j+1})\otimes \eta_{\A\otimes\B}^{j}(f_{j},...,f_1)\big)\big).
\end{align}
On the other hand, the $\Ain$-structure on $\A\otimes\B$ is given recursively using the morphism $\eta_{\A\otimes\B}$:
\begin{align}\label{multer}
m^n_{\A\otimes\B}(f_n,...,f_1):=
&\displaystyle\sum_{j=1}^{n-1} \chi^1_{\A\otimes\B}\cdot m^2_{\Un(\A)\otimes \Un(\B)}\big(\eta_{\A\otimes\B}^{n-j}(f_{n},...,f_{j+1}),\eta_{\A\otimes\B}^{j}(f_{j},...,f_1)\big).
\end{align}

\begin{exmp}\label{mdue}
Let us calculate explicitly $m^2_{\A\otimes\B}$.\\
We recall:
\begin{align*}
\eta^1(a_1,b_1):=\big(\alpha^1_{\A}\otimes\alpha^1_{\B}\big)(a_1,b_1)=a_1\otimes b_1.
\end{align*}
Given $f_2:=a_2\otimes b_2$ and $f_1:=a_1\otimes b_1$, we have
\begin{align*}
m^2_{\A\otimes\B}(f_2,f_1):=
&\chi^1_{\A\otimes\B} \cdot m^2_{\Un(\A)\otimes \Un(\B)}\big( \alpha_{\A\otimes\B}^{1}(f_{2}),\alpha_{\A\otimes\B}^{1}(f_1)\big)\\
=&\chi^1_{\A\otimes\B} \cdot m^2_{\Un(\A)\otimes \Un(\B)}\big( a_{2}\otimes b_{2},a_{1}\otimes b_{1}\big)\\
=&\chi^1_{\A\otimes\B} \big( (a_{2}\otimes a_{1})\otimes (b_{2}\otimes b_{1})\big)\\
=&\chi^1_{\A}\otimes \chi^1_{\B} \big( (a_{2}\otimes a_{1})\otimes (b_{2}\otimes b_{1})\big)\\
=&m^2_{\A}(a_2,a_1)\otimes m^2_{\A}(b_2,b_1)
\end{align*}
\end{exmp}

\begin{exmp}\label{mtre}
Let us calculate explicitly $m^3_{\A\otimes\B}$.\\
First we have:
\begin{align*}
\eta^2(f_2,f_1)&:=(\mbox{Id}_{\A} \otimes H_{\B}) \big( (a_2\otimes a_1) \otimes (b_2\otimes b_1) \big)\\
+& (H_{\A} \otimes \alpha^1_{\B}\cdot\chi^1_{\B}) \big( (a_2\otimes a_1) \otimes (b_2\otimes b_1 \big)\\
=&(a_2\otimes a_1)\otimes\underline{b_2\otimes b_1}+ \underline{a_2\otimes a_1}\otimes m^2_{\B}(b_2,b_1).
\end{align*}
We take $f_j:=a_j\otimes b_j$ for $j=1,2,3$.\\
Let us calculate:
\begin{align*}
m^3_{\A\otimes\B}(f_3,f_2,f_1):=
&\chi^1_{\A\otimes\B}\big(m^2_{\Un(\A)\otimes \Un(\B)}(\eta^2(f_3,f_2),\eta^1_{\A\otimes\B}(f_1) )\big)\\ 
+&\chi^1_{\A\otimes\B}\big(m^2_{\Un(\A)\otimes \Un(\B)}(\eta^1_{\A\otimes\B}(f_3),\eta^2(f_2,f_1) )\big)\\ 
=&\chi^1_{\A\otimes\B}\big(m^2_{\Un(\A)\otimes \Un(\B)}\big( (a_3\otimes a_2)\otimes\underline{b_3\otimes b_2} + \underline{a_3\otimes a_2}\otimes m^2_{\B}(b_3,b_2),a_1\otimes b_1 \big)\\ 
+& \chi^1_{\A\otimes\B}\big(m^2_{\Un(\A)\otimes \Un(\B)}\big( a_3\otimes b_3, (a_2\otimes a_1)\otimes\underline{b_2\otimes b_1} +\underline{a_2\otimes a_1}\otimes m^2_{\B}(b_2,b_1) \big)\\
=&\chi^1_{\A\otimes\B}\big( (a_3\otimes a_2\otimes a_1)\otimes(\underline{b_3\otimes b_2}\otimes b_1) + (\underline{a_3\otimes a_2}\otimes a_1)\otimes (m^2_{\B}(b_3,b_2)\otimes b_1) \big)\\ 
+& \chi^1_{\A\otimes\B}\big(  (a_3\otimes a_2\otimes a_1)\otimes (b_3\otimes\underline{b_2\otimes b_1} )+ (a_3\otimes\underline{a_2\otimes a_1})\otimes (b_3\otimes m^2_{\B}(b_2,b_1)) \big)\\
=&m^2_{\A}(m^2_{\A}(a_3,a_2),a_1)\otimes m^3_{\B}(b_3,b_2,b_1) + m^3(a_3,a_2,a_1)\otimes m^2_{\B}(b_3,m^2_{\B}(b_2,b_1)).
\end{align*}
\end{exmp}

\newpage
\section{Appendix}\label{appendix}

\begin{proof}
We want to prove that $R$ verifies (\ref{lollz}).\\
If $n_m>1$ then $R\big(g_m[-1]\otimes...\otimes g_1[-1] \big)=0$, it is easy to see: 
\begin{align*}
R\big(d_{\Un(|\A|)}\big(g_m[-1]\otimes...\otimes g_1[-1]\big)\big)&=g_m[-1]\otimes...\otimes g_1[-1].
\end{align*}
We take the case $m=2$ and $n_2=1$.\\ 
In formula, we have a morphism of the form:
$$(f[1])[-1]\otimes (f_{n}[1]\otimes...\otimes f_1[1])[-1]$$
we have:
$$R\big((f[1])[-1]\otimes (f_{n}[1]\otimes...\otimes f_1[1])[-1]\big)=(f[1]\otimes f_{n}[1]\otimes...\otimes f_1[1])[-1].$$
The differential is
\begin{align*}
d_{\Un(|\A|)}\big((f[1])[-1]\otimes &(f_{n}[1]\otimes...\otimes f_1[1])[-1]\big)=(m_1(f)[1])[-1]\otimes (f_{n}[1]\otimes...\otimes f_1[1])[-1]+\\
&+(f[1])[-1]\otimes d_{\Un(|\A|)}\big((f_{n}[1]\otimes...\otimes f_1[1])[-1]\big)\\
&=(m_1(f)[1])[-1]\otimes (f_{n}[1]\otimes...\otimes f_1[1])[-1]+\\
&+\displaystyle\sum_{j=1}^n (f[1])[-1]\otimes (f_{n}[1]\otimes...\otimes m_1(f_j)[1]\otimes...\otimes f_1[1])[-1]\\
&+\displaystyle\sum_{j=2}^n (f[1])[-1]\otimes (f_{n}[1]\otimes...\otimes f_j[1])[-1]\otimes(f_{j-1}[1]\otimes...\otimes f_1[1])[-1].
\end{align*}
Applying $R$ we get:
\begin{align*}
R\big(d_{U^{\tiny\mbox{n}}(|{\mathscr{A}}|)}\big((f[1])[-1]\otimes& (f_{n}[1]\otimes...\otimes f_1[1])[-1]\big)\big)=(m_1(f)[1]\otimes f_{n}[1]\otimes...\otimes f_1[1])[-1]+\\
&+\displaystyle\sum_{j=1}^n (f[1]\otimes f_{n}[1]\otimes...\otimes m_1(f_j)[1]\otimes...\otimes f_1[1])[-1]\\
&+\displaystyle\sum_{j=2}^n (f[1]\otimes f_{n}[1]\otimes...\otimes f_j[1])[-1]\otimes(f_{j-1}[1]\otimes...\otimes f_1[1])[-1].
\end{align*}
On the other hand we calculate 
\begin{align*}
d_{\Un(|\A|)}\big(R((f[1])[-1]\otimes (f_{n}[1]&\otimes...\otimes f_1[1])[-1]) \big)=d_{U(\overline{A})}\big((f[1]\otimes f_{n}[1]\otimes...\otimes f_1[1])[-1]) \big)\\
&=(f[1])[-1]\otimes (f_{n}[1]\otimes...\otimes f_1[1])[-1])\\
&+\displaystyle\sum_{j=2}^n (f[1]\otimes f_{n}[1]\otimes...\otimes f_j[1])[-1]\otimes(f_{j-1}[1]\otimes...\otimes f_1[1])[-1]\\
&+(m_1(f)[1] \otimes f_{n}[1]\otimes...\otimes f_1[1])[-1]))\\
&+\displaystyle\sum_{j=1}^n (f[1]\otimes f_{n}[1]\otimes...\otimes m_1(f_j)[1]\otimes...\otimes f_1[1])[-1].
\end{align*}
We have 
\begin{align*}
R\big(d_{\Un(|\A|)}\big((f[1])[-1]\otimes (f_{n}[1]\otimes...&\otimes f_1[1])[-1]\big) \big)=\\
&=d_{\Un(|\A|)}\big( R \big((f[1])[-1]\otimes (f_{n}[1]\otimes...\otimes f_1[1])[-1]\big)\big)\\
&-(f[1])[-1]\otimes (f_{n}[1]\otimes...\otimes f_1[1])[-1]).
\end{align*}
So (\ref{lollz}) hold if $f=(f[1])[-1]\otimes (f_{n}[1]\otimes...\otimes f_1[1])[-1]$.\\
To prove the general case i.e. $m>2$ and $n_m=1$ we note that 
\begin{align*}
R\big(g_m[-1]\otimes g_{m-1}[-1]\otimes g_{m-2}[-1]\otimes...\otimes g_1[-1]\big)&=R\big(g_m[-1]\otimes g_{m-1}[-1]\big)\otimes g_{m-2}[-1]\otimes...\otimes g_1[-1].
\end{align*}
We calculate:
\begin{align*}
d_{\Un(|\A|)}\big(R\big(g_m[-1]\otimes g_{m-1}[-1]&\otimes g_{m-2}[-1]\otimes...\otimes g_1[-1]\big)\big)=\\
&=d_{\Un(|\A|)}\big(R\big(g_m[-1]\otimes g_{m-1}[-1]\big)\big)\otimes g_{m-2}[-1]\otimes...\otimes g_1[-1]+\\
&+R\big(g_m[-1]\otimes g_{m-1}[-1]\big)\otimes d_{\Un(|\A|)}\big(g_{m-2}[-1]\otimes...\otimes g_1[-1]\big).\\
&=g_m[-1]\otimes g_{m-1}[-1]\otimes g_{m-2}[-1]\otimes...\otimes g_1[-1]+\\
&+R\big(d_{\Un(|\A|)}\big(g_m[-1]\otimes g_{m-1}[-1]\big)\big)\otimes g_{m-2}[-1]\otimes...\otimes g_1[-1]+\\
&+R\big(g_m[-1]\otimes g_{m-1}[-1]\big)\otimes d_{\Un(|\A|)}\big(g_{m-2}[-1]\otimes...\otimes g_1[-1]\big)\\
&=g_m[-1]\otimes g_{m-1}[-1]\otimes g_{m-2}[-1]\otimes...\otimes g_1[-1]+\\
&+R\big(d_{\Un(|\A|)}\big(g_m[-1]\otimes g_{m-1}[-1]\big)\otimes g_{m-2}[-1]\otimes...\otimes g_1[-1]+\\
&+g_m[-1]\otimes g_{m-1}[-1]\otimes d_{U^{\tiny\mbox{n}}(\tilde{\mathscr{A}})_{+}}\big(g_{m-2}[-1]\otimes...\otimes g_1[-1]\big)\big)\\
&=g_m[-1]\otimes g_{m-1}[-1]\otimes g_{m-2}[-1]\otimes...\otimes g_1[-1]+\\
&+R\big(d_{\Un(|\A|)}\big(g_m[-1]\otimes g_{m-1}[-1]\otimes g_{m-2}[-1]\otimes...\otimes g_1[-1]\big)\big)\\
\end{align*}
and we are done.
\end{proof}

\newpage

\end{document}